\numberwithin{equation}{section}
\numberwithin{equation}{section}
\newtheorem{theorem}{Theorem}[section]
\newtheorem{corollary}[theorem]{Corollary}
\newtheorem{lemma}[theorem]{Lemma}
\newtheorem{fact}{Fact}[section]
\newtheorem{proposition}[theorem]{Proposition}
\newtheorem{definition}{Definition}[section]
\newtheorem{remark}[theorem]{Remark}
\newtheorem{assumption}{Assumption}
\newcommand{\lin}[1]{\vec{#1}}
\newcommand{\linV}{\vec{V}}
\newcommand{\linVp}{\vec{V}_p}
\newcommand{\linVd}{\vec{V}_d}
\newcommand{\dist}{\operatorname{Dist}}
\newcommand{\gap}{\operatorname{Gap}}
\newcommand{\ep}{\operatorname{\textsf{E}}}
\newcommand{\eobj}{\operatorname{E_{obj}}}
\newcommand{\efeas}{\operatorname{E_{feas}}}
\newcommand{\condL}{\mathcal{L}}
\newcommand{\condC}{\mathcal{C}}
\newcommand{\err}{\operatorname{MErr}_\eps}
\newcommand{\width}{\operatorname{Width}}
\newcommand{\bestrescaling}{rPDHG-Ideal}
\newcommand{\adarescaling}{rPDHG-AHR}
\newcommand{\cpipm}{CP-CGM}
\newcommand{\ideal}{rPDHG-Ideal}
\newcommand{\eps}{\varepsilon}
\newcommand{\Diam}{\mathrm{Diam}}
\newcommand{\calC}{{\cal C}}
\newcommand{\calF}{{\cal F}}
\newcommand{\calL}{{\cal L}}
\newcommand{\calS}{{\cal S}}
\newcommand{\calW}{{\cal W}}
\newcommand{\calX}{{\cal X}}
\newcommand{\calY}{{\cal Y}}
\newcommand{\calZ}{{\cal Z}}
\title{The Role of Level-Set Geometry on the Performance of PDHG for Conic Linear Optimization}
\author{Zikai Xiong\thanks{MIT Operations Research Center, 77 Massachusetts Avenue, Cambridge, MA 02139, USA. 
		\href{mailto:zikai@mit.edu}{zikai@mit.edu}.  Research supported by AFOSR Grant No. FA9550-22-1-0356.
	}  
	\and Robert M. Freund\thanks{MIT Sloan School of Management, 77 Massachusetts Avenue, Cambridge, MA 02139, USA. 
		\href{mailto:rfreund@mit.edu}{rfreund@mit.edu}. Research supported by AFOSR Grant No. FA9550-22-1-0356.}}
\date{\today}
\begin{document}

\maketitle

\begin{abstract}
	We consider solving huge-scale instances of (convex) conic linear optimization problems, at the scale where matrix-factorization-free methods are attractive or necessary. The restarted primal-dual hybrid gradient method (rPDHG) -- with heuristic enhancements and GPU implementation -- has been very successful in solving huge-scale linear programming (LP) problems; however its application to more general conic convex optimization problems is not so well-studied. We analyze the theoretical and practical performance of rPDHG for general (convex) conic linear optimization, and LP as a special case thereof. We show a relationship between the geometry of the primal-dual (sub-)level sets $\mathcal{W}_\varepsilon$ and the convergence rate of rPDHG. Specifically, we prove a bound on the convergence rate of rPDHG that improves when there is a primal-dual (sub-)level set $\mathcal{W}_\varepsilon$ for which (i) $\mathcal{W}_\varepsilon$ is close to the optimal solution set (in Hausdorff distance), and (ii) the ratio of the diameter to the ``conic radius'' of $\mathcal{W}_\varepsilon$ is small. And in the special case of LP problems, the performance of rPDHG is bounded only by this ratio applied to the (sub-)level set corresponding to the best non-optimal extreme point. Depending on the problem instance, this ratio can take on extreme values and can result in poor performance of rPDHG both in theory and in practice. To address this issue, we show how central-path-based linear transformations -- including conic rescaling -- can markedly enhance the convergence rate of rPDHG. Furthermore, we present computational results that demonstrate how such rescalings can accelerate convergence to high-accuracy solutions, and lead to more efficient methods for huge-scale linear optimization problems.
\end{abstract}
 
\textbf{Key words:} conic optimization, linear optimization, condition numbers, first-order methods, convergence guarantees, numerical experiments

\textbf{MSC codes:} 90C05, 90C06, 90C25, 90C47

\section{Introduction}\label{sec:intro}
In this paper, we focus on the following general conic linear program (CLP):
\begin{equation}\tag{P}\label{pro: general primal clp}
	\min_{x\in\mathbb{R}^n}  \ c^\top x \quad 	\text{s.t.} \ Ax = b, \  x \in K_p \ ,
\end{equation}where $K_p \subseteq \mathbb{R}^n$ is a closed convex cone, $A\in \mathbb{R}^{m\times n}$ is the constraint matrix,  $b \in \mathbb{R}^m$ is the right-hand side vector, and $c \in \mathbb{R}^n$ is the objective vector.
The family of CLPs has emerged since the 1990s as a fundamental problem class in convex optimization.  CLP includes standard linear optimization problems (LPs) as a subclass in which $K_p$ is the nonnegative orthant $\mathbb{R}^n_+$, and the importance of LP cannot be overstated in application domains as varied as manufacturing \cite{bowman1956production,hanssmann1960linear}, transportation \cite{charnes1954stepping}, economics \cite{greene2003econometric}, computer science \cite{cormen2022introduction}, and medicine \cite{wagner2004large} among many others \cite{dantzig2002linear}. Second-order cone optimization problems (SOCPs) are another subclass of CLP where the fundamental cone is a cross-product of second-order cones $\mathbb{K}^{d+1}_{\textsc{soc}}$, with significant applications in finance \cite{levy1970international,markowitz1950theories}, statistics \cite{tibshirani1996regression}, and others \cite{lobo1998applications}. The very broad class of semidefinite optimization problems (SDPs), where the underlying cone is a cross-product of semidefinite cone $\mathbb{S}^{d \times d}_+$, has also received significant attention, though more for its overarching breadth of potential applications than for practical industrial usage \cite{blekherman2012semidefinite,wolkowicz2012handbook,alizadeh1995interior}.

Algorithms for small and medium-size CLP instances have been extensively researched both theoretically and practically. Virtually all practical LP algorithms since the late 1980s have been based on simplex/pivoting methods and/or interior-point methods (IPMs). Pivoting methods were extended to quadratic optimization problems, and IPMs were extended to SOCPs, SDPs, and others. Today these classic algorithms form the foundation of virtually all modern solvers and have had a profound impact on optimization quite broadly. 

But the success of these two classic algorithms is premised on being able to repeatedly solve linear equation systems at each iteration, whose operations grow superlinearly with respect to the size of the data (measured with the dimensions $m$ and/or $n$ of the operators or the number of nonzero entries $\textsf{nnz}$ in the data $A$, $b$, $c$), which renders the methods impractical when the problem size is very large-scale. Furthermore, the matrix factorizations are not well suited to either parallel or distributed computation. In contrast, first-order methods (FOMs) are emerging as an alternative approach for solving large-scale CLPs, since they require no or only very few matrix factorizations.  Instead, the primary computational cost of FOMs lies in computing matrix-vector products that are needed to compute (or estimate) gradients. As such,  FOMs are inherently more suitable for exploiting data sparsity, and furthermore are well suited for parallel and/or distributed computer architecture and modern graphics processing units (GPUs).

One of the most successful first-order methods for LP is the restarted primal-dual hybrid gradient method (rPDHG) \cite{applegate2023faster}, which directly tackles the saddlepoint formulation of LP, and automatically detects the infeasibility \cite{applegate2024infeasibility}. An advanced implementation of rPDHG is the solver PDLP \cite{applegate2021practical}, which has outperformed the commercial solver Gurobi on large-scale LP problems \cite{applegate2021practical}. Recent GPU implementations of PDLP have further enhanced its performance to the point that its performance has surpassed classic algorithms (simplex methods and IPMs) on a significant number of problem instances \cite{lu2023cupdlp,lu2023cupdlp-c}. And most recently, rPDHG has been embedded in the state-of-art commercial solvers COPT 7.1 and Xpress 9.4 as a base algorithm for LP \cite{coptgithub,xpressnwes} alongside simplex methods and IPMs. Indeed, many problems that used to be considered too large-scale to be solved are now solvable via rPDHG; for instance, a distributed version of PDLP has been used to solve practical LP instances with $\textsf{nnz} = 9.2\times 10^{10}$, which is a scale far beyond the capabilities of traditional methods \cite{pdlpnews}.  Another example is a representative large-scale benchmark instance called \textsc{zib03}. This instance solved in 16.5 hours in 2021 \cite{koch2022progress}, and now solves in 15 minutes using PDLP with GPU architecture \cite{lu2023cupdlp-c}. 

Despite the promising performance of rPDHG on many LP instances, the method can also perform poorly on certain instances -- even very small instances. Figure \ref{fig:number_of_iteration_count} illustrates the \begin{wrapfigure}{hr}{0.35\textwidth}
	\centering
	\includegraphics[width=1\linewidth]{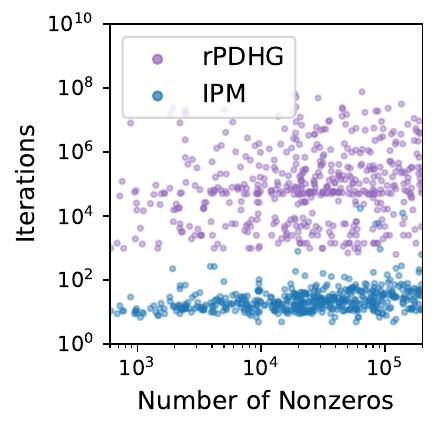}
	\caption{\small The number of iterations of rPDHG and IPM for solving LP instances with different numbers of nonzeros in the constraint matrix.}\label{fig:number_of_iteration_count}
\end{wrapfigure} distribution of the number of iterations required by a standard rPDHG implementation and a standard IPM implementation for a set of LP instances taken from the MIPLIB 2017 dataset \cite{gleixner2021miplib}.  While rPDHG, as a first-order method,  enjoys a much lower per-iteration cost than the IPM, it usually requires orders-of-magnitude more iterations than an IPM, which can offset the advantages of its per-iteration cost.
Moreover, the number of iterations required by rPDHG can vary significantly across different instances, even for very small problems with similar size (as measured by the number of nonzeros $\textsf{nnz}$).  This begs the question of what instance-specific conditions cause some LP instances to be more difficult to solve? Answers to this question lead to the study of traditional as well as novel condition measures for LP and more generally for CLP.

There has been some recent research focused on different condition numbers to analyze the complexity of rPDHG on LP problems. \cite{applegate2023faster} shows that the linear convergence of rPDHG relies on the sharpness of a ``normalized duality gap'' for LP and characterizes the sharpness using a global Hoffman constant of the KKT system.  However, the Hoffman constant is usually overly conservative, and is difficult to analyze, compute, or improve.  \cite{xiong2023computational} connects the sharpness constant to two natural and intuitive condition measures of LP, namely the ``limiting error ratio'' near the optimal solution set, and the LP sharpness (the sharpness of the LP instance). These condition measures are more intrinsically related to stability of LP under perturbation. Both of the above papers rely on the sharpness of LP. For CLP instances with zero sharpness -- or for LP instances with exponentially small positive sharpness -- the practical performance of rPDHG is usually better than the theory indicates. This discrepancy suggests that other condition numbers (in addition to sharpness) play a role in the performance of rPDHG. The identification of these condition numbers -- either theoretically, practically, or both -- can result in schemes to improve these condition numbers so that with these enhancements, rPDHG can achieve more stable and predictable performance and thus enable solutions of ever-more-challenging CLP instances.

Based on the above discussion, this paper seeks to address the following questions. What are the condition numbers (beyond sharpness) that impact the performance of rPDHG on general CLP problems (in theory and in practice)? Is it possible to improve these condition numbers by applying suitable linear transformations of the problem instance? And can the insights gained from answers to these questions be utilized to develop practical improvements in rPDHG?

\subsection{Outline/Overview of Results}

In Section \ref{sec:CLP} we revisit conic linear optimization problems (CLP) and do some elementary transformations to focus on the space of cone variables for both the primal and dual problems.  We revisit the saddlepoint formulation and the primal-dual hybrid gradient method (PDHG) of \cite{chambolle2011first}, and we revisit fundamental convergence results of PDHG.

In Section \ref{sec:complexity_clp} we introduce three condition numbers related to the geometry of the primal-dual sublevel sets of \eqref{pro: general primal clp}, and we use these condition numbers to establish new computational guarantees for rPDHG. (Here the sublevel set (or level set for short) is the set of feasible primal-dual solution cone-variable pairs with a duality gap at most a given threshold.) These condition numbers, which are purely geometric (and are defined using the Euclidean norm), are the diameter $D_\delta$ of the $\delta$-sublevel set, the conic radius $r_\delta$ of the $\delta$-sublevel set, and the Hausdorff distance $d^H_\delta$ between the $\delta$-sublevel set and the optimal solution set. Based on these condition numbers, we derive a new convergence guarantee of rPDHG that applies to general CLP problems and does not rely on the sharpness of the problem instance.

In Section \ref{sec:complexity_lp} we present a new global linear convergence guarantee for rPDHG for LP instances, whose rate bound involves the ratio $D_\delta/r_\delta$ (ratio of the diameter to the conic radius) of the sublevel set corresponding to the best non-optimal extreme point (the ``second-best'' solution point). The slow convergence of rPDHG on certain LP instances can be attributed to the extreme values of this ratio.

In Section \ref{sec:geometric_enhancements} we show how to improve the three condition measures $D_\delta$, $r_\delta$, and $d^H_\delta$ through a rescaling transformation of the primal-dual feasible regions.  We show that such a rescaling, if based on the Hessian of the barrier function of a point on the central path, results in a rescaled problem with guaranteed bounds on the (newly-rescaled) condition numbers $\bar D_\delta$, $\bar r_\delta$, and $\bar d^H_\delta$. If such a rescaling is employed, the overall complexity of rPDHG can be significantly improved. We also present computational experiments that confirm the role of the three condition numbers in the overall performance -- theoretically and practically -- and confirm the effectiveness of our central-path Hessian rescaling methodology.

In Section \ref{sec:exp} we develop and test heuristics to adaptively compute a good central-path rescaling that efficiently trades off the extra computation time to determine the rescaling with the computational savings using rPDHG on the rescaled problem. We compare our methods with a standard implementation of rPDHG and with a standard IPM on LP problems from the MIPLIB 2017 dataset.

\subsection{Other related work for large-scale CLP}
In addition to the research papers discussed earlier, several other works have also analyzed the performance of rPDHG and its variants for solving LP problems. \cite{hinder2023worst} presents a worst-case complexity of rPDHG on totally-unimodular LP instances, that does not rely on any condition numbers.  \cite{lu2023geometry} develops a two-phase theory of the behavior of PDHG without restarts, where the initial sublinear convergence phase is followed by a linear convergence phase that is characterized by the Hoffman constant of a reduced system. and \cite{lu2022infimal} shows that the last iterate of PDHG without restarts also has a linear convergence rate but it is slower than that of rPDHG.

Several other first-order methods have been studied for LP and general CLP problems.  \cite{lin2021admm} proposes an ADMM-based interior-point method that leverages the framework of the homogeneous self-dual interior-point method and employs ADMM to solve the inner log-barrier problems. Further enhancements and extension to CLP problems were subsequently developed by \cite{deng2024enhanced}. \cite{o2016conic,o2021operator} use ADMM to directly solve the homogeneous self-dual formulation for the general CLP problem, and \cite{lu2023practicalqp,huang2024restarted} study accelerated variants of rPDHG for solving a convex quadratic programming problem, which itself is special case of SOCP and hence also of CLP.

\subsection{Notation}
Throughout this paper, we use the following notation for the most common cone examples: $\mathbb{R}^n_+$ denotes the nonnegative orthant,  $\mathbb{S}^{d\times d}_+$ denotes the semidefinite cone, which is the set of positive semidefinite symmetric matrices in $\mathbb{R}^{d\times d}$, and $\mathbb{K}^{d+1}_{\mathsf{soc}}$ denotes the second-order cone $\{(x,t):x\in \mathbb{R}^d, t\in \mathbb{R}, \|x\|_2 \le t\}$.

For a matrix $A\in\mathbb{R}^{m\times n}$, $\operatorname{Null}(A):=\{x\in\mathbb{R}^n:Ax = 0\}$ denotes the null space of $A$ and $\operatorname{Im}(A) :=\{Ax:x\in\mathbb{R}^n\}$ denotes the image of $A$. For any set $\calX\subset \mathbb{R}^n$,  $P_\calX: \mathbb{R}^n \to \mathbb{R}^n$ denotes the Euclidean projection onto $\calX$, namely, $P_\calX(x) := \arg\min_{\hat{x}\in \calX} \|x - \hat{x}\|$.
If not specified via definition, $\|\cdot\|$ in this paper denotes the Euclidean norm. Let $B(x,r)$ denote the ball centered at $x$ with radius $r$. For any $M \in \mathbb{S}_{+}^{n\times n}$, $\|\cdot\|_M$ denotes the inner product ``norm'' induced by $M$, namely, $\|z\|_M :=\sqrt{z^\top Mz}$. (Here we allow $M$ to not be strictly positive definite, in which case $\|z\|_M$ is a semi-norm but not necessarily a norm.) For any $x \in \mathbb{R}^n$ and set $\calX\subset \mathbb{R}^n$, the Euclidean distance between $x$ and $\calX$ is denoted by $\dist(x,\calX):= \min_{\hat{x} \in \calX} \|x-\hat{x}\|$ and the $M$-norm distance between $x$ and $\calX$ is denoted by $\dist_M(x,\calX):= \min_{\hat{x} \in \calX} \|x-\hat{x}\|_M$.
For $A \in \mathbb{R}^{n\times n}$ we use $A^\dag$ to denote the Moore-Penrose inverse of $A$, and we use $\sigma_{\max}^+(A)$ and $\sigma_{\min}^+(A)$ to denote the largest and smallest positive singular values of $A$, respectively.
For $x\in\mathbb{R}^n$, we use $x^+$ to denote the positive part of $x$. For any set $\calX \subset \mathbb{R}^n$, $\textsf{int}\calX$ denotes the interior of $\calX$. For any affine subspace $V$, we use $\lin{V}$ to denote the associated linear subspace corresponding to $V$.
For any linear subspace $\lin{S}$ in $\mathbb{R}^n$, we use $\lin{S}^\bot$ to denote the corresponding complementary linear subspace of $\lin{S}$. For any cone $K$, we use $K^*$ to denote the corresponding dual cone of $K$. The width of a cone $\bar{K}$ is defined as:
\begin{equation}\label{eq:width_K}
	\width_{\bar{K}}:=
	\max \left\{\left.\frac{r}{\|x\|} \right\rvert\, B(x, r) \subset \bar{K}\right\} \ .
\end{equation}
(The width of a cone is an intrinsic property of the cone, though it depends on the choice of norm.  Under the Euclidean norm we have $\width_{\mathbb{R}^n_+} = 1/\sqrt{n}$, $\width_{\mathbb{S}_{+}^{d \times d}} = 1/\sqrt{d}$, and $\width_{\mathbb{K}_{\textsf{soc}}^{d+1}} = 1/\sqrt{2}$, see for example \cite{freund1999condition}.)

\section{Preliminaries: Conic Linear Program, and PDHG}\label{sec:CLP}

\subsection{General conic linear program}\label{sec:get_general_clp_dual}
In this paper we consider the general conic linear optimization problem \eqref{pro: general primal clp}.
Because we are interested in the setting of huge-scale instances, we do \underline{not} assume that $A$ has linearly independent rows, since (i) eliminating linear dependence can be computationally expensive for truly huge-scale problems, and (ii) the performance of PDHG is agnostic to the presence of linear dependence in the rows of $A$. We refer to \eqref{pro: general primal clp} as the primal problem. Let $y$ be the multiplier on the equations $Ax=b$ ; then \eqref{pro: general primal clp} is equivalent to the following saddlepoint problem of the Lagrangian $L(x,y)$:
\begin{equation}\tag{PD}\label{pro: general saddlepoint clp}
	\min_{x \in K_p\subseteq\mathbb{R}^n} \ \max_{y\in\mathbb{R}^m} \ L(x,y):= c^\top x  + b^\top y - x^\top A^\top y \ .
\end{equation}
The corresponding dual problem of \eqref{pro: general primal clp} is the problem that switches the minimum and maximum of \eqref{pro: general saddlepoint clp}. The dual problem is also a CLP instance, and can be equivalently written as a problem whose cone is the dual cone of $K_p$, namely $K_d:= K_p^*:=\left\{s \in \mathbb{R}^n:x^\top s \ge 0 \text { for all } x \in K_p\right\}$:
\begin{equation}\tag{D$_{y,s}$}\label{pro: general dual clp}
	\max_{y\in \mathbb{R}^m, s\in\mathbb{R}^n}  \ b^\top y \quad \	\text {s.t.}  \ c-A^\top y  = s, \ s\in K_d \ 	.
\end{equation}
{For any dual feasible solution $(\bar{y},\hat{s})$ of \eqref{pro: general dual clp}, let  $\hat{y}:=(AA^\top)^{\dag}A(c-\hat{s})$ and then $(\hat{y},\hat{s})$ is also dual feasible because $c - A^\top \hat{y} = c - A^\top (AA^\top)^{\dag}A(c-\hat{s}) =  c - A^\top (AA^\top)^{\dag}AA^\top\bar{y} = c - A^\top \bar{y} = \hat{s}$.  Note that if there exists a solution $x_0$ to the linear system $Ax = b$, the corresponding dual objective function values of $(\bar{y},\hat{s})$ and $(\hat{y},\hat{s})$ are equal, because $b^\top \bar{y} =  x_0^\top A^\top \bar{y}=  x_0^\top (c-\hat{s})  = x_0^\top A^\top \hat{y} =   b^\top \hat{y}$. It therefore follows that dual feasible solutions with the same $s$ component have the same objective function values.  Furthermore, for any feasible solution $({y},{s})$ the objective function value can be expressed as a linear function of ${s}$ since
\begin{equation}\label{eq by=-qs}
	b^\top {y} = b^\top (AA^\top)^{\dag} A(c - {s}) = q_0-b^\top(AA^\top)^{\dag} A {s} \ ,
\end{equation}
where $q_0 = b^\top (AA^\top)^{\dag} Ac$.  Let us define $q := A^\top (AA^\top)^{\dag} b$, whereby \eqref{pro: general dual clp} is equivalent to the following (dual) problem defined only on the variable $s$ :
\begin{equation}\tag{D$_s$}\label{pro: general dual clp on s}
	\max_{s\in\mathbb{R}^n}  \ -q^\top  s + q_0 \quad \  \text{s.t.}  \ s \in c + \operatorname{Im}(A^\top), \ s\in K_d \ .
\end{equation}
For any feasible $\hat{s}$ of \eqref{pro: general dual clp on s}, a corresponding feasible $(\hat{y},\hat{s})$ for \eqref{pro: general dual clp} can be recovered by assigning $\hat{y}:= (AA^\top)^{\dag} A(c - \hat{s})$.

A linear programming problem (``LP'') is an instance of \eqref{pro: general primal clp} where $K_p = \mathbb{R}^n_+$, and has the property of \textit{strong duality}, namely the optimal objective function values of the primal problem \eqref{pro: general primal clp} and dual problem \eqref{pro: general dual clp} (or the equivalent \eqref{pro: general dual clp on s}) are identical if both problems are feasible. Strong duality is not always true for the more general CLP problem, but is guaranteed if both the primal and dual problem have feasible solutions in the interiors of their respective cones, which we formally state as follows:
\begin{assumption}\label{assump:striclyfeasible} There exists a primal feasible solution in the interior of $K_p$ and a dual feasible solution in the interior of $K_d$.
\end{assumption}
Under Assumption \ref{assump:striclyfeasible}, both the primal and the dual problems attain their optima, and there is zero duality gap, see \cite{Duffin1956infinite}.  Furthermore, the Karush-Kuhn-Tucker conditions are both necessary and sufficient for optimality, whereby the solution $(x^\star,s^\star)$ is optimal for \eqref{pro: general primal clp} and \eqref{pro: general dual clp on s} if and only if $(x^\star,s^\star)$ is primal-dual feasible and the duality gap is zero.
Here the primal-dual feasible set is as follows:
\begin{equation}\label{def_p_d_feasibleset}
	\calF:= V \cap K, \text{ where }V:= \{(x,s)\in\mathbb{R}^{2n}:Ax = b \text{ and } \exists \ y \in \mathbb{R}^m, \text{ s.t. } A^\top y + s = c\}, \text{ and }K:=K_p\times K_d \ ,
\end{equation}
which is the intersection of the affine subspace $V$ and the cone $K$, both of which are in $\mathbb{R}^{2n}$.
Notice that $V$ is the cross-product of the affine subspaces for the primal and dual solutions:
$$
	V = V_p \times V_d \ , \quad \text{where } V_p := \{x\in\mathbb{R}^n: Ax = b\} , \ V_d := \{ s\in \mathbb{R}^n : \exists \ y \in \mathbb{R}^m, \text{ s.t. } A^\top y + s = c\} \ . $$
The duality gap is defined as:
\begin{equation}\label{def_duality_gap}
	\gap(x,s):= c^\top x + q^\top s  - q_0\ ,
\end{equation}
which is the difference between the primal and dual objective function values.
Let us introduce $w:=(x,s)$ and let $\calW^\star$ be the set of optimal solutions $w^\star =(x^\star,s^\star)$, namely
\begin{equation}\label{def_optimal_W}
	\calW^\star := \calF \cap \{w\in \mathbb{R}^{2n}: \gap(w)  = 0\} \ .
\end{equation}
We use $\calZ^\star$ to denote the set of optimal solutions $(x^\star,y^\star)$, which is equivalently characterized as the set of saddlepoints of \eqref{pro: general saddlepoint clp}.

In light of the linear constraint $Ax = b$, replacing $c$ with any vector in $c + \operatorname{Im}(A^\top)$ does not change $\calW^\star$. Therefore, in certain places in this paper we will presume that $c \in \operatorname{Null}(A)$. This condition can be satisfied by replacing $c \leftarrow c - P_{\operatorname{Im}(A^\top)}(c)$, which yields $c \in P_{\operatorname{Null}(A)}(c)$, resulting in $q_0 = 0$ and $\gap(x,s) = c^\top x + q^\top s $.  Additionally,  when $c \in \operatorname{Null}(A)$, for any feasible $\hat s$,  a corresponding dual feasible $\hat{y}$ can be obtained as $\hat y :=-(AA^\top)^{\dag} A\hat{s}$.

\subsection{PDHG for Conic LP}
The primal-dual hybrid gradient method (PDHG) was introduced in \cite{esser2010general,pock2009algorithm} in the context of solving general convex-concave saddlepoint problems, of which the saddlepoint problem \eqref{pro: general saddlepoint clp} is a class of instances.
Algorithm \ref{alg: one PDHG} describes a single iteration of PDHG for  \eqref{pro: general saddlepoint clp}, which we denote as \textsc{OnePDHG}$(x,y)$, where $\tau$ and $\sigma$ are the primal and dual step-sizes, respectively.
\begin{algorithm}[htbp]
	\SetAlgoLined
	\SetKwProg{Fn}{Function}{}{}
	\Fn{\textsc{OnePDHG}$(x,y)$}{
		$x^{+} \leftarrow P_{K_p}\left(x-\tau\left(c-A^{\top} y\right)\right) $ \;\label{line:update_x}
		$y^{+} \leftarrow y+\sigma\left(b-A\left(2 x^{+}-x\right)\right)$ \;\label{line:update_y}
		return $(x^+,y^+)$ \;}
	\caption{One iteration of PDHG on $(x,y)$ for problem \eqref{pro: general saddlepoint clp}}\label{alg: one PDHG}
\end{algorithm}
Let $z:=(x,y)\in\mathbb{R}^{m+n}$ denote the combined primal/dual variables, and then PDHG generates iterates as follows:
$$
	z^{k+1} \leftarrow \textsc{OnePDHG}(z^k) \ \text{ for }k=0,1,2,\ldots .
$$
It should be noted that the convergence guarantees for PDHG rely on the step-sizes $\tau$ and $\sigma$ being sufficiently small.  In particular, if the following condition is satisfied:
\begin{equation}\label{robsummer}
	M:=	\begin{pmatrix}
		\frac{1}{\tau}I_n & -A^\top             \\
		-A                & \frac{1}{\sigma}I_m
	\end{pmatrix}  \in  \mathbb{S}^{m+n}_+ \ , 
\end{equation}
then PDHG's average iterates will converge to a saddlepoint of the convex-concave problem \cite{chambolle2011first}. The above requirement is equivalently written as:
\begin{equation}\label{eq:general_stepsize}
	\tau > 0, \ \sigma >0, \ \text{ and } \ \tau\sigma \le  \left( \frac{1}{ \sigma_{\max}^+(A) } \right)^2 \ .
\end{equation}
Furthermore, the matrix $M$ defined in \eqref{robsummer} turns out to be particularly useful in analyzing the convergence of PDHG through its induced inner product norm defined by $\| z \|_M := \sqrt{z^\top M z}$, which will be used extensively in the rest of this paper.

The main computational effort in executing \textsc{OnePDHG} is in computing the two matrix-vector products and computing the projection onto $K_p$. In practice, most CLP problems of interest are instances where $K_p$ is a cross-product of standard cones, namely $\mathbb{R}^n_+$, $\mathbb{K}^{d+1}_{\textsc{soc}}$, and $\mathbb{S}^{d \times d}_+$ \cite{nesterov1994interior}. These cones all have well-known projection operators \cite{parikh2014proximal}, among which only projection onto the semidefinite cone may be computationally challenging because it involves a full matrix eigendecomposition. Projection onto $\mathbb{R}^n_+$ is given by $
	P_{\mathbb{R}^n_+}(v):=v^+ $, and projection onto $\mathbb{K}^{d+1}_{\textsc{soc}}$ is given by:
$$
	P_{\mathbb{K}^{d+1}_{\mathsf{soc}}}(v, t) := \left\{\begin{array}{ll}0 & \text{ if }\|v\| \leq-t \\ (v, t) & \text{ if } \|v\| \leq t \\ \tfrac{1+t /\|v\|}{2}\cdot \left(v,\|v\|\right) & \text{ if } \|v\| \geq|t|\end{array} \right.\ .
$$
Furthermore, if $K_p$ is the cross-product of several cones, then each of these projections can be carried out independently.

\subsection{Normalized duality gap for the saddlepoint problem \eqref{pro: general saddlepoint clp}}

To evaluate the quality of a candidate solution $z=(x,y)$, \cite{applegate2023faster} defined the ``normalized duality gap'' in the context of the saddlepoint formulation of LP problems. Here we simply extend this definition to CLP problems and we show that the normalized duality gap provides upper bounds on the residuals of the optimality conditions of CLP.
\begin{definition}[Normalized duality gap]
	For any $z = (x,y)\in K_p \times \mathbb{R}^m$ and $r > 0$, define
	$$
		B(r;z) := \left\{\hat{z} := (\hat{x},\hat{y}):  \hat{x}\in K_p \text{ and } \|\hat{z} -z\|_M \le r  \right\} \ .
	$$
	The normalized duality gap of the saddlepoint problem \eqref{pro: general saddlepoint clp} is then defined as
	\begin{equation}\label{sunny}
		\rho(r;z) := \frac{1}{r}\sup_{  \hat{z} \in B(r;z) }  \big[ L(x,\hat{y}) - L(\hat{x},y) \big] \ .
	\end{equation}
\end{definition}
\noindent The normalized duality gap defined in \cite{applegate2023faster} is just a special case of the above definition when $K_p = \mathbb{R}^n_+$. Lemma \ref{lm: convergence of PHDG without restart} below shows that the normalized duality gap yields upper bounds on the distances to the affine set $V$ and to the cone $K$ of the primal-dual cone variable pair $w=(x,s)$, and also bounds the duality gap $\gap(x,s)$.  This lemma is a generalization of Lemma 2.2 in \cite{xiong2023computational} in the context of LP instances. Before stating the lemma we need the following definitions:
\begin{equation}\label{eq  def lamdab min max}
	\lambda_{\max}:= \sigma_{\max}^+\left(A \right)\text{, }	\lambda_{\min}:= \sigma_{\min}^+\left(A \right), \text{ and }\kappa := \frac{\lambda_{\max}}{\lambda_{\min}}  \ .
\end{equation}

\begin{lemma}\label{lm: convergence of PHDG without restart}
	For any $r > 0$ and $\bar{z} :=(\bar{x},\bar{y})$ such that $\bar{x} \in K_p$, and $\bar s :=c - A^\top \bar{y}$, the normalized duality gap $\rho(r;\bar{z})$ provides the following bounds for $\bar w :=(\bar x, \bar s)$:
	\begin{enumerate}
		\item Distance to the affine subspace: $ \dist(\bar{w},V)  \le \frac{1}{\sqrt{\sigma} \lambda_{\min}}\cdot \rho(r;\bar{z})$, \label{item_gap1}
		\item Distance to the cone: $ \dist(\bar{w},K)  \le \frac{1}{\sqrt{\tau}} \cdot \rho(r;\bar{z})$, and \label{item_gap2}
		\item Duality gap: $\gap(\bar{w})  \le   \max\{ r, \|\bar{z}\|_M\} \rho(r;\bar{z})$.\label{item_gap3}
	\end{enumerate}
\end{lemma}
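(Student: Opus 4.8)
The plan is to reduce all three bounds to a single explicit formula for the quantity inside the supremum in \eqref{sunny}, and then to establish each inequality by substituting a carefully chosen feasible point $\hat{z}\in B(r;\bar{z})$ that witnesses a large value of that quantity. First I would expand $L(\bar{x},\hat{y})-L(\hat{x},\bar{y})$ and collect terms: writing $d_x:=\hat{x}-\bar{x}$, $d_y:=\hat{y}-\bar{y}$, and using $\bar{s}=c-A^\top\bar{y}$, the constant and cross terms cancel, leaving the clean bilinear identity
\[
L(\bar{x},\hat{y})-L(\hat{x},\bar{y}) \;=\; (b-A\bar{x})^\top d_y \;-\; \bar{s}^\top d_x \ .
\]
This is the engine for everything: $\rho(r;\bar{z})$ equals $\tfrac{1}{r}$ times the supremum of this expression over $\hat{x}\in K_p$ and $\|d\|_M\le r$, and each part chooses $\hat{z}$ so as to make one of the two terms dominant.

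For parts \ref{item_gap1} and \ref{item_gap2} I would first note that the distances degenerate: since $\bar{s}=c-A^\top\bar{y}\in V_d$ and $\bar{x}\in K_p$ by hypothesis, we have $\dist(\bar{w},V)=\dist(\bar{x},V_p)$ and $\dist(\bar{w},K)=\dist(\bar{s},K_d)$. For part \ref{item_gap1}, take $\hat{x}=\bar{x}$ (so $d_x=0$, feasibility automatic); then $\|d\|_M=\|d_y\|/\sqrt{\sigma}$, so $\|d\|_M\le r$ means $\|d_y\|\le r\sqrt{\sigma}$, and choosing $d_y$ parallel to $b-A\bar{x}$ gives $\rho(r;\bar{z})\ge \sqrt{\sigma}\,\|b-A\bar{x}\|$; combining with the pseudoinverse estimate $\dist(\bar{x},V_p)\le \|b-A\bar{x}\|/\lambda_{\min}$ (valid since $b\in\operatorname{Im}(A)$ under Assumption \ref{assump:striclyfeasible}) yields the claim. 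For part \ref{item_gap2}, I would perturb only $x$ (set $d_y=0$), so that $\|d\|_M=\|d_x\|/\sqrt{\tau}$ and the constraint becomes $\|d_x\|\le r\sqrt{\tau}$. The correct direction comes from the Moreau decomposition of $\bar{s}$ with respect to $K_d$: writing $\bar{s}=P_{K_d}(\bar{s})+P_{K_d^\circ}(\bar{s})$ with $K_d^\circ=-K_p$, the polar component $P_{K_d^\circ}(\bar{s})=-g$ satisfies $g\in K_p$, $\|g\|=\dist(\bar{s},K_d)$, and $-\bar{s}^\top g=\|g\|^2$. Setting $d_x=(r\sqrt{\tau}/\|g\|)\,g$ keeps $\hat{x}=\bar{x}+d_x\in K_p$ (a cone is closed under nonnegative combinations) and makes $-\bar{s}^\top d_x=r\sqrt{\tau}\,\|g\|$, giving $\rho(r;\bar{z})\ge\sqrt{\tau}\,\dist(\bar{s},K_d)$.

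For part \ref{item_gap3} the idea is to scale $\bar{z}$ toward the origin: take $\hat{z}=(1-\beta)\bar{z}$ for $\beta\in[0,1]$, which keeps $\hat{x}=(1-\beta)\bar{x}\in K_p$. Substituting $d=-\beta\bar{z}$ into the identity and simplifying with $\bar{s}=c-A^\top\bar{y}$ collapses the expression to $\beta\,(c^\top\bar{x}-b^\top\bar{y})$, which equals $\beta\cdot\gap(\bar{w})$ after invoking $b^\top\bar{y}=q_0-q^\top\bar{s}$ from \eqref{eq by=-qs}. Since $\|d\|_M=\beta\|\bar{z}\|_M\le r$ forces $\beta\le r/\|\bar{z}\|_M$, choosing the best admissible $\beta=\min\{1,\,r/\|\bar{z}\|_M\}$ gives $\gap(\bar{w})\le\max\{r,\|\bar{z}\|_M\}\,\rho(r;\bar{z})$ when $\gap(\bar{w})\ge0$; the bound is trivial when $\gap(\bar{w})<0$ because $\rho(r;\bar z)\ge0$ (the value at $\hat{z}=\bar{z}$ is $0$).

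The main obstacle is the direction-finding step in part \ref{item_gap2}: one must recognize that the correct perturbation is the negated polar component from Moreau's decomposition, and verify both that it lies in $K_p$ and that it extracts exactly $\dist(\bar{s},K_d)$ from $-\bar{s}^\top d_x$. The remaining steps are routine once the identity for $L(\bar{x},\hat{y})-L(\hat{x},\bar{y})$ is in hand; the only side conditions to track are primal feasibility ($b\in\operatorname{Im}(A)$) for the pseudoinverse bound in part \ref{item_gap1} and the sign bookkeeping for the gap in part \ref{item_gap3}.
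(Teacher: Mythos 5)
Your proposal is correct and follows essentially the same route as the paper's proof: the same bilinear identity for $L(\bar x,\hat y)-L(\hat x,\bar y)$, the same test point $d_y \parallel b-A\bar x$ with the $\lambda_{\min}$ estimate for item (\textit{\ref{item_gap1}}.), the same Moreau/projection direction $-d=g\in K_p$ for item (\textit{\ref{item_gap2}}.), and the same scaling $\hat z=(1-\beta)\bar z$ with $\beta=\min\{1,r/\|\bar z\|_M\}$ for item (\textit{\ref{item_gap3}}.). The only cosmetic differences are that you isolate the bilinear identity up front and you flag explicitly the (implicit in the paper) reliance on $b\in\operatorname{Im}(A)$ and the degenerate cases $g=0$ and $\bar z=0$, which the paper also handles.
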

\noindent It follows from the definition of the optimal solution set $\calW^\star$ in \eqref{def_optimal_W} that when the normalized duality gap is $0$, then $\bar{w} \in \calW^\star$. Moreover, if $\max\{ r, \|\bar{z}\|_M\}$ is not too large, the magnitude of $\rho(r;\bar{z})$ also measures how close to optimality the primal-dual solution $\bar{w}$ is. We will show later that under some mild initial point conditions, the magnitude of $\max\{ r, \|\bar{z}\|_M\}$ in PDHG is well-controlled by the distance to optimal solutions.
\begin{proof}[Proof of Lemma \ref{lm: convergence of PHDG without restart}]
	The following proof is a variation of the proof of  Lemma 2.1 in \cite{xiong2023computational}. Define $\bar{s}=c - A^\top \bar{y} $ and $\bar{w} = (\bar{x},\bar{s})$. From the definition of $\rho(r;\cdot)$ we have:
	\begin{equation}\label{eq  lm: convergence of PHDG without restart 1}
		L(\bar{x},y) - L(x,\bar{y}) \le r \rho(r;\bar{z}) \ \ \text{for any $z=(x,y) \in B(r;\bar{z})$ . }
	\end{equation}

	We first prove item (\textit{\ref{item_gap1}}.), which is the distance to the affine subspace $V$.
	Let $u = b - A\bar{x}$ and define $y:= \bar{y} + \sqrt{\sigma}r \cdot u/ \|u\|$. Set $z := (\bar x, y)$, whereby $ z \in B(r;\bar{z})
	$ and hence from \eqref{eq  lm: convergence of PHDG without restart 1} we have
	$$ r \rho(r;\bar{z}) \ge L(\bar{x},y) - L(\bar x,\bar{y}) = (b-A \bar x)^\top (y - \bar y)  = \sqrt{\sigma}r \|u\| \ , $$ which means $\|u\|=\|A\bar{x} - b\| \le \frac{\rho(r;\bar{z})}{\sqrt{\sigma}}$. Let $\hat{x} \in \arg\min_{x\in V_p}\|x - \bar{x}\|$ and hence $\dist(\bar{x},V_p) = \|\hat{x} - \bar{x}\|$.  Note from the standard optimality conditions that $\hat{x} - \bar{x} \in \operatorname{Im}(A^\top)$. Since
	$$
		\|A\bar{x} - b\| = \|A\bar{x} - A \hat{x}\| \ge \min_{v\in \operatorname{Im}(A)}\frac{\|Av\|}{\|v\|}\cdot \|\bar{x} - \hat{x}\| = \lambda_{\min} \|\bar{x} - \hat{x}\| \ ,
	$$
	then it follows that $\dist(\bar{x},V_p) = \|\hat{x} - \bar{x}\| \le  \frac{\rho(r;\bar{z})}{\sqrt{\sigma} \lambda_{\min}}$. This proves item (\textit{\ref{item_gap1}}.).

	Let us now prove item (\textit{\ref{item_gap2}}.). It holds trivially from the supposition that $\bar x \in K_p$ that $\dist(\bar{x},K_p)  = 0$, and hence we only need to prove  $ \dist(\bar{s},K_d) \le \frac{1}{\sqrt{\tau}} \cdot \rho(r;\bar{z})$. Let us denote $\hat s := P_{K_d}(\bar{s})$ and $d:= \bar s - \hat s$.  Then it follows from the optimality conditions of the projection problem $\hat s =\arg\min_{s \in K_d} \|s - \bar s\|$ that $\hat s \in K_d$, $-d \in K_d^* = K_p$, and $ d ^\top \bar s = 0$.  If $d = 0$ then $\bar s \in K_d$ and the bound in item (\textit{\ref{item_gap2}}.) holds trivially.  If $d \ne0$ then define $x:= \bar{x} - \sqrt{\tau} r  \cdot d / \|d\|$ and set $z := (x, \bar y)$, whereby $ z \in B(r;\bar{z})
	$ and hence from \eqref{eq  lm: convergence of PHDG without restart 1} we have
	\begin{equation}\label{ineq item 3}
		\begin{aligned}
			r \rho(r;\bar{z}) & \ge L(\bar x, \bar y) - L(x,\bar{y})  =  (c - A^\top \bar{y})^\top (\bar{x} - x) = \bar{s}^\top  d \cdot \sqrt{\tau} r / \|d\|  = ( \hat s  + d)^\top d \cdot \sqrt{\tau} r / \|d\| \\
			                  & \ge d^\top  d\cdot \sqrt{\tau} r / \|d\|  = \sqrt{\tau} r  \|d\| =  \sqrt{\tau} r \cdot \dist(\bar{s},K_d) \ ,
		\end{aligned}
	\end{equation} where the second inequality follows since $\hat s \in K_d$ and $d \in K_p$ whereby $\hat s^\top d \ge 0$.  It therefore follows that $\dist(\bar{s},K_d)  \le \frac{1}{\sqrt{\tau}} \cdot \rho(r;\bar{z}) $, which proves item (\textit{\ref{item_gap2}}.).

	Lastly, we examine the duality gap $\gap(\bar{x},\bar{s}) = c^\top \bar{x} - b^\top \bar{y}$, and we consider two cases, namely $\bar z = 0$ and $\bar{z} \ne 0$.  If $\bar{z} = 0$, then $\gap(\bar{x},\bar{s}) = c^\top \bar{x} - b^\top \bar{y}=0$, which satisfies the duality gap bound trivially.  If $\bar{z} \neq 0$, then define $z := \bar{z} - \min\{\frac{r}{\|\bar{z}\|_M},1\}\bar{z}$, which satisfies $\|z - \bar{z}\|_M \le r$. Substituting this value of $z$ in \eqref{eq  lm: convergence of PHDG without restart 1} yields:
	\begin{equation}\label{eq  lm: convergence of PHDG without restart 7}
		r \rho(r;\bar{z}) \ge L(\bar{x},y) - L(x,\bar{y}) = \min\left\{\frac{r}{\|\bar{z}\|_M},1\right\}(c^\top \bar{x} - b^\top \bar{y}) = \min\left\{\frac{r}{\|\bar{z}\|_M},1\right\}(c^\top \bar{x} +q^\top \bar s -q_0) \ ,
	\end{equation}
	which after rearranging yields
	\begin{equation}\label{eq  lm: convergence of PHDG without restart 8}
		\gap(\bar{w})  = c^\top \bar{x} +q^\top \bar s -q_0 \le   \max\{ r, \|\bar{z}\|_M\} \rho(r;\bar{z}) \ .
	\end{equation}
	This proves the desired bound in item (\textit{\ref{item_gap3}}.).\end{proof}

For LP instances it is shown in \cite{applegate2023faster} that the normalized duality gap $\rho(r;z)$ can be easily computed or approximated.  In Appendix \ref{appendix:compute_rho} we show how to compute $\rho(r;z)$ for more general CLP instances.

\subsection{Sublinear convergence of PDHG for \eqref{pro: general saddlepoint clp}}

Let the $k$-th iterate of PDHG be denoted as $z^k$, and the average of the first $k$ iterates be denoted as $\bar{z}^k := \frac{1}{k}\sum_{i=1}^k z^i$. The iterates generated by PDHG satisfy the following desirable distance properties to the set of saddlepoints $\calZ^\star$, as stated in the following lemma.

\begin{lemma}{\bf (Nonexpansive property, essentially Proposition 2 of \cite{applegate2023faster})}\label{lm: nonexpansive property} Suppose that  $\sigma, \tau$ satisfy \eqref{eq:general_stepsize}. For any saddlepoint $z^\star$ of \eqref{pro: general saddlepoint clp}, and for all $k\ge0$, it holds that
	\begin{equation}\label{eq nonexpansive property}
		\| z^{k+1} -z^\star \|_M \le  \|z^k  - z^\star \|_M \ .
	\end{equation}
	Therefore under the assignment $z:= z^k$ or $z := \bar z^k$ it holds that $
		\left\| z -z^\star\right\|_M \le \left\|z^0  - z^\star\right\|_M$.
\end{lemma}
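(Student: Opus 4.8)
The plan is to reduce everything to a single-step firm-nonexpansiveness estimate and then add two routine finishing arguments; the proof mirrors the LP argument of \cite{applegate2023faster}, the only cone-specific input being that the Euclidean projection $P_{K_p}$ onto the closed convex cone $K_p$ is firmly nonexpansive (equivalently, the normal-cone operator of $K_p$ is monotone). Concretely, I would prove the one-step contraction
$$\|z^{k+1} - z^\star\|_M^2 \le \|z^k - z^\star\|_M^2 - \|z^{k+1} - z^k\|_M^2 \ ,$$
which yields \eqref{eq nonexpansive property} at once because $\|z^{k+1}-z^k\|_M^2 \ge 0$ (here $M \succeq 0$ by the step-size condition \eqref{eq:general_stepsize}, so $\|\cdot\|_M$ is a genuine semi-norm). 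The driver is the elementary three-point identity
$$\|z^k - z^\star\|_M^2 - \|z^{k+1}-z^\star\|_M^2 - \|z^k - z^{k+1}\|_M^2 = 2\langle z^{k+1}-z^\star,\ M(z^k - z^{k+1})\rangle \ ,$$
so the entire proof collapses to establishing the single key inequality $\langle z^{k+1}-z^\star,\ M(z^k - z^{k+1})\rangle \ge 0$.

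To prove this inequality I would first record the first-order characterization of each half-step of \textsc{OnePDHG}. The $x$-update is the projection step $x^{k+1} = P_{K_p}(x^k - \tau(c - A^\top y^k))$, whose optimality condition is the variational inequality $\langle \hat x - x^{k+1},\ (c - A^\top y^k) + \tfrac1\tau(x^{k+1}-x^k)\rangle \ge 0$ for all $\hat x \in K_p$, while the $y$-update gives the equation $\tfrac1\sigma(y^{k+1}-y^k) = b - A(2x^{k+1}-x^k)$ reflecting the extrapolated ascent direction. I would then encode optimality of the saddlepoint $z^\star = (x^\star,y^\star)$ as $\langle \hat x - x^\star,\ c - A^\top y^\star\rangle \ge 0$ for all $\hat x \in K_p$ together with $Ax^\star = b$. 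Substituting $\hat x = x^\star$ in the first variational inequality and $\hat x = x^{k+1}$ in the saddle condition, and expanding $M(z^k - z^{k+1})$ blockwise, the terms involving $A$ organize into the skew-symmetric bilinear coupling of the Lagrangian and cancel, and what remains is exactly controlled by the two variational inequalities, delivering $\langle z^{k+1}-z^\star,\ M(z^k - z^{k+1})\rangle \ge 0$. Equivalently and more structurally, this is the assertion that the PDHG step is the resolvent in the $M$-geometry of the maximal monotone saddle operator $T(x,y) = (c - A^\top y + N_{K_p}(x),\ Ax - b)$, where $N_{K_p}$ is the normal cone of $K_p$; the inequality is then precisely monotonicity of $T$ applied to $z^{k+1}$ and to the zero $z^\star$ of $T$.

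The main obstacle is exactly this bookkeeping: one must carry the extrapolation term $2x^{k+1}-x^k$ through, substitute the $y$-update equation and the relation $Ax^\star = b$, and verify that every bilinear $A$-term pairs off against the off-diagonal blocks of $M$ so that only nonnegative quantities and terms governed by the variational inequalities survive. This is the only place where signs and the precise form of $M$ (and which iterate is extrapolated) genuinely matter, and it is where all the care is required.

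The two displayed conclusions then follow immediately. Telescoping the per-step bound \eqref{eq nonexpansive property} from $0$ to $k$ gives $\|z^k - z^\star\|_M \le \|z^0 - z^\star\|_M$. For the ergodic average, convexity of the semi-norm $\|\cdot\|_M$ (i.e. the triangle inequality) yields $\|\bar z^k - z^\star\|_M = \big\|\tfrac1k\sum_{i=1}^k (z^i - z^\star)\big\|_M \le \tfrac1k\sum_{i=1}^k \|z^i - z^\star\|_M \le \|z^0 - z^\star\|_M$, using the per-iterate bound just established.
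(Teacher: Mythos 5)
Your overall strategy --- reduce everything to the one-step inequality via the three-point identity, and establish $\langle z^{k+1}-z^\star,\, M(z^k-z^{k+1})\rangle \ge 0$ from the projection variational inequality, the $y$-update equation, and the saddle-point conditions --- is exactly the standard argument behind Proposition 2 of \cite{applegate2023faster}. The paper itself gives no proof and simply cites that result, so you are reconstructing the intended argument; the reduction, the monotone-operator reading of the key inequality, the telescoping step, and the triangle-inequality step for $\bar z^k$ are all fine.

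The gap sits precisely at the step you flag as ``where all the care is required'' and then do not carry out. The claimed cancellation of the bilinear $A$-terms is not sign-robust: expanding $\langle z^{k+1}-z^\star,\, M(z^k-z^{k+1})\rangle$ blockwise with $M$ as defined in \eqref{robsummer} (off-diagonal blocks $-A^\top$ and $-A$), substituting the $y$-update and $Ax^\star=b$, the coupling terms do \emph{not} pair off; one is left with residual cross terms such as $2\langle A(x^{k+1}-x^k),\, y^{k+1}-y^\star\rangle$ of indefinite sign. The cancellation (and the resolvent identity $M(z^k-z^{k+1})\in T(z^{k+1})$ that you invoke) holds for the metric whose off-diagonal blocks are $+A^\top$ and $+A$, i.e.\ for $M$ with $A$ replaced by $-A$, equivalently after the change of variables $y\mapsto -y$. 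This is not cosmetic: with $n=m=1$, $K_p=\mathbb{R}_+$, $A=1$, $b=1$, $c=0$, $\tau=\sigma=0.9$ (so \eqref{eq:general_stepsize} holds), the saddlepoint is $z^\star=(1,0)$, and starting from $z^0=(2,1)$ one step of \textsc{OnePDHG} gives $z^1=(2.9,-1.52)$ with $\|z^1-z^\star\|_M^2\approx 12.35 > 0.22\approx\|z^0-z^\star\|_M^2$, violating \eqref{eq nonexpansive property} as literally written, while the inequality does hold in the $+A^\top$ metric. So if you complete your bookkeeping faithfully with the paper's $M$ the key inequality fails; to close the argument you must flip the off-diagonal sign of $M$ (after which your proof goes through verbatim), and as written your proposal asserts a cancellation that does not occur for the stated $M$.
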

\noindent Lemma \ref{lm: nonexpansive property} is essentially a restatement of Proposition 2 in \cite{applegate2023faster}. The inequality \eqref{eq nonexpansive property}, also known as the nonexpansive property, appears in many other operator splitting methods \cite{liang2016convergence,ryu2022large,applegate2023faster}. We also will make use of the following lemma, which is a restatement from Lemma 2.7 of \cite{xiong2023computational}.

\begin{lemma}{\bf (from Lemma 2.7 of \cite{xiong2023computational})}\label{lm: R in the opt gap convnergence} Suppose $z^a$, $z^b$, and $z^c$ satisfy the nonexpansive properties:  $\|z^b - z^\star\|_M \le \| z^{a} - z^\star\|_M$ and $\|z^c - z^\star\|_M \le \| z^{a} - z^\star\|_M$ for every $z^\star \in \calZ^\star$. Then
	\begin{equation}\label{eq of lm: R in the opt gap convnergence}
		\begin{aligned}
			\max\{ \|z^b - z^c\|_M, \|z^b\|_M\}  \le & \ 2 \dist_M(z^a,\calZ^\star)  + \|z^{a} \|_M	\ .
		\end{aligned}
	\end{equation}
\end{lemma}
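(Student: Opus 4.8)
The plan is to establish the two bounds appearing inside the maximum separately, reducing each to the triangle inequality applied with a single optimal point $z^\star$, and then to take an infimum over $\calZ^\star$ at the very end. The key structural observation is that the hypotheses supply, for \emph{every} $z^\star \in \calZ^\star$, that both $\|z^b - z^\star\|_M$ and $\|z^c - z^\star\|_M$ are at most $\|z^a - z^\star\|_M$. Because this holds for every member of $\calZ^\star$ rather than for one fixed point, I never need to fix a particular minimizer, and in particular I can avoid arguing that the $M$-distance to $\calZ^\star$ is attained (which would be delicate since $\|\cdot\|_M$ is only a semi-norm when $M$ is not strictly positive definite).

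For the term $\|z^b - z^c\|_M$, I would fix an arbitrary $z^\star \in \calZ^\star$ and write $\|z^b - z^c\|_M \le \|z^b - z^\star\|_M + \|z^\star - z^c\|_M$ by the triangle inequality (valid for the semi-norm $\|\cdot\|_M$), then apply both nonexpansive hypotheses to bound each term by $\|z^a - z^\star\|_M$, yielding $\|z^b - z^c\|_M \le 2\|z^a - z^\star\|_M$. Since this holds for every $z^\star \in \calZ^\star$, taking the infimum over $\calZ^\star$ gives $\|z^b - z^c\|_M \le 2\,\dist_M(z^a,\calZ^\star)$, which is at most the claimed right-hand side because $\|z^a\|_M \ge 0$.

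For the term $\|z^b\|_M$, I would again fix $z^\star \in \calZ^\star$ and bound $\|z^b\|_M \le \|z^b - z^\star\|_M + \|z^\star\|_M \le \|z^a - z^\star\|_M + \|z^\star\|_M$, using the hypothesis on $z^b$ in the second step. I would then bound $\|z^\star\|_M \le \|z^\star - z^a\|_M + \|z^a\|_M$ by a further application of the triangle inequality, so that $\|z^b\|_M \le 2\|z^a - z^\star\|_M + \|z^a\|_M$. Taking the infimum over $z^\star \in \calZ^\star$ produces $\|z^b\|_M \le 2\,\dist_M(z^a,\calZ^\star) + \|z^a\|_M$. Combining the two bounds shows that both quantities inside the maximum are at most $2\,\dist_M(z^a,\calZ^\star) + \|z^a\|_M$, which is exactly \eqref{eq of lm: R in the opt gap convnergence}.

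There is no serious obstacle here: the whole argument is three applications of the triangle inequality followed by passing to the infimum. The one point requiring a moment of care is that $\|\cdot\|_M$ may be a semi-norm rather than a genuine norm (the excerpt explicitly allows $M$ not to be strictly positive definite), but the triangle inequality, symmetry under negation, and nonnegativity all hold for semi-norms, so every step goes through unchanged. Deferring the infimum over $\calZ^\star$ to the end—rather than fixing an $M$-projection of $z^a$ onto $\calZ^\star$—is precisely what sidesteps the question of whether the $M$-distance is attained, which is the only place where the semi-norm could otherwise cause trouble.
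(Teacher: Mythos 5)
Your proof is correct: the paper does not reproduce a proof of this lemma (it is imported verbatim from Lemma 2.7 of the cited reference), and your argument --- two triangle inequalities combined with the nonexpansive hypotheses, followed by an infimum over $\calZ^\star$ --- is exactly the standard route the source takes. Your added care about $\|\cdot\|_M$ being only a semi-norm and about deferring the infimum so that attainment of $\dist_M(z^a,\calZ^\star)$ is never needed is a harmless refinement that does not change the substance.
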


Using Lemma \ref{lm: nonexpansive property}, the sublinear convergence of the normalized duality gap has been demonstrated in \cite{applegate2023faster,xiong2023computational}, among others. Here we directly present a restatement of Corollary 2.4 of \cite{xiong2023computational}, which was initially developed for LP problems but in fact holds more broadly for the more general conic optimization problem \eqref{pro: general saddlepoint clp}.
\begin{lemma}{\bf (Sublinear convergence of PDHG, from Corollary 2.4 of \cite{xiong2023computational})}\label{lm: original sublinear PDHG}
	Suppose that  $\sigma, \tau$ satisfy \eqref{eq:general_stepsize}. Then for any $z^0 := (x^0,y^0)$ with $x^0 \in K_p$, it holds for all $k \ge 1$ that
	\begin{equation}
		\rho(\|\bar{z}^k-z^0\|_M;\bar{z}^k) \le \frac{8\dist_M(z^0,\calZ^\star)}{k} \ .
	\end{equation}
\end{lemma}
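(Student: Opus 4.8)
The plan is to reduce everything to the standard ergodic (one-iteration) inequality for PDHG under the metric $M$, and then match it to the definition of the normalized duality gap. Because $L(x,y)=c^\top x+b^\top y-x^\top A^\top y$ is affine in $x$ and affine in $y$ separately, and because line~\ref{line:update_x} of \textsc{OnePDHG} is exactly a proximal (projection) step onto $K_p$, the Chambolle--Pock analysis yields, for every reference point $\hat z=(\hat x,\hat y)$ with $\hat x\in K_p$ and every $i\ge 0$,
$$ L(x^{i+1},\hat y)-L(\hat x,y^{i+1}) \ \le\ \tfrac12\|\hat z-z^i\|_M^2-\tfrac12\|\hat z-z^{i+1}\|_M^2 \ , $$
whose validity is precisely what the step-size condition \eqref{eq:general_stepsize} (equivalently $M\in\mathbb{S}^{m+n}_+$) guarantees; this is the same estimate that underlies the nonexpansive property of Lemma~\ref{lm: nonexpansive property}. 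Summing over $i=0,\dots,k-1$ telescopes the right-hand side to $\tfrac12\|\hat z-z^0\|_M^2-\tfrac12\|\hat z-z^k\|_M^2\le \tfrac12\|\hat z-z^0\|_M^2$, while on the left the affineness of $L$ in each argument makes the sum equal to $k\big[L(\bar{x}^k,\hat y)-L(\hat x,\bar{y}^k)\big]$. Dividing by $k$ gives the ergodic bound
$$ L(\bar{x}^k,\hat y)-L(\hat x,\bar{y}^k)\ \le\ \frac{1}{2k}\,\|\hat z-z^0\|_M^2 \qquad\text{for every }\hat z=(\hat x,\hat y)\text{ with }\hat x\in K_p \ . $$

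Next I would feed this into the definition of $\rho$. Writing $R:=\|\bar{z}^k-z^0\|_M$ and taking the radius to be $r=R$, every $\hat z\in B(R;\bar{z}^k)$ satisfies $\hat x\in K_p$ and $\|\hat z-\bar{z}^k\|_M\le R$, so the triangle inequality gives $\|\hat z-z^0\|_M\le \|\hat z-\bar{z}^k\|_M+R\le 2R$. Combining with the ergodic bound and taking the supremum over the ball,
$$ \rho(R;\bar{z}^k)=\frac1R\sup_{\hat z\in B(R;\bar{z}^k)}\big[L(\bar{x}^k,\hat y)-L(\hat x,\bar{y}^k)\big]\ \le\ \frac1R\cdot\frac{(2R)^2}{2k}\ =\ \frac{2R}{k} \ . $$

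It remains to control $R$. Let $z^\star\in\calZ^\star$ attain (or nearly attain) $\dist_M(z^0,\calZ^\star)$; such a point exists since $\calZ^\star\neq\emptyset$ under Assumption~\ref{assump:striclyfeasible}. Applying the nonexpansive property (Lemma~\ref{lm: nonexpansive property}) with $z:=\bar{z}^k$ gives $\|\bar{z}^k-z^\star\|_M\le\|z^0-z^\star\|_M$, whence the triangle inequality yields $R=\|\bar{z}^k-z^0\|_M\le\|\bar{z}^k-z^\star\|_M+\|z^\star-z^0\|_M\le 2\dist_M(z^0,\calZ^\star)$. Substituting into the previous display gives $\rho(\|\bar{z}^k-z^0\|_M;\bar{z}^k)\le \tfrac{4\dist_M(z^0,\calZ^\star)}{k}$, which is in fact stronger than, and hence implies, the claimed $\tfrac{8\dist_M(z^0,\calZ^\star)}{k}$; the looser constant in the statement can alternatively be obtained by bounding $R$ through Lemma~\ref{lm: R in the opt gap convnergence}.

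The main obstacle is the one-step inequality in the first paragraph: it is the only place where the actual PDHG update and the step-size restriction enter, and it must be arranged so that its admissible reference set (arbitrary $\hat y$, with $\hat x\in K_p$) matches exactly the domain $B(r;\bar{z}^k)$ appearing in $\rho$. A secondary point requiring care is that $M$ need only be positive semidefinite under \eqref{eq:general_stepsize}, so $\|\cdot\|_M$ is a semi-norm rather than a norm; one should verify that the telescoping, the triangle inequalities, and the selection of the minimizer $z^\star$ of $\dist_M(z^0,\calZ^\star)$ all remain valid in this degenerate-metric setting, which they do because every quantity above is expressed consistently in $\|\cdot\|_M$.
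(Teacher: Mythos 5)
Your proof is correct. The paper itself does not prove Lemma \ref{lm: original sublinear PDHG} --- it imports it verbatim from Corollary 2.4 of \cite{xiong2023computational} --- and your argument is precisely the standard derivation underlying that citation: the one-step Chambolle--Pock inequality $L(x^{i+1},\hat y)-L(\hat x,y^{i+1})\le\tfrac12\|\hat z-z^i\|_M^2-\tfrac12\|\hat z-z^{i+1}\|_M^2$ (valid after dropping the nonnegative residual term, which is where \eqref{eq:general_stepsize} enters), telescoping and averaging via the bilinearity of $L$, the triangle inequality $\|\hat z-z^0\|_M\le 2R$ on the ball $B(R;\bar z^k)$, and the nonexpansive property to bound $R\le 2\dist_M(z^0,\calZ^\star)$. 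You even obtain the sharper constant $4$ in place of $8$, which indeed implies the stated bound; your closing remarks about the semi-norm setting and the matching of the reference set $\{\hat z:\hat x\in K_p\}$ with the domain of $\rho$ address the only points where care is genuinely needed.
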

\noindent Combining the results of Lemma \ref{lm: convergence of PHDG without restart} and Lemma \ref{lm: original sublinear PDHG}, we obtain the following corollary regarding sublinear convergence of PDHG for \eqref{pro: general saddlepoint clp}.
\begin{corollary}\label{thm covergence result}
	Suppose that  $\sigma, \tau$ satisfy \eqref{eq:general_stepsize}, and PDHG is initiated with $z^0 = (x^0,y^0)$. For all $k \ge 1$, let $\bar{s}^k := c - A^\top \bar{y}^k$. Then the following hold for $\bar{w}^k:= (\bar{x}^k,\bar{s}^k)$ for all $k \ge 1$:
	\begin{enumerate}
		\item Distance to the affine subspace: $ \dist(\bar{w}^k,V)  \le \frac{8}{\sqrt{\sigma} \lambda_{\min}}\cdot  \frac{\dist_M(z^0,\calZ^\star)}{k}$, \label{item_conv1}
		\item Distance to the cone: $ \dist(\bar{w}^k,K)  \le \frac{8}{\sqrt{\tau}} \cdot  \frac{\dist_M(z^0,\calZ^\star)}{k}$, and \label{item_conv2}
		\item Duality gap: $\gap(\bar{w}^k)  \le   \left(16\dist_M(z^0,\calZ^\star) + 8\|z^0\|_M\right)\cdot  \frac{\dist_M(z^0,\calZ^\star)}{k}$.\label{item_conv3}
	\end{enumerate}
\end{corollary}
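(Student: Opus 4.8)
The plan is to combine the three single-iterate bounds of Lemma~\ref{lm: convergence of PHDG without restart} with the sublinear rate of Lemma~\ref{lm: original sublinear PDHG}, applied to the averaged iterate $\bar{z}^k$ with the specific radius choice $r = \|\bar{z}^k - z^0\|_M$. First I would verify the hypothesis needed to invoke Lemma~\ref{lm: convergence of PHDG without restart}, namely that $\bar{x}^k \in K_p$: each PDHG iterate satisfies $x^i = P_{K_p}(\cdots) \in K_p$ by the projection in Line~\ref{line:update_x} of Algorithm~\ref{alg: one PDHG}, and since $K_p$ is a convex cone, the average $\bar{x}^k = \frac{1}{k}\sum_{i=1}^k x^i$ also lies in $K_p$. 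This licenses setting $\bar{z} = \bar{z}^k$ and $\bar{w} = \bar{w}^k = (\bar{x}^k,\bar{s}^k)$ in Lemma~\ref{lm: convergence of PHDG without restart}.

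For items (\ref{item_conv1}.) and (\ref{item_conv2}.) the argument is immediate: Lemma~\ref{lm: convergence of PHDG without restart} gives $\dist(\bar{w}^k, V) \le \frac{1}{\sqrt{\sigma}\lambda_{\min}} \rho(r; \bar{z}^k)$ and $\dist(\bar{w}^k, K) \le \frac{1}{\sqrt{\tau}} \rho(r; \bar{z}^k)$ for \emph{any} $r > 0$, and in particular for $r = \|\bar{z}^k - z^0\|_M$. Substituting the bound $\rho(\|\bar{z}^k - z^0\|_M; \bar{z}^k) \le 8\dist_M(z^0, \calZ^\star)/k$ from Lemma~\ref{lm: original sublinear PDHG} then yields exactly the stated rates, since the prefactors $1/(\sqrt{\sigma}\lambda_{\min})$ and $1/\sqrt{\tau}$ do not depend on $r$.

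The one step requiring a little more care is item (\ref{item_conv3}.), the duality-gap bound, because the gap estimate in Lemma~\ref{lm: convergence of PHDG without restart} carries the $r$-dependent prefactor $\max\{r, \|\bar{z}^k\|_M\}$, which with $r = \|\bar{z}^k - z^0\|_M$ becomes $\max\{\|\bar{z}^k - z^0\|_M, \|\bar{z}^k\|_M\}$. The plan is to control this prefactor by a quantity depending only on $z^0$ and $\calZ^\star$, via Lemma~\ref{lm: R in the opt gap convnergence} under the assignment $z^a = z^0$, $z^b = \bar{z}^k$, $z^c = z^0$. The (degenerate) choice $z^c = z^a = z^0$ trivially satisfies the required nonexpansiveness, while $z^b = \bar{z}^k$ satisfies $\|\bar{z}^k - z^\star\|_M \le \|z^0 - z^\star\|_M$ for every $z^\star \in \calZ^\star$ by the nonexpansive property (Lemma~\ref{lm: nonexpansive property}). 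This gives $\max\{\|\bar{z}^k - z^0\|_M, \|\bar{z}^k\|_M\} \le 2\dist_M(z^0, \calZ^\star) + \|z^0\|_M$; multiplying by the rate from Lemma~\ref{lm: original sublinear PDHG} and collecting terms produces the claimed coefficient $16\dist_M(z^0, \calZ^\star) + 8\|z^0\|_M$.

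The main (mild) obstacle is therefore item (\ref{item_conv3}.): recognizing that $\max\{\|\bar{z}^k - z^0\|_M, \|\bar{z}^k\|_M\}$ is precisely the form $\max\{\|z^b - z^c\|_M, \|z^b\|_M\}$ handled by Lemma~\ref{lm: R in the opt gap convnergence}, and spotting the assignment $z^c = z^a = z^0$ that makes the nonexpansiveness hypotheses hold. Once that identification is made the rest is routine substitution, and items (\ref{item_conv1}.)--(\ref{item_conv2}.) follow with no additional work.
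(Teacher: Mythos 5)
Your proposal is correct and follows essentially the same route as the paper's proof: items (1) and (2) by direct substitution of the rate from Lemma \ref{lm: original sublinear PDHG} into Lemma \ref{lm: convergence of PHDG without restart}, and item (3) by invoking Lemma \ref{lm: R in the opt gap convnergence} with the same assignment $z^a := z^0$, $z^b := \bar{z}^k$, $z^c := z^0$ to bound $\max\{\|\bar{z}^k - z^0\|_M, \|\bar{z}^k\|_M\}$. Your added check that $\bar{x}^k \in K_p$ (by convexity of $K_p$ and the projection step) is a detail the paper leaves implicit.
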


\begin{proof}
	The upper bounds for the distances to the affine subspace $V$ and the cone $K$ follow directly from Lemma \ref{lm: convergence of PHDG without restart} and Lemma \ref{lm: original sublinear PDHG}.  To prove item (\textit{\ref{item_conv3}}.), we apply Lemma \ref{lm: R in the opt gap convnergence} with $z^a := z^0$, $z^b:=\bar{z}^k$ and $z^c:= z^0$, which then satisfy the nonexpansive properties of Lemma \ref{lm: R in the opt gap convnergence}, whereby it holds that
	$$
		\max\{ \|\bar{z}^k - z^0\|_M, \|\bar{z}^k\|_M\}  \le    2 \dist_M(z^0,\calZ^\star)  + \|z^0 \|_M \ .
	$$
	Then item (\textit{\ref{item_conv3}}.) of the corollary follows by applying the above inequality to item (\textit{\ref{item_gap3}}.) of Lemma \ref{lm: convergence of PHDG without restart} with $r = \|\bar{z}^k - z^0\|_M$. \end{proof}

\section{Complexity of restarted-PDHG for Conic LP}\label{sec:complexity_clp}
In addition to the convergence analysis of PDHG, \cite{applegate2023faster,xiong2023computational} show that fixed-period and/or adaptive restarts lead to faster convergence of PDHG for LP problems, in both theory and practice. Algorithm \ref{alg: PDHG with restarts} describes our general restart scheme for PDHG for conic LP, which is similar to Algorithm 2 in \cite{xiong2023computational} for LP problems. We refer to this algorithm as ``rPDHG'' which is short for ``restarted-PDHG.'' 

\begin{algorithm}[htbp]
	\SetAlgoLined
	{\bf Input:} Initial iterate $z^{0,0}:=(x^{0,0}, y^{0,0})$, $n \gets 0$, and step-size $\tau,\sigma$ satisfying \eqref{eq:general_stepsize} \;
	\Repeat{\text{Either $z^{n,0}$ is a saddlepoint or $z^{n,0}$ satisfies some other convergence condition }}{
		\textbf{initialize the inner loop:} inner loop counter $k\gets 0$ \;
		\Repeat{ $\bar{z}^{n,k}$ satisfies some (verifiable) restart condition \ }{
			\textbf{conduct one step of PDHG: }$z^{n,k+1} \gets \textsc{OnePDHG}(z^{n,k})$ \;
			\textbf{compute the average iterate in the inner loop. }$\bar{z}^{n,k+1}\gets\frac{1}{k+1} \sum_{i=1}^{k+1} z^{n,i}$
			\label{line:average} \;  \label{line:output-is-average-of-iterates}
			$k\gets k+1$ \;
		}\label{line:restart_condition}
		\textbf{restart the outer loop:} $z^{n+1,0}\gets \bar{z}^{n,k}$, $n\gets n+1$ \;
	}
	{\bf Output:} $z^{n,0}$ ($ \ = (x^{n,0},  y^{n,0})$)
	\caption{rPDHG: restarted-PDHG}\label{alg: PDHG with restarts}
\end{algorithm}

Here $z^{k+1} \gets \textsc{OnePDHG}(z^k)$ is an iteration of PDHG as described in Algorithm \ref{alg: one PDHG}. For each iterate $z^{n,k} = (x^{n,k},y^{n,k})$, we define $s^{n,k}:= c - A^\top y^{n,k}$ and $\bar{s}^{n,k}:= c - A^\top \bar{y}^{n,k}$, and $\bar{s}^{n,k}$ denotes the average of dual cone variable iterate values. The double superscript on the variables $z^{n,k}$, $s^{n,k}$, and $\bar{s}^{n,k}$ indexes the outer iteration counter followed by the inner iteration counter, so that $z^{n,k}$ is the $k$-th inner iteration of the $n$-th outer loop.

In order to implement Algorithm \ref{alg: PDHG with restarts} (rPDHG) it is necessary to specify a (verifiable) restart condition on the average iterate $\bar z^{n,k}$ in Line \ref{line:restart_condition} that is used to determine when to re-start PDHG.
We will primarily consider Algorithm \ref{alg: PDHG with restarts} (rPDHG) using the following restart condition in Line \ref{line:restart_condition}:
\begin{equation}\label{catsdogs}\rho(\|\bar{z}^{n,k} - z^{n,0}\|_M; \bar{z}^{n,k}) \le \beta \cdot \rho(\|z^{n,0} - z^{n-1,0}\|_M; z^{n,0}) \ , \end{equation}
for a specific value of $\beta \in (0,1)$ (in fact we will use $\beta = 1/e$ where $e$ is the base of the natural logarithm).  In this way \eqref{catsdogs} is nearly identical to the condition used in \cite{applegate2023faster}.
Note that condition \eqref{catsdogs} essentially states that the normalized duality gap shrinks by the factor $\beta$ between restart values $\bar{z}^{n,k}$ and ${z}^{n,0}$. (In Appendix \ref{appendix:compute_rho} we show that computing the normalized duality gap can be done efficiently.  Also, in practice the restart condition \eqref{catsdogs} does not need to be checked frequently, so the overall cost of evaluating the restart condition is quite minor.)

\subsection{Condition numbers of the primal-dual sublevel set}

In this section we present computational guarantees for rPDHG.  Our analysis involves three condition numbers related to the primal-dual sublevel sets, and all three condition numbers have a geometric flavor. Recall from Section \ref{sec:CLP} the definition of the primal-dual optimal solution set $\calW^\star$ in \eqref{def_optimal_W} in the space of the cone variables $w=(x,s)$.
\begin{definition}{\bf ($\delta$-sublevel set)}\label{def level set}
	For any $\delta \ge  0$, the $\delta$-sublevel set is defined as
	\begin{equation}\label{eq delta level set}
		\calW_\delta :=   \calF \cap \{w: \gap(w) \le \delta\} \ ,
	\end{equation}
	which is the set of feasible primal-dual solution $w:=(x,s)$ whose duality gap $\gap(w)$ is at most $\delta$.
\end{definition}
Observe that when $\delta = 0$, then $\calW_0 = \calW^\star$, and for all $\delta \ge 0$ we have $\calW^\star \subseteq \calW_\delta$. We now present the three condition numbers we will use as the basis of our computational guarantees for rPDHG as follows.
\begin{definition}{\bf (Diameter of $\calW_\delta$)}\label{def diameter}
	For any $\delta$, the diameter of $\calW_\delta$ is:
	\begin{equation}\label{eq diameter}
		D_\delta := \max_{u,v \in \calW_\delta} \|u - v\| \ .
	\end{equation}
\end{definition}
\begin{definition}{\bf (Conic radius and conic center of $\calW_\delta$)}\label{def radius}
	For any $\delta > 0$, the conic center $w_\delta$ and the conic radius $r_\delta$ of $\calW_\delta$ are the optimal solutions of the following problem:
	\begin{equation}\label{eq radius}
		\begin{aligned}
			(w_\delta,r_\delta) := & \arg\max_{w\in\mathbb{R}^{2n},r\ge0}  \quad r                                                   \\
			                       & \ \ \ \ \ \ \quad  \operatorname{s.t.} \quad  \ \  w \in \calW_\delta, \ B(w,r)\subseteq K  \ .
		\end{aligned}
	\end{equation}
\end{definition} \noindent The conic center $w_\delta$ is the point in $\calW_\delta$ of maximum distance to the boundary of $K$, and $r_\delta$ is the distance from $w_\delta$ to the boundary of $K$.

\begin{definition}{\bf (Hausdorff distance between $\calW_\delta$ and $\calW^\star$)}\label{def distance to optima}
	For any $\delta$, let $d^H_\delta$ denote the Hausdorff distance between $\calW_\delta$ and $\calW^\star$, namely
	\begin{equation}\label{eq distance}
		d^H_\delta := D^H(\calW_\delta, \calW^\star) = \max_{w\in\calW_\delta} \dist(w,\calW^\star) \ ,
	\end{equation}
	where $D^H(\cdot,\cdot)$ denotes the Hausdorff distance. (Note that the second equality above holds because $\calW^\star \subseteq \calW_\delta$.)
\end{definition}

While the condition number $r_\delta$ is similar in concept to the measure $r_\delta$ defined in \cite{freund2003primal}, the difference is that \eqref{eq radius} is defined on the primal-dual suboptimal solution set $\calW_\delta$ and thus has quite different properties. The diameter $D_\delta$ should not be confused with $R_\delta$ defined in \cite{freund2003primal}, as $R_\delta$ refers to the maximum norm of a solution in the primal or dual sublevel sets. From the definition of $d^H_\delta$, the smaller $\delta$ is then the smaller $d^H_\delta$ is. Furthermore, as $\delta$ goes to $0$, since the sublevel sets converge to the set of optimal solutions, then $d^H_\delta$ converges to $0$. We also have the following straightforward observation regarding $D_\delta$, $r_\delta$ and $d^H_\delta$.

\begin{lemma}\label{lm D ge r}
	Under Assumption \ref{assump:striclyfeasible} it holds for all $\delta > 0$ that $D_\delta \ge d^H_\delta > r_\delta > 0$ .
\end{lemma}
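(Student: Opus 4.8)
The plan is to prove the chain $D_\delta \ge d^H_\delta > r_\delta > 0$ essentially from right to left, after recording two elementary facts. First, for any feasible $w=(x,s)\in\calF$ the duality gap is just the complementarity product, $\gap(w)=c^\top x - b^\top y = x^\top(c-A^\top y)=x^\top s \ge 0$; hence $\calW^\star=\{w\in\calF:\langle x,s\rangle=0\}$. Second, every optimal $w^\star=(x^\star,s^\star)$ lies on the boundary $\partial K$ of $K=K_p\times K_d$: since $\langle x^\star,s^\star\rangle=0$ with $x^\star\in K_p$ and $s^\star\in K_d=K_p^*$, if $x^\star$ were in $\textsf{int}K_p$ then $\langle x^\star,s^\star\rangle>0$ unless $s^\star=0\in\partial K_d$, so at least one block sits on its cone boundary. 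With these in hand, $D_\delta\ge d^H_\delta$ is immediate: for any $w\in\calW_\delta$, the projection $w^\star:=P_{\calW^\star}(w)$ also lies in $\calW_\delta$ (because $\calW^\star\subseteq\calW_\delta$), so $\dist(w,\calW^\star)=\|w-w^\star\|\le D_\delta$, and taking the supremum over $w\in\calW_\delta$ gives $d^H_\delta\le D_\delta$ (the bound is trivial when $\calW_\delta$ is unbounded).

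Next I would establish $r_\delta>0$ from strict feasibility. By Assumption \ref{assump:striclyfeasible} there is $\hat w=(\hat x,\hat s)\in\calF$ with $\hat x\in\textsf{int}K_p$ and $\hat s\in\textsf{int}K_d$, i.e.\ $\hat w\in\textsf{int}K$. Fixing any $w^\star\in\calW^\star$, the segment $w_\lambda:=(1-\lambda)w^\star+\lambda\hat w$ stays in $\calF$ by convexity of $\calF=V\cap K$, lies in $\textsf{int}K$ for every $\lambda\in(0,1]$ (the open segment from a point of $K$ to an interior point is interior), and satisfies $\gap(w_\lambda)=\lambda\,\gap(\hat w)$ by linearity of $\gap$ on the affine set $V$. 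Choosing $\lambda>0$ small enough that $\lambda\,\gap(\hat w)\le\delta$ places $w_\lambda$ in $\calW_\delta\cap\textsf{int}K$, so a ball of some positive radius about $w_\lambda$ fits inside $K$, which forces $r_\delta>0$.

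The heart of the matter, and the step I expect to be the main obstacle, is the strict inequality $d^H_\delta>r_\delta$. Let $(w_\delta,r_\delta)$ be the conic center and radius of Definition \ref{def radius}, so $B(w_\delta,r_\delta)\subseteq K$ and therefore $\dist(w_\delta,\partial K)\ge r_\delta$ (a boundary point within distance $<r_\delta$ would drag a point outside $K$ into the ball). Since $\calW^\star\subseteq\partial K$ by the second fact above, every $w^\star\in\calW^\star$ obeys $\|w_\delta-w^\star\|\ge r_\delta$, giving $d^H_\delta\ge\dist(w_\delta,\calW^\star)\ge r_\delta$. To upgrade this to a strict inequality I would exploit complementarity together with the interiority of the conic center. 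Writing $w_\delta=(x_\delta,s_\delta)$, the inclusion $B(w_\delta,r_\delta)\subseteq K$ yields $B(x_\delta,r_\delta)\subseteq K_p$ and $B(s_\delta,r_\delta)\subseteq K_d$, and hence the support-type bounds $\langle x_\delta,s\rangle\ge r_\delta\|s\|$ for all $s\in K_d$ and $\langle x,s_\delta\rangle\ge r_\delta\|x\|$ for all $x\in K_p$; moreover $x_\delta,s_\delta\neq 0$, since a ball about the origin inside a cone would make its dual cone have empty interior, contradicting Assumption \ref{assump:striclyfeasible}. Taking $w^\star=(x^\star,s^\star)=P_{\calW^\star}(w_\delta)$ and supposing toward a contradiction that $\|w_\delta-w^\star\|=r_\delta$, a short case analysis on whether $x^\star\in\partial K_p$ or $s^\star\in\partial K_d$, combined with $\langle x^\star,s^\star\rangle=0$ and the bounds above, forces the degenerate ``tangent-at-the-apex'' configuration in which one block of $w^\star$ vanishes and the inscribed ball touches $\partial K$ exactly along the line to $w^\star$; this makes the corresponding cone contain a halfspace, so its dual has empty interior, again contradicting Assumption \ref{assump:striclyfeasible}. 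Ruling out equality this cleanly is the delicate part, as it is precisely where the nondegeneracy carried by strict primal--dual feasibility (and by $A\neq 0$) must be invoked; once it is done, $\|w_\delta-w^\star\|>r_\delta$ and therefore $d^H_\delta>r_\delta$, completing the chain $D_\delta\ge d^H_\delta>r_\delta>0$.
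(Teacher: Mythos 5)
Most of your argument tracks the paper's own proof: the bound $D_\delta \ge d^H_\delta$ from $\calW^\star\subseteq\calW_\delta$, the observation that complementarity forces $\calW^\star\subseteq\partial K$, and the derivation of $r_\delta>0$ from strict feasibility via a segment from an optimal point to an interior feasible point are all essentially what the paper does (your $r_\delta>0$ step is in fact more explicit than the paper's one-line appeal to convexity). The genuine gap is in the strict inequality $d^H_\delta>r_\delta$. You place the strictness in the second link of the chain, i.e.\ you try to prove $\dist(w_\delta,\calW^\star)>r_\delta$, and you yourself flag the equality case as the delicate part. That step does not go through: the configuration you hope to rule out --- the inscribed ball $B(w_\delta,r_\delta)$ touching $\partial K$ exactly at the nearest optimal point, with one block of $w^\star$ vanishing --- is \emph{not} in contradiction with Assumption \ref{assump:striclyfeasible} and can actually occur. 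Concretely, take $n=m=1$, $K_p=\mathbb{R}_+$, $A=(1)$, $b=1$, $c=1$: both problems are strictly feasible, $\calF=\{1\}\times[0,\infty)$, $\gap(1,s)=s$, $\calW^\star=\{(1,0)\}$, and $\calW_\delta=\{1\}\times[0,\delta]$; for $\delta\le 1$ one computes $w_\delta=(1,\delta)$, $r_\delta=\delta$, and $\dist(w_\delta,\calW^\star)=\delta=r_\delta$. Here $s^\star=0$ and the ball is tangent to $\partial K$ along the segment to $w^\star$, yet neither cone contains a halfspace and both cones have nonempty interior, so the contradiction you are counting on never materializes. The intermediate claim $\dist(w_\delta,\calW^\star)>r_\delta$ is therefore false in general, and no case analysis on $x^\star\in\partial K_p$ versus $s^\star\in\partial K_d$ will rescue it.

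The paper routes the strictness through the \emph{other} link: it writes $d^H_\delta=\max_{w\in\calW_\delta}\dist(w,\calW^\star) > \dist(w_\delta,\calW^\star)\ge \dist(w_\delta,\partial K)=r_\delta$, keeping the second inequality non-strict (consistent with the example above) and arguing that the first is strict because any maximizer of $\dist(\cdot,\calW^\star)$ over $\calW_\delta$ lies on $\partial K$ whereas $w_\delta\in\textsf{int}K$. If you want to repair your write-up, that is the decomposition to adopt. Be aware, though, that the strictness is genuinely delicate here: in the degenerate instance above the maximizer of $\dist(\cdot,\calW^\star)$ over $\calW_\delta$ is $w_\delta$ itself (an interior point of $K$) and $d^H_\delta=r_\delta$, so whichever link carries the strict inequality needs an honest justification of why the relevant extremal point sits on $\partial K$; this is the one place in the lemma where a sketch is not enough.
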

\begin{proof} We have
	$$d^H_\delta = \max_{w \in \calW_\delta} \min_{w' \in \calW^\star} \|w-w'\| \le \max_{w \in \calW_\delta} \max_{w' \in \calW^\star} \|w-w'\| \le \max_{w \in \calW_\delta} \max_{w' \in \calW_\delta} \|w-w'\| = D_\delta \ , $$
	where the second inequality follows since $\calW^\star \subseteq \calW_\delta$.
	Given $(x^\star, s^\star) \in \calW^\star$, then since $x^\star \in K_p$ and $s^\star \in K_d = K_p^\star$ and $(x^\star)^\top s^\star =0$, it follows that either $x^\star \in \partial K_p$ or $s^\star \in \partial K_d$ (or both), whereby $(x^\star, s^\star) \in \partial K$.  Therefore $\calW^\star \subset \partial K$ and we have:
	$$d^H_\delta = \max_{w \in \calW_\delta} \dist(w, \calW^\star) > \dist(w_\delta, \calW^\star) \ge \dist(w_\delta, \partial K) = r_\delta  \ , $$
	where the second inequality follows from $\calW^\star \subset \partial K$.
	In addition, the first inequality holds strictly because $w_\delta \in \textsf{int} K$ but $\arg\max_{w\in\calW_\delta}\dist(w,\calW^\star)$ must lie in $\partial K$.
	Last of all, under Assumption \ref{assump:striclyfeasible} and using the convexity of the sublevel sets it follows that there exists $(x,s) \in \calW_\delta$ with $(x,s) \in \textsf{int} K_p \times \textsf{int} K_d$, and hence $(x,s) \in \textsf{int} K$ and $r_\delta > 0$.\end{proof}

We now develop and state an ``error bound'' type of result for $\calW_\delta$ involving the quotient $D_\delta / r_\delta$ that will be useful in later proofs, and that perhaps might be of independent interest. Let us use the notation $\calF_{++}$ to denote the set of strictly feasible solutions in $\calF$, namely $\calF_{++} := V \cap \textsf{int} K$. Let $w\in V \setminus \calF$ and $w_{int}\in \calF_{++}$ be given, whereby the line segment from $w$ to $w_{int}$ will contain a unique point that lies in $\partial K$, and let us denote this point by $\calF(w;w_{int}) $.  More formally we have
\begin{equation}\label{eq  def v}
	\calF(w;w_{int})  := \arg\min_{\tilde{w}}\left\{\|w - \tilde{w}\| : \tilde{w}:= \lambda \cdot w_{int} + (1- \lambda)\cdot w, \ \tilde{w}\in \calF
	\right\} \ .
\end{equation}
The following lemma states for the sublevel set $\calW_\delta$ that if the ratio $D_\delta / r_\delta$ is small, then a point in $V \cap \{w: \gap(w)\le \delta\}$ that is close to $K$ must also be close to $\calW_\delta$. In this way we see that $D_\delta / r_\delta$ is in fact an error bound for $\calW_\delta$.

\begin{lemma}\label{lm error bound R r}
	For any $\delta >0$ and $w \in V$ with $\gap(w) \le \delta$,  it holds that either $\dist(w,K) = 0$ and $w \in \calW_\delta$, or
	\begin{equation}\label{eq local error bound}
		\frac{\dist(w, \calW_\delta)}{\dist(w, K)} \le \frac{\| w - \calF(w;w_\delta) \|}{\dist(w, K)} \le \frac{\| w_\delta - \calF(w;w_\delta) \|}{r_\delta}  \le \frac{D_\delta}{r_\delta} \ .
	\end{equation}
\end{lemma}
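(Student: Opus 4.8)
The plan is to dispose of the trivial alternative first and then establish the three inequalities in \eqref{eq local error bound}, treating the two outer ones as bookkeeping and concentrating on the middle one. If $\dist(w,K)=0$, then since $w\in V$ we have $w\in V\cap K=\calF$, and combined with $\gap(w)\le\delta$ this gives $w\in\calW_\delta$, which is exactly the first alternative. So I would assume henceforth that $\dist(w,K)>0$, i.e. $w\notin K$.

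Write $v:=\calF(w;w_\delta)$ for the point where the segment from $w$ to $w_\delta$ meets $\partial K$. Since the whole segment lies in the affine subspace $V$, the constraint $\tilde w\in\calF$ in \eqref{eq  def v} reduces to $\tilde w\in K$, and because $w\notin K$ while $w_\delta\in\textsf{int}\,K$ (using $r_\delta>0$ from Lemma \ref{lm D ge r}), there is a unique crossing point $v=(1-\lambda)w+\lambda w_\delta$ with $\lambda\in(0,1)$, and this entry point is indeed the member of $K\cap(\text{segment})$ closest to $w$. First I would check $v\in\calW_\delta$: clearly $v\in\calF$, and since $\gap$ is affine, $\gap(v)=(1-\lambda)\gap(w)+\lambda\gap(w_\delta)\le\delta$ because both $\gap(w)\le\delta$ and $\gap(w_\delta)\le\delta$. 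Given $v\in\calW_\delta$, the leftmost inequality is immediate from $\dist(w,\calW_\delta)\le\|w-v\|$, and the rightmost inequality follows from $v,w_\delta\in\calW_\delta$ together with the definition of the diameter, giving $\|w_\delta-v\|\le D_\delta$.

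The crux is the middle inequality $\|w-v\|/\dist(w,K)\le\|w_\delta-v\|/r_\delta$. By collinearity of $w,v,w_\delta$ one computes $w-v=\lambda(w-w_\delta)$ and $v-w_\delta=(1-\lambda)(w-w_\delta)$, so $\|w-v\|=\tfrac{\lambda}{1-\lambda}\|w_\delta-v\|$, and after cancelling $\|w_\delta-v\|$ the claim reduces to the single estimate $\dist(w,K)\ge\tfrac{\lambda}{1-\lambda}r_\delta$. To prove this I would invoke a supporting hyperplane to the convex cone $K$ at the boundary point $v$: there is a unit vector $a$ with $\langle a,z-v\rangle\le 0$ for all $z\in K$, so $K$ lies in the halfspace $\{z:\langle a,z\rangle\le\langle a,v\rangle\}$ and hence $\dist(w,K)\ge\langle a,w-v\rangle$. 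Evaluating the inclusion $B(w_\delta,r_\delta)\subseteq K$ at $z=w_\delta+r_\delta a$ gives $\langle a,v-w_\delta\rangle\ge r_\delta$, while the collinearity identity $w-v=\tfrac{\lambda}{1-\lambda}(v-w_\delta)$ yields $\langle a,w-v\rangle=\tfrac{\lambda}{1-\lambda}\langle a,v-w_\delta\rangle\ge\tfrac{\lambda}{1-\lambda}r_\delta$, completing the estimate.

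The main obstacle is precisely this middle step, where the conic-radius condition $B(w_\delta,r_\delta)\subseteq K$ must be converted into a lower bound on $\dist(w,K)$; the supporting-hyperplane device is what links the local geometry of $K$ at $v$ to the ``width'' certificate carried by the ball around $w_\delta$. Everything else is routine: confirming $\lambda\in(0,1)$ so that all ratios are well defined, verifying that $v$ is exactly the closest feasible point on the segment, and using affinity of the gap to keep $v$ inside $\calW_\delta$. I would also double-check that the distance-to-halfspace formula uses $\|a\|=1$ and that $\langle a,w-v\rangle>0$ (so the bound is not vacuous), both of which follow from $\lambda\in(0,1)$ and $\langle a,v-w_\delta\rangle\ge r_\delta>0$.
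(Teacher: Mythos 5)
Your proposal is correct and follows essentially the same route as the paper's proof: reduce to the ratio $\lambda/(1-\lambda)$ via collinearity of $w$, $v=\calF(w;w_\delta)$, and $w_\delta$, then use a supporting hyperplane of $K$ at $v$ together with $B(w_\delta,r_\delta)\subseteq K$ to bound that ratio by $\dist(w,K)/r_\delta$. The only cosmetic difference is that you phrase the hyperplane step directly in terms of inner products with a unit normal rather than distances to the hyperplane $H$ (and the paper additionally takes $H$ through the origin using $p\in K^*$, which is not needed); the substance is identical.
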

\begin{proof}[Proof of Lemma \ref{lm error bound R r}]
	If $\dist(w,K) =0$, then $w \in \calW_\delta$ because $w \in V$ and $\gap(w) \le \delta$ by the hypotheses of the lemma.  If $w \notin K$, then $w \notin \calF$, and let $v := \calF(w;w_\delta)$.  Notice that $w_\delta \in \calW_\delta $, $w \in V$, and $\gap(w) \le \delta$ together imply that $v := \calF(w;w_\delta) \in \calW_\delta$. Then the first inequality in \eqref{eq local error bound} holds because $v \in \calW_\delta$ and so $ \dist(w, \calW_\delta) \le \| w - v \| = \| w - \calF(w;w_\delta) \|$.  For the third inequality of \eqref{eq local error bound} notice that $w_\delta \in \calW_\delta $ and $v \in \calW_\delta $ imply that $ \| w_\delta - v \| = \| w_\delta - \calF(w;w_\delta) \|  \le \Diam(\calW_\delta) = D_\delta$, which yields the third inequality of \eqref{eq local error bound}.

	We now prove the second inequality of \eqref{eq local error bound}. From the definition in \eqref{eq  def v}, because $w_\delta \in \calF_{++}$, there exists $\lambda \in (0,1)$ for which
	\begin{equation}\label{eq actual set v}
		v = {\lambda} \cdot w_{\delta} + (1- {\lambda}) \cdot w \ .
	\end{equation}
	Furthermore, since $v \in \partial K$, then there exists a supporting hyperplane $H$ of $K$ that contains $v$.  It then follows that there exists $p \in \mathbb{R}^{2n}$ for which $H := \{\hat{w}\in \mathbb{R}^{2n}: p^\top \hat{w} = 0\}$, $p^\top v = 0$, $p \in K^*$, and $p^\top w<0$ and $p^\top \hat w >0$ for all $\hat w \in \textsf{int}K$ and so in particular $p^\top w_\delta > 0$.  From \eqref{eq actual set v} we have
	$$
		{\lambda} \cdot p^\top w_\delta + (1 - {\lambda}) \cdot p^\top  w = p^\top v = 0 \ , $$ which can be rearranged to yield the following equalities:
	\begin{equation}\label{eq error bound R r}
		\frac{\lambda}{1-\lambda} = \frac{|p^\top w |}{|p^\top w_\delta |}  = \frac{\dist(w, H)}{\dist(w_\delta,H)} \ .
	\end{equation}
	From \eqref{eq actual set v} the left side of \eqref{eq error bound R r} can be further expressed as \begin{equation}\label{eq error bound R r 1}
		\frac{\lambda}{1-\lambda} = \frac{\| w - v\|}{\|w_\delta - v\|} \ .
	\end{equation}
	Also, since $K$ and $w$ are on different sides of the hyperplane $H$, this implies that	\begin{equation}\label{eq error bound R r 2}
		\dist(w, H) \le \dist(w, K) \ .
	\end{equation}
	Additionally, because $B(w_\delta,r_\delta) \subseteq K$ (from Definition \ref{def radius}), we have
	\begin{equation}\label{eq error bound R r 3}
		\dist(w_\delta, H) \ge r_\delta \ .
	\end{equation}
	Substituting \eqref{eq error bound R r 1}, \eqref{eq error bound R r 2}, \eqref{eq error bound R r 3} back into \eqref{eq error bound R r} yields
	\begin{equation}\label{eq error bound R r 4}
		\frac{\| w - v\|}{\|w_\delta - v\|} \le \frac{ \dist(w, K)  }{r_\delta} \ ,
	\end{equation}
	which proves the second inequality in  \eqref{eq local error bound}.
\end{proof}

\subsection{Computational guarantees for CLP problems}

In this subsection we present our computational guarantees for rPDHG, which are based on the three geometry-based condition numbers $D_\delta$, $r_\delta$ and $d^H_\delta$ for sublevel sets $\calW_\delta$. We consider the following restart condition rPDHG which is the same as the adaptive restart scheme introduced in \cite{applegate2023faster} and also used in \cite{xiong2023computational}.
\begin{definition}[$\beta$-restart condition]\label{def beta restart}
	For a given $\beta \in(0,1)$, the iteration $(n,k)$ satisfies the \textit{$\beta$-restart condition} if $n \ge 1$ and condition \eqref{catsdogs} is satisfied, or $n=0$ and $k=1$.
\end{definition}

We also introduce the following definition of objective function error which separately considers primal objective error and dual objective error.
\begin{definition}[Objective function error]\label{def objective error} Suppose that the primal and dual problems \eqref{pro: general primal clp} and \eqref{pro: general dual clp on s} have the common optimal objective function value $f^\star$. For the candidate pair $w := (x,s)$, the objective function error of $w$ is defined as:
	\begin{equation}\label{eq objective error}
		\eobj(w) :=  |c^\top x - f^\star | + |  f^\star -q_0 + q^\top s  | \ .
	\end{equation}
\end{definition}
Note that when both $x$ and $s$ are feasible, then the objective error $\eobj(w)$ is equal to $\gap(w) = c^\top x + q^\top s - q_0$. However, having a small $\gap(w)$ does not imply that $\eobj(w)$ is small. For this reason, $\eobj(w)$ is a particularly appropriate measure of the optimality gap of a proposed solution when $w$ is not primal/dual feasible.

Depending on the specific application, the tolerance requirement of a given candidate solution $w=(x,s)$ in terms of infeasibility, duality gap, and objective error can be different, which motivates the following definition of the $\eps$-tolerance requirement.
\begin{definition}{($\eps$-tolerance requirement)}\label{eq accuracy requirement clp} Let $w=(x,s) \in \mathbb{R}^{2n}$ be given. For the triplet $\eps := (\eps_{\mathrm{cons}}, \eps_{\mathrm{gap}},\eps_{\mathrm{obj}})\in\mathbb{R}_{++}^3$, the solution $w$ satisfies the $\eps$-tolerance requirement of \eqref{pro: general primal clp} and \eqref{pro: general dual clp on s} if
	\begin{enumerate}
		\item Distances to constraints: $\max\left\{\dist(w,V), \ \dist(w,K)\right\} \le \eps_{\mathrm{cons}}$, \label{item_gap11}
		\item Duality gap: $\gap(w) \le \eps_{\mathrm{gap}}$, and \label{item_gap22}
		\item Objective functions tolerance: $\eobj(w)\le \eps_{\mathrm{obj}}$.\label{item_gap33}
	\end{enumerate}
\end{definition}

\noindent In Definition \ref{eq accuracy requirement clp}, $\eps \in \mathbb{R}^3_{++}$ denotes the tolerance of three kinds of target error requirements, namely the distance to constraints $\eps_{\mathrm{cons}}$, duality gap $\eps_{\mathrm{gap}}$, and objectives error $\eps_{\mathrm{obj}}$. Recalling the definitions of $\lambda_{\max}$, $\lambda_{\min}$, and $\kappa$ from \eqref{eq  def lamdab min max}, we now state our main computational guarantee for Algorithm \ref{alg: PDHG with restarts} (rPDHG).

\begin{theorem}\label{thm overall complexity clp}
	Under Assumption \ref{assump:striclyfeasible}, suppose that $c\in\operatorname{Null}(A)$ and Algorithm \ref{alg: PDHG with restarts} (rPDHG) is run starting from $z^{0,0} = (x^{0,0},y^{0,0} ) = (0,0)$ using the $\beta$-restart condition with $\beta := 1/e$, and the step-sizes are chosen as follows:
	\begin{equation}\label{eq:stepsize_in_theorem}
		\tau = \frac{1}{\kappa}  \ \ \ \mathrm{and} \ \ \  \sigma = \frac{1}{\lambda_{\max}\lambda_{\min}} \ .
	\end{equation}
	Given $\eps := (\eps_{\mathrm{cons}}, \eps_{\mathrm{gap}},\eps_{\mathrm{obj}})\in\mathbb{R}_{++}^3$, let $T$ be the total number of \textsc{OnePDHG} iterations that are run in order to obtain $n$ for which $w^{n,0}=(x^{n,0},s^{n,0})$ satisfies the $\eps$-tolerance requirement (Definition \ref{eq accuracy requirement clp}).
	Then
	\begin{equation}\label{eq overall complexity}
		T \le \inf_{\delta > 0}  \ T_\delta := 190\kappa \cdot \frac{D_\delta}{r_\delta}  \cdot \Big[\ln\big(33\kappa \cdot \dist(0,\calW^\star) \big) + \ln\Big(\frac{1}{\err}\Big) \, \Big]
		+ \frac{50\kappa \cdot d^H_\delta}{\err}	 \ ,
	\end{equation}
	in which $\err$ is the following (weighted) minimum of the target tolerance values $\eps$:
	\begin{equation}\label{def eq calN}
		\err := \min\left\{\eps_{\mathrm{cons}}, \
		\tfrac{\sqrt{2}}{4 \cdot \dist(0,\calW^\star)}\cdot \eps_{\mathrm{gap}}, \
		\tfrac{1}{14}  \left( \displaystyle\sup_{\gamma  > 0} \tfrac{ r_\gamma}{\gamma} \right) \cdot \eps_{\mathrm{obj}}
		\right\} \ .
	\end{equation}
\end{theorem}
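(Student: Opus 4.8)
The plan is to combine a restart-driven linear-convergence analysis with a single final sublinear phase, so that the two summands of $T_\delta$ arise respectively from these two phases, and then to take the infimum over $\delta$ at the very end. The backbone is a ``sharpness up to additive error $d^H_\delta$'' inequality for the restart iterates, of the form
\[
\dist_M(z^{n,0}, \calZ^\star) \;\le\; C_1\,\kappa\,\frac{D_\delta}{r_\delta}\,\rho\big(\|z^{n,0}-z^{n-1,0}\|_M; z^{n,0}\big) \;+\; C_2\, d^H_\delta \ .
\]
To derive it I would first apply Lemma \ref{lm: convergence of PHDG without restart} to pass from the normalized duality gap $\rho$ to the three residuals $\dist(w^{n,0},V)$, $\dist(w^{n,0},K)$, and $\gap(w^{n,0})$, where $w^{n,0}=(x^{n,0},s^{n,0})$. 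I would then invoke the error bound of Lemma \ref{lm error bound R r}: after correcting $w^{n,0}$ to a genuine witness $w\in V$ with $\gap(w)\le\delta$ (absorbing the small perturbations in $\dist(\cdot,V)$ and in the gap, which are themselves controlled by $\rho$), that lemma converts closeness to $K$ into closeness to $\calW_\delta$ at the cost of the factor $D_\delta/r_\delta$, and the triangle inequality with the Hausdorff bound $\dist(w,\calW^\star)\le\dist(w,\calW_\delta)+d^H_\delta$ produces the additive $d^H_\delta$ term. The factors of $\kappa,\sqrt\tau,\sqrt\sigma$ enter through the stepsize choice \eqref{eq:stepsize_in_theorem} and through translating between the Euclidean geometry of $w=(x,s)$ and the $M$-norm geometry of $z=(x,y)$, so a supporting technical step is a two-sided comparison between $\dist(w^{n,0},\calW^\star)$ and $\dist_M(z^{n,0},\calZ^\star)$.

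For the linear phase, I would feed this inequality into the restart machinery exactly as in \cite{applegate2023faster,xiong2023computational}. The sublinear bound of Lemma \ref{lm: original sublinear PDHG}, together with the nonexpansiveness of Lemmas \ref{lm: nonexpansive property} and \ref{lm: R in the opt gap convnergence}, guarantees that the $\beta$-restart condition \eqref{catsdogs} fires after at most $O(\kappa\,D_\delta/r_\delta)$ inner iterations \emph{provided} the current $\dist_M(z^{n,0},\calZ^\star)$ still dominates the additive term (say is at least twice $C_2 d^H_\delta$); in that regime the sharpness inequality also forces the reference normalized duality gap to shrink by the factor $\beta=1/e$ per restart. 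Hence after $N=O\big(\ln(\kappa\dist(0,\calW^\star))+\ln(1/\err)\big)$ restarts the iterate reaches the scale $\dist_M(z^{n,0},\calZ^\star)=O(d^H_\delta)$, and multiplying the per-restart cost by $N$ yields the first summand $190\kappa\,(D_\delta/r_\delta)\big[\ln(33\kappa\dist(0,\calW^\star))+\ln(1/\err)\big]$.

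Once the distance has been driven to the $O(d^H_\delta)$ scale, I would switch to a single final run and apply Corollary \ref{thm covergence result} with $z^0$ the last restart point: its three items bound $\dist(\bar w^k,V)$, $\dist(\bar w^k,K)$, and $\gap(\bar w^k)$ by $O(\kappa\,\dist_M(z^0,\calZ^\star)/k)=O(\kappa\,d^H_\delta/k)$, so $k=O(\kappa\,d^H_\delta/\err)$ iterations push all residuals below $\err$, giving the second summand $50\kappa\,d^H_\delta/\err$. It then remains to check that residuals below $\err$ certify the $\eps$-tolerance requirement of Definition \ref{eq accuracy requirement clp}: the three entries of the weighted minimum \eqref{def eq calN} are calibrated precisely for this, with $\err\le\eps_{\mathrm{cons}}$ giving item (1), the factor $\tfrac{\sqrt2}{4\dist(0,\calW^\star)}$ on $\eps_{\mathrm{gap}}$ absorbing the $\max\{r,\|z\|_M\}$ multiplier in the duality-gap bound in item (\ref{item_gap3}) of Lemma \ref{lm: convergence of PHDG without restart} (since $\max\{r,\|z\|_M\}=O(\dist(0,\calW^\star))$ by nonexpansiveness from $z^{0,0}=0$), and the factor $\tfrac{1}{14}(\sup_{\gamma>0} r_\gamma/\gamma)$ on $\eps_{\mathrm{obj}}$ being the ``reach'' constant needed to convert small residuals into a small $\eobj$ by separately bounding $|c^\top x - f^\star|$ and $|f^\star-q_0+q^\top s|$ for an infeasible $w$ through the conic geometry relating gap level to conic radius. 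Since $\delta>0$ was arbitrary, taking the infimum over $\delta$ finishes the proof.

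The main obstacle I anticipate is the first-paragraph derivation of the sharpness-with-additive-error inequality: Lemma \ref{lm error bound R r} requires a genuinely feasible witness (with $w\in V$ and $\gap(w)\le\delta$), whereas the iterate $w^{n,0}$ is only approximately feasible, so the perturbation bookkeeping — together with the $M$-norm-versus-Euclidean comparison that pins down all the $\kappa$ factors — is where the real work and the exact constants reside. The remaining restart and sublinear arguments are structurally standard, but stitching the two phases together at the $d^H_\delta$ crossover, and verifying that the geometric decrease does not stall there, still requires care.
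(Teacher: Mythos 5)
Your proposal is built on exactly the paper's core inequality --- the ``sharpness up to additive error'' bound $\dist_M(z^{n,0},\calZ^\star)\le \calL\cdot\rho+\calC$ with $\calL = O(\kappa D_\delta/r_\delta)$ and $\calC=O(\sqrt{\kappa}\, d^H_\delta)$ (the paper's Lemma \ref{thm L C}), derived via the same chain: Lemma \ref{lm: convergence of PHDG without restart} for the residuals, the error bound of Lemma \ref{lm error bound R r} with the Hausdorff triangle inequality (Lemma \ref{lm efeas bound dist}), and the $M$-norm-versus-Euclidean comparison (Lemma \ref{lm change of norm}). The translation of the three tolerance entries into a single threshold on $\rho$, including the role of $\max\{r,\|\bar z\|_M\}=O(\sqrt{\kappa}\dist(0,\calW^\star))$ for the gap and of $\sup_\gamma r_\gamma/\gamma$ for the objective error, also matches the paper (Lemmas \ref{lm use gap to bound error in w} and \ref{lm:upperbound_eobj_using_regular_errors}). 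Where you genuinely diverge is the final assembly: you split the run into a linear phase (until $\dist_M$ reaches the $O(d^H_\delta)$ scale) followed by a sublinear tail analyzed via Corollary \ref{thm covergence result}, whereas the paper proves one unified restart theorem (Theorem \ref{thm: complexity of PDHG with adaptive restart}) in which every restart loop costs at most $\tfrac{8}{\beta}(\calL+\calC/\rho^n)+1$ iterations and the $\calC/\rho^n$ contributions telescope into a geometric series $\le \calC/(\eps(1-\beta))$; the two summands of $T_\delta$ then fall out simultaneously with no crossover to manage. The paper's route buys cleaner constants and sidesteps the two issues you yourself flag: (i) there is no need to verify non-stalling at the crossover, and (ii) Algorithm \ref{alg: PDHG with restarts} never actually ``switches to a single final run'' --- it keeps restarting, so your phase-two invocation of Corollary \ref{thm covergence result} on one long averaged run does not literally describe the algorithm's trajectory. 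That step is repairable (each phase-two restart still starts within $O(\sqrt{\kappa}\,d^H_\delta)$ of $\calZ^\star$ by nonexpansiveness, and summing the per-restart costs over the geometrically decreasing $\rho^n$ reproduces the $O(\kappa d^H_\delta/\err)$ total), but once you write that repair out you have essentially rederived the paper's telescoping argument. One further piece of bookkeeping you underplay: extending the error bounds from iterates with $\gap(w)\le\delta$ to general iterates is handled in the paper by a homogeneity/convexity argument (Proposition \ref{lm:convexity_ineq}), not merely by ``absorbing small perturbations,'' and the case $\gap(w^{n,0})\ge\delta$ in Lemma \ref{thm L C} additionally uses $\dist(0,\calW^\star)\le\delta/r_\delta$ from \eqref{xcski} to fold the gap term into the $D_\delta/r_\delta$ coefficient.
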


In the statement of Theorem \ref{thm overall complexity clp} we define and use the term $\err$ purely for expositional convenience, as otherwise the bound in \eqref{eq overall complexity} would be more cumbersome to write down. We note that the three target tolerances have different multipliers inside the minimum in $\err$, namely $1$, $\tfrac{\sqrt{2}}{4 \cdot \dist(0,\calW^\star)} $ , and $\tfrac{1}{14}  \left( \sup_{\gamma  > 0} \tfrac{ r_\gamma}{\gamma} \right) $, which affect the overall complexity in \eqref{eq overall complexity}.  For example, if $\dist(0,\calW^\star)$ is small, then the impact of $\eps_{\mathrm{gap}}$ on the complexity bound is not as pronounced as it would be if $\dist(0,\calW^\star)$ is large.  And while the term $\left( \sup_{\gamma  > 0} \tfrac{ r_\gamma}{\gamma} \right) $ may look rather opaque at first glance, in fact it has the property of being nearly equal to the reciprocal of $\max_{w\in \calW^\star}\|w\|$, namely:
\begin{equation}\label{xcski} \frac{\width_K}{ \max_{w\in \calW^\star}\|w\|} \ \le \ \sup_{\gamma>0} \frac{r_\gamma}{\gamma} \ \le \ \frac{1}{\max_{w\in \calW^\star}\|w\|}  \ ,\end{equation}where $\width_K$ is the width of $K$ defined in \eqref{eq:width_K}. (The proof of \eqref{xcski} is presented in Appendix \ref{app:proof of lm r calW small}.) Generally speaking, the overall complexity bound is monotone increasing in the reciprocals of $\eps_{\mathrm{cons}}$, $\eps_{\mathrm{gap}}$, and $\eps_{\mathrm{obj}}$, and (except for the multiplier terms) the structural effect of each target tolerance requirement is similar.  Hence for the purpose of discussion we will simply refer to them in aggregate as the triplet $\eps$.

Let us now examine the overall dependence on $\err$ in \eqref{eq overall complexity}. For any $\delta >0$, $T_\delta$ depends on $\err^{-1}$ in two places: one is inside the logarithm term and the other is in the right-most term, which we will call the ``linear term'' in $\err^{-1}$. When $\err$ is not very small, then the logarithm term is the dominant term, while when $\err$ is very small the linear term is the dominant term.  Note in the bound that the logarithm term is multiplied by $\kappa \frac{D_\delta}{r_\delta}$ whereas the linear term is multiplied by $\kappa d^H_\delta$, so the notion of ``very small'' will depend on the relative magnitudes of $\frac{D_\delta}{r_\delta}$ and $d^H_\delta$.

Last of all, observe in \eqref{eq overall complexity} that $T$ is bounded above by the \underline{smallest} $T_\delta$ over $\delta >0$. The constant in front of the linear term in $T_\delta$ is $O(\kappa\cdot d_\delta^H)$, which decreases to $0$ as $\delta$ goes to $0$. However, the constant in front of the logarithm term in $T_\delta$ is $O(\kappa\cdot \frac{D_\delta}{r_\delta})$, which might go to a constant as $\delta$ goes to $0$, or might go to $\infty$ as $\delta$ goes to zero. If $\lim\inf_{\delta \searrow 0} \frac{D_\delta}{r_\delta} < \infty$ (which can happen for example if the instance is a linear program and the optimal primal-dual solution is unique), then Theorem \ref{thm overall complexity clp} implies the following linear convergence complexity.
\begin{corollary}\label{cor: linear convergence complexity clp}
	In the setting of Theorem \ref{thm overall complexity clp}, the total number of iterations $T$ is bounded above by
	$190\kappa \cdot \left(\lim\inf_{\delta \searrow 0}\frac{D_\delta}{r_\delta} \right) \cdot \left(\ln(33\kappa\cdot \dist(0,\calW^\star) ) + \ln\big(\frac{1}{\err}\big)  \right)$.
\end{corollary}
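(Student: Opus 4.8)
The plan is to obtain the corollary directly from the bound $T \le \inf_{\delta > 0} T_\delta$ of Theorem \ref{thm overall complexity clp} by passing to the limit $\delta \searrow 0$ along a carefully chosen sequence. The key structural observation I would exploit is that the quantity $\err$ defined in \eqref{def eq calN} depends only on the fixed data $\eps$, $\dist(0,\calW^\star)$, and $\sup_{\gamma > 0} r_\gamma/\gamma$, and in particular does \emph{not} depend on $\delta$. Writing the constant $C := 190\kappa\big[\ln(33\kappa\cdot \dist(0,\calW^\star)) + \ln(1/\err)\big]$, I would then split the bound as $T_\delta = C\cdot \frac{D_\delta}{r_\delta} + \frac{50\kappa\, d^H_\delta}{\err}$, so that the $\delta$-dependence of the ``logarithm term'' enters only through $D_\delta/r_\delta$ and that of the ``linear term'' only through $d^H_\delta$.

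First I would recall from the discussion following Definition \ref{def distance to optima} that as $\delta \searrow 0$ the sublevel sets $\calW_\delta$ shrink down to $\calW^\star$, so that $d^H_\delta \to 0$; since $\err$ is held fixed, the linear term $\frac{50\kappa\, d^H_\delta}{\err}$ therefore vanishes in the limit. Next, invoking the hypothesis $\lim\inf_{\delta \searrow 0}\frac{D_\delta}{r_\delta} < \infty$, I would choose a sequence $\delta_j \searrow 0$ along which $\frac{D_{\delta_j}}{r_{\delta_j}} \to \lim\inf_{\delta \searrow 0}\frac{D_\delta}{r_\delta}$. Both pieces of $T_{\delta_j}$ then converge along this sequence, giving
\[
\lim_{j\to\infty} T_{\delta_j} = C\cdot \lim\inf_{\delta\searrow 0}\frac{D_\delta}{r_\delta} \ .
\]
Finally, since $T \le \inf_{\delta>0} T_\delta \le \lim_{j\to\infty} T_{\delta_j}$, substituting the value of $C$ produces exactly the asserted bound.

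There is no substantive obstacle in this argument: its entire content is the observation that the only factor in $T_\delta$ that can fail to stay bounded as $\delta \searrow 0$ is the coefficient $D_\delta/r_\delta$ of the logarithm term, while the linear term automatically decays because $d^H_\delta \to 0$ with $\err$ fixed. The one point deserving mild care is to take the limit \emph{along the $\lim\inf$-achieving sequence} for $D_\delta/r_\delta$ rather than a two-sided limit (which need not exist), so that the vanishing of the linear term can be combined with the convergence of the logarithm-term coefficient; this is why I would use $\inf_{\delta>0}T_\delta \le \lim_{j\to\infty}T_{\delta_j}$ rather than attempting to evaluate a genuine limit of $T_\delta$.
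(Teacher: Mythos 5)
Your proposal is correct and is essentially the argument the paper itself relies on: the discussion immediately preceding the corollary observes that $T \le \inf_{\delta>0} T_\delta$, that the coefficient $50\kappa\, d^H_\delta/\err$ of the linear term vanishes as $\delta \searrow 0$ (with $\err$ independent of $\delta$), and that only the factor $D_\delta/r_\delta$ in the logarithm term survives in the limit. Your only addition is to make the limiting step explicit by passing along a $\liminf$-achieving sequence, which is exactly the right way to formalize what the paper leaves implicit.
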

\noindent Of course Corollary \ref{cor: linear convergence complexity clp} might itself be too conservative, especially when the target tolerance values are not very small.

To illustrate the above discussion, consider the following linear programming family of instances parameterized by $\nu \ge 0$:
\begin{equation}\tag{$P_\nu$}\label{pro:example_lp_validate}
	\begin{array}{c}
		\min_{x = (x_1,x_2,x_3) \in \mathbb{R}^3_+} \frac{2 + \nu }{10}\cdot x_1 + x_2 + (1 + \nu)x_3 \ \ \text{s.t. } -10x_1 + x_2 + x_3  = 1  \ ,
	\end{array}
\end{equation} and here let us choose $\nu = 10^{-4}$.  Figure \ref{fig:validation} shows the values of the ratio $D_\delta/ r_\delta$ and the Hausdorff distance $d^H_\delta$ as a function of $\delta$ for different sublevel set values. Figure \ref{fig:validation} illustrates that both of these measures can exhibit significant variation.
\begin{figure}[htbp]
	\centering
	\includegraphics[width=0.65\textwidth]{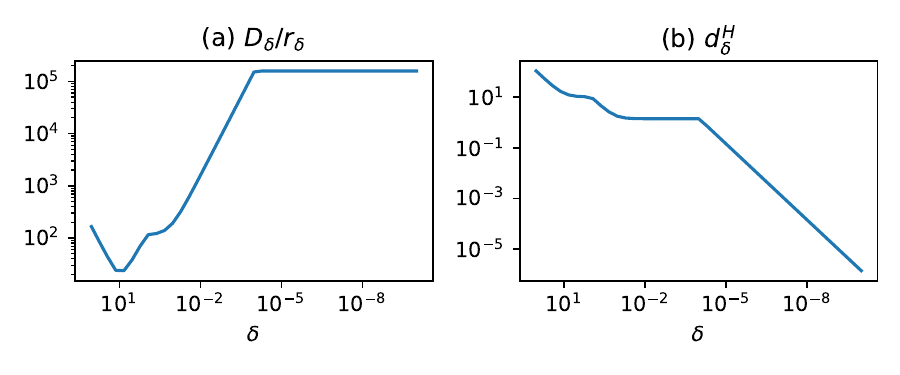}\vspace{-15pt}
	\caption{Values of $D_\delta/r_\delta$ and $d_\delta^H$ as a function of $\delta$ for the linear programming instance \eqref{pro:example_lp_validate} with $\nu = 10^{-4}$. }\label{fig:validation}
\end{figure} 

Let us now examine the values of the complexity bound functions $T_\delta$ applied to the linear programming instance \eqref{pro:example_lp_validate}.  For a given scalar $\bar\eps >0$ we can set the required tolerances uniformly, namely $\eps =(\eps_{\mathrm{cons}},\eps_{\mathrm{gap}},\eps_{\mathrm{obj}}) := (\bar\eps,\bar\eps,\bar\eps ) $, and in this way the required tolerance is just a function of the scalar $\bar\eps$. The value of $\delta$ that yields the smallest value of $T_\delta$ can be different depending on the value of the tolerance $\bar\eps$. This is illustrated in Figure \ref{fig:Tdelta and actual T}. The left figure in Figure \ref{fig:Tdelta and actual T} shows plots of $T_\delta$ as a function of $\bar\eps$ for $\delta=10^{-1}$ and $\delta=10^{-8}$. One can see that $T_{10^{-1}}$ is a better bound than $T_{10^{-8}}$ for $\bar{\eps} > 10^{-5}$, and that $T_{10^{-8}}$ is a better bound than $T_{10^{-1}}$ for $\bar{\eps} < 10^{-5}$. The right figure in Figure \ref{fig:Tdelta and actual T} shows plots of the actual iteration counts of \textsc{OnePDHG} in running rPDHG on \eqref{pro:example_lp_validate} with $\nu = 10^{-4}$, and also shows the lower envelope theoretical bound \eqref{eq overall complexity}, namely $\inf_{\delta >0} T_\delta$ for  \eqref{pro:example_lp_validate} as a function of $\bar\eps$.  Here we observe that the theoretical bound $\inf_{\delta >0} T_\delta$ is consistently off by roughly a constant factor of $10^{4}$. When the target tolerance is not extremely small, such as $\bar{\eps} > 10^{-5}$, then $\inf_{\delta >0} T_\delta$ is more similar to $T_{10^{-1}}$ than $T_{10^{-8}}$, and the actual iteration count is also more closely matched by $T_{10^{-1}}$. As the target tolerance $\bar{\eps}$ becomes smaller, then $\inf_{\delta >0} T_\delta$ is more similar to $T_{10^{-8}}$ than $T_{10^{-1}}$, and points to the observation that rPDHG achieves linear convergence after a certain number of iterations. And although it is tempting to think that the linear convergence result in Corollary \ref{cor: linear convergence complexity clp} is ``better'' than sublinear convergence, this example shows that this is not necessarily the case when the target tolerance is not so small.  Indeed, the more nuanced complexity bound $\inf_{\delta > 0}T_\delta$ corresponds better to computational practice on this example.

\begin{figure}[htbp]
	\centering
	\includegraphics[width=0.65\textwidth]{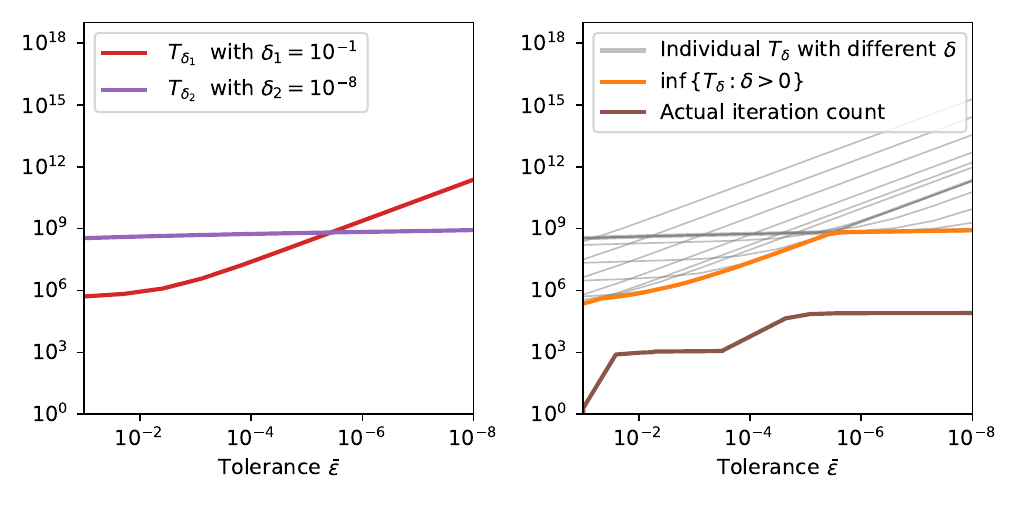}\vspace{-10pt}
	\caption{(Left) Plots of $T_\delta$ for $\delta=10^{-1}$ and $\delta=10^{-8}$, as a function of the required tolerance $\eps := (\eps_{\mathrm{cons}}, \eps_{\mathrm{gap}},\eps_{\mathrm{obj}}) := (\bar\eps,\bar\eps,\bar\eps ) $ for the linear programming instance \eqref{pro:example_lp_validate} for $\nu=10^{-4}$. (Right) Plots of the actual iteration counts of \textsc{OnePDHG}, and the lower envelope $\inf_{\delta >0} T_\delta$.}\label{fig:Tdelta and actual T}
\end{figure}

We also point out that $d^H_\delta \rightarrow 0$ as $\delta \rightarrow 0$, and hence $d^H_\delta$ can be made arbitrarily small by choosing $\delta$ sufficiently small.  However, the ratio $D_\delta/r_\delta$ might be exceedingly large (especially for small values of $\delta$) and so might be $\inf_{\delta>0}\frac{D_\delta}{r_\delta}$ . Therefore, in such cases, the logarithmic term might itself be the dominant term in the iteration bound in $\inf_{\delta > 0}T_\delta$.

In the case of linear programming instances, if there are multiple optimal solutions then $\lim_{\delta \searrow 0}\frac{D_\delta}{r_\delta} = \infty$, and Corollary \ref{cor: linear convergence complexity clp} does not itself yield a linear convergence bound.  (However, linear convergence is guaranteed through previous analyses; see \cite{applegate2023faster,xiong2023computational}.) Moreover, we will show a different linear convergence result for linear programming instances using the geometry of sublevel sets in Section \ref{sec:complexity_lp}.

Note that all terms of the complexity bound \eqref{eq overall complexity} are linear in $\kappa$.  This suggests a strategy where we use a preconditioner $D$ on the linear constraints, replacing $Ax=b$ by $DAx = Db$, so that $\sigma^+_{\max}(DA)/\sigma^+_{\min}(DA)$ becomes smaller. Except for $\kappa$, such a preconditioner does not impact any of the quantities in the complexity bound \eqref{eq overall complexity}. For example, using $D = (AA^\top)^{-1/2}$ yields $\kappa=1$ without changing any other terms in \eqref{eq overall complexity}. Furthermore, getting the optimal diagonal preconditioner $D$ is essentially solving a semidefinite program \cite{qu2024optimal}, and a scalable approximation method is available in \cite{gao2023scalable}.

Finally, we remark that the step-sizes used in \eqref{eq:stepsize_in_theorem} are relatively easy to compute to reasonable accuracy so long as it is inexpensive to estimate the largest and smallest singular values of $A$. Furthermore, so long as the step-sizes satisfy \eqref{eq:general_stepsize}, using step-sizes that only approximately adhere to \eqref{eq:stepsize_in_theorem} only moderately changes the computational bounds in Theorem \ref{thm overall complexity clp}.

The proof of Theorem \ref{thm overall complexity clp} is based on demonstrating an upper bound on the number of \textsc{OnePDHG} iterations needed to ensure a sufficient decrease in the normalized duality gap \eqref{sunny}.  Towards the proof thereof, we first present the following more general theorem which shows that the number of \textsc{OnePDHG} iterations of rPDHG can be appropriately bounded so long as the normalized duality gap provides an upper bound on the distance to optimal solutions.

\begin{theorem}\label{thm: complexity of PDHG with adaptive restart}Under Assumption \ref{assump:striclyfeasible}, suppose that Algorithm \ref{alg: PDHG with restarts} (rPDHG) is run starting from $z^{0,0} = (x^{0,0},y^{0,0} )$ using the $\beta$-restart condition with $\beta := 1/e$. Suppose further that there exists $\condL \ge 1$ and $ \condC \ge 0$ such that
	\begin{equation}\label{eq restart L C condition}
		\dist_M(z^{n,0}, \calZ^\star) \le \rho(\|z^{n,0} - z^{n-1,0} \|_M;z^{n,0}) \cdot \condL + \condC \
	\end{equation}
	holds for all $n \ge 1$. Let $T$ be the total number of \textsc{OnePDHG} iterations that are run in order to obtain the first outer iteration $N$ that satisfies $\rho(\|{z}^{N,0} - z^{N-1,0}\|_M; {z}^{N,0})\le \eps$.  Then

	\begin{equation}\label{eq all complexity 6}
		T \ \le \ 23\calL  \cdot \ln\left(\frac{23 \dist_M(z^{0,0},\calZ^\star)}{\eps}\right)  +  \frac{35\calC}{\eps} \ .
	\end{equation}
\end{theorem}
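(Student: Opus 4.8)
The plan is to track the sequence of normalized duality gaps measured at the successive restart points, show this sequence decays geometrically, bound the length of each inner loop by combining the sublinear convergence of PDHG (Lemma \ref{lm: original sublinear PDHG}) with the hypothesis \eqref{eq restart L C condition}, and then sum.

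First I would set up notation: for $n \ge 1$ write $\rho_n := \rho(\|z^{n,0} - z^{n-1,0}\|_M; z^{n,0})$, the normalized duality gap at the $n$-th restart point relative to the previous one. The $\beta$-restart condition \eqref{catsdogs} states exactly that the inner loop of outer iteration $n$ stops at the first index $k_n$ with $\rho(\|\bar z^{n,k_n} - z^{n,0}\|_M; \bar z^{n,k_n}) \le \beta \rho_n$; since $z^{n+1,0} = \bar z^{n,k_n}$, this immediately gives the geometric decay $\rho_{n+1} \le \beta \rho_n$ for all $n \ge 1$, hence $\rho_n \le \beta^{n-1}\rho_1$. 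Applying Lemma \ref{lm: original sublinear PDHG} at the $n=0$ restart with $k=1$ bounds the first gap, $\rho_1 \le 8\,\dist_M(z^{0,0},\calZ^\star)$. Since $\beta = 1/e$, the first outer index $N$ with $\rho_N \le \eps$ satisfies $N \le 2 + \ln(\rho_1/\eps) \le 2 + \ln\big(8\,\dist_M(z^{0,0},\calZ^\star)/\eps\big)$.

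Next I would bound the number of inner iterations $k_n$ in outer loop $n \ge 1$. Since each $x$-iterate lies in $K_p$ (the $P_{K_p}$ projections and the subsequent averaging preserve $K_p$), Lemma \ref{lm: original sublinear PDHG} applied with starting point $z^{n,0}$ gives $\rho(\|\bar z^{n,k}-z^{n,0}\|_M; \bar z^{n,k}) \le 8\,\dist_M(z^{n,0},\calZ^\star)/k$, so the restart condition is certainly met once $k \ge 8\,\dist_M(z^{n,0},\calZ^\star)/(\beta\rho_n)$; that is, $k_n \le \lceil 8\,\dist_M(z^{n,0},\calZ^\star)/(\beta\rho_n)\rceil$. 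Invoking the hypothesis $\dist_M(z^{n,0},\calZ^\star)\le \condL\rho_n + \condC$ and using $\beta=1/e$, this becomes $k_n \le \lceil 8e\condL + 8e\condC/\rho_n\rceil \le 8e\condL + 8e\condC/\rho_n + 1$.

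Finally I would sum $T = k_0 + \sum_{n=1}^{N-1}k_n$ with $k_0 = 1$. The terms proportional to $\condL$ together with the ``$+1$'' ceiling slack aggregate to roughly $N(8e\condL+1) \le 23\condL\,N$, and substituting the bound on $N$ (using $1 + \ln(8x) = \ln(8ex)$ and $8e \le 23$) produces the logarithmic term $23\condL\ln\big(23\,\dist_M(z^{0,0},\calZ^\star)/\eps\big)$. The $\condC$ contribution is $8e\condC\sum_{n=1}^{N-1}1/\rho_n$. Here is the crux: because the $\rho_n$ decay geometrically with ratio at most $\beta=1/e$, their reciprocals grow geometrically and the sum is dominated by its last term. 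Using $\rho_n \ge \rho_{N-1}\,e^{\,N-1-n}$ together with $\rho_{N-1} > \eps$ (which holds since $N$ is the \emph{first} index with $\rho_N \le \eps$) bounds the sum by a convergent geometric series, $\sum_{n=1}^{N-1}1/\rho_n \le \frac{e}{(e-1)\eps}$, yielding the linear term $\frac{8e^2}{e-1}\cdot\frac{\condC}{\eps} \le \frac{35\condC}{\eps}$. The main obstacle is exactly this last summation and the surrounding constant bookkeeping: one must exploit the geometric decay of $\rho_n$ in two opposing ways — to bound the number of outer iterations $N$ (needing $\rho_n$ eventually small) and to bound $\sum 1/\rho_n$ (needing $\rho_n$ not too small, via $\rho_{N-1}>\eps$) — and then verify that the accumulated factors of $e$, $\frac{e}{e-1}$, and the ceiling slack collapse into the clean constants $23$ and $35$. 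The degenerate case $\rho_1 \le \eps$, giving $N=1$ and $T=1$, should be checked separately.
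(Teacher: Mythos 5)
Your proposal is correct and follows essentially the same route as the paper's proof: bound each inner-loop length via Lemma \ref{lm: original sublinear PDHG} combined with hypothesis \eqref{eq restart L C condition}, bound $N$ via the geometric decay $\rho_{n+1}\le\beta\rho_n$ starting from $\rho_1\le 8\,\dist_M(z^{0,0},\calZ^\star)$, and control $\sum_{n=1}^{N-1}1/\rho_n$ by a geometric series anchored at $\rho_{N-1}>\eps$. The only loose point is the final constant bookkeeping for the logarithmic term (your $23\condL\cdot N$ with $N\le 2+\ln(8x/\eps)$ gives $\ln(8e^2x/\eps)$ rather than $\ln(23x/\eps)$ directly), but this tightens exactly as in the paper by keeping the $\condL$ term multiplied by $N-1$ and folding the residual additive $24\condL$ into the logarithm.
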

\begin{proof}
	We first derive an upper bound $k_n$ on the number of iterations $k$ between two consecutive restarts $z^{n, 0}$ and $z^{n+1, 0}$.  For $ n =0$, it follows trivially from Definition \ref{def beta restart} that $k_0 := 1$. For $n \ge 1$ and $k \ge 1$ it holds from Lemma \ref{lm: original sublinear PDHG} that
	\begin{equation}\label{eq thm: complexity of PDHG with adaptive restart 1}
		\rho(\|\bar{z}^{n,k}-z^{n,0}\|_M;\bar{z}^{n,k}) \le \frac{8\dist_M(z^{n,0} ,\calZ^\star)}{k}\ .
	\end{equation}
	If $\rho(\|z^{n,0}-z^{n-1,0}\|_M; z^{n,0}) = 0$ it follows from Lemma \ref{lm: convergence of PHDG without restart} that $z^{n,0} \in \calZ^\star$ which then implies that $z^{n,k} = z^{n,0}$ for all $k\ge 1$, and so in particular $k_n = 1$.  If $\rho(\|z^{n,0}-z^{n-1,0}\|_M; z^{n,0}) \neq 0$, then dividing both sides of \eqref{eq thm: complexity of PDHG with adaptive restart 1} by $\rho(\|z^{n,0}-z^{n-1,0}\|_M; z^{n,0})$ yields:
	\begin{equation}\label{eq thm: complexity of PDHG with adaptive restart 2}
		\frac{	\rho(\|\bar{z}^{n,k}-z^{n,0}\|_M;\bar{z}^{n,k})}{\rho(\|z^{n,0}-z^{n-1,0}\|_M; z^{n,0})} \le \frac{8}{k}  \cdot 	\frac{\dist_M(z^{n,0} ,\calZ^\star)}{\rho(\|z^{n,0}-z^{n-1,0}\|_M; z^{n,0})}  \ .
	\end{equation}
	Furthermore, \eqref{eq restart L C condition} implies:
	\begin{equation}\label{miggie}
		\frac{\dist_M(z^{n,0} ,\calZ^\star)}{\rho(\|z^{n,0}-z^{n-1,0}\|_M; z^{n,0})} \le  \condL + \frac{\condC}{\rho(\|z^{n,0} - z^{n-1,0} \|_M;z^{n,0})} \ .
	\end{equation}
	Let us define: $ \bar k_n :=    \frac{8}{\beta} \cdot \left(
		\condL + \frac{\condC}{\rho(\|z^{n,0} - z^{n-1,0} \|_M;z^{n,0})}
		\right) $.
	It then follows from \eqref{eq thm: complexity of PDHG with adaptive restart 2} and \eqref{miggie} that
	the restart condition \eqref{catsdogs} is satisfied for all $k \ge \bar k_n$, whereby
	\begin{equation}\label{eq restart iteration condition}k_n := \frac{8}{\beta} \cdot \left(
		\condL + \frac{\condC}{\rho(\|z^{n,0} - z^{n-1,0} \|_M;z^{n,0})}
		\right)  + 1  \end{equation} is an upper bound on the number of iterations between the consecutive restarts $z^{n, 0}$ and $z^{n+1, 0}$.

	Next we examine the first outer iteration $N$ that satisfies $\rho(\|{z}^{N,0} - z^{N-1,0}\|_M; {z}^{N,0})\le \eps$.  It follows from the $\beta$-restart condition \eqref{catsdogs} for all $n \ge 1$ that
	\begin{equation}\label{eq all complexity 1}
		\rho(\|{z}^{n,0} - z^{n-1,0}\|_M; {z}^{n,0}) \le \beta^{n-1} \cdot \rho(\|{z}^{1,0} - z^{0,0}\|_M; {z}^{1,0}) \ ,
	\end{equation}
	which in combination with Lemma \ref{lm: original sublinear PDHG} (and using $k=1$) yields:
	\begin{equation}\label{eq all complexity 2}
		\rho(\|{z}^{n,0} - z^{n-1,0}\|_M; {z}^{n,0}) \le 8 \beta^{n-1} \cdot \dist_M(z^{0,0}, \calZ^\star) \ .
	\end{equation}
	Let us define $\bar N := 1+ \frac{1}{\ln( 1/\beta) } \cdot \ln\left( \frac{8 \dist_M(z^{0,0}, \calZ^\star)}{ \eps} \right) $.  It follows from \eqref{eq all complexity 2} that $\rho(\|{z}^{n,0} - z^{n-1,0}\|_M; {z}^{n,0}) \le \eps$ for all $n \ge \bar N$, from which it follows that
	\begin{equation}\label{eq all complexity 3}
		N \ \le \  \frac{1}{\ln( 1/\beta) } \cdot \ln\left( \frac{8  \dist_M(z^{0,0}, \calZ^\star)}{ \eps} \right) + 2 \ . 	\end{equation}The total number of \textsc{OnePDHG} iterations $T$ satisfies
	\begin{equation}\label{eq all complexity 4}
		\begin{aligned}
			T \le \sum_{n=0}^{N-1} k_n  = \  & 1  + \sum_{n = 1}^{N-1}\left(
			\frac{8}{\beta} \cdot \left(
				\condL + \frac{\condC}{\rho(\|z^{n,0} - z^{n-1,0} \|_M;z^{n,0})}
				\right) + 1
			\right)                                                                                                                      \\
			\le \                            & N  + \frac{8\condL (N-1)}{\beta} +  \frac{8\condC}{\beta}  \cdot \sum_{n = 1}^{N-1}\left(
			\frac{1}{\rho(\|z^{n,0} - z^{n-1,0} \|_M;z^{n,0})}
			\right) \ .
		\end{aligned}
	\end{equation}
	We have from the definition of $N$ that $\rho(\|{z}^{N-1,0} - z^{N-2,0}\|_M; {z}^{N-1,0}) > \eps$.  Also, the $\beta$-restart condition \eqref{catsdogs} implies for all $n \le N-2$ that:
	$$
		\frac{	1}{\rho(\|z^{n,0} - z^{n-1,0}\|_M; z^{n,0})} \le 	 \frac{\beta}{\rho(\|z^{n+1,0} - z^{n,0}\|_M; z^{n+1,0})}  \le \cdots \le
		\frac{\beta^{N - 1 -n}}{\rho(\|{z}^{N-1,0} - z^{N-2,0}\|_M; {z}^{N-1,0})} \ ,
	$$
	Therefore
	\begin{equation}\label{eq all inverse gap}
		\sum_{n = 1}^{N-1}\left(
		\frac{1}{\rho(\|z^{n,0} - z^{n-1,0} \|_M;z^{n,0})}
		\right) < \frac{1}{\eps} \cdot \left(
		1 + \beta^{1} + \beta^{2} + \cdots
		\right) = \frac{1}{\eps(1-\beta)} \ .
	\end{equation}
	Substituting \eqref{eq all inverse gap}  into \eqref{eq all complexity 4} yields
	\begin{equation}\label{eq all complexity 5}
		T \le  N  + \frac{8\condL (N-1)}{\beta} +  \frac{8\condC}{\beta}  \cdot\frac{1}{\eps(1-\beta)} =  \left(1 + \frac{8 \calL}{\beta}\right) N - \frac{8 \calL}{\beta} + \frac{8 \calC}{\beta (1- \beta)}\cdot \frac{1}{\eps} \ ,
	\end{equation}
	and then using \eqref{eq all complexity 3} we arrive at
	$$
		T \le 	\left(1 + \frac{8 \calL}{\beta} \right)\cdot \frac{1}{\ln(1/\beta)} \cdot \ln\left(\frac{8 \dist_M(z^{0,0},\calZ^\star)}{\eps}\right) + \left(2 + \frac{16 \calL}{\beta} \right) - \frac{8\calL}{\beta}
		+ \frac{8 \calC}{\beta (1-\beta)}\cdot \frac{1}{\eps} \ .
	$$
	Noting that $\condL \ge 1$ and $\beta = 1/e \approx     0.3679$, the above bound can be relaxed slightly to yield 
	\begin{equation}\label{eq all complexity 7}
		T \ \le \ 23\calL  \cdot \ln\left(\frac{8 \dist_M(z^{0,0},\calZ^\star)}{\eps}\right)  + 24 \calL
		+  \frac{35\calC}{\eps} \ .
	\end{equation}
	Finally, notice that the middle term above satisfies $24 \calL \le 23\calL \cdot \ln\big(e^{24/23}\big)$ and substituting it back into \eqref{eq all complexity 7} yields  $T \le \ 23\calL  \cdot \ln\left(\frac{8 \cdot e^{24/23} \cdot \dist_M(z^{0,0},\calZ^\star)}{\eps}\right) + \frac{35\calC}{\eps}$, which then simplifies to \eqref{eq all complexity 6}.
\end{proof}

Note that if there exist $\calL < \infty$ and $\calC =0$ for which \eqref{eq restart L C condition} holds for all $n \ge 1$, then Theorem \ref{thm: complexity of PDHG with adaptive restart} states that the total number of \textsc{OnePDHG} iterations required to obtain a normalized duality gap smaller than $\eps$ is bounded above by $O(\condL \cdot \ln(\dist_M(z^{0,0}, \calZ^\star)/\eps))$.

\subsubsection{Some lemmas useful for the proof of Theorem \ref{thm overall complexity clp}}

In this subsection we present some lemmas that we will use in the proof of Theorem \ref{thm overall complexity clp} in Section \ref{gillianwelch}.

Notice that the result in Theorem \ref{thm: complexity of PDHG with adaptive restart} involves the normalized duality gap instead of more standard optimization tolerance quantities such as KKT error tolerances or distances to feasibility sets. In order to prove Theorem \ref{thm overall complexity clp} we will first need to translate between norms and tolerance metrics. Let
\begin{equation}\label{icemelt}
	c_0 := \max\left\{
	\frac{1}{\sqrt{\sigma} \lambda_{\min}  }, \ \frac{1}{\sqrt{\tau}}
	\right\} \ .
\end{equation}
We first prove the following lemma on the relation between two different norms.
\begin{lemma}\label{lm change of norm}
	Suppose that $\tau,\sigma$ satisfy \eqref{eq:general_stepsize}. For any convex sets $\calX$ and $\calS$ in $\mathbb{R}^n$, let $\calY:= \{y:c-  A^\top y \in \calS\}$. Given any point $z :=(x,y) \in \mathbb{R}^{n+m}$, let $s:= c - A^\top y$ and $w := (x,s) $.  Then it holds that
	\begin{equation}\label{eqlm change of norm}
		\dist_M(z, \calX \times \calY) \le \sqrt{2}c_0 \cdot \dist(w,\calX \times\calS) \ .
	\end{equation}
\end{lemma}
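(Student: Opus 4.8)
The plan is to produce a single point $(\hat x,\hat y)\in\calX\times\calY$ whose $M$-distance to $z$ is at most $\sqrt2\,c_0\cdot\dist(w,\calX\times\calS)$; since $\dist_M(z,\calX\times\calY)\le\|z-(\hat x,\hat y)\|_M$, this suffices. First I would project in the $w$-space: set $\hat x:=P_\calX(x)$ and $\hat s:=P_\calS(s)$, and write $\Delta x:=x-\hat x$ and $\Delta s:=s-\hat s$, so that $\dist(w,\calX\times\calS)^2=\|\Delta x\|^2+\|\Delta s\|^2$ because $\calX\times\calS$ is a product set.

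The main step — and the place where the linear-algebraic structure enters — is to build $\hat y$ so that simultaneously (i) $(\hat x,\hat y)\in\calX\times\calY$, i.e. $c-A^\top\hat y=\hat s\in\calS$, and (ii) the increment $\Delta y:=y-\hat y$ is controlled by $\|\Delta s\|$. Since $s=c-A^\top y\in V_d=c+\operatorname{Im}(A^\top)$ and $\hat s\in V_d$ as well (this reachability, coming from the $s$-components lying in the dual affine subspace $V_d$, is exactly the crux), the residual $\Delta s=s-\hat s$ lies in $\operatorname{Im}(A^\top)$, so the system $A^\top\hat y=c-\hat s$ is solvable; among its solutions I would pick the one whose $\operatorname{Null}(A^\top)$-component matches that of $y$, forcing $\Delta y\in\operatorname{Im}(A)$. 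This choice delivers the two facts I need: $A^\top\Delta y=(c-s)-(c-\hat s)=\hat s-s=-\Delta s$, and, because $\Delta y\in\operatorname{Im}(A)$, the singular-value bound $\|\Delta y\|\le\tfrac{1}{\lambda_{\min}}\|A^\top\Delta y\|=\tfrac{1}{\lambda_{\min}}\|\Delta s\|$. I expect securing feasibility of $\hat y$ and norm control at once to be the hard part, since it relies on $\Delta s\in\operatorname{Im}(A^\top)$ together with the restriction $\Delta y\in\operatorname{Im}(A)$.

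With $\hat y$ fixed I would expand the $M$-norm from the definition of $M$ in \eqref{robsummer}: $\|z-(\hat x,\hat y)\|_M^2=\tfrac1\tau\|\Delta x\|^2-2\Delta x^\top A^\top\Delta y+\tfrac1\sigma\|\Delta y\|^2$, and then substitute $A^\top\Delta y=-\Delta s$ to rewrite the cross term, obtaining $\tfrac1\tau\|\Delta x\|^2+2\Delta x^\top\Delta s+\tfrac1\sigma\|\Delta y\|^2$. Finally I would bound the three terms using $c_0$ from \eqref{icemelt}: clearly $\tfrac1\tau\le c_0^2$, and $\tfrac1\sigma\|\Delta y\|^2\le\tfrac{1}{\sigma\lambda_{\min}^2}\|\Delta s\|^2\le c_0^2\|\Delta s\|^2$. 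For the cross term I would first record that $c_0\ge1$ — indeed $c_0^4\ge\tfrac{1}{\tau\sigma\lambda_{\min}^2}\ge\tfrac{\lambda_{\max}^2}{\lambda_{\min}^2}=\kappa^2\ge1$ by the step-size condition \eqref{eq:general_stepsize} — so that $2\Delta x^\top\Delta s\le\|\Delta x\|^2+\|\Delta s\|^2\le c_0^2(\|\Delta x\|^2+\|\Delta s\|^2)$. Summing the three bounds gives $\|z-(\hat x,\hat y)\|_M^2\le2c_0^2(\|\Delta x\|^2+\|\Delta s\|^2)=2c_0^2\,\dist(w,\calX\times\calS)^2$, and taking square roots yields the claim.
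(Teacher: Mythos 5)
Your argument is correct and reaches the same constant $\sqrt{2}c_0$, but by a genuinely different route than the paper. The paper never constructs an explicit candidate in $\calX\times\calY$: it first absorbs the off-diagonal block of $M$ wholesale via the comparison $M\preceq 2N$ with $N=\operatorname{diag}(\tfrac{1}{\tau}I_n,\tfrac{1}{\sigma}I_m)$ (valid since $\tau\sigma\lambda_{\max}^2\le 1$), so that $\dist_M\le\sqrt{2}\,\dist_N$ and the product structure decouples; it then separately proves $\dist(y,\calY)\le\dist(s,\calS)/\lambda_{\min}$ by identifying $\dist(s,\calS)$ with $\dist_{AA^\top}(y,\calY)$ and running an eigendecomposition argument. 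You instead build the feasible point $(\hat x,\hat y)$ directly from the $w$-space projections, expand $\|\cdot\|_M$ exactly, and neutralize the cross term through the identity $A^\top\Delta y=-\Delta s$ together with AM--GM and $c_0\ge 1$. Your choice of $\hat y$ with $\Delta y\in\operatorname{Im}(A)$ is the same device the paper uses internally (its decomposition $y-\hat y=u+v$ with $u\in\operatorname{Im}(A)$, $v\in\operatorname{Null}(A^\top)$), so the linear-algebraic core is shared; what your version buys is a self-contained one-pass computation that makes visible exactly where each factor of $c_0$ and the $\sqrt 2$ come from, at the price of having to track the cross term explicitly rather than hiding it in $M\preceq 2N$.

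One caveat: your parenthetical assertion that $\hat s=P_\calS(s)\in V_d$ is not a consequence of $\calS$ being an arbitrary convex set; it requires $\calS\subseteq V_d$ (or at least that the projection of $s$ onto $\calS$ lands in $c+\operatorname{Im}(A^\top)$), and this is what makes $\Delta s\in\operatorname{Im}(A^\top)$ and hence the solvability of $A^\top\hat y=c-\hat s$ go through. This is not a defect specific to your write-up --- the paper's chain of equalities $\dist(s,\calS)=\dist(s,\,c-A^\top(\calY))$ quietly relies on the same fact, and in every application of the lemma $\calS$ is a subset of $V_d$ (e.g.\ $\calS=\calS^\star$) --- but since you correctly identified it as the crux, it deserves an explicit hypothesis or a sentence of justification rather than a parenthesis.
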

\begin{proof}
	Define:
	$$
		\left\|(x,y)\right\|_N := \sqrt{\frac{1}{\tau} \|x\|^2 + \frac{1}{\sigma} \|y\|^2} \ \text{ where } \
		N:= 	\begin{pmatrix}
			\frac{1}{\tau}I_n &                     \\
			                  & \frac{1}{\sigma}I_m
		\end{pmatrix} \ .
	$$
	Then $2N - M \in \mathbb{S}^{m+n}_+$ because $1/(\tau \sigma) \ge \lambda_{\max}^2$ due to \eqref{eq:general_stepsize}, and therefore
	$\|z\|_M \le \sqrt{2}\|z\|_N $ for any $z$. This means
	\begin{equation}\label{ineq key norm 1}
		\dist_M(z, \calX \times \calY) \le \sqrt{2}  \cdot \dist_N(z,\calX \times\calY) =
		\sqrt{2}\cdot \sqrt{\frac{1}{\tau}\cdot\dist(x,\calX)^2 + \frac{1}{\sigma}\cdot\dist(y,\calY)^2}
		\ .
	\end{equation}

	Next we claim that
	\begin{equation}\label{ineq key norm 2}
		\dist(y,\calY) \le \dist(s,\calS)\cdot \frac{1}{\lambda_{\min}} \ .
	\end{equation}
	Towards establishing \eqref{ineq key norm 2}, first observe that:
	$$
		\dist(s,\calS) = \dist(c - A^\top y, \calS) = \dist(c - A^\top y, c - A^\top (\calY)) = \dist(A^\top y, A^\top (\calY)) = \dist_{AA^\top} (y,\calY) \ .
	$$
	Let $AA^\top = PD^2P^\top$ denote the thin eigendecomposition of $AA^\top$, so that $P^\top P=I$ and $D$ is the diagonal matrix of positive singular values of $A$, whereby $D_{ii} \ge \min_j D_{jj} = \lambda_{\min}$ for each $i$.  Now let $\hat{y}$ solve the shortest distance problem from $y$ to $\calY$ in the norm $\| \cdot\|_{AA^\top}$, hence $\hat{y} \in \calY$ and  $\dist_{AA^\top} (y,\calY) = \| y - \hat{y}\|_{AA^\top}$, and let us write $y-\hat{y} = u + v $ where $u \in \operatorname{Im}(A)$ and $v \in \operatorname{Null}(A^\top)$. Then setting $\tilde y = \hat{y} + v$ and noting that $\tilde y \in \calY$, we have:
	\begin{equation}\label{skylight} \dist_{AA^\top}(y,\calY) \le \|y - \tilde y\|_{AA^\top} = \|u\|_{AA^\top} \ . \end{equation}
	Next notice that since $u \in \operatorname{Im}(A) = \operatorname{Im}(AA^\top)$, there exists $\pi$ for which $u=AA^\top \pi$, and define $\lambda = D^2P^\top \pi$. It then follows that $u = P \lambda$, $\lambda = P^\top u$, and $\|u\| = \|\lambda\|$.   We therefore have:
	\begin{equation}\label{firepit}\begin{aligned}
			\dist_{AA^\top} (y,\calY)^2 & = (u+v)^\top AA^\top (u+v) \\ & = u^\top AA^\top u = \lambda^\top P^\top P D^2P^\top P\lambda = \lambda^\top D^2 \lambda \ge \lambda_{\min}^2 \|\lambda\|^2 \ ,
		\end{aligned}
	\end{equation}
	and hence
	$$
		\dist(s,\calS) = \dist_{AA^\top} (y,\calY) \ge \lambda_{\min} \|\lambda\| = \lambda_{\min} \|u\| \ge \lambda_{\min}\dist(y,\calY) \ ,
	$$
	where the second inequality uses \eqref{skylight}.  This proves \eqref{ineq key norm 2}.

	Finally, combining \eqref{ineq key norm 1} and \eqref{ineq key norm 2} we obtain
	\begin{equation}\label{ineq key norm 3}
		\dist_M(z, \calX \times \calY) \le
		\sqrt{2}\cdot \sqrt{\frac{1}{\tau}\cdot\dist(x,\calX)^2 + \frac{1}{\sigma \lambda_{\min}^2}\cdot\dist(s,\calS)^2} \le \sqrt{2}c_0\cdot \dist(w,\calX\times \calS) \ .
	\end{equation}
\end{proof}

Under the property that $ c \in \operatorname{Null}(A)$, we have the following property of the initial iterate $z^{0,0}  =(x^{0,0},y^{0,0}): =(0,0)$.
\begin{proposition}\label{lm:distance_to_optimal_initial} Suppose that $ c \in \operatorname{Null}(A)$ and the initial iterate is $(z^{0,0})  =(x^{0,0},y^{0,0}): =(0,0)$, and $w^{0,0} = (x^{0,0},s^{0,0}) = (0,c-A^\top y^{0,0})$. Then
	\begin{equation}\label{eq bound w to wstar 2}
		\dist(w^{0,0},\calW^\star) \le \dist(0,\calW^\star)   \ .
	\end{equation}
\end{proposition}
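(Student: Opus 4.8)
The claim is that $\dist(w^{0,0}, \calW^\star) \le \dist(0, \calW^\star)$, where $w^{0,0} = (0, s^{0,0})$ with $s^{0,0} = c - A^\top y^{0,0}$. Since $y^{0,0} = 0$, this reduces to $s^{0,0} = c$. Thus the point in question is $w^{0,0} = (0, c)$, and we must show $\dist((0,c), \calW^\star) \le \dist((0,0), \calW^\star)$. The plan is to exhibit, for each fixed optimal point $w^\star = (x^\star, s^\star) \in \calW^\star$, that $\|(0,c) - w^\star\| \le \|(0,0) - w^\star\|$, and then take the minimum over $w^\star$.

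Fixing $w^\star = (x^\star, s^\star)$, the squared distances split across the product $\mathbb{R}^n \times \mathbb{R}^n$. The primal blocks are identical: both points have primal component $0$, contributing $\|x^\star\|^2$ in each case. So the inequality reduces to the dual block, namely $\|c - s^\star\|^2 \le \|s^\star\|^2$. First I would expand this to the equivalent statement $\|c\|^2 - 2 c^\top s^\star \le 0$, i.e. $c^\top s^\star \ge \tfrac{1}{2}\|c\|^2$. The hard part is establishing this bound on $c^\top s^\star$, and this is where I expect to use the hypothesis $c \in \operatorname{Null}(A)$ together with dual feasibility of $s^\star$.

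The key algebraic fact should be that $c^\top s^\star = \|c\|^2$ for every optimal (indeed every dual-feasible) $s^\star$. To see this, recall from \eqref{pro: general dual clp on s} that any feasible dual cone variable satisfies $s^\star \in c + \operatorname{Im}(A^\top)$, so write $s^\star = c + A^\top v$ for some $v$. Then $c^\top s^\star = \|c\|^2 + c^\top A^\top v = \|c\|^2 + (Ac)^\top v$, and since $c \in \operatorname{Null}(A)$ we have $Ac = 0$, giving $c^\top s^\star = \|c\|^2$. This is stronger than the $\tfrac{1}{2}\|c\|^2$ bound needed, so the dual-block inequality $\|c - s^\star\|^2 = \|c\|^2 - 2\|c\|^2 + \|s^\star\|^2 = \|s^\star\|^2 - \|c\|^2 \le \|s^\star\|^2$ holds.

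Combining the blocks, for each $w^\star \in \calW^\star$ we get $\|(0,c) - w^\star\|^2 = \|x^\star\|^2 + \|s^\star\|^2 - \|c\|^2 \le \|x^\star\|^2 + \|s^\star\|^2 = \|(0,0) - w^\star\|^2$. Taking the square root and minimizing over $w^\star \in \calW^\star$ yields $\dist((0,c), \calW^\star) \le \dist((0,0), \calW^\star)$, which is exactly \eqref{eq bound w to wstar 2}. The main obstacle is recognizing that the orthogonality $c^\top A^\top v = 0$ furnished by $c \in \operatorname{Null}(A)$ makes the dual inner product $c^\top s^\star$ constant over all feasible $s^\star$; the rest is a routine Pythagorean splitting of the product-space distance.
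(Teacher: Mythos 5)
Your proof is correct and follows essentially the same route as the paper's: both reduce to the dual block and exploit the orthogonality of $c \in \operatorname{Null}(A)$ against $s^\star - c \in \operatorname{Im}(A^\top)$ via the Pythagorean identity $\|s^\star\|^2 = \|s^\star - c\|^2 + \|c\|^2$. The only cosmetic difference is that you argue pointwise over every $w^\star \in \calW^\star$ and then minimize, whereas the paper applies the same identity to the norm-minimizing dual optimal solution $\hat s$.
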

\begin{proof}
	Let $\calX^\star$ and $\calS^\star$ denote the optimal primal and dual optimal solution sets for \eqref{pro: general primal clp} and \eqref{pro: general dual clp on s}, respectively.  Then $(x^{0,0},y^{0,0}) = (0,0)$ implies that $w^{0,0} = (x^{0,0},s^{0,0}) = (0,c)$ and hence
	\begin{equation}\label{eq bound w to wstar 1}
		\dist(w^{0,0},\calW^\star) = \sqrt{\dist(x^{0,0},\calX^\star)  + \dist(s^{0,0},\calS^\star) } = \sqrt{\dist(0,\calX^\star)  + \dist(c,\calS^\star) }  \ .
	\end{equation}
	Let $\hat{s} \in \arg\min_{s\in\calS^\star} \| {s}\|$.  Then we have
	$$ \| \hat s \|^2 =  \| (\hat s -c) + c \|^2 = \| (\hat s -c)\|^2 + \| c \|^2 \ge \| (\hat s -c)\|^2 \ ,
	$$where the second equality follows since  $\hat{s} - c \in \linVd$ and $c \in \linVd^\bot$. Therefore
	$\dist(c,\calS^\star) \le \|c-\hat s\| \le \| \hat s \| = \dist(0,\calS^\star)$. Substituting this inequality into \eqref{eq bound w to wstar 1} yields
	\begin{equation}\label{eq bound w to wstar 3}
		\dist(w^{0,0},\calW^\star) \le \sqrt{\dist(0,\calX^\star)  + \dist(0,\calS^\star) } = \dist(0,\calW^\star)   \ .
	\end{equation}\end{proof}

Regarding the objective function error $\eobj(w)$, the following lemma presents a useful upper bound.  Let $\calF_p$ and $\calF_d$ denote the feasible regions of \eqref{pro: general primal clp} and \eqref{pro: general dual clp}, respectively.
\begin{lemma}\label{lm upper bounf of eobj}
	For any $w = (x,s) \in \mathbb{R}^{2n}$, it holds that
	\begin{equation}\label{eq lm upper bounf of eobj}
		\eobj(w) \le 2\left(\max\{0,\gap(w)\} + \min_{\hat{x}\in \calF_p} |c^\top x - c^\top \hat{x}| + \min_{\hat{s}\in \calF_d} |q^\top s - q^\top  \hat{s}| \right) \ .
	\end{equation}
\end{lemma}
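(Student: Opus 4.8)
The plan is to bound each of the two absolute-value terms in $\eobj(w) = |c^\top x - f^\star| + |f^\star - q_0 + q^\top s|$ separately by inserting an arbitrary feasible point and then invoking weak duality to control the signs. First I would fix any primal feasible $\hat{x}\in\calF_p$ and any dual feasible $\hat{s}\in\calF_d$ and split
\[
c^\top x - f^\star = (c^\top x - c^\top \hat{x}) + (c^\top \hat{x} - f^\star), \qquad f^\star - q_0 + q^\top s = (q^\top s - q^\top \hat{s}) + (f^\star - q_0 + q^\top \hat{s}) \ .
\]
The crucial observation is that weak duality renders the two ``gap-to-optimal'' pieces nonnegative: since $\hat{x}$ is primal feasible we have $c^\top \hat{x} \ge f^\star$, and since $\hat{s}$ is dual feasible the dual objective satisfies $-q^\top \hat{s} + q_0 \le f^\star$, i.e. $f^\star - q_0 + q^\top \hat{s} \ge 0$. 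Applying the triangle inequality and then dropping the (now redundant) absolute values on these nonnegative terms would yield
\[
\eobj(w) \le |c^\top x - c^\top \hat{x}| + |q^\top s - q^\top \hat{s}| + (c^\top \hat{x} - f^\star) + (f^\star - q_0 + q^\top \hat{s}) \ .
\]

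Next I would note that $f^\star$ cancels in the last two terms, which collapse to exactly the duality gap at the feasible pair, namely $(c^\top \hat{x} - f^\star) + (f^\star - q_0 + q^\top \hat{s}) = c^\top \hat{x} + q^\top \hat{s} - q_0 = \gap(\hat{x},\hat{s})$. I would then re-express this gap in terms of $\gap(w)$ via $\gap(\hat{x},\hat{s}) = \gap(w) + (c^\top \hat{x} - c^\top x) + (q^\top \hat{s} - q^\top s)$, so that after substitution the bound reads
\[
\eobj(w) \le \big(|c^\top x - c^\top \hat{x}| + (c^\top \hat{x} - c^\top x)\big) + \big(|q^\top s - q^\top \hat{s}| + (q^\top \hat{s} - q^\top s)\big) + \gap(w) \ .
\]
The elementary inequality $|u| - u \le 2|u|$, applied to each parenthesized group (with $u = c^\top x - c^\top \hat{x}$ and with $u = q^\top s - q^\top \hat{s}$, respectively), together with $\gap(w) \le \max\{0,\gap(w)\}$, gives $\eobj(w) \le 2|c^\top x - c^\top \hat{x}| + 2|q^\top s - q^\top \hat{s}| + 2\max\{0,\gap(w)\}$.

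Finally, since this holds for every $\hat{x}\in\calF_p$ and every $\hat{s}\in\calF_d$, and since the right-hand side cleanly separates into a part depending only on $\hat{x}$ and a part depending only on $\hat{s}$, I would minimize the two terms independently to reach the claimed inequality \eqref{eq lm upper bounf of eobj}. I do not expect a serious obstacle: the argument is essentially careful bookkeeping of signs. The one step deserving attention is the use of weak duality to discard the absolute values on the gap-to-optimal terms, which is what fixes the correct orientation of the primal and dual objectives; the mild slack introduced by inflating the coefficient on $\max\{0,\gap(w)\}$ from $1$ to $2$ is harmless since that term is nonnegative. I would also remark that only weak duality (not Assumption \ref{assump:striclyfeasible}) is needed for $c^\top \hat{x} \ge f^\star$ and $f^\star - q_0 + q^\top \hat{s} \ge 0$; strong duality enters only through the existence of the common value $f^\star$ presupposed in Definition \ref{def objective error}.
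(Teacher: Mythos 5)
Your proof is correct, and it takes a genuinely different route from the paper's. The paper proves the bound by a six-way case analysis on the ordering of $f_p := c^\top x$, $f_d := q_0 - q^\top s$, and $f^\star$, using in each case the identities $f_p \le f^\star \Rightarrow \min_{\hat{x}\in\calF_p}|c^\top x - c^\top\hat{x}| = f^\star - f_p$ and $f_d \ge f^\star \Rightarrow \min_{\hat{s}\in\calF_d}|q^\top s - q^\top\hat{s}| = f_d - f^\star$ (which implicitly rely on the optimal value being attained) to identify which of the three terms dominates $\eobj(w)$. Your argument replaces the enumeration with a single uniform chain: insert an arbitrary feasible pair $(\hat{x},\hat{s})$, apply the triangle inequality, use weak duality to certify that $c^\top\hat{x}-f^\star$ and $f^\star - q_0 + q^\top\hat{s}$ are nonnegative so their sum collapses to $\gap(\hat{x},\hat{s})$, shift that gap back to $\gap(w)$, and absorb the resulting signed correction terms via $|u|-u\le 2|u|$. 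This is shorter, avoids any attainment argument for the minima (the bound holds pointwise in $(\hat{x},\hat{s})$ and you only pass to the infimum at the end, with the two minimizations decoupling cleanly), and makes transparent exactly where the factor $2$ comes from. What the paper's case analysis buys in exchange is slightly sharper information in individual cases (e.g.\ when $f_d\le f^\star\le f_p$ it shows $\eobj(w)=\gap(w)$ exactly), but none of that extra precision is used downstream, so your version proves the stated lemma in full.
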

\begin{proof} For ease of exposition, let $E(w)$ denote the right-hand side of \eqref{eq lm upper bounf of eobj}, and let $t_g$, $t_p$, and $t_d$ denote the three terms in the large parentheses in \eqref{eq lm upper bounf of eobj}, whereby $E(w) = 2(t_g + t_p + t_d)$.  Also, define $f_p:=c^\top x$ and $f_d:= q_0-q^\top  s$.  Then $\gap(w) = f_p - f_d$, and observe:
	\begin{equation}\label{mydean}
		f_p \le f^\star \ \Rightarrow \ t_p = f^\star - f_p  \ , \ \ \ \ \ \text{and} \ \ \ \ \ f_d \ge f^\star \ \Rightarrow \ t_d =  f_d - f^\star \ .
	\end{equation}
	There are six cases to consider that depend on the ordering of $f_p$, $f_d$, and $f^\star$.
	\begin{enumerate}
		\item Suppose $f_d \le f^\star \le f_p$. Then $\eobj(w) = | f_p - f^\star | + | f^\star - f_d  | = f_p - f_d = \gap(w) \le 2\gap(w) \le E(w)$, thus showing \eqref{eq lm upper bounf of eobj} in this case.
		\item Suppose $f_p \le f^\star \le f_d$. It then follows from \eqref{mydean} that
		      $$ \eobj(w) = f^\star - f_p + f_d - f^\star = t_p + t_d \le 2(t_p + t_d) \le E(w) \ , $$ thus showing \eqref{eq lm upper bounf of eobj} in this case.
		\item Suppose $f_p \le f_d \le f^\star$. Then
		      $$\eobj(w) = | f_p - f^\star | + | f^\star - f_d  | \le 2 | f_p - f^\star | = 2 t_p \le E(w) \ , $$
		      where the last equality follows from \eqref{mydean}, thus showing \eqref{eq lm upper bounf of eobj} in this case.
		\item Suppose $f_d \le f_p \le f^\star$.  Then  $\gap(w) = f_p - f_d \ge 0$ and thus $t_g = \max\{0,\gap(w)\} = \gap(w)$ and also $t_p = f^\star - f_p$ (from \eqref{mydean}). Therefore
		      $$\eobj(w) = | f_p - f^\star | + | f^\star - f_d  | \le 2| f^\star - f_d  | = 2 (f^\star - f_d ) = 2(f^\star -f_p + f_p - f_d ) = 2(t_g + t_p) \le E(w) \ , $$thus showing \eqref{eq lm upper bounf of eobj} in this case.
		\item Suppose $f^\star \le f_p \le f_d$. Then
		      $$\eobj(w) = | f_p - f^\star | + | f^\star - f_d  | \le 2 | f_d - f^\star | = 2 t_d \le E(w) \ , $$
		      where the last equality follows from \eqref{mydean}, thus showing \eqref{eq lm upper bounf of eobj} in this case.
		\item Suppose $f^\star \le f_d \le f_p$.   Then  $\gap(w) = f_p - f_d \ge 0$ and thus $t_g = \max\{0,\gap(w)\} = \gap(w)$ and also $t_d = f_d - f^\star $ (from \eqref{mydean}). Therefore
		      $$\eobj(w) = | f_p - f^\star | + | f^\star - f_d  | \le 2| f^\star - f_p  | = 2 (f_p - f^\star ) = 2(f_p - f_d + f_d - f^\star) = 2(t_g + t_d) \le E(w) \ , $$thus showing \eqref{eq lm upper bounf of eobj} in this last and final case.\end{enumerate}\end{proof}

By using the above lemma, we now present an upper bound of $\eobj(w)$ using $\gap(w)$ and $\max\{\dist(w,K),\dist(w,V)\}$.
\begin{lemma}\label{lm:upperbound_eobj_using_regular_errors}Suppose that $ c \in \operatorname{Null}(A)$. For any $w = (x,s)\in \mathbb{R}^{2n}$, it holds that
	\begin{equation}\label{eq:lm:upperbound_eobj_using_regular_errors}
		\eobj(w)\le 2\left(\max\{0,\gap(w)\} + 4 \left(\inf_{\gamma>0}\frac{\gamma}{r_\gamma}\right)\cdot \max\{\dist(w,K),\dist(w,V)\} \right) \ .
	\end{equation}
\end{lemma}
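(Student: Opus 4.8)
The plan is to invoke Lemma~\ref{lm upper bounf of eobj}, which bounds $\eobj(w)$ by $2\big(\max\{0,\gap(w)\} + t_p + t_d\big)$ with $t_p := \min_{\hat x\in\calF_p}|c^\top x - c^\top\hat x|$ and $t_d := \min_{\hat s\in\calF_d}|q^\top s - q^\top\hat s|$; equivalently, writing $d(s):=q_0-q^\top s$ for the dual objective, one has the identity $\eobj(w)=\gap(w)+2\big[(f^\star-c^\top x)^+ + (d(s)-f^\star)^+\big]$, so the whole task reduces to showing that the two below-optimal deficits $(f^\star-c^\top x)^+$ and $(d(s)-f^\star)^+$ are each at most $2\tfrac{\gamma}{r_\gamma}\cdot\Delta$, where $\Delta:=\max\{\dist(w,K),\dist(w,V)\}$, for every fixed $\gamma>0$. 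Taking $\inf_{\gamma>0}$ and using $\gap(w)\le\max\{0,\gap(w)\}$ then yields the claim.

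The central device is the conic center $w_\gamma=(x_\gamma,s_\gamma)$ of $\calW_\gamma$ from Definition~\ref{def radius}: it is feasible, satisfies $\gap(w_\gamma)\le\gamma$, and carries a Euclidean ball $B(w_\gamma,r_\gamma)\subseteq K$. First I would project $w$ onto the affine set $V$ to obtain $w_V=(x_V,s_V)$ with $\|w-w_V\|=\dist(w,V)\le\Delta$ and hence $\dist(w_V,K)\le 2\Delta$. A key point is that this projection changes neither objective value: since $c\in\operatorname{Null}(A)$ is orthogonal to $\operatorname{Im}(A^\top)$ (the direction along which the primal part is moved) we get $c^\top x_V=c^\top x$, and since $q\in\operatorname{Im}(A^\top)$ is orthogonal to $\operatorname{Null}(A)$ we get $q^\top s_V=q^\top s$.

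Next I would form $\hat w:=(1-\lambda)w_V+\lambda w_\gamma$ with $\lambda:=\tfrac{2\Delta}{r_\gamma+2\Delta}$ and show $\hat w\in\calF$. It lies in $V$ since $w_V,w_\gamma\in V$; and it lies in $K$ by a cone-homogeneity argument: the point $(1-\lambda)P_K(w_V)+\lambda w_\gamma$ carries the ball $B(\cdot,\lambda r_\gamma)\subseteq K$ (because $B(\lambda w_\gamma,\lambda r_\gamma)\subseteq K$ and $(1-\lambda)P_K(w_V)\in K$), and this ball absorbs the leftover perturbation $(1-\lambda)\big(w_V-P_K(w_V)\big)$, whose norm is at most $(1-\lambda)\cdot 2\Delta\le\lambda r_\gamma$. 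Feasibility of $\hat w=(\hat x,\hat s)$ gives $c^\top\hat x\ge f^\star$; and since the objective is affine, $c^\top\hat x=(1-\lambda)c^\top x+\lambda c^\top x_\gamma$. Combining this with the sublevel-set estimate $c^\top x_\gamma\le f^\star+\gamma$ (which follows from $\gap(w_\gamma)\le\gamma$ and weak duality) and rearranging gives $(f^\star-c^\top x)^+\le\tfrac{\lambda}{1-\lambda}\gamma=\tfrac{2\Delta}{r_\gamma}\gamma$. The mirror-image construction on the dual component, using $d(\hat s)\le f^\star$ and $d(s_\gamma)\ge f^\star-\gamma$, gives $(d(s)-f^\star)^+\le\tfrac{2\Delta}{r_\gamma}\gamma$, completing the two deficit bounds.

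The main obstacle is to bound these deficits with no dependence on the norms of the data (such as $\|c\|$ or $\|w_\gamma\|$), which would spuriously appear if one simply estimated $|c^\top x-c^\top\hat x|\le\|c\|\,\|x-\hat x\|$. The feasibility-plus-gap argument is exactly what removes this dependence: it never evaluates $c^\top x_\gamma$ on its own but only uses the inequality $c^\top\hat x\ge f^\star$ together with $\gap(w_\gamma)\le\gamma$, so the factor $\tfrac{\lambda}{1-\lambda}$ collapses to the clean geometric ratio $\tfrac{2\Delta}{r_\gamma}$. A secondary subtlety is the direction of the objective error: when $c^\top x$ (or $d(s)$) overshoots the feasible range, the corresponding excess is already captured by the $\max\{0,\gap(w)\}$ term, so only the below-optimal deficits require the conic-center construction.
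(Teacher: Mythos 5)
Your proof is correct, and its key step is genuinely different from the paper's. Both arguments share the opening moves: reduce via Lemma \ref{lm upper bounf of eobj} (your exact identity $\eobj(w)=\gap(w)+2\bigl[(f^\star-c^\top x)^++(d(s)-f^\star)^+\bigr]$ is a valid sharpening of it), project onto $V$ using $c\in\operatorname{Null}(A)=\lin{V}_p$ and $q\in\operatorname{Im}(A^\top)=\lin{V}_d$ to preserve both objective values, and bring in the conic center $w_\gamma$. From there the paper takes the boundary point $\calF(\bar w;w_\gamma)$, invokes the error bound of Lemma \ref{lm error bound R r} to control the collinearity ratio, and bounds $|c^\top x_\gamma-c^\top\tilde x|\le\gamma$ by placing \emph{both} endpoints in $[f^\star,f^\star+\gamma]$ — which forces the hypothesis $\gap(\bar w)\le\gamma$ and hence a second case handled by the homogeneity/convexity argument of Proposition \ref{lm:convexity_ineq}. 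You instead fix the mixing weight $\lambda=\tfrac{2\Delta}{r_\gamma+2\Delta}$ in advance, verify $\hat w\in\calF$ by the ball-absorption argument (which correctly re-derives the geometric content of Lemma \ref{lm error bound R r} for this specific point), and then use only the \emph{one-sided} inequalities $c^\top\hat x\ge f^\star$, $d(\hat s)\le f^\star$, $c^\top x_\gamma\le f^\star+\gamma$, $d(s_\gamma)\ge f^\star-\gamma$. Because you never need $\hat w\in\calW_\gamma$, no assumption relating $\gamma$ to $\gap(w)$ is required, and the entire two-case split and the appeal to Proposition \ref{lm:convexity_ineq} disappear. The constants check out ($\tfrac{\lambda}{1-\lambda}=\tfrac{2\Delta}{r_\gamma}$ gives each deficit $\le\tfrac{2\Delta\gamma}{r_\gamma}$, hence $\eobj(w)\le\max\{0,\gap(w)\}+\tfrac{8\gamma}{r_\gamma}\Delta$, which is slightly stronger than \eqref{eq:lm:upperbound_eobj_using_regular_errors}), and taking the infimum over $\gamma$ at the end is legitimate since the bound holds for every $\gamma>0$. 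What your route buys is a shorter, case-free proof; what the paper's route buys is reuse of Lemma \ref{lm error bound R r} and Proposition \ref{lm:convexity_ineq}, which it needs elsewhere anyway.
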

\noindent Before proving Lemma \ref{lm:upperbound_eobj_using_regular_errors}, we state the following simple result that will be used in the proof thereof.
\begin{proposition}\label{lm:convexity_ineq}Let $f:[0,\infty)\mapsto \mathbb{R}$ and $g:[0,\infty)\mapsto \mathbb{R}$ be nonnegative convex functions for which $f(0) = g(0) $, and suppose that $f$ is linear. If there exists $u \ge 0$ such that $g(u)\ge f(u) $, then for any $v \ge u$ it holds that  $g(v) \ge f(v)$.
\end{proposition}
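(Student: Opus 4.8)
The plan is to reduce the statement to a single elementary property of convex functions. Concretely, I would introduce $h := g - f$. Since $f$ is linear, $-f$ is (affine, hence) convex, so $h$ is a sum of two convex functions and is therefore convex on $[0,\infty)$. The hypothesis $f(0) = g(0)$ gives $h(0) = 0$, the hypothesis $g(u) \ge f(u)$ gives $h(u) \ge 0$, and the desired conclusion $g(v) \ge f(v)$ is exactly $h(v) \ge 0$. Thus everything is recast as: a convex function $h$ with $h(0) = 0$ and $h(u) \ge 0$ satisfies $h(v) \ge 0$ for every $v \ge u$, which is the statement that a convex function vanishing at the origin has a nondecreasing ``slope from the origin.''

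The key step is to express the intermediate point as a convex combination of $0$ and $v$ and invoke convexity. For the nontrivial case $v > u$ I would set $\lambda := u/v$, so that $u = (1-\lambda)\cdot 0 + \lambda \cdot v$. Convexity of $h$ then yields $h(u) \le (1-\lambda) h(0) + \lambda h(v) = \lambda h(v)$, where I used $h(0)=0$. Combining this with $h(u) \ge 0$ gives $\lambda h(v) \ge 0$, and since $\lambda > 0$ we conclude $h(v) \ge 0$, i.e. $g(v) \ge f(v)$; the boundary case $v = u$ is immediate from the hypothesis. Equivalently, one may run the same computation directly on $f$ and $g$: convexity of $g$ gives $g(u) \le (1-\lambda)g(0) + \lambda g(v)$, while linearity of $f$ gives the exact identity $f(u) = (1-\lambda)f(0) + \lambda f(v)$, and subtracting these while using $g(0)=f(0)$ and $g(u)\ge f(u)$ isolates $\lambda\,(g(v) - f(v)) \ge 0$.

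I expect the only point requiring care to be the degeneracy of the convex combination: the argument needs the weight $\lambda = u/v$ on the endpoint $v$ to be strictly positive, which holds precisely when $u > 0$, so the substantive content is the case $u>0$ (with $v=u$ handled directly from the hypothesis). Apart from this, the proof is a two-line application of convexity. I would also note that the assumed nonnegativity of $f$ and $g$ is not actually invoked in this implication; it is present only to match the setting of Lemma \ref{lm:upperbound_eobj_using_regular_errors}, where the proposition is applied.
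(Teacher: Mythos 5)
Your proof is correct and is essentially identical to the paper's own argument: both define the difference $F := g - f$, observe it is convex with $F(0)=0$ and $F(u)\ge 0$, and write $u$ as the convex combination $(u/v)\cdot v + ((v-u)/v)\cdot 0$ to conclude $(u/v)\,F(v)\ge F(u)\ge 0$. Your remark about the degenerate weight is a fair catch — the implication genuinely requires $u>0$ (for $u=0$ the hypothesis is vacuous and the conclusion can fail, e.g.\ $f(t)=t$, $g(t)=t/2$), a point the paper's proof also glosses over but which is harmless since every invocation in the paper uses $u>0$.
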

\begin{proof} Define $F(\cdot) := g(\cdot) - f(\cdot)$, and note that $F:[0,\infty)\mapsto \mathbb{R}$ is convex since $f(\cdot)$ is a linear function, also $F(0) = 0$, and $F(u)\ge 0$. From the convexity of $F$ we have for $v > u$ that $(\frac{u}{v})\cdot F(v)  = (\frac{u}{v})\cdot F(v) + (\frac{v - u}{v})\cdot F(0) \ge  F\left(	(\frac{u}{v})\cdot v + (\frac{v - u}{v})\cdot 0 	\right) = F(u)\ge 0$.  Hence $ g(v) -f(v) \ge 0$ for $v >u$. And when $v=u$ the result holds trivially.
\end{proof}
\noindent We now prove Lemma \ref{lm:upperbound_eobj_using_regular_errors}.
\begin{proof}[Proof of Lemma \ref{lm:upperbound_eobj_using_regular_errors}] Let $w = (x,s)\in \mathbb{R}^{2n}$ be given. In light of Lemma \ref{lm upper bounf of eobj}, it suffices to prove for all $\gamma > 0 $ that
	\begin{equation}\label{georgiadean}
		\min_{\hat{x}\in \calF_p} |c^\top x - c^\top \hat{x}| + \min_{\hat{s}\in \calF_d} |q^\top s - q^\top  \hat{s}|  \le \left(\frac{4\gamma}{r_\gamma}\right)\cdot \max\{\dist(w,K),\dist(w,V)\}  \ .
	\end{equation}
	Let $\bar{w}=(\bar{x},\bar{s}) := P_{V}(w)$, and it follows that $\bar{x} := P_{V_p}(x)$ and $\bar{s}:= P_{V_d}(s)$. Then because $c \in \operatorname{Null}(A) = \linVp$ and $q\in\operatorname{Im}(A^\top) = \linVd$, it follows that $c^\top \bar{x} = c^\top x$ and $q^\top \bar{s} = q^\top s$, whereby $\gap(\bar{w}) = \gap(w)$.

	Let $\gamma >0$ be given.  Let us first suppose that $\bar w \in K$, whereby we have $\eobj(w) =\eobj(\bar w) =\gap(\bar w)  \le 2\max\{0, \gap(w)\}  $, from which \eqref{eq:lm:upperbound_eobj_using_regular_errors} follows directly.  We henceforth assume that $\bar w \notin K$.

	We first consider the case where $\gamma \ge \gap(\bar{w})$. Let $w_\gamma$ and $r_\gamma$ be the conic center and conic radius of $\calW_\gamma$ (recall Definition \ref{def radius}).  Then from Lemma \ref{lm error bound R r} we have:
	\begin{equation}\label{eq lm c123 of e obj 1}
		\frac{\| \bar{w} - \calF(\bar{w};w_\gamma) \|}{\dist(\bar{w}, K)} \le \frac{\| w_\gamma - \calF(\bar{w};w_\gamma) \|}{r_\gamma} \ ,
	\end{equation}
	and let us denote $\tilde{w} = (\tilde{x},\tilde{s}) := \calF(\bar{w};w_\gamma)$. Since $(\tilde{x},\tilde{s})$ lies in the relative interior of the line segment between $\bar{w}=(\bar{x},\bar{s})$ and $w_\gamma = (x_\gamma, s_\gamma)$, then if $\bar{x}\neq x_\gamma$ it holds that
	\begin{equation}\label{eq lm c123 of e obj 1-0}
		\frac{\|\bar{w} - \tilde{w}\|}{\|\tilde{w} - w_\gamma\|} = \frac{\|\bar{x} - \tilde{x}\|}{\|\tilde{x}-x_\gamma\|} \ .
	\end{equation}

	We now establish an upper bound on $|c^\top \bar{x} - c^\top \tilde{x}| $. When $\bar{x} = x_\gamma$, then  $\bar{x} = \tilde{x} = x_\gamma$ and $|c^\top \bar{x} - c^\top \tilde{x}| =0$. When $\bar{x} \neq x_\gamma$, then since $\bar{x}, \tilde{x}$ and $x_\gamma$ are all on the same line segment it holds that 	\begin{equation}\label{eq lm c123 of e obj 2}
		|c^\top \bar{x} - c^\top \tilde{x}|   = |c^\top x_\gamma - c^\top \tilde{x}|  \cdot \frac{\| \bar{x} - \tilde{x}\|}{\| \tilde{x} - x_\gamma\|}  =  |c^\top x_\gamma - c^\top \tilde{x}|\cdot\frac{\|\bar{w} -  \tilde{w}\|}{\|\tilde{w} - w_\gamma\|}
	\end{equation}
	where the second equality uses \eqref{eq lm c123 of e obj 1-0}.
	Note that because $\tilde{w}$ and $w_\gamma$ are both feasible and their duality gaps are no larger than $\gamma$, then $c^\top \tilde{x}$ and $c^\top x_\gamma$ lie in the interval $[ f^\star, f^\star + \gamma]$, from which it follows that $ |c^\top x_\gamma - c^\top \tilde{x}| \le \gamma$. Then \eqref{eq lm c123 of e obj 2} implies that $|c^\top \bar{x} - c^\top \tilde{x}|   \le \gamma\cdot \frac{\| \bar{w} - \tilde{w}\|}{\| \tilde{w} - w_\gamma\|}$. And using \eqref{eq lm c123 of e obj 1} we obtain:
	\begin{equation}\label{eq lm c123 of e obj 4}
		\begin{aligned}
			|c^\top \bar{x} - c^\top \tilde{x}| & \le \gamma\cdot \frac{\dist(\bar{w},K)}{r_\gamma} \, .
		\end{aligned}
	\end{equation}
	Using identical logic applied to the dual variable yields:
	\begin{equation}\label{eq lm c123 of e obj 5}
		\begin{aligned}
			|q^\top \tilde{s} - q^\top  \bar{s}| & \le \gamma\cdot \frac{\dist(\bar{w},K)}{r_\gamma} \, .
		\end{aligned}
	\end{equation}

	Combining  \eqref{eq lm c123 of e obj 4} and \eqref{eq lm c123 of e obj 5} we arrive at:
	\begin{equation}\label{eq lm c123 of e obj 8}
		\min_{\hat{x}\in \calF_p} |c^\top \bar x - c^\top \hat{x}| + \min_{\hat{s}\in \calF_d} |q^\top \bar s - q^\top  \hat{s}|
		\le 	|c^\top \bar{x} - c^\top \tilde{x}|  + 	|q^\top  \tilde{s} - q^\top  \bar{s}| \le \frac{2\gamma}{r_\gamma}\cdot \dist(\bar{w},K)  \ .
	\end{equation}
	Next observe that $\dist(\bar{w},K) \le \dist(w,K) + \|w - \bar{w}\| \le 2 \max\{\dist(w,K),\dist(w,V)\}$, and also recall that $c^\top \bar{x} = c^\top x$ and $q^\top \bar{s} = q^\top s$, which then implies with \eqref{eq lm c123 of e obj 8} that
	\begin{equation}\label{eq lm c123 of e obj 9}
		\min_{\hat{x}\in \calF_p} |c^\top  x - c^\top \hat{x}| + \min_{\hat{s}\in \calF_d} |q^\top  s - q^\top  \hat{s}|
		\le 	 \frac{4\gamma}{r_\gamma}\cdot \max\{\dist(w,K),\dist(w,V)\} \ ,
	\end{equation}
	thus demonstrating \eqref{georgiadean}, and hence \eqref{eq:lm:upperbound_eobj_using_regular_errors},  in the case when $\gamma \ge \gap(w)$.

	We now turn our attention to the case when $\gamma < \gap(w)$, and here we will invoke Proposition \ref{lm:convexity_ineq} to complete the proof. Let $w^\star = \arg\min_{\hat{w}\in\calW^\star}\|\hat{w} - w\|$ and define  $w_t := w^\star + t\cdot (w - w^\star)$ for $t \in [0, \infty)$. Then define the following functions of $t$ :
	$$
		f(t):= \eobj(w_t) \ , \text{ and   } \ g(t):= 2\left(\max\{0,\gap(w_t)\} +  \frac{4\gamma}{r_\gamma}\cdot \max\{\dist(w_t,K),\dist(w_t,V)\} \right) \ .
	$$
	Then $f(t)$ is a nonnegative linear function on $[0,\infty)$, and $f(0) = 0$, and $g(t)$ is a nonnegative convex function on $[0,\infty)$, and $g(0) = 0$. In addition, because $\gap(\cdot)$ is a linear function and $\gap(w_t) = t \cdot \gap(w)$, then setting $u := \gamma/\gap(w) < 1$ we obtain $\gap(w_u) = u \cdot \gap(w) = \gamma$. We can then invoke \eqref{eq:lm:upperbound_eobj_using_regular_errors} using $w_u$ in the place of $w$, which shows that $g(u) \ge f(u)$. Now it follows from Proposition \ref{lm:convexity_ineq} with $v=1 > u $ that $g(1) \ge f(1)$ which is
	\begin{equation}\label{eq lm c123 of e obj 10}
		\eobj(w) \le 2 \left(\max\{0,\gap(w)\} +  \frac{4\gamma}{r_\gamma}\cdot \max\{\dist(w,K),\dist(w,V)\}\right) \ .
	\end{equation}
	This inequality proves the result in the case when $\gamma < \gap(w)$, completing the overall proof.
\end{proof}

\begin{lemma}\label{lm use gap to bound error in w}
	Suppose the step-sizes $\tau,\sigma$ satisfy \eqref{eq:general_stepsize} and the starting point is $z^{0,0} =(x^{0,0},y^{0,0}) := (0,0)$ and so $w^{0,0}= (x^{0,0},s^{0,0}) = (0,c)$. Then for any outer iteration value $n \ge 1$ it holds that:
	\begin{align}
		 & \max\left\{\dist(w^{n,0},V), \dist(w^{n,0},K)\right\} \le c_0 \cdot \rho(\| z^{n,0} - z^{n-1}\|_M \ ,z^{n,0} ) \, ; \label{ineq translate feasibility error}           \\
		 & \gap(w^{n,0}) \le
		2\sqrt{2}c_0\cdot  \dist(0,\calW^\star)
		\cdot  \rho(\| z^{n,0} - z^{n-1}\|_M ; z^{n,0} ) \ , \text{ and}
		\label{ineq translate gap error}                                                                                                                                          \\
		 & \eobj(w^{n,0})\le 14 c_0 \cdot \left( \inf_{\gamma >0}\frac{\gamma}{r_\gamma}\right) \cdot  \rho(\| z^{n,0} - z^{n-1}\|_M ; z^{n,0} )  \ , \label{ineq translate eobj}
	\end{align} where $c_0$ is defined in \eqref{icemelt}.
\end{lemma}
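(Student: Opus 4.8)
The plan is to feed the restart point $z^{n,0}=(x^{n,0},y^{n,0})$ into Lemma \ref{lm: convergence of PHDG without restart} with radius $r:=\|z^{n,0}-z^{n-1,0}\|_M$, and then translate the resulting normalized-duality-gap bounds into the three target quantities; write $\rho_n := \rho(r;z^{n,0})$ for brevity. The hypotheses of Lemma \ref{lm: convergence of PHDG without restart} hold because every inner iterate has its $x$-component produced by a projection onto $K_p$ (and averages of such points stay in $K_p$ by convexity), so $x^{n,0}\in K_p$, while $s^{n,0}=c-A^\top y^{n,0}$ by definition. Item \eqref{ineq translate feasibility error} is then immediate: items (\textit{\ref{item_gap1}}.) and (\textit{\ref{item_gap2}}.) of Lemma \ref{lm: convergence of PHDG without restart} give $\dist(w^{n,0},V)\le \tfrac{1}{\sqrt{\sigma}\lambda_{\min}}\rho_n$ and $\dist(w^{n,0},K)\le\tfrac{1}{\sqrt{\tau}}\rho_n$, and taking the maximum together with the definition of $c_0$ in \eqref{icemelt} yields the claim.

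For item \eqref{ineq translate gap error}, item (\textit{\ref{item_gap3}}.) of Lemma \ref{lm: convergence of PHDG without restart} gives $\gap(w^{n,0})\le\max\{r,\|z^{n,0}\|_M\}\rho_n$, so it remains to bound $\max\{r,\|z^{n,0}\|_M\}$ uniformly in $n$. The key observation is nonexpansiveness: by Lemma \ref{lm: nonexpansive property} both the PDHG iterates and their averages move no farther from any saddlepoint, and since each restart point is an average of the preceding inner loop, a short induction shows $\|z^{n,0}-z^\star\|_M\le\|z^{0,0}-z^\star\|_M$ for every $z^\star\in\calZ^\star$ and every $n$. Applying Lemma \ref{lm: R in the opt gap convnergence} with $z^a:=z^{0,0}$, $z^b:=z^{n,0}$, and $z^c:=z^{n-1,0}$ then gives $\max\{r,\|z^{n,0}\|_M\}\le 2\dist_M(z^{0,0},\calZ^\star)+\|z^{0,0}\|_M = 2\dist_M(z^{0,0},\calZ^\star)$, using $z^{0,0}=0$. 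To pass from the $M$-norm distance in $z$-space to the Euclidean distance in $w$-space I would invoke Lemma \ref{lm change of norm} with $\calX\times\calS:=\calW^\star=\calX^\star\times\calS^\star$, noting that the saddlepoint set is the corresponding product $\calZ^\star=\calX^\star\times\{y:c-A^\top y\in\calS^\star\}$, which yields $\dist_M(z^{0,0},\calZ^\star)\le\sqrt{2}\,c_0\,\dist(w^{0,0},\calW^\star)$; then Proposition \ref{lm:distance_to_optimal_initial} gives $\dist(w^{0,0},\calW^\star)\le\dist(0,\calW^\star)$. Chaining these produces $\max\{r,\|z^{n,0}\|_M\}\le 2\sqrt{2}\,c_0\,\dist(0,\calW^\star)$, which combined with item (\textit{\ref{item_gap3}}.) proves \eqref{ineq translate gap error}.

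For item \eqref{ineq translate eobj}, the plan is to substitute the already-established bounds \eqref{ineq translate feasibility error} and \eqref{ineq translate gap error} into Lemma \ref{lm:upperbound_eobj_using_regular_errors}, which bounds $\eobj(w^{n,0})$ by $2\big(\max\{0,\gap(w^{n,0})\}+4(\inf_{\gamma>0}\gamma/r_\gamma)\max\{\dist(w^{n,0},K),\dist(w^{n,0},V)\}\big)$. Using \eqref{ineq translate feasibility error} the feasibility term is at most $4(\inf_{\gamma>0}\gamma/r_\gamma)c_0\rho_n$, and for the gap term I would use \eqref{ineq translate gap error} together with the inequality $\dist(0,\calW^\star)\le\inf_{\gamma>0}\gamma/r_\gamma$ to get $\max\{0,\gap(w^{n,0})\}\le 2\sqrt{2}\,c_0(\inf_{\gamma>0}\gamma/r_\gamma)\rho_n$. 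This last inequality follows from \eqref{xcski}, since $\inf_{\gamma>0}\gamma/r_\gamma=(\sup_{\gamma>0}r_\gamma/\gamma)^{-1}\ge\max_{w\in\calW^\star}\|w\|\ge\dist(0,\calW^\star)$. Adding the two contributions gives the factor $2(2\sqrt{2}+4)=8+4\sqrt{2}\le 14$, completing \eqref{ineq translate eobj}.

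The only genuinely delicate step is the uniform bound on $\max\{r,\|z^{n,0}\|_M\}$ in item \eqref{ineq translate gap error}: it must hold for every outer iteration $n$, which forces anchoring the nonexpansiveness argument at $z^{0,0}=0$ rather than at the immediately preceding restart, and carrying the norm-translation of Lemma \ref{lm change of norm} and the initial-point estimate of Proposition \ref{lm:distance_to_optimal_initial} through carefully. The remaining work—verifying the $K_p$-membership hypotheses and tallying the numerical constants—is routine.
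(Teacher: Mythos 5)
Your proposal is correct and follows essentially the same route as the paper's proof: item \eqref{ineq translate feasibility error} directly from Lemma \ref{lm: convergence of PHDG without restart}, item \eqref{ineq translate gap error} via Lemma \ref{lm: R in the opt gap convnergence} anchored at $z^{0,0}=0$ combined with Lemma \ref{lm change of norm} and Proposition \ref{lm:distance_to_optimal_initial}, and item \eqref{ineq translate eobj} by substituting the first two bounds into Lemma \ref{lm:upperbound_eobj_using_regular_errors} and using \eqref{xcski} to absorb $\dist(0,\calW^\star)$ into $\inf_{\gamma>0}\gamma/r_\gamma$, with the same constant $4\sqrt{2}+8\le 14$.
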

\begin{proof}
	The inequality \eqref{ineq translate feasibility error} follows directly from items ({\it \ref{item_gap1}.})  and ({\it \ref{item_gap2}.}) of Lemma \ref{lm: convergence of PHDG without restart}. Towards the proof of \eqref{ineq translate gap error}, note that $z^a := z^{0,0}$, $z^b:=z^{n,0}$, and $z^c:= z^{n-1,0}$ satisfy the nonexpansive conditions of Lemma \ref{lm: R in the opt gap convnergence} whereby it follows from Lemma \ref{lm: R in the opt gap convnergence} that
	$$
		\max\{ \|z^{n,0} - z^{n-1,0}\|_M, \|z^{n,0}\|_M\}  \le    2 \cdot \dist_M(z^{0,0},\calZ^\star)  + \|z^{0,0} \|_M = 2 \cdot \dist_M(z^{0,0},\calZ^\star) \ ,
	$$
	where the equality follows since $z^{0,0} = 0$. And applying Lemma \ref{lm change of norm} with $\calX := \calX^\star$ and $\calY := \calY^\star$, together with Proposition \ref{lm:distance_to_optimal_initial}, we obtain	$$
		\dist_M(z^{0,0},\calZ^\star)  \le \sqrt{2}c_0 \cdot \dist(w^{0,0}, \calW^\star)  \le \sqrt{2}c_0 \cdot \dist(0, \calW^\star)  \ .
	$$
	Inequality \eqref{ineq translate gap error} then follows directly from the above two inequalities and ({\it \ref{item_gap3}.}) of Lemma \ref{lm: convergence of PHDG without restart} using $r:= \|z^{n,0} - z^{n-1,0}\|_M$.

	Substituting \eqref{ineq translate feasibility error} and \eqref{ineq translate gap error} into \eqref{eq:lm:upperbound_eobj_using_regular_errors} of Lemma \ref{lm:upperbound_eobj_using_regular_errors} yields
	$$
		\eobj(w^{n,0})\le \left(4\sqrt{2}c_0 \cdot \dist(0,\calW^\star)+ 8c_0 \cdot \inf_{\gamma >0}\frac{\gamma}{r_\gamma} \right)\cdot  \rho(\| z^{n,0} - z^{n-1}\|_M ; z^{n,0} ) \ .
	$$
	Then applying the second inequality of \eqref{xcski}, we obtain $\dist(0,\calW^\star) \le \max_{w\in\calW^\star}\|w\|\le \gamma/r_\gamma$ for any $\gamma > 0$, from which \eqref{ineq translate eobj} follows since $4\sqrt{2} + 8 \le 14$, which completes the proof.
\end{proof}

The next lemma states that the distance to optimal solutions can be bounded by terms involving the distance to constraints and the Hausdorff distance $d^H_\delta$.

\begin{lemma}\label{lm efeas bound dist}
	Suppose that $c\in\operatorname{Null}(A)$. For any  $w = (x,s) \in \mathbb{R}^{2n}$,  and any $\delta>0$, it holds that
	\begin{equation}\label{eq gap small bound}
		\dist(w,\calW^\star) \le \frac{3 D_\delta}{r_\delta} \cdot  \max\{\dist(w,V),\dist(w,K)\} + \frac{d^H_\delta}{\delta} \cdot \max\{\gap(w),\delta\} \ .
	\end{equation}
\end{lemma}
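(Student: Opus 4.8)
The plan is to reduce to a point lying in the affine set $V$, to which Lemma \ref{lm error bound R r} can be applied, and then to absorb a possibly-large duality gap by a homogenization (scaling) argument that manufactures the factor $d^H_\delta/\delta$. First I would project $w$ onto $V$, setting $\bar w := P_V(w)$. Since $V = V_p \times V_d$ and $c \in \operatorname{Null}(A) = \linVp$ while $q \in \operatorname{Im}(A^\top) = \linVd$, the primal displacement $x - \bar x \in \linVp^\bot$ and dual displacement $s - \bar s \in \linVd^\bot$ are orthogonal to $c$ and $q$ respectively, so $\gap(\bar w) = \gap(w)$ (exactly as in the proof of Lemma \ref{lm:upperbound_eobj_using_regular_errors}). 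Two elementary triangle inequalities then let me transfer the final estimate back to $w$: using $\calW^\star \subseteq V$ we have $\dist(w,\calW^\star) \le \dist(w,V) + \dist(\bar w, \calW^\star)$, and $\dist(\bar w, K) \le \dist(w,K) + \dist(w,V) \le 2\max\{\dist(w,V),\dist(w,K)\}$. So the task reduces to bounding $\dist(\bar w, \calW^\star)$ in terms of $\dist(\bar w, K)$ and $\gap(\bar w)$.

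The core estimate I would isolate is that for any $v \in V$ with $\gap(v) \le \delta$ one has $\dist(v,\calW^\star) \le \frac{D_\delta}{r_\delta}\dist(v,K) + d^H_\delta$. This follows from Lemma \ref{lm error bound R r}, which gives either $v \in \calW_\delta$ (when $\dist(v,K)=0$) or $\dist(v,\calW_\delta) \le \frac{D_\delta}{r_\delta}\dist(v,K)$, combined with the triangle inequality $\dist(v,\calW^\star) \le \dist(v,\calW_\delta) + d^H_\delta$ that comes directly from the definition of the Hausdorff distance $d^H_\delta = \max_{w'\in\calW_\delta}\dist(w',\calW^\star)$. If $\gap(\bar w) \le \delta$, this applies to $\bar w$ directly; and since in that case $\max\{\gap(w),\delta\} = \delta$, the claimed bound \eqref{eq gap small bound} then follows after the Step-1 transfer.

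The main obstacle is the case $\gap(\bar w) > \delta$, where $\bar w \notin \calW_\delta$ and the core estimate fails. To handle it I would homogenize: let $w^\star := P_{\calW^\star}(\bar w)$, let $\lambda := \delta/\gap(\bar w) \in (0,1)$, and set $\hat w := (1-\lambda)w^\star + \lambda \bar w \in V$. Because $\gap$ is affine and vanishes on $\calW^\star$, $\gap(\hat w) = \lambda\,\gap(\bar w) = \delta$, so the core estimate applies to $\hat w$. Convexity of $\dist(\cdot,K)$ together with $w^\star \in K$ yields $\dist(\hat w, K) \le \lambda\,\dist(\bar w, K)$, and the standard fact that the Euclidean projection onto the convex set $\calW^\star$ is unchanged along the segment $[w^\star,\bar w]$ gives $\dist(\hat w,\calW^\star) = \|\hat w - w^\star\| = \lambda\,\dist(\bar w,\calW^\star)$. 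Substituting both into $\dist(\hat w,\calW^\star) \le \frac{D_\delta}{r_\delta}\dist(\hat w, K) + d^H_\delta$ and dividing by $\lambda$ produces $\dist(\bar w,\calW^\star) \le \frac{D_\delta}{r_\delta}\dist(\bar w, K) + \frac{d^H_\delta}{\lambda}$, where $\frac{1}{\lambda} = \frac{\gap(\bar w)}{\delta} = \frac{\max\{\gap(w),\delta\}}{\delta}$ in this regime.

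Finally I would combine this with the Step-1 transfer inequalities and invoke $D_\delta/r_\delta \ge 1$ (Lemma \ref{lm D ge r}) to absorb the leftover $\dist(w,V)$ term via $1 + 2\frac{D_\delta}{r_\delta} \le 3\frac{D_\delta}{r_\delta}$, arriving at \eqref{eq gap small bound}. I expect the projection-along-the-segment identity to be the step most worth stating with care, since the entire $1/\delta$ scaling rests on the \emph{equality} $\dist(\hat w,\calW^\star) = \lambda\,\dist(\bar w,\calW^\star)$ (hence a genuine lower bound on $\dist(\hat w,\calW^\star)$) and not merely on an inequality in the convenient direction.
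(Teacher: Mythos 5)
Your proof is correct, and it follows the same overall architecture as the paper's: project onto $V$ (using $c\in\operatorname{Null}(A)$, $q\in\operatorname{Im}(A^\top)$ to preserve the gap), combine Lemma \ref{lm error bound R r} with the triangle inequality $\dist(\cdot,\calW^\star)\le\dist(\cdot,\calW_\delta)+d^H_\delta$ for the case $\gap(w)\le\delta$, absorb the leftover $\dist(w,V)$ via $D_\delta/r_\delta\ge 1$, and handle $\gap(w)>\delta$ by scaling along the segment toward $P_{\calW^\star}$. The one place you genuinely diverge is that large-gap case: the paper invokes its abstract Proposition \ref{lm:convexity_ineq} with $f(t)=\dist(w_t,\calW^\star)$ and $g(t)$ equal to the entire right-hand side of \eqref{eq gap small bound} evaluated at $w_t$, whereas you homogenize only the ``core estimate'' on $V$ and divide by $\lambda=\delta/\gap(\bar w)$ explicitly. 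Your version is arguably the more careful one: the paper asserts $g(0)=0$, but in fact $g(0)=\tfrac{d^H_\delta}{\delta}\max\{\gap(w^\star),\delta\}=d^H_\delta>0$, so Proposition \ref{lm:convexity_ineq} does not literally apply as stated (it can be repaired by replacing $\max\{\gap(w_t),\delta\}$ with $t\cdot\gap(w)$ in the definition of $g$, which is what your explicit $1/\lambda$ computation amounts to). Your closing remark is also well placed: the step that makes the $1/\delta$ factor legitimate is the \emph{equality} $\dist(\hat w,\calW^\star)=\lambda\,\dist(\bar w,\calW^\star)$, which holds because the Euclidean projection onto the convex set $\calW^\star$ is constant along the segment from $w^\star=P_{\calW^\star}(\bar w)$ to $\bar w$; an inequality in the wrong direction there would break the argument.
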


\begin{proof}
	We first consider the case where $\delta \ge  \gap(w)$. Define $\hat{w} := P_{V}(w) $, whereby $\|w - \hat{w}\| = \dist(w,V)$. Then because $c \in \operatorname{Null}(A) = \linVp$ and $q \in \operatorname{Im}(A^\top) = \linVd$, then $\gap(w) = \gap(\hat{w})$, therefore $\gap(\hat{w}) \le \delta$ as well. We have:
	\begin{equation}\label{eq error bound distance clp 1}
		\dist(w, \calW^\star) \le \dist(\hat{w}, \calW^\star) + \| \hat{w}- w\|  = \dist(\hat{w}, \calW^\star) +\dist(w, V) \ ,
	\end{equation} and from the definition of $d^H_\delta$ we also have:
	\begin{equation}\label{eq error bound distance clp 2}
		\dist(\hat{w},\calW^\star) \le \dist(\hat{w}, \calW_\delta) + d^H_\delta \ .
	\end{equation}
	Since $\hat{w}\in V$ and $\gap(\hat{w})\le \delta$, from Lemma \ref{lm error bound R r} it follows that $ \dist(\hat{w}, \calW_\delta) $ in \eqref{eq error bound distance clp 2} can be bounded as follows:
	\begin{equation}\label{eq error bound distance clp 3}
		\begin{aligned}
			\dist(\hat{w}, \calW_\delta) & \le \frac{D_\delta}{r_\delta} \cdot \dist(\hat{w},K)
			\le \frac{D_\delta}{r_\delta} \cdot\left(\|w - \hat{w}\| + \dist(w,K)    \right)
			\\
			                             & \le  \frac{2 D_\delta}{r_\delta} \cdot \max\{\dist(w,V),\dist(w,K)\}  \ .
		\end{aligned}
	\end{equation}
	Then combining \eqref{eq error bound distance clp 3}, \eqref{eq error bound distance clp 2}, and \eqref{eq error bound distance clp 1} yields
	$$
		\dist(w,\calW^\star) \le \frac{2 D_\delta}{r_\delta} \cdot  \max\{\dist(w,V),\dist(w,K)\} +\dist(w , V) + d^H_\delta \ .
	$$
	Last of all note that $D_\delta \ge r_\delta$ from Lemma \ref{lm D ge r}, from which the above inequality then implies \eqref{eq gap small bound}.

	Let us now consider the case where  $\delta \le \gap(w)$.  Here we will make use of Proposition \ref{lm:convexity_ineq} to complete the proof.  Let $w^\star = P_{\calW^\star}(w) = \arg\min_{\bar{w}\in\calW^\star}\|\bar{w} - w\|$ and define $w_t := w^\star + t\cdot (w - w^\star)$ for $t\in[0,\infty)$. Then define the following functions of $t$ :
	$$
		f(t):= \dist(w_t,\calW^\star) \ ,  \text{ and  }g(t):= \frac{3 D_\delta}{r_\delta} \cdot  \max\{\dist(w_t,V),\dist(w_t,K)\} + \frac{d^H_\delta}{\delta} \cdot \max\{\gap(w_t),\delta\} \ .
	$$
	Then $f(t)$ is a nonnegative  linear  function on $[0,\infty)$, and $f(0) = 0$. And $g(t)$ is convex and nonnegative on $[0,\infty)$, and $g(0) = 0$. In addition, because $\gap(\cdot)$ is a linear function and $\gap(w_t) = t \cdot \gap(w)$, then setting $u := \delta / \gap(w)$ we obtain $ \gap(w_u) = u\cdot \gap(w) = \delta$. We can then invoke \eqref{eq gap small bound} using $w_u$ in the place of $w$, which yields $g(u) \ge f(u)$. Now it follows from Proposition \ref{lm:convexity_ineq} with $v := 1\ge  u$ that  $g(1) \ge f(1)$, which is precisely \eqref{eq gap small bound} in the case $\delta \le \gap(w)$, and completes the proof.
\end{proof}

\begin{lemma}\label{thm L C}
	Suppose that $c\in\operatorname{Null}(A)$. Under Assumption \ref{assump:striclyfeasible}, suppose that Algorithm \ref{alg: PDHG with restarts} (rPDHG) is run starting from $z^{0,0} = (x^{0,0},y^{0,0}) = (0,0)$, and the step-sizes $\sigma$ and $\tau$ satisfy the step-size inequality \eqref{eq:general_stepsize}. Then for all $n\ge 1$ and any $\delta > 0$, it holds that
	\begin{equation}\label{eq thm L C}
		\dist_M(z^{n,0},\calZ^\star) \le 8.25 \cdot c_0^2  \cdot \frac{D_\delta}{r_\delta} \cdot \rho(\| z^{n,0} - z^{n-1,0}\|_M; z^{n,0}) + \sqrt{2}c_0 d^H_\delta \ .
	\end{equation}
\end{lemma}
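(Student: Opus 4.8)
The plan is to chain together the three ``translation'' lemmas already in hand: Lemma \ref{lm change of norm} (to pass from the $M$-norm on $(x,y)$-space to the Euclidean norm on $(x,s)$-space), Lemma \ref{lm efeas bound dist} (which controls $\dist(w,\calW^\star)$ by feasibility residuals plus a gap term), and Lemma \ref{lm use gap to bound error in w} (which converts feasibility and gap residuals into the normalized duality gap). For brevity write $\rho := \rho(\|z^{n,0}-z^{n-1,0}\|_M; z^{n,0})$.

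First I would apply Lemma \ref{lm change of norm} with $\calX := \calX^\star$ and $\calS := \calS^\star$, the primal and dual optimal solution sets. Under Assumption \ref{assump:striclyfeasible} strong duality holds and the optimal set factors as $\calW^\star = \calX^\star\times\calS^\star$, while the associated $\calY^\star := \{y : c-A^\top y\in\calS^\star\}$ gives $\calZ^\star = \calX^\star\times\calY^\star$; hence \eqref{eqlm change of norm} reads $\dist_M(z^{n,0},\calZ^\star)\le\sqrt{2}\,c_0\cdot\dist(w^{n,0},\calW^\star)$. Next I would invoke Lemma \ref{lm efeas bound dist} (with the same $\delta$) to bound $\dist(w^{n,0},\calW^\star)$ via \eqref{eq gap small bound}, and substitute the feasibility estimate $\max\{\dist(w^{n,0},V),\dist(w^{n,0},K)\}\le c_0\rho$ from \eqref{ineq translate feasibility error}. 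The feasibility summand then contributes $\sqrt{2}\,c_0\cdot\tfrac{3D_\delta}{r_\delta}\cdot c_0\rho = 3\sqrt{2}\,c_0^2\tfrac{D_\delta}{r_\delta}\rho$.

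The main obstacle is the remaining gap summand $\tfrac{d^H_\delta}{\delta}\max\{\gap(w^{n,0}),\delta\}$ of \eqref{eq gap small bound}, where the $1/\delta$ factor threatens to blow up. I would split into two cases. If $\gap(w^{n,0})\le\delta$ the summand equals $d^H_\delta$, yielding the clean term $\sqrt{2}\,c_0 d^H_\delta$ after the change-of-norm multiplier. If instead $\gap(w^{n,0})>\delta$, then $\max\{\gap(w^{n,0}),\delta\}=\gap(w^{n,0})$, which I would bound by \eqref{ineq translate gap error}, namely $\gap(w^{n,0})\le 2\sqrt{2}\,c_0\dist(0,\calW^\star)\rho$. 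The crucial cancellation comes from the right-hand inequality of \eqref{xcski}, which (taking $\gamma=\delta$) gives $\dist(0,\calW^\star)\le\max_{w\in\calW^\star}\|w\|\le\delta/r_\delta$; the $1/\delta$ then cancels and a factor $1/r_\delta$ appears. Combined with $d^H_\delta\le D_\delta$ from Lemma \ref{lm D ge r}, this summand is at most $\sqrt{2}\,c_0\cdot\tfrac{D_\delta}{\delta}\cdot 2\sqrt{2}\,c_0\cdot\tfrac{\delta}{r_\delta}\rho = 4\,c_0^2\tfrac{D_\delta}{r_\delta}\rho$.

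Finally I would collect constants. In the first case the total bound is $3\sqrt{2}\,c_0^2\tfrac{D_\delta}{r_\delta}\rho + \sqrt{2}\,c_0 d^H_\delta$, and in the second it is $(3\sqrt{2}+4)\,c_0^2\tfrac{D_\delta}{r_\delta}\rho$. Since $3\sqrt{2}+4 < 8.25$ and the omitted $d^H_\delta$ summand is nonnegative, both cases are dominated by $8.25\,c_0^2\tfrac{D_\delta}{r_\delta}\rho + \sqrt{2}\,c_0 d^H_\delta$, which is exactly \eqref{eq thm L C}. I expect the case analysis on the gap---specifically converting the dangerous $\tfrac{d^H_\delta}{\delta}\gap$ term into the desired $\tfrac{D_\delta}{r_\delta}\rho$ scaling via the $\dist(0,\calW^\star)\le\delta/r_\delta$ estimate---to be the only nonroutine step; everything else is direct substitution.
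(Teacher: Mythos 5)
Your proposal is correct and follows essentially the same route as the paper's proof: the same chain of Lemma \ref{lm change of norm}, Lemma \ref{lm efeas bound dist}, and Lemma \ref{lm use gap to bound error in w}, the same case split on whether $\gap(w^{n,0})$ exceeds $\delta$, and the same cancellation of the $1/\delta$ factor via $\dist(0,\calW^\star)\le \delta/r_\delta$ from \eqref{xcski}, arriving at the identical constants $3\sqrt{2}$ and $3\sqrt{2}+4\le 8.25$.
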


\begin{proof}
	Applying Lemmas \ref{lm change of norm} and \ref{lm efeas bound dist} directly yields:
	\begin{equation}\label{eq L C 2}
		\begin{aligned}
			\dist_M(z^{n,0},\calZ^\star) & \ \le  \sqrt{2}c_0\cdot \dist(w^{n,0},\calW^\star)
			\\
			                             & \
			\le  \frac{3\sqrt{2} c_0 D_\delta}{r_\delta}\cdot  \max\{\dist(w^{n,0},V),\dist(w^{n,0},K)\}+  \frac{\sqrt{2}c_0 d^H_\delta}{\delta} \cdot \max\{\gap(w^{n,0}),\delta\} \ .
		\end{aligned}
	\end{equation}

	\noindent We consider two cases, depending on whether $\delta \ge \gap(w^{n,0})$ or $\delta < \gap(w^{n,0})$. We first consider the case where $\gap(w^{n,0})\ge \delta$.  From Lemma \ref{lm use gap to bound error in w} and \eqref{eq L C 2} it follows that
	\begin{equation}\label{eq L C 3}
		\dist_M(z^{n,0},\calZ^\star)
		\le \left( \frac{3\sqrt{2} c_0^2 D_\delta}{r_\delta} +  \frac{4c_0^2 d^H_\delta}{\delta}
		\cdot   \dist(0,\calW^\star)
		\right)
		\cdot  \rho(\| z^{n,0} - z^{n-1}\|_M ; z^{n,0} ) \ .
	\end{equation}
	Furthermore, applying the second inequality of \eqref{xcski} yields $
		\dist(0,\calW^\star) \le \max_{w\in\calW^\star}\|w\| \le \delta/r_\delta$ for any $\delta > 0$, and  substituting this into \eqref{eq L C 3} yields
	\begin{equation}\label{eq L C 4}
		\dist_M(z^{n,0},\calZ^\star)
		\le \left( \frac{3\sqrt{2} c_0^2 D_\delta}{r_\delta} +  \frac{4c_0^2 d^H_\delta}{r_\delta}
		\right)
		\cdot  \rho(\| z^{n,0} - z^{n-1}\|_M ; z^{n,0} ) \ .
	\end{equation}
	We can also have $d^H_\delta \le D_\delta$ and $3\sqrt{2} + 4 \le 8.25$, which when combined with \eqref{eq L C 4} shows that
	\begin{equation}\label{eq L C 5}
		\begin{aligned}
			\dist_M(z^{n,0},\calZ^\star) \le 8.25 \cdot c_0^2  \cdot \frac{D_\delta}{r_\delta} \cdot \rho(\| z^{n,0} - z^{n-1,0}\|_M; z^{n,0}) \ ,
		\end{aligned}
	\end{equation}
	which proves the result in this case.

	Let us now consider the case where $\gap(w^{n,0}) < \delta$. From Lemma \ref{lm use gap to bound error in w} and \eqref{eq L C 2} it follows that
	\begin{equation}\label{eq L C 6}
		\begin{aligned}
			\dist_M(z^{n,0},\calZ^\star) \le 3\sqrt{2} c_0^2 \cdot \frac{ D_\delta}{r_\delta} \cdot  \rho(\| z^{n,0} - z^{n-1,0}\|_M; z^{n,0}) + \sqrt{2}c_0 d^H_\delta \ ,
		\end{aligned}
	\end{equation}
	which proves the result in this case.	Depending on the case, we obtain either \eqref{eq L C 5} or \eqref{eq L C 6}, either of which implies \eqref{eq thm L C}.
\end{proof}

\subsubsection{Proof of Theorem \ref{thm overall complexity clp}}\label{gillianwelch}

At last we prove Theorem \ref{thm overall complexity clp}.
\begin{proof}[Proof of Theorem \ref{thm overall complexity clp}]
	From Lemma \ref{lm use gap to bound error in w} it follows that $w^{n,0}$ satisfies the $\eps$-tolerance requirement (Definition \ref{eq accuracy requirement clp}) if
	\begin{equation}\label{eq overall complexity clp 1}
		\rho(\| z^{n,0} - z^{n-1}\|_M ; z^{n,0} ) \le \min\left\{
		\frac{ \eps_{\mathrm{cons}}}{c_0},\
		\frac{ \eps_{\mathrm{gap}}}{2\sqrt{2} c_0 \cdot  \dist(0,\calW^\star) },\
		\frac{\eps_{\mathrm{obj}}}{14c_0\cdot\inf_{\gamma>0}\left(\frac{\gamma}{r_\gamma}\right)}
		\right\} \ .
	\end{equation}
	Now notice that the right-hand-side term is equal to $\frac{\err}{c_0}$ where $\err$ is defined in \eqref{def eq calN}.  Also, it follows from the choice of step-sizes in the theorem and the definition of $c_0$ in \eqref{icemelt} that $c_0 = \sqrt{\kappa}$.

	In the proof of Lemma \ref{thm L C} we see that \eqref{eq thm L C} holds for any $\delta > 0$, so Theorem \ref{thm: complexity of PDHG with adaptive restart} can be applied since the condition \eqref{eq restart L C condition} is satisfied using \eqref{eq thm L C} with $\calL = \frac{8.25 c_0^2 D_\delta}{r_\delta}$ and $\calC = \sqrt{2} c_0 d^H_\delta$. Therefore it follows from Theorem \ref{thm: complexity of PDHG with adaptive restart} that $T$ satisfies
	\begin{equation}\label{eq overall complexity clp 2}\begin{aligned}
			T & \le 23 \left(8.25 \cdot c_0^2\cdot \frac{D_\delta}{r_\delta} \right) \cdot  \ln\left(\frac{23 c_0\cdot \dist_M(z^{0,0},\calZ^\star)}{\err}\right)  +  \frac{35 \sqrt{2}c_0^2d^H_\delta}{\err}                                                                                                                                                                                          \\ \\
			  & = 23  \left(8.25 \cdot \kappa \cdot \frac{D_\delta}{r_\delta} \right) \cdot  \ln\left(\frac{23 \sqrt{\kappa} \cdot \dist_M(z^{0,0},\calZ^\star)}{\err} \right)  
			+  \frac{35 \sqrt{2}\cdot \kappa \cdot d^H_\delta}{\err}                                                                                                                                                                            \\ \\
			  & \le  23 \left(8.25  \kappa \cdot \frac{D_\delta}{r_\delta} \right) \cdot  \ln\left(\frac{23\sqrt{2} \kappa  \cdot \dist(0,\calW^\star)}{\err} \right)  
			+  \frac{35\sqrt{2}  \kappa \cdot d^H_\delta}{\err} \ ,
		\end{aligned}\end{equation}
	where the equality substitutes in the values of $c_0$, and the second inequality uses $\dist_M(z^{0,0},\calZ^\star) \le  \sqrt{2}c_0 \cdot \dist(w^{0,0},\calW^\star)\le  \sqrt{2}c_0 \cdot \dist(0,\calW^\star) = \sqrt{2\kappa}\cdot \dist(0,\calW^\star)$ from Lemma \ref{lm change of norm} and Proposition \ref{lm:distance_to_optimal_initial}.  Last of all, notice that $23 \times 8.25\le 190$, $23\sqrt{2}\le 33$ and $35\sqrt{2} \le 50$, and substituting this back into \eqref{eq overall complexity clp 2} yield  \eqref{eq overall complexity} and completes the proof.\end{proof}

\section{Linear Convergence of rPDHG for Linear Programming}\label{sec:complexity_lp}

In the case of linear programming problems (namely instances of \eqref{pro: general primal clp} wherein $K_p = \mathbb{R}^n_+$) we now show a global linear convergence bound that structurally improves on the bound in \cite{xiong2023computational}.  Our analysis uses the ``best suboptimal extreme point gap'' $\bar \delta$ whose formal definition we now state.\medskip
\begin{definition}[Best suboptimal extreme point gap]\label{def:best_suboptimal_gap}
	Let $\ep_\calF$ denote the set of extreme points of $\calF$. The best suboptimal extreme point gap $\bar{\delta}$ is defined as follows:
	\begin{equation}\label{eqdef:best_suboptimal_gap}
		\bar{\delta}:= \left\{
		\begin{array}{ll}
			\min\{\gap(w): w\in \ep_\calF\setminus \calW^\star\} & \quad \text{ if $\ep_\calF\setminus \calW^\star \neq \emptyset$ } \\
			+\infty                                              & \quad \text{ if $\ep_\calF\setminus \calW^\star = \emptyset$ .}   \\
		\end{array}
		\right.
	\end{equation}
\end{definition}
\noindent We now present our global linear convergence result for rPDHG for linear programming problem instances.
\begin{theorem}\label{thm overall complexity lp} Suppose that
	\eqref{pro: general primal clp} is a linear programming instance ($K_p = \mathbb{R}^n_+$) and $c\in\operatorname{Null}(A)$, and let $\bar\delta$ be as defined in \eqref{eqdef:best_suboptimal_gap}. Under Assumption \ref{assump:striclyfeasible}, suppose that Algorithm \ref{alg: PDHG with restarts} (rPDHG) is run starting from $z^{0,0} = (x^{0,0},y^{0,0} ) = (0,0)$ using the $\beta$-restart condition with $\beta := 1/e$, and the step-sizes are chosen as follows:
	\begin{equation}\label{eq:stepsize_in_theorem again}
		\tau = \frac{1}{\kappa}  \ \ \ \mathrm{and} \ \ \  \sigma = \frac{1}{\lambda_{\max}\lambda_{\min}} \ .
	\end{equation}
	Given $\eps := (\eps_{\mathrm{cons}}, \eps_{\mathrm{gap}},\eps_{\mathrm{obj}})\in\mathbb{R}_{++}^3$, let $T$ be the total number of \textsc{OnePDHG} iterations that are run in order to obtain $n$ for which $w^{n,0}=(x^{n,0},s^{n,0})$ satisfies the $\eps$-tolerance requirement (Definition \ref{eq accuracy requirement clp}).
	Then
	\begin{equation}\label{eq overall complexity LP}
		T \le 255 \kappa \cdot \left( \min_{0 < \delta \le \bar{\delta}}\frac{D_{\delta}}{r_{\delta}}\right) \cdot   \left[\ln\big(33 \kappa \cdot \dist(0,\calW^\star) \big) + \ln\left(\frac{1}{\err}\right) \, \right] \,  ,
	\end{equation}
	in which $\err$ is the weighted minimum of the target tolerance values $\eps$:
	\begin{equation}\label{def eq calN lp}
		\err := \min\left\{\eps_{\mathrm{cons}}, \
		\tfrac{\sqrt{2}}{4 \cdot \dist(0,\calW^\star)}\cdot \eps_{\mathrm{gap}}, \
		\tfrac{1}{14}  \left( \displaystyle\sup_{\gamma  > 0} \tfrac{ r_\gamma}{\gamma} \right) \cdot \eps_{\mathrm{obj}}
		\right\} \ .
	\end{equation}
\end{theorem}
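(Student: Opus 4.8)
The plan is to re-run the argument behind Theorem~\ref{thm overall complexity clp}, but to sharpen its key estimate Lemma~\ref{thm L C} so that the additive constant ``$\calC$'' is eliminated ($\calC=0$) for every fixed $\delta\in(0,\bar\delta]$. Once we have a bound of the form $\dist_M(z^{n,0},\calZ^\star)\le\calL\cdot\rho(\|z^{n,0}-z^{n-1,0}\|_M;z^{n,0})$ with $\calC=0$ and $\calL=O\!\left(c_0^2\,\tfrac{D_\delta}{r_\delta}\right)$, Theorem~\ref{thm: complexity of PDHG with adaptive restart} immediately yields a \emph{purely logarithmic} (linearly convergent) iteration bound $T\le 23\calL\ln(\cdots)$; optimizing the resulting estimate over $\delta\in(0,\bar\delta]$ produces the factor $\min_{0<\delta\le\bar\delta}\tfrac{D_\delta}{r_\delta}$, and the final translation from a normalized-duality-gap tolerance to the $\eps$-tolerance requirement is carried out exactly as in the proof of Theorem~\ref{thm overall complexity clp} (via Lemma~\ref{lm use gap to bound error in w} and \eqref{xcski}), absorbing the various constants into the leading factor $255$.

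The whole improvement rests on one LP-specific (polyhedral) phenomenon that I would isolate as a lemma: for $0<\delta\le\bar\delta$ the sublevel sets $\calW_\delta$ contract \emph{linearly} onto the optimal set $\calW^\star$. Concretely, since no suboptimal extreme point of $\calF$ has gap below $\bar\delta$, every vertex of $\calW_\delta$ is either an optimal vertex or lies on an edge of $\calF$ joining an optimal vertex to a suboptimal one, cut at the level $\{\gap=\delta\}$ at a fraction $\delta/g$ of its length (where $g\ge\bar\delta$ is the gap of the suboptimal endpoint). This gives $d^H_{\delta'}\le\tfrac{\delta'}{\delta}d^H_\delta$ for $\delta'\le\delta\le\bar\delta$, so that $d^H_\delta/\delta$ is essentially constant on $(0,\bar\delta]$ --- the LP sharpness constant --- and, applied to feasible points, yields $\dist(w,\calW^\star)\le\tfrac{d^H_\delta}{\delta}\gap(w)$ for every $w\in\calF$. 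This linear scaling is exactly what fails for a general cone, where $d^H_\delta$ need not vanish faster than $\delta$, and is the reason the additive term survives in Theorem~\ref{thm overall complexity clp}.

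With sharpness in hand I would upgrade Lemma~\ref{lm efeas bound dist} to an LP error bound in which the gap term carries $\gap^+(w)$ rather than $\max\{\gap(w),\delta\}$, namely $\dist(w,\calW^\star)\le c_1\tfrac{D_\delta}{r_\delta}\max\{\dist(w,V),\dist(w,K)\}+\tfrac{d^H_\delta}{\delta}\gap^+(w)$ for all $w$ and all $\delta\le\bar\delta$. Feeding this into Lemma~\ref{lm use gap to bound error in w} makes the gap contribution $\tfrac{d^H_\delta}{\delta}\gap(w^{n,0})\lesssim\tfrac{d^H_\delta}{\delta}\dist(0,\calW^\star)\rho\le\tfrac{d^H_\delta}{r_\delta}\rho\le\tfrac{D_\delta}{r_\delta}\rho$ (using $\dist(0,\calW^\star)\le\delta/r_\delta$ from \eqref{xcski} and $d^H_\delta\le D_\delta$ from Lemma~\ref{lm D ge r}); thus the former additive constant $\sqrt2\,c_0 d^H_\delta$ is now multiplied by $\rho$ and folds into $\calL$, forcing $\calC=0$.

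The main obstacle is proving this LP error bound in the regime where $\hat w:=P_V(w)$ is \emph{infeasible} ($\hat w\notin K$) yet has \emph{small} gap $\gap(\hat w)<\delta$ --- exactly the case that produced the surviving constant in the conic proof. Projecting $\hat w$ toward the conic center $w_\delta$ via Lemma~\ref{lm error bound R r} lands in $\calW_\delta$ but can inflate the gap all the way up to $\delta$, so a naive combination only yields a bound quadratic in $D_\delta/r_\delta$. The resolution I would pursue is the homogeneity device of Proposition~\ref{lm:convexity_ineq}: along the ray $w_t=w^\star+t(w-w^\star)$ with $w^\star=P_{\calW^\star}(w)$, the target $f(t)=\dist(w_t,\calW^\star)$ is linear while the claimed right-hand side $g(t)$ is convex with $g(0)=0$, so it suffices to verify $g(u)\ge f(u)$ for some $u\le 1$. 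Letting $u\to 0^+$ reduces the whole inequality to a \emph{local} error bound at the optimal point $w^\star$ in the direction of $w$, which holds by polyhedrality of $\calF$; the remaining quantitative work is to check that $c_1\tfrac{D_\delta}{r_\delta}$ (together with $\tfrac{d^H_\delta}{\delta}$) dominates this local rate uniformly over directions, which is where $c_1$ grows from $3$ to the value producing the constant $255$. I expect this uniform domination to be the technically delicate step, and I would sanity-check it against the unique-optimum special case, where $\lim\inf_{\delta\searrow0}D_\delta/r_\delta<\infty$ and the bound must reduce to Corollary~\ref{cor: linear convergence complexity clp}.
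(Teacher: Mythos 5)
Your overall architecture is exactly the paper's: isolate the LP-specific sharpness property on $(0,\bar\delta]$, use it to upgrade the error bound so that the gap term carries $\gap(w)$ rather than $\max\{\gap(w),\delta\}$, conclude that condition \eqref{eq restart L C condition} holds with $\calC=0$ and $\calL=O\bigl(c_0^2\,\tfrac{D_\delta}{r_\delta}\bigr)$, and then invoke Theorem \ref{thm: complexity of PDHG with adaptive restart} and Lemma \ref{lm use gap to bound error in w} to get the purely logarithmic bound, optimizing over $\delta\in(0,\bar\delta]$. Your key lemma is, up to the constant, the paper's Lemma \ref{lm c123 sharp LP}, and your lower bound $\mu\ge\tfrac{\delta}{d^H_\delta}\cdot\tfrac{1}{\sqrt{\|P_{\linVp}(c)\|^2+\|q\|^2}}$ (hence $\dist(w,\calW^\star)\le\tfrac{d^H_\delta}{\delta}\gap(w)$ on $\calF$) is exactly Lemma \ref{lm:lowerbound_mu}; the extreme-point justification you sketch is the content of Theorem 5.5 of \cite{xiong2023computational}, which the paper cites rather than reproves.

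The one place where your argument is not actually closed is the step you yourself flag as delicate: establishing the error bound when $\hat w=P_V(w)\notin K$ but $\gap(\hat w)<\delta$. Two remarks. First, your worry that the ``naive combination'' is quadratic in $D_\delta/r_\delta$ does not materialize: writing $v=\calF(\hat w;w_\delta)$, collinearity of $\hat w$, $v$, $w_\delta$ together with Lemma \ref{lm error bound R r} gives $|\gap(\hat w)-\gap(v)|\le|\gap(v)-\gap(w_\delta)|\cdot\tfrac{\dist(\hat w,K)}{r_\delta}\le\delta\cdot\tfrac{\dist(\hat w,K)}{r_\delta}$, because both $v$ and $w_\delta$ lie in $\calW_\delta$. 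So the gap inflation is proportional to the infeasibility, the factor $\delta$ cancels against the $1/\delta$ in the sharpness constant, and one gets $\tfrac{d^H_\delta}{\delta}\gap(v)\le\tfrac{d^H_\delta}{\delta}\gap(\hat w)+\tfrac{D_\delta}{r_\delta}\dist(\hat w,K)$ --- linear in $D_\delta/r_\delta$ with constant $5$ after assembling the pieces. Second, your proposed alternative resolution (send $u\to0^+$ in Proposition \ref{lm:convexity_ineq} and reduce to a local polyhedral error bound at $w^\star$, then check uniform domination over directions) is not carried out and would require genuinely new machinery; the paper instead uses Proposition \ref{lm:convexity_ineq} only for the easy extension from $\gap(w)\le\delta$ to $\gap(w)>\delta$ with $u=\delta/\gap(w)$. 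So the proposal identifies the right statement to prove but leaves its proof open precisely where the paper's direct collinearity computation does the work.
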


Similar to Theorem \ref{thm overall complexity clp}, in Theorem \ref{thm overall complexity lp} we still use $\err$ for expositional convenience, and $\err$ defined in \eqref{def eq calN lp} is identical to that defined in \eqref{def eq calN}. But unlike the bound in Theorem \ref{thm overall complexity clp}, the computational bound \eqref{eq overall complexity LP} in Theorem \ref{thm overall complexity clp}
depends on $\err^{-1}$ only in the logarithm term. This means that if there exists a $\delta$-sublevel set for some $\delta \in (0,\bar{\delta}]$ such that the corresponding ratio $\frac{D_{\delta}}{r_{\delta}}$ is not too large, then rPDHG will have fast linear convergence for that LP instance.  Appendix \ref{taxtime} contains the proof of Theorem \ref{thm overall complexity lp}.

Theorem \ref{thm overall complexity lp} provides a stronger linear convergence guarantee than Corollary \ref{cor: linear convergence complexity clp} because $\left( \min_{0 < \delta \le \bar{\delta}}\frac{D_{\delta}}{r_{\delta}}\right) < \infty$ (which follows from Assumption \ref{assump:striclyfeasible}) even if there exist multiple optimal solutions. Consider as an example the LP problem \eqref{pro:example_lp_validate} with $\nu=0$. This instance has multiple primal optimal solutions, namely $x^\star =(0, \alpha, 1-\alpha)$ is primal optimal for all $\alpha \in [0,1]$. The left plot in Figure \ref{fig:two_thms_easy_hard_LP_paper} shows the iteration upper bounds from Theorems \ref{thm overall complexity clp} and \ref{thm overall complexity lp}, and the actual number of iterations of rPDHG that were needed to satisfy the $(\bar{\eps},\bar{\eps},\bar{\eps})$-tolerance requirement, for the LP problem \eqref{pro:example_lp_validate} with $\nu=0$.  The right plot of Figure \ref{fig:two_thms_easy_hard_LP_paper} shows the same results for the LP problem \eqref{pro:example_lp_validate} with $\nu=10^{-4}$. Examining the LP problem \eqref{pro:example_lp_validate} with $\nu=0$ and the left plot in the figure, we observe that the bound in Theorem \ref{thm overall complexity clp} itself is not revealing linear convergence, as a consequence of the fact that \eqref{pro:example_lp_validate} has multiple primal optimal solutions for $\nu = 0$. Examining the LP problem \eqref{pro:example_lp_validate} with $\nu=10^{-4}$ and the right plot in the figure, we observe the linear convergence rate in Theorem \ref{thm overall complexity lp}.  But also observe that the linear convergence rate does not correspond to the early-stage performance of rPDHG in practice, and for the early stage in fact the bound in Theorem \ref{thm overall complexity clp} much more closely corresponds to the actual performance of rPDHG for this instance.

\begin{figure}[htbp]
	\centering
	\includegraphics[width=0.65\linewidth]{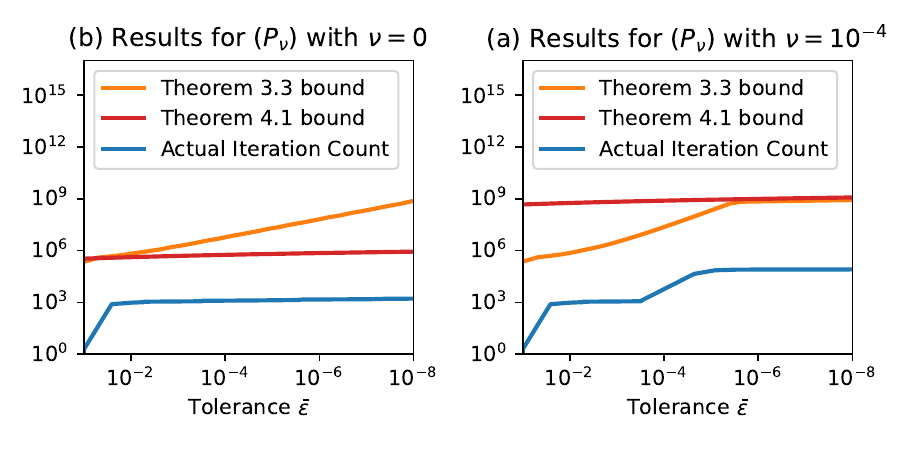}\vspace{-10pt}
	\caption{Plots of the guaranteed bounds on the number of iterations of rPDHG and the actual number of iterations of rPDHG for LP problem \eqref{pro:example_lp_validate} with $\nu=0$ (left) and $\nu=10^{-4}$ (right).}\label{fig:two_thms_easy_hard_LP_paper}
\end{figure}

It follows from \eqref{xcski} that $r_\delta \le \tfrac{\delta}{\max_{w\in\calW^\star}\|w\|}$ for any $\delta > 0$, so the term $\min_{0 < \delta \le \bar{\delta}} \frac{D_{\delta}}{r_{\delta}}$ in \eqref{eq overall complexity LP} has the following lower bound:
\begin{equation}
	\min_{0 < \delta \le \bar{\delta}}\frac{D_{\delta}}{r_{\delta}} \ge \min_{0 < \delta \le \bar{\delta}} \frac{D_\delta \cdot \max_{w\in\calW^\star}\|w\|}{\delta} \ .
\end{equation}
Now notice that both sides of the above inequality are decreasing in the value of $\bar{\delta}$. Therefore the iteration bound in Theorem \ref{thm overall complexity lp} is smaller when the value of the best suboptimal extreme point gap $\bar{\delta}$ is larger. This is consistent with what we observe in computational practice for the LP instances \eqref{pro:example_lp_validate} with $\nu=0$ and $\nu=10^{-4}$.  Table \ref{tabletable} shows the values of $\bar\delta$ as well as several other quantities of interest for these two LP instances. Notice that $\bar\delta$ is much larger in \eqref{pro:example_lp_validate} with $\nu=0$ compared to its value in \eqref{pro:example_lp_validate} with $\nu=10^{-4}$.  Furthermore, the two problems have fairly similar values of $D_{\bar{\delta}}$ and $\max_{w\in\calW^\star}\|w\|$ both of which contribute to the bound $\frac{D_{\bar{\delta}}}{r_{\bar\delta}}$ being much larger for \eqref{pro:example_lp_validate} with $\nu=10^{-4}$.  This is also reflected in the bounds and the actual iteration counts in Figure \ref{fig:two_thms_easy_hard_LP_paper}.

\begin{table}[htbp]
	\centering
	\begin{tabular}{l|cccccc}
		              & $\kappa$ & $\bar{\delta}$ & $D_{\bar{\delta}}$ & $r_{\bar{\delta}}$ & $\frac{D_{\bar{\delta}}}{r_{\bar{\delta}}}$ & {\small$\max_{w\in\calW^\star}\|w\|$} \\ \hline
		$\nu=0$       & 1.0e0    & 1.0e0         & 1.1e1              & 9.1e-2             & 1.2e2                                       & 1.0e1                                 \\
		$\nu=10^{-4}$ & 1.0e0    & 1.0e-4         & 1.4e0             & 8.9e-6             & 1.6e5                                       & 1.0e1
	\end{tabular}\vspace{-5pt}
	\caption{Values of some quantities of interest for LP instances \eqref{pro:example_lp_validate} with $\nu=0$ and $\nu=10^{-4}$. }\label{tabletable}

\end{table}

Moreover, a small value of $\bar{\delta}$ implies that the best suboptimal extreme point is nearly optimal and so the problem is ``close to'' having more optimal solutions (say, under small data perturbation). In this sense, Theorem \ref{thm overall complexity lp} can be interpreted as indicating that the performance of rPDHG is hurt when the problem is close to having more optimal solutions. However, unlike Theorem \ref{thm overall complexity clp}, having multiple optimal solutions does not necessarily hurt the linear convergence rate of rPDHG. This is similar to the observation of \cite{lu2023geometry} on the performance of PDHG without restarts, where they show that degeneracy in LP instances does not hurt the convergence rate, but being {\em close} to degeneracy does hurt the convergence rate. One difference between the computational guarantees in Theorem \ref{thm overall complexity lp} and in \cite{lu2023geometry} is that the bound in  \cite{lu2023geometry} depends on quantities involving the limiting points of the iterates of PDHG, which is not necessarily an inherent property of the problem instance as it might depend on the initial point.

We note that Theorem \ref{thm overall complexity lp} is not the first result of the linear convergence of PDHG on LP problems. \cite{applegate2023faster} uses the Hoffman constant of the KKT system to characterize the linear convergence, but the global Hoffman constant is often overly conservative and hard to analyze \cite{lu2023geometry,xiong2023computational}. \cite{lu2023geometry} studies PDHG without restarts applied to LP problems and uncovers a very nice ``two-phase'' behavior. Although the second phase yields linear convergence based on the Hoffman constant of a reduced system, the duration of the first phase is still not well understood. \cite{xiong2023computational,xiong2023relation} provide computational guarantees with linear convergence based on two condition measures of LP problems, ``limiting error ratios'' and LP sharpness. These condition measures are closely related to the inherent natural properties of the LP instances, but they can still take extreme values and lead to poor performance of PDHG in theory and in practice. In contrast, in addition to the simplicity of the linear convergence rate in Theorem \ref{thm overall complexity lp}, the bound in Theorem \ref{thm overall complexity lp} is based on the condition numbers related to sublevel-set geometry -- diameter, conic radius, and Hausdorff distance. As we will show in Section \ref{sec:geometric_enhancements}, the sublevel-set geometry can be improved by rescaling transformations, and in this way the theory in Theorem \ref{thm overall complexity lp} can lead to practical algorithm enhancements that can significantly improve the overall performance of rPDHG.

\section{How to Bound the Sublevel-Set Geometry using Central-Path Hessian-based Rescaling}\label{sec:geometric_enhancements}
In this section we show how to bound (and generally improve) the primal-dual sublevel-set geometry condition numbers -- $D_\delta$, $r_\delta$ and $d^H_\delta$ -- by doing a ``rescaling'' linear transformation of the variables based on Hessian information from a point on the central path of a self-concordant barrier. 
(Here the word ``rescaling'' is borrowed from the lexicon of interior-point literature for LP where nonnegative individual variables are rescaled.) A brief overview is as follows. From the theory of interior-point methods \cite{nesterov2018lectures,renegar2001mathematical}, it is known that the Hessian matrix of a point on the central path yields an ellipsoid that contains a given primal-dual sublevel set, and whose center is suitably interior to the feasible region.  Using a linear transformation based on the Hessian, one then can transform the primal-dual sublevel sets to easily bound the resulting sublevel-set geometry condition numbers in the transformed space, in such a way that sublevel-set geometry condition numbers in the transformed space only depend on the complexity value $\vartheta$ of the barrier and the duality gap of the central path point.  After rescaling, one can then apply rPDHG to the transformed problem, whose sublevel set condition numbers are more bounded/improved, yielding an improved iteration bound on rPDHG via Theorems \ref{thm overall complexity clp} and \ref{thm overall complexity lp}.

We begin by recalling some essential properties of self-concordant barrier functions and central-path solutions.

\subsection{Self-concordant barriers and central-path solutions}

A self-concordant barrier function for a cone $K_p$ is a special class of barrier functions defined on $\textsf{int}K_p$. This class of barrier functions plays a crucial role in Newton's method and in interior-point methods for convex optimization, see \cite{renegar2001mathematical}. Examples of (logarithmically homogeneous) self-concordant barrier functions include: $f(x):= -\sum_{i=1}^n\log(x_i)$ (whose complexity value is $\vartheta_f = n$) for $K_p = \mathbb{R}^n_+$, $f(x):=-\ln(x_{d+1}^2 - \|x_{1:d}\|^2)$ (whose complexity value is $\vartheta_f=2$) for $K_p = \mathbb{K}_{\textsf{soc}}^{d+1}$, and $f(X):= -\ln(\det(X))$ (whose complexity value is $\vartheta_f = d$) for $K_p = \mathbb{S}^{d\times d}_{+}$. A review of the definitions of (logarithmically homogeneous) self-concordant barrier functions is provided in Appendix \ref{appendix:geometric_enhancements}.

Let $f$ be a self-concordant barrier function for $K_p$, and let $H_f(x)$ denote its Hessian at $x$. The local norm of $f$ at $x$ is defined as $\|u\|_x := \|u\|_{H_f(x)} = \sqrt{\langle u,H_f(x) u \rangle}$ and the local-norm ball of radius $r$ centered at $x$ is $B_x(x, r) := \{\hat{x}: \|\hat{x} - x\|_x \le r\}$. Let $f^*$ denote the following conjugate function of $f$ defined by $f^*(s) := -\inf_{x \in K_p}\{s^\top x + f(x) \}$ (which is a slight variation of the standard definition of the conjugate function). Then $f^*$ is a self-concordant barrier function for the dual cone $K_d$, and $F(w) := f(x) + f^*(s)$ for $w = (x, s)$ is a self-concordant barrier function on the product cone $K$. We define the primal-dual central path on $K$ as follows.

\begin{definition}[Primal-Dual Central path]\label{italy}
	For $\eta >0$ define $w(\eta)$ as follows:
	\begin{equation}\tag{PD$_\eta$}\label{pro: general primaldual clp centralpath}
		w(\eta) :=	\arg\min_{w \in\mathbb{R}^{2n}}  \ \eta\cdot \gap(w) +F(w) \quad
		\operatorname{s.t.} \  w \in V_p \times V_d , \ w \in\emph{\textsf{int}} K_d\times \emph{\textsf{int}} K_p \ ,
	\end{equation}
	and the primal-dual central path is defined to be the set of parameterized solutions $\{w(\eta):\eta>0\}$.
\end{definition}
\noindent
Because $F(w) = f(x)+f^*(s)$, \eqref{pro: general primaldual clp centralpath} can be separated into primal and dual barrier problems, whose solutions $x(\eta)$ and $s(\eta)$ form the primal central path and dual central path, respectively, and $w(\eta) = (x(\eta),s(\eta))$.
The following are useful properties of $f$, $f^*$, and $F$ (see \cite{renegar2001mathematical} for proofs):
\begin{fact}\label{fact:SCfunction_main}
	The following properties hold for $f$, $f^*$ and $F$:
	\begin{enumerate}
		\item For any $x\in \emph{\textsf{int}}K_p$ it holds that $B_x(x,1)\subset K_p$. \label{item:fact:SCfunction_main:0}
		\item $f^*$ is a self-concordant barrier function for the dual cone $K_d$. Furthermore, if $f$ is logarithmically homogeneous, then $f^*$ is logarithmically homogeneous, and $\vartheta_{f^*}=\vartheta_f$ .\label{item:fact:SCfunction_main:1}
		\item $F(w)$ is a self-concordant barrier function for $K:=K_p\times K_d$, and $\vartheta_F = \vartheta_f + \vartheta_{f^*}$ . \label{item:fact:SCfunction_main:4}
	\end{enumerate}
\end{fact}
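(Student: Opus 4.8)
The plan is to establish the three listed properties from the defining differential inequalities of self-concordant barrier functions together with the Euler identities implied by logarithmic homogeneity. All three are classical (see \cite{renegar2001mathematical,nesterov2018lectures} for the full arguments), so I would present compact derivations and reduce the work to standard one-dimensional and block-separation estimates. Throughout I would use the gradient $\nabla f$, the Hessian $H_f$, and the third-derivative form $D^3 f(x)[h,h,h]$, and recall that self-concordance means $|D^3 f(x)[h,h,h]| \le 2\,(D^2 f(x)[h,h])^{3/2}$ while logarithmic homogeneity means $f(tx) = f(x) - \vartheta_f \ln t$ for all $t>0$.

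For the Dikin-ball containment $B_x(x,1)\subset K_p$, I would fix $x\in\textsf{int}K_p$ and a direction $h$ with $\|h\|_x = 1$, and study the univariate restriction $\phi(t) := f(x+th)$. Self-concordance gives $|\phi'''(t)| \le 2\,\phi''(t)^{3/2}$ on the interval where $\phi$ is finite; setting $\psi(t) := \phi''(t)^{-1/2}$ this becomes $|\psi'(t)| \le 1$, and since $\psi(0) = \|h\|_x^{-1} = 1$ we obtain $\psi(t) \ge 1-t$, hence $\phi''(t) \le (1-t)^{-2}$ stays finite for every $t\in[0,1)$. Because $f$ is a barrier and blows up at $\partial K_p$, finiteness of $\phi''$ on $[0,1)$ forces $x+th\in\textsf{int}K_p$ for $t\in[0,1)$; letting $t\uparrow 1$ and passing to closures yields $x+h\in K_p$, which is the claim.

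For the conjugate property I would use the Legendre correspondence induced by $f$. The stationarity condition for $f^*(s) = -\inf_x\{s^\top x + f(x)\}$ is $s = -\nabla f(x)$, the map $x\mapsto -\nabla f(x)$ is a bijection from $\textsf{int}K_p$ onto $\textsf{int}K_d$ whose inverse is $s\mapsto -\nabla f^*(s)$, and at corresponding points $H_{f^*}(s) = H_f(x)^{-1}$. The heart of the argument is that the third-order self-concordance inequality is preserved under this inverse-Hessian transformation, so that $f^*$ is self-concordant on $\textsf{int}K_d$; this is the technically heaviest step and the main obstacle, since it requires tracking how the cubic form transforms under conjugation. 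The complexity value, by contrast, transfers cleanly by a direct substitution: replacing $s$ by $\lambda s$ and $x$ by $y/\lambda$ in the definition of $f^*$ and using $f(y/\lambda) = f(y) + \vartheta_f\ln\lambda$ gives $f^*(\lambda s) = f^*(s) - \vartheta_f\ln\lambda$, so $f^*$ is logarithmically homogeneous with $\vartheta_{f^*} = \vartheta_f$.

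For the product property, I would exploit the block structure: with $w=(x,s)$ and $F(w) = f(x) + f^*(s)$, one has $H_F(w) = \mathrm{diag}(H_f(x), H_{f^*}(s))$, and the third-derivative tensor is likewise block-diagonal. Splitting a direction as $u=(u_1,u_2)$, the self-concordance inequality for $F$ follows by adding the inequalities for $f$ and $f^*$ and using the superadditivity bound $a^{3/2}+b^{3/2}\le (a+b)^{3/2}$ for $a,b\ge 0$. The complexity values add because $\nabla F^\top H_F^{-1}\nabla F = \nabla f^\top H_f^{-1}\nabla f + \nabla f^{*\top} H_{f^*}^{-1}\nabla f^*$, so the smallest admissible parameter for $F$ is $\vartheta_f + \vartheta_{f^*}$ (which, combined with the conjugate property, equals $2\vartheta_f$ in the logarithmically homogeneous case). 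Of the three, I expect the Dikin-ball and product statements to be essentially mechanical, while the preservation of self-concordance under the Legendre transform in the conjugate step is where the real care is required.
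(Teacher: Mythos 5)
The paper never proves this statement: it is labeled a ``Fact'' and the sentence immediately preceding it reads ``(see \cite{renegar2001mathematical} for proofs)'', so there is no in-paper argument to compare yours against. What can be compared is your proposal against the paper's framework, and two things stand out. First, the definition of a self-concordant barrier the paper actually adopts (Definition \ref{def:SCbarrier} in Appendix \ref{appendix:geometric_enhancements}, following Renegar) is phrased via the local-norm inequality \eqref{eq:sef:SCbarrier} and \emph{already contains} the requirement $B_x(x,1)\subset K_p$ as part of the definition, so under the paper's conventions item 1 needs no proof at all. Your derivation of it from the cubic-form inequality $|D^3f(x)[h,h,h]|\le 2\,(D^2f(x)[h,h])^{3/2}$ is the standard Nesterov-style argument and is sound --- though the step ``finiteness of $\phi''$ forces interiority'' should be routed through boundedness of $\phi$ itself on $[0,t]$ for each $t<1$ (integrate $\phi''\le(1-t)^{-2}$ twice on a compact subinterval) together with the blow-up of a convex barrier at the boundary of its domain --- but you are proving it from a different axiomatization than the one in force in the paper.

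Second, and more substantively: the one step in item 2 that carries real mathematical content --- that self-concordance is preserved under the Legendre transform, so that $f^*$ is genuinely a self-concordant barrier for $K_d$ --- is exactly the step you flag as ``the main obstacle'' and then do not carry out. That step is precisely what the paper's citation to \cite{renegar2001mathematical} covers (together with the bijection $-\nabla f:\textsf{int}K_p\to\textsf{int}K_d$ and the inverse-Hessian identity, which the paper restates separately as Fact \ref{fact:SCfunction}), so as written your item 2 is an outline rather than a proof. The remaining pieces are correct and standard: the substitution $x=y/\lambda$ giving $f^*(\lambda s)=f^*(s)-\vartheta_f\ln\lambda$; the identity $\vartheta_{f^*}=\vartheta_f$ from $\|\nabla f(x)\|_x^2\equiv\vartheta_f$ in the logarithmically homogeneous case; and the product construction, where the block-diagonal structure, the bound $a^{3/2}+b^{3/2}\le(a+b)^{3/2}$, and the fact that a supremum of a separable function over a product domain splits as the sum of suprema give $\vartheta_F=\vartheta_f+\vartheta_{f^*}$ exactly.
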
\noindent Additionally, central-path solutions have the following useful properties.

\begin{fact}\label{fact:centralpath_main}
	The following properties hold for central-path solutions $w(\eta)=(x(\eta),s(\eta))$ defined in \eqref{pro: general primaldual clp centralpath}:
	\begin{enumerate}
		\item
		      For every $w \in \calF$ satisfying $\gap(w) \le \gap(w(\eta))$ it holds that $w \in B_{w(\eta)}(w(\eta),\vartheta_F + 2 \sqrt{\vartheta_F})$ \cite[Theorem 5.3.8]{nesterov2018lectures}. \label{item:fact:centralpath_main:3}
		\item If $f$ is logarithmically homogeneous, then $\gap(w(\eta))=\vartheta_f/\eta$ \cite[Section 3.4 and Equation (2.9)]{renegar2001mathematical}. \label{{item:fact:centralpath_main:2}}
	\end{enumerate}
\end{fact}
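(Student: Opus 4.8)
The plan is to handle the two items separately, since item~2 is a short stationarity computation whereas item~1 is the genuine self-concordance localization estimate. For item~2 I would first record the primal--dual central-path relation. Because $F(w)=f(x)+f^*(s)$ and $\gap(w)=c^\top x+q^\top s-q_0$ is affine, problem \eqref{pro: general primaldual clp centralpath} separates, and the first-order optimality condition for the primal part $\min\{\eta c^\top x+f(x):Ax=b\}$ reads $\eta c+\nabla f(x(\eta))\in\operatorname{Im}(A^\top)$, i.e. $-\nabla f(x(\eta))=\eta\,(c-A^\top y)=\eta\,s(\eta)$ for the matching dual slack $s(\eta)$. Logarithmic homogeneity of degree $\vartheta_f$ gives the Euler identity $\nabla f(x)^\top x=-\vartheta_f$ (differentiate $f(tx)=f(x)-\vartheta_f\ln t$ at $t=1$), so
\[
x(\eta)^\top s(\eta)=-\tfrac1\eta\,\nabla f(x(\eta))^\top x(\eta)=\vartheta_f/\eta .
\]
Finally, at any feasible pair the duality gap equals the complementarity product, $\gap(w)=x^\top s$ (this is exactly $c^\top x-b^\top y=x^\top(c-A^\top y)$ from Section~\ref{sec:get_general_clp_dual}), and therefore $\gap(w(\eta))=\vartheta_f/\eta$.

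For item~1 the plan is to reduce the containment to a sign condition on a gradient inner product and then invoke the standard localization bound. By optimality of $w(\eta)$ over the affine set $V$, the gradient $\eta(c,q)+\nabla F(w(\eta))$ is orthogonal to $\lin{V}$; since any feasible $w$ has $w-w(\eta)\in\lin{V}$ and $\langle(c,q),w-w(\eta)\rangle=\gap(w)-\gap(w(\eta))$, I obtain the exact identity
\[
\langle\nabla F(w(\eta)),w-w(\eta)\rangle=-\eta\big(\gap(w)-\gap(w(\eta))\big)\ge 0,
\]
using the hypothesis $\gap(w)\le\gap(w(\eta))$. It then remains to establish the purely geometric fact that, for the $\vartheta_F$-self-concordant barrier $F$ on $K$ (whose parameter is $\vartheta_F=\vartheta_f+\vartheta_{f^*}$ by Fact~\ref{fact:SCfunction_main}), any point $w$ in the closed domain with $\langle\nabla F(w(\eta)),w-w(\eta)\rangle\ge0$ satisfies $w\in B_{w(\eta)}(w(\eta),\vartheta_F+2\sqrt{\vartheta_F})$. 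This is precisely \cite[Theorem 5.3.8]{nesterov2018lectures}.

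To prove this last geometric fact directly I would restrict $F$ to the segment $t\mapsto w(\eta)+t\,(w-w(\eta))$, use that the restriction is a one-dimensional self-concordant barrier, and combine the sign condition above with the Cauchy--Schwarz-type estimate $\langle\nabla F(w(\eta)),u\rangle^2\le\vartheta_F\,\|u\|_{w(\eta)}^2$ and the semi-boundedness of the barrier along the ray towards $w$. The main obstacle is exactly this quantitative step: converting the one-sided condition $\langle\nabla F(w(\eta)),w-w(\eta)\rangle\ge0$ into the explicit radius $\vartheta_F+2\sqrt{\vartheta_F}$ requires the full one-dimensional self-concordance analysis and care in the case where $w\in\partial K$, so that $F(w)=+\infty$ while $\|w-w(\eta)\|_{w(\eta)}$ stays finite. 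Since this is the content of the cited Nesterov/Renegar theorem, I would lean on that citation for the delicate estimate and keep the novel part to the two reductions above.
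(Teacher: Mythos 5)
Your proposal is correct and is essentially the paper's treatment: the paper states this as a Fact and delegates the substantive content to the cited results (Nesterov's Theorem 5.3.8 for the localization radius $\vartheta_F+2\sqrt{\vartheta_F}$, and Renegar for the central-path identities), exactly as you do. The elementary reductions you supply — the stationarity relation $-\nabla f(x(\eta))=\eta\,s(\eta)$, the Euler identity $\nabla f(x)^\top x=-\vartheta_f$, the identity $\gap(w)=x^\top s$ on $\calF$, and the sign condition $\langle\nabla F(w(\eta)),w-w(\eta)\rangle\ge 0$ needed to invoke the cited theorem — are all accurate and are precisely what the paper leaves implicit.
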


\noindent It follows from Fact \ref{fact:SCfunction_main} that $B_{w(\eta)}(w(\eta),1)$ is an inscribed ellipsoid of $K$ centered at $w(\eta)$, and from Fact \ref{fact:centralpath_main} that $B_{w(\eta)}(w(\eta),\vartheta_F + 2 \sqrt{\vartheta_F})$ circumscribes the sublevel set $\calW_{\gap(\eta)}$. Importantly, these two ellipsoids have the same center $w(\eta)$ and shape matrix $H_f({w(\eta)})$, and only differ in their scaling. To somewhat ease the notational burden, we use $H_{w(\eta)}:= H_F(w(\eta))$ to denote the Hessian of $F$ at $w(\eta)$. We have the following bounds on the condition numbers of the primal-dual sublevel set $\calW_\alpha$.
\begin{remark}\label{cor:H_bound_cond_numbers}
	For $\alpha = \gap(w(\eta))$, the diameter $D_\alpha$ and the Hausdorff distance $d^H_{\alpha}$ are at most as large as the diameter of $B_{w(\eta)}(w(\eta),\vartheta_F+2\sqrt{\vartheta_F})$, and the conic radius $r_\alpha$ is at least as large as the radius of $B_{w(\eta)}(w(\eta),1)$. Therefore
	\begin{equation}\label{eq:cor:H_bound_cond_numbers}
		d^H_{\alpha}\le D_\alpha \le \frac{2\vartheta_F+4\sqrt{\vartheta_F}}{\sqrt{\sigma_{\min}^+(H_{w(\eta)})}} \ , \ \ \text{ and } \ \  r_\alpha \ge \frac{1}{\sqrt{\sigma_{\max}^+(H_{w(\eta)})}} \ .
	\end{equation}
\end{remark}
\noindent (The first inequality in \eqref{cor:H_bound_cond_numbers} follows from Lemma \ref{lm D ge r}, and the second and third inequalities are derived from the fact that the diameter and radius of the ellipsoid $B_{w(\eta)}(w(\eta),r)$ are $\tfrac{2r}{\sqrt{\sigma_{\min}^+(H_{w(\eta)})}}$ and $\tfrac{r}{\sqrt{\sigma_{\max}^+(H_{w(\eta)})}}$, respectively.)

Remark \ref{cor:H_bound_cond_numbers} indicates that large values of $\sigma_{\max}^+(H_{w(\eta)})$ and small values of $\sigma_{\min}^+(H_{w(\eta)})$ might lead to a large ratio $D_\alpha / r_\alpha$ and a large Hausdorff distance $d^H_\alpha$, and thus to worse iteration bounds in Theorems \ref{thm overall complexity clp} and \ref{thm overall complexity lp}. Contrapositively, suppose that we have (or can easily construct) a linear transformation $\phi: \ \mathbb{R}^{2n} \mapsto \mathbb{R}^{2n}$ mapping primal-dual sublevel set $\calW_\alpha$ to $\tilde\calW_\alpha = \phi(\calW_\alpha)$ so that after transformation to the new variables $\tilde w = \phi(w)$ we have $\sigma_{\max}^+(H_{\tilde w(\eta)}) = \sigma_{\min}^+(H_{\tilde w(\eta)}) = \alpha$. Then from Remark \ref{cor:H_bound_cond_numbers} the bounds on the condition numbers of the transformed primal-dual sublevel sets will only involve $\vartheta_F$ and $\alpha$ and hence the iteration bounds in Theorems \ref{thm overall complexity clp} and \ref{thm overall complexity lp} will be similarly controlled.  Such a linear transformation $\phi$ is of course easily constructed from $H_{w(\eta)}$ itself, which we present in detail in the next subsection.

\subsection{``Rescaling'' linear transformation to improve the geometry of primal-dual sublevel sets}\label{subsec:rescaling_transformation}

Let $f$ be a logarithmically homogeneous self-concordant barrier function for $K_p$. Given a point $w(\eta)$ on the central path, we define the linear transformation (``rescaling'') $\phi$ of $w = (x,s) \in \mathbb{R}^{2n}$ as follows:
\begin{equation}\label{eq:def phi}
	\tilde w = (\tilde x, \tilde s) := \phi_\eta(w):= \left(\tfrac{1}{\eta}\cdot H_{w(\eta)}\right)^{1/2} w   \ = \left( \begin{array}{cc} \tfrac{1}{\sqrt{\eta}}\cdot H_{x(\eta)}^{1/2} & \\ & \tfrac{1}{\sqrt{\eta}}\cdot H_{s(\eta)}^{1/2} \end{array} \right) \left(\begin{array}{c} x \\ s \end{array} \right) =  \left( \begin{array}{c} \tfrac{1}{\sqrt{\eta}}\cdot H_{x(\eta)}^{1/2} x  \\  \tfrac{1}{\sqrt{\eta}}\cdot H_{s(\eta)}^{1/2} s \end{array} \right)  \ ,
\end{equation}
where we use $M^{1/2}$ to denote the symmetric square root of a symmetric positive semi-definite matrix $M$. Let $\alpha$ denote the duality gap of $w(\eta)$, whereby $\alpha := \gap(w(\eta)) = \vartheta_f/\eta$ from item (\ref{{item:fact:centralpath_main:2}}.) of Fact \ref{fact:centralpath_main}. Under the linear transformation $\phi_\eta$ in \eqref{eq:def phi}, we study the geometry of the rescaled sublevel sets and related objects:
\begin{equation}\label{congrats}\tilde{\calW}_{\alpha} := \phi_\eta \left(\calW_{\alpha} \right) \ , \ \tilde{\calW}^\star := \phi_\eta \left( \calW^\star \right) \ , \ \text{ and } \tilde{K} := \phi_\eta(K) \ . \end{equation}
\begin{theorem}\label{thm:conditionnumber_after_transformation}
	Let $f$ be a logarithmically homogeneous self-concordant barrier function for $K_p$, and let $\alpha:= \gap(w(\eta)) = \vartheta_f/\eta$.  Under the linear transformation \eqref{eq:def phi} and following the notation \eqref{congrats}, the following bounds hold:
	\begin{align}
		 & \tilde{D}_{\alpha}  := \max_{u,v\in \tilde{\calW}_{\alpha}} \|u-v\| \le \frac{4\vartheta_f + 4 \sqrt{2\vartheta_f}}{\sqrt{\eta}}  \label{eq:tilde_D}                                                                          \\
		 & \tilde{r}_{\alpha} :=\left(
		\begin{array}{cl}
				\max_{w\in \mathbb{R}^{2n}, \ r\ge 0 } & r                                                         \\
				\operatorname{s.t.}                    & w \in \tilde{\calW}_{\alpha}, \ B(w,r)\subseteq \tilde{K}
			\end{array}
		\right) \ge \frac{1}{\sqrt{\eta}} \label{eq:tilde_r}                                                                                                                                                                             \\
		 & \tilde{d}^H_{\alpha}  := D^H(\tilde{\calW}_{\alpha}, \tilde{\calW}^\star) := \max_{w\in \tilde{\calW}_{\alpha}}\dist(w,\tilde{\calW}^\star) \le \frac{4\vartheta_f + 4 \sqrt{2\vartheta_f}}{\sqrt{\eta}}   \label{eq:tilde_d} \\
		 & \dist(0,\tilde{\calW}^\star)  \le \frac{2\vartheta_f + 3 \sqrt{2\vartheta_f}}{\sqrt{\eta}} \ , \text{ and } \label{eq:tilde_dist}                                                                                             \\
		 & \frac{\tilde{D}_{\alpha} }{\tilde{r}_{\alpha}}   \le 4\vartheta_f + 4 \sqrt{2\vartheta_f} \ .  \label{eq:ratioratio}
	\end{align}
\end{theorem}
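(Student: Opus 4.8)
The plan is to push the two governing local-norm balls centered at $w(\eta)$ --- the inscribed ball from Fact~\ref{fact:SCfunction_main}(\ref{item:fact:SCfunction_main:0}.) and the circumscribing ball from Fact~\ref{fact:centralpath_main}(\ref{item:fact:centralpath_main:3}.) --- through the rescaling $\phi_\eta$, under which both become concentric \emph{Euclidean} balls centered at $\tilde w(\eta) := \phi_\eta(w(\eta))$. The engine of the whole argument is the single identity that for every $w$,
$$ \|\phi_\eta(w) - \tilde w(\eta)\|^2 \;=\; \tfrac{1}{\eta}\,(w - w(\eta))^\top H_{w(\eta)} (w - w(\eta)) \;=\; \tfrac{1}{\eta}\,\|w - w(\eta)\|_{H_{w(\eta)}}^2 \ , $$
so that $\phi_\eta$ carries the local-norm ball $B_{w(\eta)}(w(\eta), r)$ exactly onto the Euclidean ball $B(\tilde w(\eta), r/\sqrt{\eta})$. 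Writing $\vartheta_F = 2\vartheta_f$ (Fact~\ref{fact:SCfunction_main}(\ref{item:fact:SCfunction_main:4}.)) and $R := \vartheta_F + 2\sqrt{\vartheta_F} = 2\vartheta_f + 2\sqrt{2\vartheta_f}$, all five bounds then reduce to reading off radii.

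For \eqref{eq:tilde_r} I would use Fact~\ref{fact:SCfunction_main}(\ref{item:fact:SCfunction_main:0}.), which gives $B_{w(\eta)}(w(\eta),1)\subseteq K$; since $\gap(w(\eta)) = \alpha$ means $w(\eta)\in\calW_\alpha$, applying $\phi_\eta$ shows $\tilde w(\eta)\in\tilde\calW_\alpha$ and $B(\tilde w(\eta),1/\sqrt{\eta})\subseteq\tilde K$, so $(\tilde w(\eta),1/\sqrt{\eta})$ is feasible for the program defining $\tilde r_\alpha$, yielding $\tilde r_\alpha\ge 1/\sqrt{\eta}$. For \eqref{eq:tilde_D} I would use Fact~\ref{fact:centralpath_main}(\ref{item:fact:centralpath_main:3}.), which gives $\calW_\alpha\subseteq B_{w(\eta)}(w(\eta),R)$; hence $\tilde\calW_\alpha\subseteq B(\tilde w(\eta),R/\sqrt{\eta})$ and $\tilde D_\alpha\le 2R/\sqrt{\eta} = (4\vartheta_f + 4\sqrt{2\vartheta_f})/\sqrt{\eta}$. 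Since $\phi_\eta$ is a bijection and $\calW^\star\subseteq\calW_\alpha$, we have $\tilde\calW^\star\subseteq\tilde\calW_\alpha$, so the very argument of Lemma~\ref{lm D ge r} gives $\tilde d^H_\alpha\le\tilde D_\alpha$, which is \eqref{eq:tilde_d}.

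The only step needing an ingredient beyond the two Facts is \eqref{eq:tilde_dist}, where I would invoke logarithmic homogeneity of $f$ and $f^*$ in the form $x^\top H_f(x) x = \vartheta_f$ for $x\in\textsf{int}\,K_p$ (and likewise $s^\top H_{f^*}(s)s = \vartheta_{f^*} = \vartheta_f$). Using the block-diagonal structure of $H_{w(\eta)}$ recorded in \eqref{eq:def phi} and evaluating at $w(\eta)=(x(\eta),s(\eta))$ gives $\|w(\eta)\|_{H_{w(\eta)}}^2 = \vartheta_f + \vartheta_f = \vartheta_F$, so $\|\tilde w(\eta)\| = \sqrt{\vartheta_F}/\sqrt{\eta} = \sqrt{2\vartheta_f}/\sqrt{\eta}$. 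Picking any $w^\star\in\calW^\star$ (so $\gap(w^\star)=0\le\alpha$), Fact~\ref{fact:centralpath_main}(\ref{item:fact:centralpath_main:3}.) places $w^\star\in B_{w(\eta)}(w(\eta),R)$, hence $\|\tilde w^\star - \tilde w(\eta)\|\le R/\sqrt{\eta}$, and the triangle inequality gives $\dist(0,\tilde\calW^\star)\le\|\tilde w^\star\|\le(\sqrt{\vartheta_F}+R)/\sqrt{\eta} = (2\vartheta_f + 3\sqrt{2\vartheta_f})/\sqrt{\eta}$, which is \eqref{eq:tilde_dist}. Finally, \eqref{eq:ratioratio} is immediate: dividing the upper bound \eqref{eq:tilde_D} on $\tilde D_\alpha$ by the lower bound \eqref{eq:tilde_r} on $\tilde r_\alpha$ cancels $1/\sqrt{\eta}$ and leaves $4\vartheta_f + 4\sqrt{2\vartheta_f}$. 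The main (and essentially only) obstacle is correctly invoking the log-homogeneity identity to pin down $\|\tilde w(\eta)\|$; everything else is tracking how a single quadratic form pushes forward under $\phi_\eta$ together with two applications of the cited Facts.
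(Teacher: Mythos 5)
Your proposal is correct and follows essentially the same route as the paper's proof: both push the unit inscribed local-norm ball and the $(\vartheta_F+2\sqrt{\vartheta_F})$-circumscribing ball through $\phi_\eta$ via the identity $\phi_\eta(B_{w(\eta)}(w(\eta),r)) = B(\tilde w(\eta), r/\sqrt{\eta})$, read off the radii with $\vartheta_F = 2\vartheta_f$, and use $\|w(\eta)\|_{w(\eta)} = \sqrt{\vartheta_F}$ plus the triangle inequality for the distance-to-origin bound. No gaps.
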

\noindent Theorem \ref{thm:conditionnumber_after_transformation} states that the condition numbers of the rescaled sublevel set $\tilde{\calW}_{\alpha}$ have upper or lower bounds that only involve $\vartheta_f$ and $\eta$. Note that all these condition number bounds are decreasing in $\eta$. Moreover, the ratio $\frac{\tilde{D}_\alpha}{\tilde{r}_\alpha}$ remains bounded above by $4\vartheta_f + 4\sqrt{2\vartheta_f}$, which is likely to be significantly smaller than the original ratio $\frac{D_\alpha}{r_\alpha}$ (which could be arbitrarily large). So long as $\eta$ is not itself too small, the rescaled sublevel set $\tilde{\calW}_{\alpha}$ may have better geometry than the original sublevel set $\calW_{\alpha}$.
(Of course, if the original ratio $D_\alpha/r_\alpha$ is already sufficiently small, then the rescaling transformation may not be beneficial.)  The proof of Theorem \ref{thm:conditionnumber_after_transformation} is presented in Appendix \ref{appendix:geometric_enhancements}.

Furthermore, it follows from the barrier calculus of self-concordant functions that the linear map $\phi_\eta$ can be expressed quite simply using just the Hessian of $f$ as is shown in the following proposition, whose proof is presented in Appendix \ref{appendix:geometric_enhancements}. 
\begin{proposition}\label{lm:feature_of_log_hessian} Let $f$ be a logarithmically homogeneous self-concordant barrier function for $K_p$, and define $D_1:=\sqrt{\eta}\cdot H_{x(\eta)}^{-1/2}$. For $w=(x,s)$, $\phi_{\eta}(w)$ can be expressed  as
	\begin{equation}\label{eq_of_lm:feature_of_log_hessian}
		\phi_\eta(w) = \left(D_1^{-1} x, D_1 s\right) \ .
	\end{equation}
\end{proposition}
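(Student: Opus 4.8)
The plan is to reduce the claimed identity to a single matrix relation between the primal and dual central-path Hessians. Recalling from \eqref{eq:def phi} that $\phi_\eta(w) = \big(\tfrac{1}{\sqrt{\eta}}H_{x(\eta)}^{1/2}x,\ \tfrac{1}{\sqrt{\eta}}H_{s(\eta)}^{1/2}s\big)$, the primal block is immediate: since $D_1 = \sqrt{\eta}\,H_{x(\eta)}^{-1/2}$ we have $D_1^{-1} = \tfrac{1}{\sqrt{\eta}}H_{x(\eta)}^{1/2}$, which is exactly the primal component of $\phi_\eta$. Thus it suffices to prove the dual block $\tfrac{1}{\sqrt{\eta}}H_{s(\eta)}^{1/2}s = D_1 s = \sqrt{\eta}\,H_{x(\eta)}^{-1/2}s$ for all $s$, and since both sides are linear in $s$ this is equivalent to the matrix identity $H_{s(\eta)}^{1/2} = \eta\,H_{x(\eta)}^{-1/2}$, or after squaring (both sides being symmetric positive semidefinite), $H_{s(\eta)} = \eta^2\,H_{x(\eta)}^{-1}$. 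Everything therefore reduces to relating $H_{s(\eta)} = H_{f^*}(s(\eta))$ to $H_{x(\eta)} = H_f(x(\eta))$.

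First I would pin down the primal-dual central-path conjugacy relation $s(\eta) = -\tfrac{1}{\eta}\nabla f(x(\eta))$, equivalently $\nabla f(x(\eta)) = -\eta\,s(\eta)$. Because \eqref{pro: general primaldual clp centralpath} separates (via $F(w)=f(x)+f^*(s)$ and the linearity of $\gap$) into the primal problem $\min\{\eta\,c^\top x + f(x) : x\in V_p\}$ and the dual problem $\min\{\eta\,q^\top s + f^*(s) : s\in V_d\}$, the primal stationarity condition reads $\eta c + \nabla f(x(\eta)) \in \operatorname{Im}(A^\top)$. This shows $-\tfrac{1}{\eta}\nabla f(x(\eta)) \in c + \operatorname{Im}(A^\top) = V_d$, and a short verification using the definition of $f^*$ (whose minimizing $x$ satisfies $\nabla f(x)=-s$, giving $\nabla f^*(s)=-x$) together with $Aq = b = Ax(\eta)$ confirms that this point also satisfies the dual stationarity condition $\eta q + \nabla f^*(s) \in \operatorname{Null}(A)$, hence equals $s(\eta)$.

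With the conjugacy relation in hand, the remaining calculus is the crux. The barrier conjugacy identity gives $H_{f^*}(s) = H_f(x)^{-1}$ at any pair with $\nabla f(x) = -s$ (obtained by differentiating $\nabla f(x^*(s)) = -s$); applied to $s(\eta)$ I need the $x$ with $\nabla f(x) = -s(\eta)$. Since $\nabla f(x(\eta)) = -\eta\,s(\eta)$, logarithmic homogeneity of $f$ (which yields $\nabla f(tx) = t^{-1}\nabla f(x)$ and $H_f(tx) = t^{-2}H_f(x)$) shows $\nabla f(\eta\,x(\eta)) = -s(\eta)$, so the conjugate point is $\eta\,x(\eta)$. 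Therefore $H_{s(\eta)} = H_f(\eta\,x(\eta))^{-1} = (\eta^{-2}H_{x(\eta)})^{-1} = \eta^2\,H_{x(\eta)}^{-1}$, which is exactly the matrix identity isolated above. Taking symmetric positive semidefinite square roots gives $H_{s(\eta)}^{1/2} = \eta\,H_{x(\eta)}^{-1/2}$, whence $\tfrac{1}{\sqrt{\eta}}H_{s(\eta)}^{1/2}s = \sqrt{\eta}\,H_{x(\eta)}^{-1/2}s = D_1 s$, completing \eqref{eq_of_lm:feature_of_log_hessian}. I expect the main obstacle to be carefully tracking the factor of $\eta$ through the homogeneity scaling: the conjugate point of $s(\eta)$ is $\eta\,x(\eta)$ rather than $x(\eta)$, and it is precisely this extra factor that converts a naive $H_{x(\eta)}^{-1}$ into the correct $\eta^2\,H_{x(\eta)}^{-1}$.
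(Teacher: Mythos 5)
Your proposal is correct and follows essentially the same route as the paper: both reduce the claim to the matrix identity $H_{s(\eta)} = \eta^2 H_{x(\eta)}^{-1}$ and establish it from the central-path conjugacy relation $s(\eta) = -\tfrac{1}{\eta}\nabla f(x(\eta))$, the Hessian conjugacy $\nabla^2 f^*(-\nabla f(x)) = [\nabla^2 f(x)]^{-1}$, and logarithmic homogeneity. The only cosmetic differences are that you apply homogeneity to $f$ (shifting the conjugate point to $\eta\,x(\eta)$) where the paper applies it to $f^*$ (rescaling its argument by $1/\eta$), and that you re-derive the conjugacy relation from the separated barrier problems' optimality conditions rather than citing it as a known fact.
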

\noindent

\noindent Using Proposition \ref{lm:feature_of_log_hessian} we define $\tilde{K} = \phi_\eta(K) = \tilde{K_p} \times \tilde{K}_d  := D_1^{-1}\cdot K_p \times D_1 \cdot K_d$, and notice as well that $\tilde{K_p}$ and $ \tilde{K}_d$ are indeed dual cones. Note also that
$\tilde{\calW}_{\alpha}  = \phi_\eta(\calW_{\alpha}) $, so we can write $\tilde{\calW}_\alpha = $
{\small\begin{equation*}
			\begin{aligned}
				\left\{ \big(D_1^{-1} x, D_1 s\big):
				\begin{array}{l}
					Ax = b,                                                     \\
					\exists \ y \in\mathbb{R}^m \text{ s.t. } A^\top y + s = c, \\
					x\in K_p, s\in K_d                                          \\
					c^\top x - b^\top y \le \alpha
				\end{array}
				\right\} = \left\{ \left(\tilde{x}, \tilde{s}\right):
				\begin{array}{l}
					AD_1 \tilde{x} = b,                                                                   \\
					\exists \ y \in\mathbb{R}^m \text{ s.t. } D_1^\top A^\top y + \tilde{s} = D_1^\top c, \\
					D_1 \tilde{x}\in K_p, \  D_1^{-1}\tilde{s}\in K_d                                     \\
					(D_1^\top c)^\top x - b^\top y \le \alpha
				\end{array}
				\right\} \\
			\end{aligned}
		\end{equation*}}where the equality substitutes $\big(D_1^{-1} x, D_1 s\big)$ with $\left(\tilde{x}, \tilde{s}\right)$. Note that here $D_1^{-1}\cdot K_p = \tilde{K}_p$ and $D_1 \cdot K_d = \tilde{K}_d$.

Additionally, let $D_2\in\mathbb{R}^{m\times m}$ be a given full-rank matrix.  Since any full-rank row transformation of the linear constraints does not change the feasible sets of $x$ and $s$, it follows that $\tilde{\calW}_\alpha$ is the $\alpha$-sublevel set of the following primal and dual paired problems:
\begin{equation}\tag{$\tilde{\mathrm{P}}$}\label{pro:rescaled problem}
	\text{(Primal)} \quad
	\min_{x\in\mathbb{R}^n} \  (D_1^\top c)^\top x \quad
	\operatorname{s.t.} \ D_2 A D_1 x = D_2 b, \  x \in  \tilde{K}_p \ \end{equation}
\begin{equation}\tag{$\tilde{\mathrm{D}}_{y,s}$}\label{pro:dual rescaled problem}
	\text{(Dual)} \quad
	\max_{y\in\mathbb{R}^m, s\in\mathbb{R}^n}  \  (D_2 b)^\top y \quad
	\operatorname{s.t.}  \ (D_2 A D_1)^\top y + s = D_1^\top c, \  s \in  \tilde{K}_d \ .
\end{equation}
Therefore the favorable geometry described in Theorem \ref{thm:conditionnumber_after_transformation} can be equivalently achieved by the appropriate rescaling of the problem data, which we state formally as follows.
\begin{fact}\label{fact:correct_data_rescaling} Let $D_2\in\mathbb{R}^{m\times m}$ be a given full-rank matrix, and define:
	\begin{equation}\label{eq:defD_1} D_1:=\sqrt{\eta}\cdot H_{x(\eta)}^{-1/2} \ . \end{equation}
	Then the $\alpha$-sublevel sets, the optimal solution sets, and the underlying primal-dual cone of \eqref{pro:rescaled problem} and \eqref{pro:dual rescaled problem} are $\tilde{\calW}_{\alpha}$, $\tilde{\calW}^\star$ and $\tilde{K}$, and satisfy the bounds in Theorem \ref{thm:conditionnumber_after_transformation}.
\end{fact}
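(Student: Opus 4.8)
The plan is to treat this Fact as the translation of the abstract rescaling map $\phi_\eta$ into an explicit rescaling of the problem data, after which the claimed bounds follow immediately from Theorem \ref{thm:conditionnumber_after_transformation}. First I would invoke Proposition \ref{lm:feature_of_log_hessian} with the stated $D_1 = \sqrt{\eta}\, H_{x(\eta)}^{-1/2}$, which gives $\phi_\eta(w) = (D_1^{-1}x, D_1 s)$. This already pins down the transformed cone: since $\tilde K = \phi_\eta(K)$ and $\phi_\eta$ acts as $D_1^{-1}$ on the primal block and as $D_1$ on the dual block, we get $\tilde K = \tilde K_p \times \tilde K_d = D_1^{-1}K_p \times D_1 K_d$, which is exactly the cone underlying \eqref{pro:rescaled problem} and \eqref{pro:dual rescaled problem} (and $\tilde K_p^* = \tilde K_d$ as already noted in the text preceding the Fact).

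Next I would verify that $\tilde\calW_\alpha = \phi_\eta(\calW_\alpha)$ is precisely the $\alpha$-sublevel set of the paired problems when $D_2 = I$. This is the content of the display immediately preceding the Fact: substituting $\tilde x = D_1^{-1}x$ and $\tilde s = D_1 s$ rewrites the defining conditions of $\calW_\alpha$ -- primal feasibility $Ax=b$, dual feasibility $A^\top y + s = c$, cone membership $x\in K_p$ and $s\in K_d$, and the gap bound $c^\top x - b^\top y \le \alpha$ -- into $AD_1\tilde x = b$, $D_1^\top A^\top y + \tilde s = D_1^\top c$, $\tilde x\in\tilde K_p$, $\tilde s \in \tilde K_d$, and $(D_1^\top c)^\top \tilde x - b^\top y \le \alpha$, which are exactly the primal feasibility, dual feasibility, cone membership, and duality-gap conditions of \eqref{pro:rescaled problem}/\eqref{pro:dual rescaled problem} with objective vector $D_1^\top c$ and right-hand side $b$. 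The identical substitution with the gap threshold set to $0$ identifies $\tilde\calW^\star = \phi_\eta(\calW^\star)$ as the optimal solution set.

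The only point requiring genuine argument -- and the one I would flag as the main (albeit minor) obstacle -- is why inserting an arbitrary full-rank $D_2 \in \mathbb{R}^{m\times m}$ leaves all of these sets unchanged. On the primal side this is immediate, since invertibility of $D_2$ gives $D_2 A D_1 x = D_2 b$ if and only if $AD_1 x = b$. On the dual side $y$ is a free variable, and the constraint $(D_2 A D_1)^\top y + s = D_1^\top c$ reads $D_1^\top A^\top (D_2^\top y) + s = D_1^\top c$; reparametrizing through the bijection $y \mapsto D_2^\top y$ shows that the set of attainable $s$ coincides with that obtained when $D_2 = I$. Hence the feasible $\tilde x$- and $\tilde s$-components, the optimal set, the cone, and the duality gap are all independent of the choice of full-rank $D_2$, so \eqref{pro:rescaled problem}/\eqref{pro:dual rescaled problem} have $\alpha$-sublevel set $\tilde\calW_\alpha$, optimal set $\tilde\calW^\star$, and cone $\tilde K$ for every such $D_2$. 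Applying Theorem \ref{thm:conditionnumber_after_transformation} to these objects then yields the stated bounds. The whole argument is bookkeeping; without the reparametrization of the free dual variable $y$ one might mistakenly worry that a row transformation perturbs the dual feasible region, so that step is worth making explicit.
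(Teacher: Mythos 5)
Your proposal is correct and follows essentially the same route as the paper: the paper justifies this Fact via Proposition \ref{lm:feature_of_log_hessian} and the substitution $\tilde x = D_1^{-1}x$, $\tilde s = D_1 s$ in the display preceding the Fact, together with the observation that a full-rank row transformation $D_2$ leaves the feasible sets of $x$ and $s$ unchanged, and then cites Theorem \ref{thm:conditionnumber_after_transformation}. Your explicit reparametrization $y \mapsto D_2^\top y$ for the dual constraint is exactly the detail the paper compresses into that one sentence, and it is a worthwhile thing to make explicit.
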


Despite the potential improvement in condition numbers as shown in Theorem \ref{thm:conditionnumber_after_transformation}, it might be the case that Algorithm \ref{alg: PDHG with restarts} (rPDHG) cannot be easily applied to \eqref{pro:rescaled problem} because projections onto the rescaled cone $\tilde{K}_p$ might be more expensive as compared to projections onto the original cone $K_p$. However, the following classic result (from Section 3.5 of \cite{renegar2001mathematical}, based on \cite{YuTodd97}) states that if $K_p$ is a self-scaled cone, the rescaled cone $\tilde{K}_p$ coincides with $K_p$ and so projections to $\tilde K_p$ are no more cumbersome than projections onto $K_p$.
\begin{lemma}\label{lm:symmetric_cone}
	If $K_p$ is a self-scaled cone, then $K_p = \tilde{K}_p = K_d = \tilde{K}_d$.
\end{lemma}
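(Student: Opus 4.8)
The plan is to reduce all three equalities to a single structural fact about self-scaled cones: that the symmetric square root of the Hessian of the self-scaled barrier is a linear automorphism of the cone. First I would dispatch the middle equality $K_p = K_d$, which is immediate because a self-scaled cone is self-dual (with respect to the Euclidean inner product), so $K_d = K_p^* = K_p$. It then remains to show $\tilde{K}_p := D_1^{-1} K_p = K_p$ and $\tilde{K}_d := D_1 K_d = K_d$ for $D_1 = \sqrt{\eta}\cdot H_{x(\eta)}^{-1/2}$ from \eqref{eq:defD_1}, where $f$ is taken to be the self-scaled (Jordan-algebra logarithmic) barrier so that $H_{x(\eta)} = H_f(x(\eta))$.

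Next, because cones are invariant under multiplication by a positive scalar, the factor $\sqrt{\eta}$ is irrelevant, and it suffices to prove that $H_{x(\eta)}^{1/2}$ is a linear automorphism of $K_p$, i.e. $H_{x(\eta)}^{1/2} K_p = K_p$. Granting this, $\tilde{K}_p = D_1^{-1} K_p = H_{x(\eta)}^{1/2} K_p = K_p$; and since the inverse of an automorphism is again an automorphism, $H_{x(\eta)}^{-1/2}$ also preserves $K_p = K_d$, giving $\tilde{K}_d = D_1 K_d = H_{x(\eta)}^{-1/2} K_d = K_d$.

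The crux, and the main obstacle, is therefore the automorphism property for the square root. The classical theory of self-scaled barriers (Nesterov-Todd, as presented in Section 3.5 of \cite{renegar2001mathematical}) shows that at any $x \in \textsf{int}K_p$ the Hessian $H_f(x)$ is a symmetric positive-definite operator that maps $\textsf{int}K_p$ onto $\textsf{int}K_p^* = \textsf{int}K_p$, hence is an automorphism of $K_p$; but this bare statement does not by itself control the square root. To pass to the square root I would invoke the Euclidean Jordan algebra structure underlying self-scaled cones: writing $H_f(x) = Q_{x^{-1}}$ for the quadratic representation $Q$, the fundamental identity $Q(Q(a)b) = Q(a)Q(b)Q(a)$ yields $Q_y^{1/2} = Q_{y^{1/2}}$, so that $H_f(x)^{1/2} = Q_{x^{-1/2}}$ is itself the quadratic representation of an interior point and is therefore an automorphism of $K_p$. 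Applying this with $x = x(\eta)$ gives $H_{x(\eta)}^{1/2} K_p = K_p$, and combining with the reductions above completes the proof.

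An alternative that avoids the Jordan-algebra machinery is to verify the square-root automorphism property separately on the three self-scaled cones $\mathbb{R}^n_+$, $\mathbb{K}^{d+1}_{\textsf{soc}}$, and $\mathbb{S}^{d\times d}_+$, for which $H_f(x)^{-1/2}$ acts respectively as a positive diagonal scaling, a Lorentz scaling, and a congruence $V \mapsto X^{1/2} V X^{1/2}$, each of which manifestly preserves its cone; but the Jordan-algebra route yields the uniform statement and matches the cited references.
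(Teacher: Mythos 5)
Your proposal is correct, and it is worth noting that the paper does not actually prove this lemma at all: it is stated as a classic result and deferred to Section~3.5 of \cite{renegar2001mathematical} and to Nesterov--Todd, with no argument given in the body or the appendix. What you have supplied is therefore a genuine self-contained proof rather than a variant of the paper's. Your reductions are all sound: self-duality of a self-scaled cone gives $K_p=K_d$ immediately, positive scalars are absorbed by conic invariance, and the whole lemma collapses to showing $H_{x(\eta)}^{1/2}$ is an automorphism of $K_p$. You also correctly identify the one nontrivial obstacle -- that the self-scaled property only directly asserts $H_f(x)$ maps $\textsf{int}K_p$ onto $\textsf{int}K_p^*$, which says nothing a priori about the square root -- and your resolution via the quadratic representation is right: writing $H_f(x)=Q_{x^{-1}}$ and using the fundamental identity with $b=e$ to get $Q_{a^2}=Q_a^2$, hence $Q_y^{1/2}=Q_{y^{1/2}}$ by uniqueness of the symmetric positive-definite square root, exhibits $H_f(x)^{1/2}=Q_{x^{-1/2}}$ as the quadratic representation of an interior point and therefore as a cone automorphism. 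The case-by-case fallback (diagonal scaling, Lorentz scaling, congruence $V\mapsto X^{1/2}VX^{1/2}$) is also correct and is arguably the most transparent route given that the paper only ever uses the three standard self-scaled cones and their products. Either version would serve as a complete proof of the lemma the paper leaves to the references.
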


Hence, if $K_p$ is self-scaled, the rescaled cone $\tilde{K}_p$ is identical to the original cone $K_p$, and projections onto $\tilde{K}_p$ are the same as projections onto $K_p$. Although the family of self-scaled cones is in some sense limited \cite{Guler_96}, it includes the three most important cones in practice, namely $\mathbb{R}^n_+$, $\mathbb{K}_{\textsf{soc}}^{d+1}$, and $\mathbb{S}^{d\times d}_+$ and their Cartesian products \cite{Guler_96,renegar2001mathematical}.

We denote the affine subspaces of the rescaled primal/dual problem by $\tilde{V}_p$ and $\tilde{V}_d$, respectively. Similarly, let $\tilde{V}:= \tilde{V}_p\times \tilde{V}_d$ and $\tilde{\calF}:=\tilde{V}\cap \tilde{K}$.

In summary up to this point, we have shown after the rescaling the sublevel-set condition numbers can be nicely bounded, and the complexity of doing projections onto the rescaled cone $\tilde{K}_p$ remains unchanged if $K_p$ is self-scaled. In the next subsection we present a computational guarantee for obtaining a solution of the original problem \eqref{pro: general primal clp} by first solving the rescaled problem \eqref{pro:rescaled problem} using rPDHG and then transforming back to \eqref{pro: general primal clp}.

\subsection{Complexity of rPDHG via solving the rescaled problem}\label{subsec:complexity_of_getting_good_solution}

Algorithm \ref{alg: PDHG for rescaled problem} describes our overall scheme for using a central-path Hessian rescaling of the original problem \eqref{pro: general primal clp} and then using rPDHG to solve the rescaled problem \eqref{pro:rescaled problem}, and then transforming the solution back to the original problem \eqref{pro: general primal clp}. In Line \ref{alg: amc} the rescaled problem \eqref{pro:rescaled problem} is constructed based on the Hessian matrix $D_1$ and also $D_2$ as inputted from Line \ref{alg: inp}.  In Line \ref{alg:line:apply_rPDHG} the rescaled problem is solved using rPDHG, and in Line \ref{alg:line:recover_solution} the solution $\tilde{w}^{n,0} = (\tilde{x}^{n,0},\tilde{s}^{n,0})$ of \eqref{pro:rescaled problem} is transformed back to the original problem \eqref{pro: general primal clp} using $w^{n,0} := (D_1 \tilde x^{n,0}, D_1^{-1} \tilde s^{n,0}) $.  Line \ref{noodles} describes an alternate way to transform $\tilde{w}^{n,0} = (\tilde{x}^{n,0},\tilde{s}^{n,0})$ back to the original problem, namely by first projecting $\tilde x^{n,0}$ onto $\tilde V_p$ using $\hat x^{n,0} := P_{\tilde V_p}(\tilde x^{n,0})$, and then applying: $w^{n,0} := (D_1 \hat x^{n,0}, D_1^{-1} \tilde s^{n,0}) $.  This alternative version is used in our complexity analysis in Theorem \ref{thm overall complexity rescaled} as it leads to a more streamlined bound and a more streamlined proof.  

\begin{algorithm}[htbp]
	\SetAlgoLined
	{\bf Input:} Hessian matrix $H_{x(\eta)}$ of a central-path solution $w(\eta)=(x(\eta),s(\eta))$ of a logarithmically homogeneous self-concordant barrier function $f$ for $K_p$, and a full-rank matrix $D_2\in\mathbb{R}^{m\times m}$  . Define $\alpha  := \gap(w(\eta)) = \vartheta_f /\eta$ \; \label{alg: inp}
	Construct the rescaled problem \eqref{pro:rescaled problem} with $D_1 = \sqrt{\eta}\cdot H_{x(\eta)}^{-1/2}$ and $D_2$ as given in Line \ref{alg: inp} \; \label{alg: amc}
	Apply Algorithm \ref{alg: PDHG with restarts} (rPDHG) to the rescaled problem \eqref{pro:rescaled problem} for $n$ outer iterations to obtain $\tilde{w}^{n,0} = (\tilde{x}^{n,0},\tilde{s}^{n,0})$ \; \label{alg:line:apply_rPDHG}
	Transform solution to the original problem: $w^{n,0} := (D_1 \tilde x^{n,0}, D_1^{-1} \tilde s^{n,0}) $ \;\label{alg:line:recover_solution}
	(Alternate pre-projection version) First project $\tilde x^{n,0}$ onto $\tilde V_p$ using $\hat x^{n,0} := P_{\tilde V_p}(\tilde x^{n,0})$, and then transform to the original problem: $w^{n,0} := (D_1 \hat x^{n,0}, D_1^{-1} \tilde s^{n,0}) $ \; \label{noodles}
	{\bf Output:} $w^{n,0}$\label{alg:line:projection_w}
	\caption{Scheme for Hessian rescaling and applying rPDHG to \eqref{pro:rescaled problem} }\label{alg: PDHG for rescaled problem}
\end{algorithm}

Theorem \ref{thm overall complexity rescaled} below describes our computational guarantee for the rescaling scheme (Algorithm \ref{alg: PDHG for rescaled problem}).  Before presenting the theorem, we first go over some notational and related matters. First, note that any translation of the objective function vector $D_1^\top c$ of the rescaled problem \eqref{pro:rescaled problem} along a direction in the space of $\operatorname{Im}(D_1^\top A^\top D_2^\top)$ does not change the optimal solution set $\tilde{\calW}^\star$. Therefore, similar to the setting of Theorem \ref{thm overall complexity clp}, we will presume that $D_1^\top c \in \operatorname{Null}(D_2AD_1)$. This condition can be satisfied by replacing $D_1^\top c$ with its projection $\bar c$ onto $\operatorname{Null}(D_2 A D_1)$, namely $\bar{c}:=\arg\min_{\hat{c}\in\operatorname{Null}(D_2 A D_1)}\|\hat{c}-D_1^\top c\|$.

For the rescaled problem \eqref{pro:rescaled problem} we define the notation $\tilde{\lambda}_{\max}:= \sigma_{\max}^+\left(D_2 A  D_1\right)$, $\tilde{\lambda}_{\min}:= \sigma_{\min}^+\left(D_2 A D_1\right)$, and $\tilde{\kappa} := \frac{\tilde{\lambda}_{\max}}{\tilde{\lambda}_{\min}}$.
We denote the iterates of rPDHG on the rescaled problem as $\tilde{z}^{n,k}=(\tilde{x}^{n,k},\tilde{y}^{n,k})$ and $\tilde{w}^{n,k} = \left(\tilde{x}^{n,k},\tilde{s}^{n,k}:=\bar{c} - (D_2 A D_1)^\top\tilde{y}^{n,k}\right)$.  

\begin{theorem}\label{thm overall complexity rescaled}
	Under Assumption \ref{assump:striclyfeasible}, suppose that the rescaled objective function vector $\bar{c} := D_1^\top c$ satisfies $D_2 A D_1 \bar{c} = 0$, and that Algorithm \ref{alg: PDHG for rescaled problem} is applied using Line \ref{noodles} to transform solutions back to the original problem.  Furthermore assume rPDHG is implemented in Line \ref{alg:line:apply_rPDHG} of Algorithm \ref{alg: PDHG for rescaled problem} using the same set-up as in Theorem \ref{thm overall complexity clp}, and let $\alpha  := \gap(w(\eta)) = \vartheta_f /\eta$. Given $\eps := (\eps_{\mathrm{cons}}, \eps_{\mathrm{gap}},\eps_{\mathrm{obj}})\in\mathbb{R}_{++}^3$,
	let $T$ be the total number of \textsc{OnePDHG} iterations that are run in Line \ref{alg:line:apply_rPDHG} of Algorithm \ref{alg: PDHG for rescaled problem} in order to obtain $n$ for which $w^{n,0}$ satisfies the $\eps$-tolerance requirement of \eqref{pro: general primal clp}.
	Then it holds that
	\begin{equation}\label{eq overall complexity rescaled clp}
		T \le  \tilde{\kappa}  (\vartheta_f + \sqrt{2\vartheta_f})  \cdot \left\{ 760\ln\left(33\tilde{\kappa} \left(2\vartheta_f + 3 \sqrt{2\vartheta_f}\right) \right) + 760\ln\left(\frac{1}{\err^{\alpha}}\right)
		+ \frac{200 }{\err^{\alpha}} \right\}\ ,
	\end{equation}
	where $\err^{\alpha}$ is the following (weighted) minimum of the target tolerance values $\eps$:
	\begin{equation}\label{def eq Merr rescaled clp}
		\err^{\alpha} :=  \min\left\{\frac{\eps_{\mathrm{cons}}}{\sqrt{2}\cdot D_{\alpha}} , \
		\frac{\eps_{\mathrm{gap}}}{14\alpha}  ,\  \frac{\eps_{\mathrm{obj}}}{14\alpha}
		\right\}\ .¢¢
	\end{equation}
	Furthermore, in the case where $K_p = \mathbb{R}^n_+$, then
	\begin{equation}\label{eq overall complexity rescaled lp}
		T \le \tilde{\kappa}  \left(n + \sqrt{2n}\right)\cdot  \max\left\{1,\frac{\alpha}{\bar{\delta}}\right\} \cdot   1020\left[\ln\left(33\tilde{\kappa} \left(2n + 3 \sqrt{2n}\right) \right) + \ln\left(\frac{1}{\err^{\alpha}}\right) \, \right]\,  ,
	\end{equation}
	where $\bar{\delta}$ is the best suboptimal extreme point gap defined in \eqref{eqdef:best_suboptimal_gap}.
\end{theorem}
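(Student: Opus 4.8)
The plan is to reduce the statement to Theorems \ref{thm overall complexity clp} and \ref{thm overall complexity lp} applied to the rescaled problem \eqref{pro:rescaled problem}, for which Theorem \ref{thm:conditionnumber_after_transformation} (equivalently Fact \ref{fact:correct_data_rescaling}) already supplies sublevel-set geometry bounds at the single level value $\delta=\alpha$. Concretely, I would run rPDHG on \eqref{pro:rescaled problem} and invoke Theorem \ref{thm overall complexity clp} (resp.\ Theorem \ref{thm overall complexity lp} in the LP case) with the particular choice $\delta=\alpha$, so that the iteration count $T$ to reach a rescaled tolerance $\tilde\eps=(\tilde\eps_{\mathrm{cons}},\tilde\eps_{\mathrm{gap}},\tilde\eps_{\mathrm{obj}})$ is at most $T_\alpha$ from \eqref{eq overall complexity}, into which I substitute $\tilde D_\alpha/\tilde r_\alpha\le 4\vartheta_f+4\sqrt{2\vartheta_f}$ from \eqref{eq:ratioratio}, $\tilde d^H_\alpha\le(4\vartheta_f+4\sqrt{2\vartheta_f})/\sqrt\eta$, and $\dist(0,\tilde\calW^\star)\le(2\vartheta_f+3\sqrt{2\vartheta_f})/\sqrt\eta$ from \eqref{eq:tilde_dist}. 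The constants $760=190\cdot4$ and $200=50\cdot4$ then appear immediately, and since $\alpha=\vartheta_f/\eta$ the stray $\sqrt\eta$ factors inside the logarithm and the linear term cancel against the $\sqrt\eta$ buried in $\tilde\eps$, producing \eqref{eq overall complexity rescaled clp}.

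The crux is the translation between the $\eps$-tolerance of the \emph{original} problem demanded of $w^{n,0}$ and the rescaled tolerance $\tilde\eps$ certified for $\tilde w^{n,0}$, and here I would exploit the pre-projection in Line \ref{noodles}. First, because $\hat x^{n,0}=P_{\tilde V_p}(\tilde x^{n,0})\in\tilde V_p$ and $\tilde s^{n,0}\in\tilde V_d$ by construction, the recovered pair $w^{n,0}=(D_1\hat x^{n,0},D_1^{-1}\tilde s^{n,0})$ lies exactly in $V$, so $\dist(w^{n,0},V)=0$ for free. Second, the rescaling preserves complementarity: for affine-feasible pairs $\gap(w)=x^\top s$, and $(D_1\hat x^{n,0})^\top(D_1^{-1}\tilde s^{n,0})=(\hat x^{n,0})^\top\tilde s^{n,0}$, while the objective values are invariant since $c^\top x^{n,0}=\bar c^\top\hat x^{n,0}$ and $\bar c\perp(\tilde x^{n,0}-\hat x^{n,0})$; hence $\gap(w^{n,0})$ and $\eobj(w^{n,0})$ equal the corresponding rescaled quantities at $\tilde w^{n,0}$, so I may take $\tilde\eps_{\mathrm{gap}}=\eps_{\mathrm{gap}}$ and $\tilde\eps_{\mathrm{obj}}=\eps_{\mathrm{obj}}$. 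The coefficients $\tfrac{\sqrt2}{4\dist(0,\tilde\calW^\star)}$ and $\tfrac1{14}\sup_{\gamma}\tilde r_\gamma/\gamma$ appearing in the rescaled version of $\err$ are then bounded below using $\dist(0,\tilde\calW^\star)\le(2\vartheta_f+3\sqrt{2\vartheta_f})/\sqrt\eta$ and $\sup_{\gamma}\tilde r_\gamma/\gamma\ge\tilde r_\alpha/\alpha\ge 1/(\alpha\sqrt\eta)$ (the latter from \eqref{eq:tilde_r}), which is exactly where the common denominator $14\alpha$ in $\err^\alpha$ of \eqref{def eq Merr rescaled clp} comes from.

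The main obstacle is the distance-to-cone coordinate, which is the one quantity the non-orthogonal rescaling genuinely distorts and the source of the factor $\sqrt2\,D_\alpha$. Writing $w^{n,0}=\phi_\eta^{-1}(\hat w^{n,0})$ with $K=\phi_\eta^{-1}(\tilde K)$ and taking an error-bound witness $\tilde v\in\tilde\calW_\alpha$ from Lemma \ref{lm error bound R r} for the rescaled problem, the vector $\hat w^{n,0}-\tilde v$ lies in the linear subspace $\lin{\tilde V}$, so the relevant distortion is the operator norm of $\phi_\eta^{-1}$ restricted to $\lin{\tilde V}$ rather than its global operator norm. I would bound this restricted norm by $\tfrac{\sqrt\eta}{\sqrt2}D_\alpha$ via the key observation that the duality-gap gradient $(\bar c,\tilde q)$ itself lies in $\lin{\tilde V}$: along every feasible direction orthogonal to it the gap stays constant, so the inscribed ball $B(\tilde w_\alpha,\tilde r_\alpha)\subseteq\tilde K$ (with $\tilde r_\alpha\ge 1/\sqrt\eta$) actually sits inside $\tilde\calW_\alpha$ in those directions; applying $\phi_\eta^{-1}$ then shows $\calW_\alpha$ has extent $\gtrsim\tilde r_\alpha\,\sigma_{\max}(\phi_\eta^{-1}|_{\lin{\tilde V}})$, i.e.\ $D_\alpha\ge\tfrac{\sqrt2}{\sqrt\eta}\sigma_{\max}(\phi_\eta^{-1}|_{\lin{\tilde V}})$, with the single gap-gradient direction absorbed into the $\sqrt2$. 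Combined with $\dist(\hat w^{n,0},\tilde K)\le 2\tilde\eps_{\mathrm{cons}}$ (the projection residual $\|\tilde x^{n,0}-\hat x^{n,0}\|\le\tilde\eps_{\mathrm{cons}}$ contributing the factor $2$), this gives $\dist(w^{n,0},K)\le\sqrt2\,\sqrt\eta\,D_\alpha\,\tilde\eps_{\mathrm{cons}}$, which forces the $\eps_{\mathrm{cons}}/(\sqrt2\,D_\alpha)$ entry of $\err^\alpha$.

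Finally, for the LP refinement \eqref{eq overall complexity rescaled lp} I would invoke Theorem \ref{thm overall complexity lp}, which requires controlling $\min_{0<\delta\le\bar\delta}\tilde D_\delta/\tilde r_\delta$. Since $\phi_\eta$ is a linear bijection carrying extreme points to extreme points and preserving gaps, the best suboptimal extreme-point gap $\bar\delta$ of \eqref{eqdef:best_suboptimal_gap} is rescaling-invariant. If $\alpha\le\bar\delta$ I take $\delta=\alpha$ and use \eqref{eq:ratioratio} directly; if $\alpha>\bar\delta$ I take $\delta=\bar\delta$ and bound $\tilde r_{\bar\delta}\ge\tfrac{\bar\delta}{\alpha}\tilde r_\alpha$ by scaling the conic center $\tilde w_\alpha$ toward an optimal point of $\tilde\calW^\star$ (convexity shrinks the inscribed ball proportionally to the gap), together with the monotonicity $\tilde D_{\bar\delta}\le\tilde D_\alpha$. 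This yields $\tilde D_\delta/\tilde r_\delta\le\max\{1,\alpha/\bar\delta\}\cdot(4\vartheta_f+4\sqrt{2\vartheta_f})$ with $\vartheta_f=n$, and $255\cdot4=1020$ produces the stated constant; the remaining work is constant bookkeeping of the type already illustrated.
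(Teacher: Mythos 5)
Your overall reduction --- apply Theorem \ref{thm overall complexity clp} (resp.\ Theorem \ref{thm overall complexity lp}) to the rescaled problem at the single level $\delta=\alpha$, plug in the bounds of Theorem \ref{thm:conditionnumber_after_transformation}, and use invariance of $\gap$ and $\eobj$ on $V$ together with the pre-projection of Line \ref{noodles} to get $\dist(w^{n,0},V)=0$ --- is exactly the paper's strategy, and your second and fourth paragraphs (including the monotonicity step $\tilde r_{\bar\delta}\ge\tfrac{\bar\delta}{\alpha}\tilde r_\alpha$ and the constants $760=190\cdot4$, $200=50\cdot4$, $1020=255\cdot4$) match the paper's proof. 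The gap is in your third paragraph, the cone-infeasibility translation. You route the argument through an error-bound witness $\tilde v\in\tilde{\calW}_\alpha$ from Lemma \ref{lm error bound R r}, so the displacement you push through $\phi_\eta^{-1}$ has length $\|\hat w^{n,0}-\tilde v\|\le\tfrac{\tilde D_\alpha}{\tilde r_\alpha}\cdot\dist(\hat w^{n,0},\tilde K)$, not $\dist(\hat w^{n,0},\tilde K)$; yet your final accounting multiplies $\sigma_{\max}(\phi_\eta^{-1}|_{\lin{\tilde V}})$ by $\dist(\hat w^{n,0},\tilde K)$ alone, so the factor $\tfrac{\tilde D_\alpha}{\tilde r_\alpha}=O(\vartheta_f)$ silently disappears. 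You cannot drop the witness either: the nearest point of $\tilde K$ to $\hat w^{n,0}$ is generally not in $\tilde V$, so the restricted operator norm does not apply to that displacement, and the global operator norm of $\phi_\eta^{-1}$ is not controlled by $D_\alpha$. Carried through honestly, your chain gives $\dist(w^{n,0},\calF)\lesssim\sqrt{\eta}\,D_\alpha\cdot\tfrac{\tilde D_\alpha}{\tilde r_\alpha}\cdot\dist(\hat w^{n,0},\tilde K)$, which forces $\tilde\eps_{\mathrm{cons}}$ to be smaller by a factor of $O(\vartheta_f)$ and inflates the linear term of \eqref{eq overall complexity rescaled clp} from $200/\err^{\alpha}$ to $O(\vartheta_f)/\err^{\alpha}$, i.e.\ an extra factor of $n$ for LP. (Your lower bound $D_\alpha\ge\tfrac{\sqrt2}{\sqrt\eta}\,\sigma_{\max}(\phi_\eta^{-1}|_{\lin{\tilde V}})$ is also only justified on the orthogonal complement of the gap gradient within $\lin{\tilde V}$; ``absorbed into the $\sqrt2$'' is not a proof, although a version with constant $1$ does follow from a half-ball argument.)

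The paper avoids this loss with a device you do not have: the affine-invariant \emph{relative feasibility ratio} $\efeas(w;\delta):=\min_{\bar w\in\calW_\delta}\tfrac{\|\calF(w;\bar w)-w\|}{\|\calF(w;\bar w)-\bar w\|}$, a ratio of collinear segment lengths and hence preserved by $\phi_\eta$, sandwiched as $\tfrac{\dist(w,\calF)}{D_\delta}\le\efeas(w;\delta)\le\tfrac{\dist(w,K)}{r_\delta}$ in Lemma \ref{lm:distance_to_cone_bounded}. This lets one write $\dist(w^{n,0},\calF)\le D_\alpha\cdot\widetilde{\efeas}(\hat w^{n,0};\alpha)\le D_\alpha\cdot\dist(\hat w^{n,0},\tilde K)/\tilde r_\alpha$, pairing the \emph{original} diameter with the \emph{rescaled} conic radius directly and never paying the product $\tilde D_\alpha/\tilde r_\alpha$. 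You would need this lemma (or an equivalent mixed-space error bound) to recover the stated constants in \eqref{def eq Merr rescaled clp} and \eqref{eq overall complexity rescaled clp}.
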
 

The proof of Theorem \ref{thm overall complexity rescaled} is presented in Appendix \ref{appendix:geometric_enhancements}.  In practice, it is likely more efficient to use Line \ref{alg:line:recover_solution} rather than Line \ref{noodles} of Algorithm \ref{alg: PDHG for rescaled problem} to determine the solution $w^{n,0}$ for the original problem, as the pre-projection operation might itself be expensive for very large problem instances. Depending on the practical usefulness of the pre-projection, it could be implemented using the conjugate gradient method.  Similar remarks hold for the projection of $D_1^\top c$ onto $\operatorname{Null}(D_2AD_1)$.

The error tolerance quantity $\err^{\alpha}$ is a weighted minimum of the three target tolerances in Theorem \ref{thm overall complexity rescaled}, and plays a similar role as $\err$ did in Theorem \ref{thm overall complexity clp}.  Here the different multipliers inside the minimum in the expression for $\err^{\alpha}$ are $\tfrac{1}{\sqrt{2}\cdot D_\alpha}$, $\tfrac{1}{14\alpha}$, and  $\tfrac{1}{14\alpha}$, where $\alpha=\gap(w(\eta)) = \vartheta_f/\eta$. Furthermore, if we use $D_2 := (A D_1^2 A^\top)^{-1/2}$ then $\tilde{\kappa} =1$, then the smaller $\gap(w(\eta))$ is, the larger the required $\err^{\alpha}$ is, resulting in a smaller overall complexity bound. Furthermore, unlike the bounds in \eqref{def eq calN} or \eqref{def eq calN lp}, the multipliers on $\eps_{\mathrm{gap}}$ and $\eps_{\mathrm{obj}}$ in $\err^{\alpha}$ are the same, which means that achieving a small objective function error now here is no more difficult than achieving a small duality gap (at least in theory).

In Appendix \ref{appendix:geometric_enhancements} we actually prove a stronger result than what is stated in Theorem \ref{thm overall complexity rescaled}. One can observe from our proof in Appendix \ref{appendix:geometric_enhancements} that the solution $w^{n,0}$ is guaranteed to satisfy $\dist(w^{n,0},\calF) \le \eps_{\mathrm{cons}}$, which is a (potentially significantly) stronger statement than  $\max\{\dist(w^{n,0},K), \dist(w^{n,0},V)\}\le \eps_{\mathrm{cons}}$.

Comparing \eqref{eq overall complexity rescaled clp} to \eqref{eq overall complexity}, we see that the geometric condition numbers (which for some instances might be extremely large) have been replaced by $O(\vartheta_f + \sqrt{2\vartheta_f})$. This is significant as it is better controlled; for instance, it is as small as $O(n)$ for LP problem instances.  Also, the dependence on $\tilde{\kappa}$ suggests that if not too expensive one should construct $D_2$ to decrease $\tilde{\kappa}$ as much as possible.

We also remark on the overall dependence on $\err^{\alpha}$ in \eqref{eq overall complexity rescaled lp} for LP problem instances.  Notice that the constant in the front of the logarithm term in \eqref{eq overall complexity rescaled lp} is $O\left(\tilde{\kappa}\cdot n\cdot  \max\left\{1,\frac{\alpha}{\bar{\delta}}\right\} \right)$, and therefore smaller $\alpha$ leads to a lower iteration bound. Furthermore, for appropriate choice of $D_1$ and $D_2$, it holds that rPDHG achieves linear convergence as follows.
\begin{corollary}\label{cor:best_rescaling_lp}
	When $K_p = \mathbb{R}^n_+$, let $w(\eta)$ have a sufficiently small duality gap, namely $\gap(w(\eta)) \le \bar{\delta}$.  Then setting $D_1 = \sqrt{\eta}\cdot H_{x(\eta)}^{-1/2}$ and $D_2 = (A D_1^2 A^\top)^{-1/2}$ yields
	$$
		T \le \left(n + \sqrt{2n}\right)\cdot   1020\left[\ln\left(33\left(2n + 3 \sqrt{2n}\right) \right) + \ln\left(\frac{1}{\err^{\alpha}}\right) \, \right] \ .
	$$
\end{corollary}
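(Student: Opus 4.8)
The plan is to obtain the corollary as an immediate specialization of the LP-case bound \eqref{eq overall complexity rescaled lp} in Theorem \ref{thm overall complexity rescaled}, by checking that both instance-dependent multipliers $\tilde\kappa$ and $\max\{1,\alpha/\bar\delta\}$ collapse to $1$ under the stated choices of $D_1$ and $D_2$.

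First I would verify that the choice $D_2 = (A D_1^2 A^\top)^{-1/2}$ produces a perfectly conditioned rescaled constraint matrix, i.e.\ that $\tilde\kappa = 1$. Since $H_{x(\eta)}$ is symmetric positive definite, the matrix $D_1 = \sqrt{\eta}\cdot H_{x(\eta)}^{-1/2}$ is symmetric, so $D_1 = D_1^\top$ and the squared positive singular values of $D_2 A D_1$ are the positive eigenvalues of
$$
(D_2 A D_1)(D_2 A D_1)^\top = D_2\,(A D_1^2 A^\top)\, D_2^\top \ .
$$
Writing $G := A D_1^2 A^\top$ and $D_2 = G^{-1/2}$, this product equals $G^{-1/2}\,G\,G^{-1/2}$, which is the orthogonal projector onto $\operatorname{Im}(G)$ and hence has every nonzero eigenvalue equal to $1$. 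Therefore $\tilde\lambda_{\max} = \tilde\lambda_{\min} = 1$ and $\tilde\kappa = \tilde\lambda_{\max}/\tilde\lambda_{\min} = 1$.

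Next I would dispatch the factor $\max\{1,\alpha/\bar\delta\}$. By item (2) of Fact \ref{fact:centralpath_main} we have $\alpha := \gap(w(\eta)) = \vartheta_f/\eta$, and the hypothesis $\gap(w(\eta)) \le \bar\delta$ gives $\alpha \le \bar\delta$, so $\alpha/\bar\delta \le 1$ and $\max\{1,\alpha/\bar\delta\} = 1$. Substituting $\tilde\kappa = 1$ and $\max\{1,\alpha/\bar\delta\} = 1$ into \eqref{eq overall complexity rescaled lp}, together with $\vartheta_f = n$ for $K_p = \mathbb{R}^n_+$, yields exactly the claimed bound.

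The main (indeed essentially the only) substantive step is the singular-value computation giving $\tilde\kappa = 1$; everything else is direct substitution into the already-proved Theorem \ref{thm overall complexity rescaled}. The one subtlety worth flagging is that when $A$ lacks full row rank the matrix $G = A D_1^2 A^\top$ is singular, so $D_2$ must be read as the Moore-Penrose inverse square root $G^{\dag 1/2}$, consistent with the pseudoinverse conventions used elsewhere in the paper; the projector computation above is written precisely to cover this case, and since a full-rank row transformation of the constraints leaves $\tilde\calF$ and $\tilde{\calW}_\alpha$ unchanged, the invocation of Theorem \ref{thm overall complexity rescaled} remains valid.
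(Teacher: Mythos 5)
Your proposal is correct and follows exactly the route the paper intends: the corollary is a direct specialization of the LP bound \eqref{eq overall complexity rescaled lp} in Theorem \ref{thm overall complexity rescaled}, obtained by observing that $D_2 = (AD_1^2A^\top)^{-1/2}$ forces $\tilde{\lambda}_{\max}=\tilde{\lambda}_{\min}=1$ (hence $\tilde{\kappa}=1$), that $\alpha\le\bar{\delta}$ kills the $\max\{1,\alpha/\bar{\delta}\}$ factor, and that $\vartheta_f=n$ for the logarithmic barrier on $\mathbb{R}^n_+$. Your additional remark about reading $D_2$ as a pseudoinverse square root when $A$ lacks full row rank is a sensible clarification consistent with the paper's conventions, but does not change the argument.
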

\noindent We point out that some classic interior-point methods, such as long-step barrier methods, have an overall iteration complexity of $O(n\cdot \ln(1/\eps))$ \cite{renegar2001mathematical} iterations.  Corollary \ref{cor:best_rescaling_lp} states that with a suitable rescaling scheme, rPDHG also achieves $O(n\cdot \ln(1/\eps))$ iterations. However, since the per-iteration cost of rPDHG is significantly lower than that of an IPM, the overall complexity of rPDHG with a central-path Hessian rescaling is therefore much lower than the corresponding IPM.

Last of all, we note that the central-path solution can be obtained using a first-order version of an IPM, namely where the linear equations solved to yield the Newton step are solved using first-order methods on an auxiliary least-squares optimization problem. Such first-order versions of interior-point methods have been studied and implemented, see for example \cite{lin2021admm} which develops an ADMM-based interior-point method, and \cite{deng2024enhanced} which shows it is as competitive as rPDHG in computing a solution of moderate accuracy for LP instances. Although they are not as competitive as rPDHG in solving the problem itself, these other first-order method schemes can produce a central-path solution that is of good enough quality for the purposes of doing Hessian-based rescaling. In Section \ref{sec:exp}, we will present a scheme for determining a central-path Hessian by using the conjugate gradient method to solve the central path equations.

\subsection{Computational Effectiveness of Central-path Hessian Rescaling}\label{subsec:APDHG_exp}

Theorem \ref{thm overall complexity rescaled} shows that (at least in theory) the central-path Hessian rescaling scheme (Algorithm \ref{alg: PDHG for rescaled problem}) can improve the worst-case computational burden of rPDHG for solving a conic linear optimization problem instance.  Theorem \ref{thm overall complexity rescaled} also shows that (in theory) using a Hessian scaling based on a point further along the central path (namely with lower duality gap $\alpha$) further improves the worst-case computational burden. This naturally leads to two computational questions:  (i) is the actual performance of rPDHG improved by using the Hessian scaling?, and if so (ii) is the actual performance of rPDHG enhanced by using a Hessian rescaling further along the central path?  In this subsection we present numerical experiments designed to answer these two questions, and we find that the answers to both questions are affirmative.

We conducted experiments on LP instances from the LP relaxations of the MIPLIB 2017 \cite{gleixner2021miplib} dataset, which is a collection of 1065 mixed-integer programs derived from real-world applications. In line with the experimental set-ups in \cite{applegate2021practical} and \cite{lu2023cupdlp-c}, we first applied the presolver PaPILO \cite{gleixner2023papilo} (an open-source presolving library) to all instances, which performs a variety of computations to identify inconsistent bounds, eliminate empty rows and columns of the constraint matrix, and remove variables with identical lower and upper bounds. All problems were then converted to standard-form (re)-formulation as in  \eqref{pro: general primal clp}.  We use the relative error: 
\begin{equation}\label{wimp} \mathcal{E}_r(x, y):=\max\left\{\frac{\|A x^{+}-b\|}{1+\|b\|},\frac{\|(c-A^{\top} y)^{-}\|}{1+\|c\|},\frac{|c^{\top} x^{+}-b^{\top} y|}{1+|c^{\top} x^{+}|+|b^{\top} y|}\right\} \end{equation} as the tolerance metric of solutions, which is standard in solver software \cite{lu2023cupdlp-c,applegate2021practical,mosek}.  
We studied two settings which we now describe.

\begin{itemize}
	\item \textbf{rPDHG (Central-$\boldsymbol{\delta}$)}: In this setting we run rPDHG on \eqref{pro:rescaled problem} with $D_1$ constructed using the central-path Hessian $D_1 := \eta \cdot H_{x(\eta)}^{-1/2}$ (as in Theorem \ref{thm overall complexity rescaled}) using an endogenously determined value of $\eta$ for which the relative error satisfies $ \mathcal{E}_r(x(\eta),y(\eta)) \approxeq \delta$. The diagonal entries of $D_1$ are then clipped so that they are in the range $[10^{-5}, 10^5]$ to avoid numerical issues. Here we test $\delta = 0.5$, $0.1$, and $0.01$. Then $D_2$ is set to $D_2 := (AD_1^2 A^\top)^{-1/2}$ which yields $\tilde \kappa = 1$. This is done in order to isolate the role of the rescaling matrix  $D_1$ on the performance of rPDHG by nullifying any effect of $\tilde \kappa$. 
	\item \textbf{rPDHG (EasyColumn)}: In this setting we run rPDHG on \eqref{pro:rescaled problem} with $D_1$ constructed as the diagonal matrix whose diagonal entries are the reciprocals of the $\ell_\infty$ norms of the columns of $A$. We set $D_2 := (AD_1^2 A^\top)^{-1/2}$.
\end{itemize}

 We refer to the choice of $D_2 := (AD_1^2 A^\top)^{-1/2}$ as the ``complete preconditioner''. All computations were performed on the MIT Engaging Cluster, allocated with one Intel E5-2690 v4 CPU and 16GB RAM per task. All algorithms were implemented in Julia 1.8.5. The central-path points were computed using the conic optimization solver of Mosek \cite{mosek}. In alignment with Theorem \ref{thm overall complexity rescaled}, the objective vector $D_1^\top c$} of \eqref{pro:rescaled problem} was replaced with its projection $\bar{c} := \arg\min_{\hat{c}\in\operatorname{Null}(D_2 A D_1)}\|\hat{c}-D_1^\top c\|$.
Using the complete preconditioner for $D_2$ yields $\tilde{\lambda}_{\max} = \tilde{\lambda}_{\min} = 1$ and the step-sizes in Theorem \ref{thm overall complexity rescaled} are both equal to $1$.  Instead of using these step-sizes, we selected the step-sizes of rPDHG more conservatively as $\tau=\sigma=0.8$ which made it easier to compute the approximate normalized duality gap, see Appendix \ref{appendix:compute_rho} for further details. Additionally, we adopted the ``flexible adaptive restart'' strategy as developed in \cite{applegate2023faster} with the $\beta =1/e$-restart condition. (The flexible restart ends an inner loop of rPDHG if the normalized duality gap of either (a) the current iterate or (b) the average of the loop's iterates, satisfies the $\beta$-restart condition \cite{applegate2023faster}.) 
The initial iterates of rPDHG were chosen as $x^{0,0} = 0$ and $y^{0,0}=0$. For each instance, we set the time limit of rPDHG to be 10,000 seconds.

\begin{figure}[b]
	\centering
	\includegraphics[width=1\linewidth]{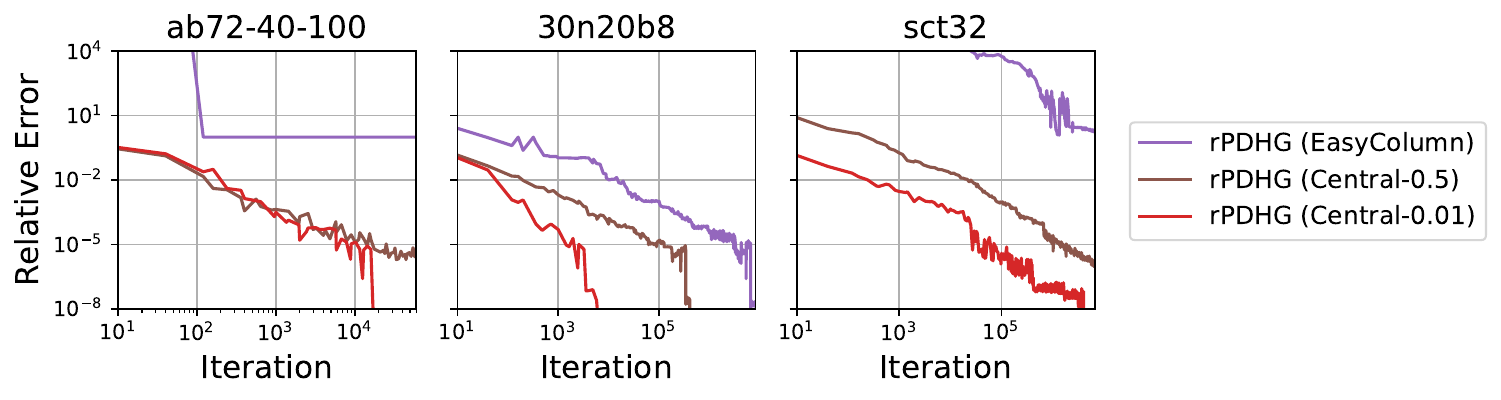}
	\caption{Performance comparison of rPDHG (Algorithm \ref{alg: PDHG with restarts}) using different column rescalings (and using complete preconditioner for $D_2$), for the problem instances \text{ab72-40-100}, \text{30n20b8}, and \text{sct32}.}\label{fig:apdhg_4_lp_instances}
\end{figure}

Figure \ref{fig:apdhg_4_lp_instances} illustrates the performance of the Hessian rescaling for two different points on the central path, namely $\delta = 0.5$ and $\delta = 0.01$, in comparison to using the EasyColumn rescaling, on three representative problem instances, namely \text{ab72-40-100}, \text{30n20b8}, and \text{sct32}. The plots are in log-log scale, with the number of iterations of \textsc{OnePDHG} on the horizontal axis and the relative error $\mathcal{E}_r(x, y)$ on the vertical axis. For these three instances we see that rPDHG converges faster using a central-path Hessian rescaling than using the EasyColumn rescaling, and the Hessian rescaling convergence is faster for $\delta=0.01$ than for $\delta = 0.5$, which is consistent with our theory (and with intuition). Although rPDHG with EasyColumn rescaling theoretically achieves linear convergence (Theorem \ref{thm overall complexity lp}), such linear convergence is not observable in the first 10,000 iterations for instance \text{ab72-30-100}, which indicates that the corresponding constant factor for the linear convergence rate is quite large for this instance. And for this instance we see a large improvement in the convergence from using a central-path Hessian rescaling.  For the problem instance \text{30n20b8}, we observe that the EasyColumn rescaling yields sublinear-like convergence performance in the early stages followed by linear convergence after around $10^6$ iterations, in synch with Theorem \ref{thm overall complexity clp}. The two central-path Hessian rescalings lead to faster linear convergence on this problem as well. For problem instance \text{sct32}, none of the three methods reach the linear convergence stage within $10^7$ iterations, but the central-path Hessian rescalings still have significant speedups over the EasyColumn rescaling. This is in synch with Theorem \ref{thm overall complexity rescaled} which indicates that a central-path Hessian rescaling yields improved convergence in both the sublinear and linear convergence stages.

To compare the different rescalings on a broad set of problem instances we chose the subset of the MIPLIB 2017 LP relaxation instances for which (i) Mosek can successfully compute central-path points, (ii) the problem instance is not too small, namely $mn \ge 10^7$, and (iii) the problem instance is not too large, namely $mn \le 5\times 10^8$ (so that we have a sufficiently large family of instances that are solved in the 10,000 seconds time limit). This yielded 222 problem instances, which we solved using different rescalings. We consider a problem instance to be solved if it achieves a relative error satisfying $\mathcal{E}_r(x, y) \le 10^{-8}$, which is a standard threshold for LP applications but is an extremely strict threshold in the context of first-order methods. Figure \ref{fig:apdhg_proportionplot} shows the fraction of the solved problems (among the 222 instances) on the horizontal axis, and the maximum iterations (leftmost plot) and the maximum runtime (rightmost plot) to achieve the fraction of the problems solved. We observe that the central-path Hessian rescaling markedly reduces the iteration counts and the runtime, in comparison with the  EasyColumn rescaling. For example, rPDHG (Central-0.5) requires 100 times fewer iterations to solve $60\%$ of the problem instances, compared to rPDHG with EasyColumn rescaling. Also, smaller value of $\delta$ (which corresponds primarily to a smaller duality gap on the central path) yields a larger fraction of problems solved. And if obtaining a low-accuracy point on the central-path is not computationally expensive, this suggests that minor efforts to obtain a central-path point have the potential to greatly improve the computational performance of rPDHG.  This last point is further developed and tested in Section \ref{sec:exp}. 

\begin{figure}[htbp]
	\centering
	\includegraphics[width=1\linewidth]{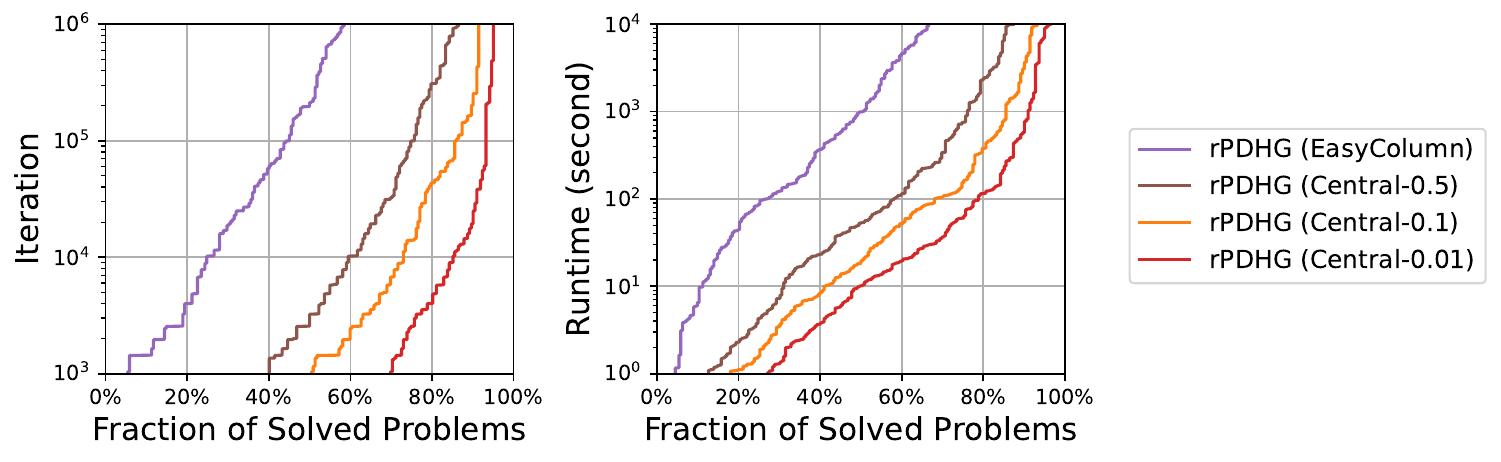}
	\caption{Performance comparison of rPDHG (Algorithm \ref{alg: PDHG with restarts}) using different column rescalings (and using complete preconditioner for $D_2$), on the 222 LP relaxation problem instances from the MIPLIB 2017 dataset}\label{fig:apdhg_proportionplot}
\end{figure}

In this subsection we have illustrated how to alleviate rPDHG's dependence on the sublevel set geometry by using central-path Hessian rescalings.  While in one sense Figures \ref{fig:apdhg_4_lp_instances} and \ref{fig:apdhg_proportionplot} show this rather nicely, in another sense it ignores two important aspects, namely (i) it does not account for the computational cost of computing a point on the central path, and (ii) it presumes that the problem is pre-conditioned using the complete preconditioner $D_2 := (AD_1^2 A^\top)^{-1/2}$, which is likely to be computationally burdensome for very large-scale problem instances. Moreover, many problem instances do not satisfy Assumption \ref{assump:striclyfeasible} and thus the central path does not exist. In Section \ref{sec:exp}, we develop a solution scheme that uses the conjugate gradient method to solve the IPM Newton steps and so compute a ``reasonably good'' interior-point solution that can be used to construct an approximate central-path Hessian rescaling, which will form the basis of a practical rescaling scheme for rPDHG for conic linear optimization problem instances.

\section{An Adaptive Hessian Rescaling Scheme for rPDHG and its Experimental Evaluation}\label{sec:exp}

Both the theory and the computational experiments in Section \ref{sec:geometric_enhancements} show that it can be very valuable to utilize a central-path Hessian rescaling transformation and then run rPDHG on the rescaled problem \eqref{pro:rescaled problem}.  The same theory and computational experiments also show that the Hessian rescaling of points further along the central path yields better performance for rPDHG both in theory and in actual computation time. However, a practical incorporation of such a Hessian rescaling must address at least two challenges, namely (i) which target point on (or near) the central path to compute?, and (ii) how to most efficiently compute the target point?  Regarding the choice of the target point, there are various strategies one can develop for determining where to aim for in computing a point near the central path.  For example, a very simple heuristic is to fix $\eta$ at some predetermined value, say $\eta := 0.1/\vartheta$, and then approximately compute $w(\eta)$.  A more complicated heuristic would be to first run rPDHG for a certain number of iterations (or for a certain amount of time) and then somehow use the information from iterates of rPDHG to choose a suitable value of $\eta$ and then compute $w(\eta)$.
	
Even if one has a clear strategy for determining which target point $w(\eta)$ to try to compute, it is still a significant challenge to efficiently compute a good approximation of $w(\eta)$. In order for rPDHG to be competitive with the best methods for linear programming instances (pivoting methods and IPMs), the scheme for approximately computing a specific $w(\eta)$ should involve some form of Newton step computation, but should not use a direct method for solving linear equations (such as Cholesky factorization).  However, computation with IPMs where iterative methods are used for solving the associated Newton step -- such as the conjugate gradient method -- are not numerically reliable for computing points far along the central path (i.e., with very small duality gap), see \cite{karmarkar1991computational,mehrotra1992implementations}.

We address these two challenges directly in this section. To address the first challenge, we present in Section \ref{subsec:adaptive_rescaling} a practical scheme for {\em adaptively} computing a good Hessian rescaling that is designed to efficiently balance the extra computation time to determine a good rescaling with the computational savings from using rPDHG on the rescaled problem. To address the second challenge, in Section \ref{subsec:\cpipm} we describe our implementation of a Newton-step-based algorithm that is designed to compute a low-accuracy point on the central path (namely, a point that is not very far along the path) where the conjugate gradient method (CGM) is used to approximately solve for the Newton step.  We call this method \cpipm \ since it computes a point on the central path using the conjugate gradient method (to solve the Newton step equations). In Section \ref{fivemiles} we present a comparison of our method with two standard methods for LP, namely an implementation of rPDHG using the scaling technique developed in PDLP \cite{applegate2021practical}, and an implementation of a standard IPM for LP almost exactly as in \cite{nocedal2006numerical}.  We test all three methods on LP problem instances from the MIPLIB 2017 dataset, and note that all computational experiments in this section follow the same general setup as in Section \ref{subsec:APDHG_exp} except as otherwise noted.  

\subsection{Scheme for rPDHG with adaptive Hessian rescaling }\label{subsec:adaptive_rescaling}

Algorithm \ref{alg:ada_rescaling} describes our scheme for running rPDHG with adaptive Hessian rescaling, which we denote as \adarescaling. Before doing a line-by-line explanation of Algorithm \ref{alg:ada_rescaling}, we first explain the underlying strategy at a more informal level. The strategy is to start by first spending a small amount of time $t$ (we set $t=0.5$ seconds) to compute a very low-accuracy approximate central path point using \cpipm. (Recall that \cpipm \ denotes the method that computes a point on the central path using the conjugate gradient method to solve the Newton step equation system, the details of which are further described in Section \ref{subsec:\cpipm} and Appendix \ref{clifficlerun}.)  We then use the approximate central path point to construct the rescaling transformation and then run rPDHG on the rescaled problem for $\omega t$ seconds (we used $\omega = 6.0$).  We then test to determine if our current rescaling is good enough or not, and proceed accordingly as follows:
\begin{itemize}
\item If the relative error of the current rPDHG solution is not too much larger than the target relative error, we declare the current rescaling to be good enough, and we keep running rPDHG until we achieve the target accuracy.
\item If the relative error of the current rPDHG solution is worse than the relative error of the previous round of rPDHG (using the previous rescaling), and if the previous relative error itself was reasonably good enough, we discard the current rescaling, and we declare the previous rescaling to be good enough.  We then keep running rPDHG using the previous rescaling until we achieve the target accuracy.
\item If neither of the above two conditions is satisfied, we replace $t \leftarrow 2t$ and run (or continue running) \cpipm \ for an additional $t$ seconds, and repeat all of the above.  
\end{itemize}

We continue to use the notation of previous sections of this paper for iteration $k$ solutions of rPDHG, namely $z^k$, $w^k$, $x^k$, $y^k$, $s^k$, etc. To distinguish between these iterates and central path solutions produced by \cpipm \ that are used in the rescaling transformation, we use a different font and let ${\textsf z}^k$, ${\textsf w}^k$, ${\textsf x}^k$, ${\textsf y}^k$, ${\textsf s}^k$, etc., denote the corresponding objects at iteration $k$ of \cpipm. With this notation in mind, we now do a line-by-line explanation of Algorithm \ref{alg:ada_rescaling}. In Line \ref{fender} the initial point for rPDHG is set to $z^0$, and the initial point for \cpipm \ is set to ${\textsf w}^0$. In addition to the target relative error $\eps$, the input is also composed of two adaptivity parameters $\hat{\eps}$ and $\bar{\eps}$ that will play a role in the logic of Line \ref{yikes} and will be explained later.  For now, it is best to think of $\hat{\eps}$ and $\bar{\eps}$ as tolerance values that are larger than $\eps$, and in fact we use $\bar{\eps} = \sqrt{\eps}$, $\hat{\eps} = \sqrt[5]{\eps}$ in our implementation. 
In Line \ref{bender} we run (or continue running) \cpipm \ starting from ${\textsf w}^k$ for $t$ seconds to produce ${\textsf w}^{k+1} = ({\textsf x}^{k+1},{\textsf s}^{k+1})$.
In Line \ref{render}, we construct the rescaled problem \eqref{pro:rescaled problem}$_{k+1}$ using the Hessian of the solution ${\textsf x}^{k+1}$ and an additional rescaling from a certain pair of $\bar{D}_1$ and $\bar{D}_2$ designed to balance the geometry condition numbers as well as $\tilde{\kappa}$.
In Line \ref{lender} we run rPDHG on the new rescaled problem for $\omega t$ seconds, and we output the transformed solution $z^{k+1}$ and its associated relative error $\eps_{k+1}:= \err(x^{k+1}, y^{k+1})$.  In Line \ref{sender} the counter is updated and the time parameter is doubled.  This entire process is then repeated unless certain conditions are satisfied, which are denoted condition (a) or condition (b) in Line \ref{yikes}. 

Condition (a) in Line \ref{yikes} tests if the current output of rPDHG has ``reasonably good'' relative error (at most $\bar\eps$), in which case we declare the current rescaled \eqref{pro:rescaled problem}$_{k}$ has a good enough rescaling, and so in Line \ref{morning} we permanently fix the new rescaling and continue running rPDHG until we attain the target relative error $\eps$.  

Condition (b) in Line \ref{yikes} tests if the current output of rPDHG is sufficiently poor compared to the previous output from rPDHG ($\eps_k > \eps_{k-1} $), and if also the previous output from rPDHG has ``fairly good'' relative error ($\eps_{k-1} \le \hat{\eps}$) then we declare the previous rescaling is good enough, and so in Line \ref{mourning} we permanently fix the rescaled problem to its previous form \eqref{pro:rescaled problem}$_{k-1}$ and continue running rPDHG until we attain the target relative error $\eps$.

The overall design strategy in Algorithm \ref{alg:ada_rescaling} is to minimize the time invested in obtaining a reasonably effective rescaling and the time spent utilizing the rescaling in running rPDHG.

\begin{algorithm}[htbp]
	\SetAlgoLined
	{\bf Input:} Initial iterate $z^{0}:=(x^{0,0}, y^{0,0})$, initial point ${\textsf w}^0$ for \cpipm , time parameter $t$, time multiplier $\omega$, target relative error $\eps$, and adaptivity error parameters $\hat{\eps}$ and $\bar{\eps}$. Define $k  := 0$, $\eps_0 := +\infty$ \label{fender} \;
	\Repeat{ either (a) $\eps_k\le \bar{\eps}$, or (b) $\eps_k > \eps_{k-1} $ and \ $\eps_{k-1} \le \hat{\eps}$  \label{yikes}}{
		Run (or continue running) \cpipm \ from ${\textsf w}^{k}$ for $t$ seconds. Output ${\textsf w}^{k+1}$ \label{bender} \;
		Construct new rescaled problem: define $\eta^{k+1}:= ({\textsf s}^{k+1})^\top {\textsf x}^{k+1}$, $\tilde D_1 := \sqrt{\eta^{k+1}} H^{-1/2}_{{\textsf x}^{k+1}}$ and $\tilde D_2 = I$.  Optionally do further rescaling by introducing $\bar D_1$ and $\bar D_2$, and setting $D_1 := \bar{D}_1 \tilde{D}_1$ and $D_2 := \bar{D}_2 \tilde{D}_2$. Then construct the new rescaled problem \eqref{pro:rescaled problem}$_{k+1}$ using $D_1$ and $D_2$ \label{render} \; 
		Run rPDHG on rescaled problem \eqref{pro:rescaled problem}$_{k+1}$ for $\omega t$ seconds. Output the transformed solution $z^{k+1}$ and the relative error $\eps_{k+1}:= \err(x^{k+1}, y^{k+1})$ \label{lender}\;
		$k\gets k+1$ and $t \gets 2t$ \label{sender} \;
	}
	If (a) holds, then fix the new rescaling: run rPDHG on \eqref{pro:rescaled problem}$_{k}$ until a  solution $z=(x,y)$ is computed for which $\err(x, y) \le \eps$ \label{morning} \;
	If (b) holds, then revert to and fix the previous rescaling: run rPDHG on \eqref{pro:rescaled problem}$_{k-1}$ until a solution $z=(x,y)$ is computed for which $\err(x, y) \le \eps$ \label{mourning} \;	
\caption{Scheme for rPDHG with Adaptive Hessian Rescaling (\adarescaling)}\label{alg:ada_rescaling}
\end{algorithm}

\subsection{\cpipm: An interior-point method for computing a point on the central path utilizing the conjugate gradient method}\label{subsec:\cpipm}

\cpipm \ is an implementation of the practical IPM presented in Nocedal and Wright \cite[Section 14.2]{nocedal2006numerical}, and which itself is based on and is similar to Mehrotra's classic predictor-corrector primal-dual path-following method \cite{mehrotra1992implementation}. At each iteration, the IPM performs one Newton step to decrease the barrier parameter and another Newton step to return to the central path. Computing the two Newton steps requires solving two linear systems at each iteration. The method works very well in practice, although no good convergence guarantee exists for this particular (and practical) IPM. 

\cpipm \ differs from the practical IPM in \cite[Section 14.2]{nocedal2006numerical} in two ways.  First, unlike the practical IPM which directly solves the two linear equation systems at each iteration using matrix factorization, instead \cpipm \ uses an ``inner'' iterative method (at each ``outer'' iteration) to solve the (reduced) normal equations \cite[(14.44)]{nocedal2006numerical}.  The normal equation matrices are of the form $A D^2 A^\top$ for an iteration-dependent positive diagonal matrix $D$.  \cpipm \ uses the Jacobi preconditioned CGM \cite{barrett1994templates} to solve the normal equations at each iteration. This approach allows \cpipm \ to avoid formulating and factorizing $A D^2 A^\top$; and therefore the primary computational burden of \cpipm \ is the (many) matrix-vector multiplications within the CGM. The CGM stops when either (i) it reaches a sufficiently large number of iterations, or (ii) the corresponding original linear system \cite[(14.41)]{nocedal2006numerical} is appropriately solved. 

The second way that \cpipm \ differs from the practical IPM in \cite[Section 14.2]{nocedal2006numerical} has to do with pre-scaling.  At the outset, \cpipm \ runs 10 iterations of the Ruiz scaling method \cite{ruiz2001scaling} before commencing with the interior-point steps. This rescaling is designed with the aim of controlling the condition number of the matrix $A$ and related linear systems. More details on the differences between the practical IPM in \cite[Section 14.2]{nocedal2006numerical} and \cpipm \ are described in Appendix \ref{clifficlerun}. 

We note that \cpipm \ is just one type of simple first-order implementations of a classic IPM. Improved preconditioners for the CGM for IPMs have been studied as early as \cite{karmarkar1991computational,mehrotra1992implementations}. Moreover, there are other more recent matrix-factorization-free approaches for computing a point near the central path; for example,  \cite{lin2021admm,deng2024enhanced} develops an ADMM-based IPM that is as competitive as PDLP in obtaining a solution of moderate accuracy, and \cite{vladu2023interior} studies the complexity of an IPM based on quasi-Newton iterations. We chose to use \cpipm \ in our experiments because \cpipm \ is sufficiently representative of the classic first-order implementations of IPMs and it in fact demonstrates some of the benefits of central-path Hessian rescaling that is tied to general IPMs.

\subsection{Computational comparison of methods}\label{fivemiles}

We present computational experiments where we compare \adarescaling \ (Algorithm \ref{alg:ada_rescaling}) with two standard methods for LP -- namely rPDHG and a standard interior-point method -- and so we compare three methods in all as follows:
\begin{itemize}
\item {\bf \adarescaling}: This is Algorithm \ref{alg:ada_rescaling}, where we set $t=0.5$, $\omega = 6$, $\bar{\eps} = \sqrt{\eps}$, and $\hat{\eps} = \sqrt[5]{\eps}$, where $\eps = 10^{-8}$ is the relative error tolerance. We performed the optional ``further scaling'' (Line \ref{render} of Algorithm \ref{alg:ada_rescaling}) using the rescaling methods of Ruiz \cite{ruiz2001scaling} and Pock-Chambolle \cite{pock2011diagonal} as implemented in \cite{applegate2021practical}.
\item {\bf rPDHG(RuizPC)}: This an implementation of rPDHG using the scaling technique developed in PDLP \cite{applegate2021practical} which itself incorporates the scaling methods of Ruiz \cite{ruiz2001scaling} and Pock-Chambolle \cite{pock2011diagonal}. 
\item {\bf PIPM}: This is an implementation of the \textbf{P}ractical \textbf{I}nterior \textbf{P}oint \textbf{M}ethod for LP exactly as in \cite{nocedal2006numerical} that uses Cholesky factorization to solve the Newton step equations.\end{itemize}

In synch with the setup of Theorem \ref{thm overall complexity rescaled}, we replace the objective function vector $D_1^\top c$ of \eqref{pro:rescaled problem} with its projection $\bar c$ onto $\operatorname{Null}(D_2 A D_1)$, namely $\bar{c}:=\arg\min_{\hat{c}\in\operatorname{Null}(D_2 A D_1)}\|\hat{c}-D_1^\top c\|$. This is done before each time when rPDHG is run (either as a subroutine in \adarescaling \ or in rPDHG(RuizPC)).  The projection is computed by running a maximum of $1,000$ iterations of CGM. Regarding the setting of the step-sizes $\sigma$ and $\tau$ in rPDHG, since $\tilde{\lambda}_{\min}$ is not easily computable, we use the heuristic in \cite{applegate2023faster,xiong2023computational} to learn a reasonably good ratio $\tau/\sigma$ when running rPDHG. We consider five possible step-sizes pairs: $(\tau, \sigma)=\big(10^{\ell} / 2 \tilde{\lambda}_{\max }, 10^{-\ell} / 2 \tilde{\lambda}_{\max }\big)$ for $\ell=-2,-1, 0, 1, 2$. For each of these step-size pairs we run rPDHG for 10,000 iterations from the same initial point (hence 50,000 iterations in total), and then we choose which of the five step-sizes to use based on the smallest relative error it achieves. Last of all, to better take advantage of the solution $\textsf{w}^{k+1}$ of \cpipm \ when running \adarescaling, we use $\textsf{w}^{k+1}$ as a warm start for rPDHG in Line \ref{lender} of \adarescaling. 

For our implementation of PIPM we solve the normal equations via sparse Cholesky factorization using the CHOLMOD library in SuiteSparse. Whenever numerical issues occur in solving the normal equations, the diagonal entries of the matrix are shifted by $10^{-10}$, $10^{-9}$, $10^{-8}$, and so forth, until no numerical issues are present.

Finally, we note that both rPDHG and PIPM can be augmented and amended to include various other heuristics and advanced implementations that could potentially significantly enhance their practical performance. We intentionally limited our comparison to the most basic heuristics because our primary goal is to understand the general potential of rPDHG using central-path Hessian rescaling.

 The computational environment is mostly the same as described in Section \ref{subsec:APDHG_exp}, except in the choice of which problem instances are chosen from the MIPLIB 2017 dataset. Unlike in Section \ref{subsec:APDHG_exp}, we chose all MIPLIB 2017 LP relaxation instances that were not too small ($mn \ge 10^6$) but were not too dense (the number of non-zeroes \textsf{nnz} satisfies $\textsf{nnz} \le 10^5$).  This yielded 413 instances in total for which almost all methods execute a sufficiently large number of iterations. (Also note that these 413 problems do not necessarily satisfy Assumption \ref{assump:striclyfeasible}.) For each problem instance and each method we set a time limit of 5 hours. As in Section \ref{subsec:APDHG_exp}, we consider a problem instance to be solved if it achieves a relative error satisfying $\mathcal{E}_r(x, y) \le 10^{-8}$.

Figure \ref{fig:adapdhg_proportionplot} shows the fraction of solved instances (among the 413 problem instances) on the horizontal axis, and the maximum number of matrix-vector products (leftmost plot) and the maximum runtime (rightmost plot) to achieve the fraction of the problems solved. Since the main computational expense of PIPM is not the matrix-vector products, the left plot omits the information for PIPM. From Figure \ref{fig:adapdhg_proportionplot} we observe that the performance of \adarescaling \ dominates that of rPDHG(RuizPC) and all the more so for the more challenging problems.  We also see that PIPM dominates \adarescaling \ except for the easiest and the hardest problems, but that the trend indicates that \adarescaling \ outperforms PIPM on the harder problems. Overall, these results demonstrate that the Hessian rescaling has benefits even if the problem does not satisfy Assumption \ref{assump:striclyfeasible}. 

(Additionally, we also ran some experiments using \cpipm \ and we observed that \cpipm \ solved fewer than 70\% of the problem instances, which indicates that \cpipm \ alone is far less competitive than rPDHG, and indeed the advantages of \adarescaling \ mainly derive from the rescaling and not from the advanced starting point provided by \cpipm.)

\begin{figure}[htbp]
	\centering
	\includegraphics[width=1\linewidth]{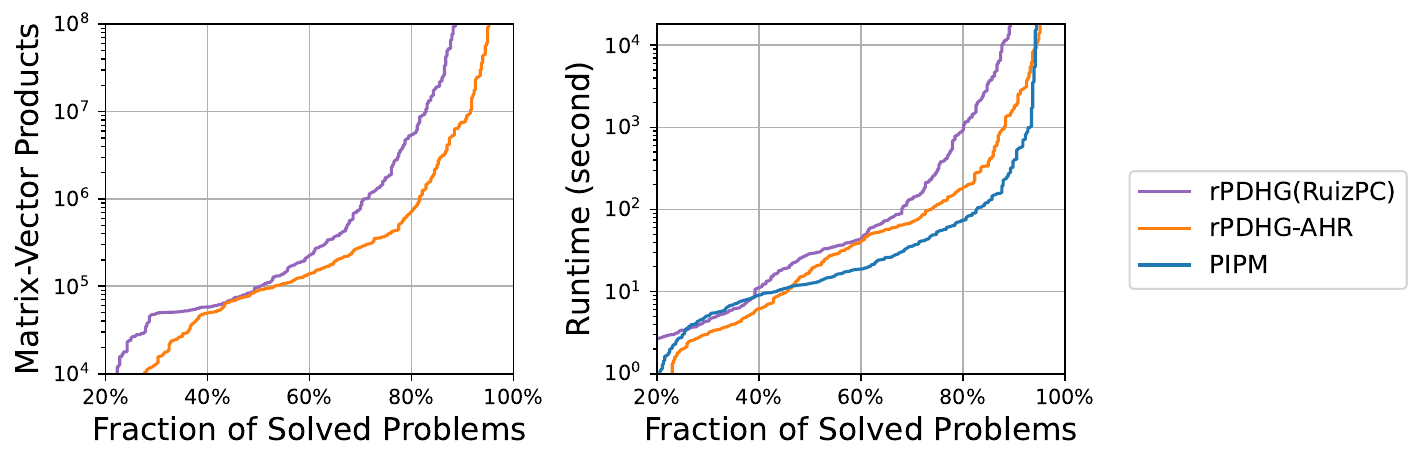}
	\caption{Performance of the three different methods on the 413 problem instances from the MIPLIB 2017 dataset.}\label{fig:adapdhg_proportionplot} 
\end{figure} 

We present more detailed comparisons between \adarescaling \ and rPDHG(RuizPC) in Table \ref{tbl:RuizPCvsAda} and Figure \ref{fig:RuizPCvsAda}, and between \adarescaling \ and PIPM in Table \ref{tbl:IPMvsAda} and Figure \ref{fig:IPMvsAda}. From Table \ref{tbl:RuizPCvsAda} we see that \adarescaling \ solved 95.2\% of the instances compared with 89.4\% for rPDHG(RuizPC). The computational bottleneck for both methods is the matrix-vector products, either within the conjugate gradient method or in the \textsc{OnePDHG} iterations. Therefore in Figure \ref{fig:RuizPCvsAda} we use the ratio of matrix-vector products required by rPDHG(RuizPC) to those required by \adarescaling \ as a measure of the speedup ratio achieved by \adarescaling. Of the 362 instances solved by both methods, Figure \ref{fig:RuizPCvsAda} shows that the advantage of \adarescaling \ over rPDHG(RuizPC) is greater for problems requiring more matrix-vector products, and the ratio is generally greater than $1.0$ for the more difficult problems, and grows to $10^2 - 10^4$. 

\vspace{10pt}
\noindent
\begin{minipage}{.45\linewidth} 
	\centering
	\vspace{31pt}
	\begin{adjustbox}{width=1\linewidth,center}
		\begin{tabular}{|cc||cc|}
		\hline
	        &   & \multicolumn{2}{c|}{\small\adarescaling}         \\ \cline{3-4}
	        &   & \multicolumn{1}{c|}{Solved}                   & Not Solved     \\ \hline\hline
		\multicolumn{1}{|c|}{\multirow{2}{*}{\small\begin{tabular}[c]{@{}c@{}}rPDHG\\ (RuizPC)\end{tabular}}} 
		& Solved & \multicolumn{1}{c|}{87.7\%}        & 1.7\% \\ \cline{2-4}
			\multicolumn{1}{|c|}{}     & Not Solved & \multicolumn{1}{c|}{7.5\%}   & 3.1\% \\ \hline
		\end{tabular}
	\end{adjustbox}
	\vspace{25pt}
	\captionof{table}{Fraction of the 413 problem instances solved and not-solved by \adarescaling \ and rPDHG(RuizPC).}\label{tbl:RuizPCvsAda}
\end{minipage}
\hfill 
\begin{minipage}{.53\linewidth} 
	\centering
	\includegraphics[width=1\linewidth]{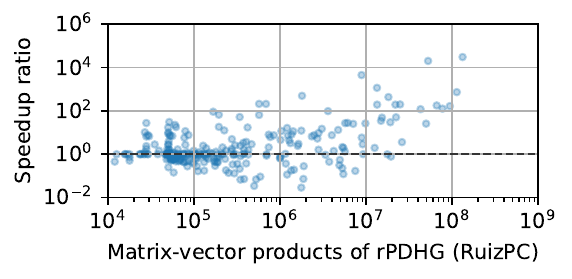}\vspace{-10pt}
	\captionof{figure}{Speedup ratio of \adarescaling \ compared with rPDHG(RuizPC) for the 362  problem instances solved by both methods.}\label{fig:RuizPCvsAda}
\end{minipage}
\vspace{20pt}

From Table \ref{tbl:IPMvsAda} we see that \adarescaling \ solved 95.2\% of the instances compared with 94.4\% for PIPM, so the two methods have nearly identical performance on this metric. In Figure \ref{fig:IPMvsAda} we use the ratio of the runtime required by rPDHG(RuizPC) to that required by \adarescaling \ as a measure of the speedup ratio achieved by \adarescaling \ over PIPM. Of the 384 instances solved by both methods, Figure \ref{fig:IPMvsAda} shows that PIPM is dominant on the easier problems, that the two methods are competitive on the intermediate-difficulty problems, and that \adarescaling \ dominates PIPM generally for problems requiring at least $100$ seconds of runtime by PIPM. This suggests in the very least that \adarescaling \ is competitive with PIPM.  We also mention that none of the 413 problems are huge-scale, and in fact are actually classified as ``small problems'' in the PDHG implementation \cite{lu2023cupdlp}.  It is therefore reasonable to expect that \adarescaling \ will exhibit more advantages over PIPM when applied to larger-scale problems.

\vspace{10pt}
\noindent
\begin{minipage}{.45\linewidth} 
	\centering
	\vspace{33pt}
	\begin{adjustbox}{width=0.9\linewidth,center}
		\begin{tabular}{|cc||cc|}  \hline
	 &   & \multicolumn{2}{c|}{\small\adarescaling}         \\ \cline{3-4}
	&   & \multicolumn{1}{c|}{Solved}                   & Not solved     \\ \hline\hline
\multicolumn{1}{|c|}{\multirow{2}{*}{\small{PIPM}}} & Solved & \multicolumn{1}{c|}{93.0\%}  & 1.5\% \\ \cline{2-4}
	\multicolumn{1}{|c|}{}                             & Not Solved & \multicolumn{1}{c|}{2.2\%}   & 3.4\% \\ \hline
		\end{tabular}
	\end{adjustbox}
	\vspace{25pt}
	\captionof{table}{Fraction of the 413 problem instances solved and not-solved by \adarescaling \ and PIPM.}\label{tbl:IPMvsAda}
\end{minipage}%
\hfill 
\begin{minipage}{.53\linewidth} 
	\centering
	\includegraphics[width=1\linewidth]{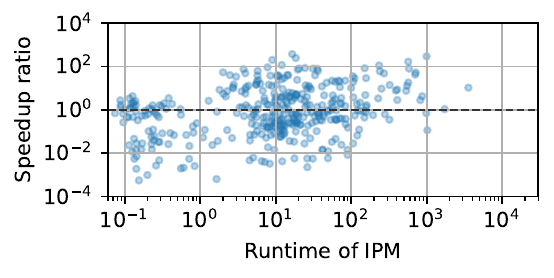}\vspace{-10pt}
	\captionof{figure}{Speedup ratio of \adarescaling \ compared with PIPM for the 384  problem instances solved by both methods.}\label{fig:IPMvsAda}
\end{minipage}
\vspace{20pt}

\subsection{An ``ideal'' central-path Hessian rescaling}\label{subsec:interior_point_rescaling}

By its design, \adarescaling \ (Algorithm \ref{alg:ada_rescaling}) heuristically tries to balance the cost of obtaining a good rescaling with the benefit of computational savings when running rPDHG. As such, \adarescaling \ is one of many different strategies that might be used to better (or best) achieve such balance of cost/benefit.  In this subsection we aim to demonstrate what we might expect from a better (or ideal) heuristic for balancing the cost/benefit. Specifically, we investigate using a Hessian rescaling for rPDHG that ideally balances the computational cost of \cpipm \ and the computational benefit for rPDHG. To do so, we first run \cpipm \ for $t$ seconds, for a variety of different values of $t = t_1, t_2, \ldots, t_M$, yielding $M$ different candidate Hessian-rescaled problems.  Then, for each $i=1, \ldots, M$ we run rPDHG for the associated rescaled problem, and we record the time $\hat t_i = \hat t_1, \hat t_2, \ldots, \hat t_M$ used to a obtain a solution with relative error at most $10^{-8}$.  Last of all, we choose the value of $t_i$ that achieves the best overall runtime $t_i + \hat t_i$.  Here is the description of our method, which we denote as \textbf{\bestrescaling}.

 \begin{itemize}
	\item \textbf{\bestrescaling}: This is essentially Algorithm \ref{alg:ada_rescaling} modified to have a single outer loop that calls \cpipm \ to run for $t_i$ seconds, where $t_i = 2^{i}/4$ for $i=1, \ldots, M$. Then we continue with Line \ref{render} of Algorithm \ref{alg:ada_rescaling} after choosing the Hessian rescaling associated with the value of $t_i$ that achieves the shortest overall runtime (for running \cpipm \ and then running rPDHG) to obtain a solution with relative error at most $10^{-8}$. As in \adarescaling, we perform the optional ``further scaling'' using the rescaling methods of Ruiz \cite{ruiz2001scaling} and Pock-Chambolle \cite{pock2011diagonal} as implemented in \cite{applegate2021practical}.
\end{itemize}
Additionally, \bestrescaling \ uses other logic just as in \adarescaling. For examples, \bestrescaling \ replaces the objective vector with the projection onto $\operatorname{Null}(D_2 A D_1)$ and uses the same logic as in \adarescaling \ to learn a reasonably good ratio between $\tau$ and $\sigma$ when running rPDHG. 

By selecting the ``best'' value of $t_i$ in \bestrescaling, we aim to get a sense of what is the limit of what a good/better heuristic might achieve. We note that by its intention \bestrescaling \ is not a practical method.  However, it does allow us to gauge the potential performance of a possible improved heuristic for central-path Hessian rescaling. 

Figure \ref{fig:ideal_proportionplot} shows the fraction of solved instances using \bestrescaling \ (among the same 413 problem instances tested in Section \ref{fivemiles}) on the horizontal axis, and the maximum runtime to achieve the fraction of the problems solved. From Figure \ref{fig:ideal_proportionplot} we observe that an ``ideal'' choice of the central-path Hessian rescaling is consistently 2 to 3 times faster than \adarescaling. There is thus room for improving \adarescaling \ to close this performance gap. Furthermore, \bestrescaling \ also consistently outperforms PIPM in terms of the computation time (on most problems) and the number of problems solved. This result shows the potential significant benefits of central-path Hessian rescaling if one can further improve the balance between the cost of computing the rescaling and the benefit of shortening the runtime of rPDHG.

\begin{figure}[htbp]
	\centering
	\includegraphics[width=0.65\linewidth]{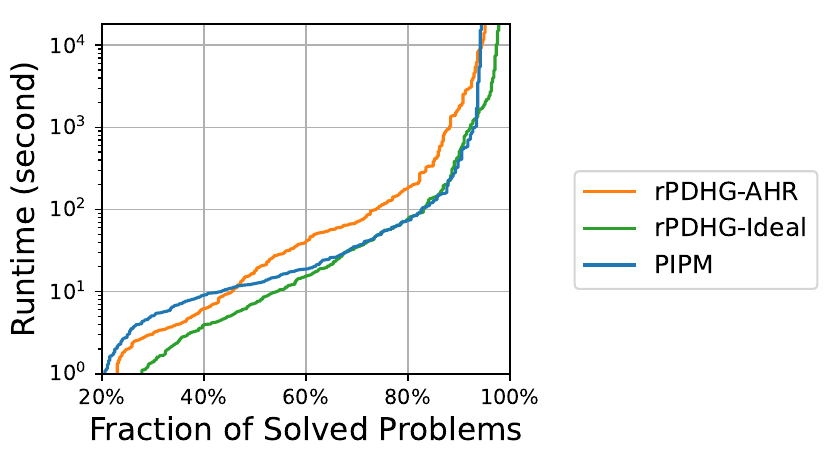}
	\caption{Performance of \ideal, \adarescaling,  and PIPM on the 413 problem instances from the MIPLIB 2017 dataset.}\label{fig:ideal_proportionplot} 
\end{figure}

\section{Final Remarks, Questions, and Research Directions}

In this paper we have extended the theoretical, algorithmic, and practical performance of rPDHG for general CLP, and we have performed computational experiments for the special case of LP.  For the lens of theory, we have presented three geometric condition numbers of the primal-dual sublevel set $\calW_\delta$: the diameter $D_\delta$, the conic radius $r_\delta$, and the Hausdorff distance to the optimal solution set $d_\delta^H$, and we have shown how these condition numbers inform the convergence rate of rPDHG both in theory and in practice. For the lens of algorithmic development, we have proposed to use central-path Hessian rescaling transformations to improve the geometry of the primal-dual sublevel sets with the overarching aim of improving the convergence rate of rPDHG -- both in theory and practice. Last of all, we have presented computational results that verify the theory and  and demonstrate how such central-path Hessian rescaling can significantly improve the performance of rPDHG for LP in practice. 

We end this paper with a short list of open questions for further investigation:

\textbf{1. Extensions to other primal-dual algorithms.} We expect that the use of the sublevel set geometric condition numbers for informing convergence is not limited to just PDHG. Indeed, the only properties of rPDHG that we used were those in Lemmas \ref{lm: nonexpansive property}, \ref{lm: R in the opt gap convnergence} and \ref{lm: original sublinear PDHG}. Similar properties also hold for other primal-dual first-order algorithms such as ADMM and EGM \cite{applegate2023faster,ryu2022large}.  For this reason we expect that much of our analysis should extend to these other primal-dual first-order methods as well.

\textbf{2. Condition numbers and analysis for more general nonlinear programs.} In this paper we studied conic linear problems (CLP) which are a (rather important) subclass of constrained convex optimization. It would be interesting to explore how the sublevel-set geometry influences the convergence rate of first-order methods for more general convex and nonconvex constrained optimization, which might also lead to methods for improving the sublevel set geometry as we have done here. 

\textbf{3. Condition numbers and analysis for the infeasibility detection problem.} It has been shown in \cite{applegate2024infeasibility} that rPDHG can also be used to detect infeasibility for LP problem instances. It would be interesting to study how the geometry of the problem informs the speed of infeasibility detection, and how to possibly improve the geometry to enhance practical algorithm performance.

\textbf{4. Other methods for improving the geometry of sublevel sets.} In this paper we have used \cpipm \ to obtain a good Hessian rescaling, but of course one can devise many other types of schemes to obtain good/better Hessian rescalings.  It would be interesting to see if there are other methods that can find a good Hessian rescaling faster than \cpipm \ or achieve a better balance between the cost of computing the rescaling and the benefit of improving the runtime of rPDHG. Perhaps other types of rescalings and associated problem transformations may improve the geometry and enhance the convergence rate of rPDHG, especially for cones (symmetric or not) for which we do not have logarithmically homogeneous self-concordant barriers.

\section*{Acknowledgements} We thank Haihao Lu, Kim-Chuan Toh, and Haoyue Wang for useful discussions and feedback on this work. 

\section*{Appendix}

\appendix

\section{Computing the Normalized Duality Gap}\label{appendix:compute_rho}

We first show in Section \ref{subsec:compute_rho} a methodology for computing the normalized duality gap $\rho(r;z)$.  In Section \ref{subsec:approx_rho} we modify this methodology by changing the norm from the $M$-norm $\|\cdot\|_M$ to a different norm $\|\cdot\|_N$ which we call the $N$-norm, and whose associated normalized duality gap is denoted by $\rho^N(r;z)$. Throughout this appendix we assume that  $\sigma, \tau$ satisfy \eqref{eq:general_stepsize}, which then implies that $M \succeq 0$ \eqref{robsummer}. The strategy presented herein for computing and approximating $\rho(r;z)$ is a generalization of the method developed by \cite{applegate2023faster} for the particular case of LP.

\subsection{Computing the normalized duality gap}\label{subsec:compute_rho}
Computing $\rho(r;z)$ for  $z\in\bar{K}:= K_p \times \mathbb{R}^m$ and $r > 0$ is basically equivalent to solving the following convex optimization problem:
\begin{equation}\label{pro:compute_rho}
	\max_{ \hat{z} = (\hat{x},\hat{y}): \ \hat{x}\in K_p, \|\hat{z} -z\|_M \le r }\left[
		L(x,\hat{y}) - L(\hat{x},y)
		\right] = \left(
	\begin{array}{ll}
			\max_{\hat{z}} \       & h^\top (\hat{z} - z)                           \\
			\operatorname{s.t.} \  & \hat{z}\in \bar{K}, \ \|z-\hat{z}\|_M^2\le r^2
		\end{array}
	\right)
\end{equation}
in which $z = (x,y)$ and $h = \begin{pmatrix}
	h_1 \\
	h_2
\end{pmatrix}:=
\begin{pmatrix}
	A^\top y - c \\
	b - A x
\end{pmatrix}\in\mathbb{R}^{n+m}$. Suppose that $\hat{z}^\star$ is an optimal solution of \eqref{pro:compute_rho}; then $\rho(r;z)$ is obtained by
$$
\rho(r;z) = \frac{h^\top \big(\hat{z}^\star - z\big)}{r}\ .
$$

We now show how to construct an optimal solution of \eqref{pro:compute_rho}. Consider the following parameterized optimization problem over the parameter $t \ge 0$, with optimal solution $z(t)$ :
\begin{equation}\label{eq:z_t}
	z(t):= \arg\max_{\tilde{z}\in \bar{K}}  \ 
	t\cdot h^\top (\tilde{z} - z) - \|\tilde{z} - z\|_M^2 \ .
\end{equation} 
This problem essentially replaces the norm constraint $\|\hat{z} - z\|_M^2 \le r^2$ in \eqref{pro:compute_rho} with a penalty term $-\tfrac{\|\hat{z} - z\|_M^2}{t}$ in the objective function. The following lemma shows that solving \eqref{pro:compute_rho} is essentially a root-finding problem of the univariate function $f(t):=\|z - z(t)\|_M - r$ defined on $t \in [0,\infty)$.

\begin{lemma}\label{lm:compute_rho}
	Suppose $t^\star$ satisfies $t^\star>0$ and $f(t^\star) = 0$. Then $z(t^\star)$ is an optimal solution of \eqref{pro:compute_rho}. 
\end{lemma}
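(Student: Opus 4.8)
The plan is to recognize \eqref{eq:z_t} as a Lagrangian (penalty) relaxation of \eqref{pro:compute_rho}, in which the ball constraint $\|\hat z - z\|_M^2 \le r^2$ has been folded into the objective with penalty weight $1/t$ playing the role of the reciprocal of a Lagrange multiplier. Concretely, I would first observe that $\hat z^\star := z(t^\star)$ is feasible for \eqref{pro:compute_rho}: by the definition of $z(t)$ in \eqref{eq:z_t} we have $\hat z^\star \in \bar K$, and the hypothesis $f(t^\star) = 0$ gives $\|z - \hat z^\star\|_M = r$, so the ball constraint holds (with equality).

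The core of the argument is a single exchange inequality. Let $\hat z$ be an arbitrary feasible point of \eqref{pro:compute_rho}, so $\hat z \in \bar K$ and $\|z - \hat z\|_M^2 \le r^2$. Since $\hat z^\star = z(t^\star)$ maximizes the objective of \eqref{eq:z_t} over $\bar K$ and $\hat z \in \bar K$, I would write down the optimality inequality
\begin{equation*}
t^\star\, h^\top(\hat z^\star - z) - \|\hat z^\star - z\|_M^2 \;\ge\; t^\star\, h^\top(\hat z - z) - \|\hat z - z\|_M^2 \ ,
\end{equation*}
and rearrange it to
\begin{equation*}
t^\star\, h^\top(\hat z^\star - \hat z) \;\ge\; \|\hat z^\star - z\|_M^2 - \|\hat z - z\|_M^2 \ .
\end{equation*}
Then I would substitute $\|\hat z^\star - z\|_M^2 = r^2$ (from $f(t^\star)=0$) and use $\|\hat z - z\|_M^2 \le r^2$ to conclude that the right-hand side is nonnegative. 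Dividing by $t^\star > 0$ yields $h^\top(\hat z^\star - z) \ge h^\top(\hat z - z)$, i.e. $\hat z^\star$ attains an objective value in \eqref{pro:compute_rho} at least as large as that of any feasible $\hat z$. Together with the feasibility of $\hat z^\star$ established above, this shows that $\hat z^\star = z(t^\star)$ is optimal.

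I do not anticipate a genuine obstacle here, since the problem is a linear objective over a convex set and the penalty relaxation becomes exact precisely when the ball constraint is active; the whole argument is an elementary exchange inequality rather than an appeal to KKT or strong-duality machinery. The only point requiring care is the tacit well-definedness of $z(t^\star)$: the hypothesis that $f(t^\star)$ is defined already presupposes that the maximizer in \eqref{eq:z_t} exists, so I would simply take $z(t^\star)$ as the given arg-max. Note that since $\|\cdot\|_M$ is only a semi-norm the maximizer need not be unique, but the comparison inequality above holds for whichever maximizer is selected, so the conclusion is unaffected.
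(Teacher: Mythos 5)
Your proof is correct, but it takes a genuinely different route from the paper's. The paper verifies optimality via the KKT certificate: it writes down the optimality conditions \eqref{eq:z_t_opt} for the penalized problem \eqref{eq:z_t}, observes that $\hat z^\star := z(t^\star)$ together with the multiplier $\lambda^\star := 1/t^\star$ satisfies the KKT system \eqref{eq:rho_KKT}--\eqref{eq:rho_KKT_1} for \eqref{pro:compute_rho} (the complementarity $\lambda^\star(r^2 - \|z-\hat z^\star\|_M^2)=0$ holding precisely because $f(t^\star)=0$), and concludes by sufficiency of KKT for this convex program. You instead give a purely primal exchange argument: compare the penalized objective at $z(t^\star)$ with that at an arbitrary feasible $\hat z$, substitute $\|z(t^\star)-z\|_M^2 = r^2 \ge \|\hat z - z\|_M^2$, and divide by $t^\star>0$. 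Your route is more elementary --- it needs only that $z(t^\star)$ is a maximizer of \eqref{eq:z_t}, and never touches the dual cone $\bar K^*$, conic complementarity, or constraint qualifications --- and it handles the semi-norm case without comment. What the paper's KKT formulation buys is continuity with the rest of the appendix: the same multiplier $\lambda = 1/t$ and slack $s$ reappear in the dual problem \eqref{bertsekas} used to prove Lemma \ref{lm:compute_rho_2}, so the two lemmas share one set of objects. Either proof is acceptable for the statement at hand.
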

\noindent
Based on this lemma, we consider using the bijection method to find the root of $f(t)$ in the region $t \in (0,\infty)$. Notice that $f(0) < 0$.  We can compute $f(t)$ for an increasing sequence of values of $t$, for example $t_k := 2^k$ for $k=1, 2, \ldots$, until we obtain $k$ for which $f(t_k) \ge 0$. Let $K$ denote the first value of $k$ for which $f(t_k) \ge 0$. Then $f(t_{K-1})$ has a different sign than $f(t_{K})$, and so $[t_{K-1},t_K]$ contains a root of $f(t)$ which can be computed using the bijection method. In the special case that none of the $t_k$ satisfy $f(t_k)\ge 0$, the following lemma shows that $\frac{h^\top (z(t_k) - z)}{r}$ itself converges to $\rho(r;z)$ at a conveniently bounded rate.

\begin{lemma}\label{lm:compute_rho_2}  If $t>0$ and $f(t) < 0$, then $\left|\frac{h^\top (z(t) - z)}{r} - \rho(r;z)\right|\le \frac{r}{t}$ . 
\end{lemma}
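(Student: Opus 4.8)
The plan is to sandwich the quantity $\tfrac{h^\top(z(t)-z)}{r}$ between $\rho(r;z)-\tfrac{r}{t}$ and $\rho(r;z)$, using a two-sided comparison between the penalized maximizer $z(t)$ from \eqref{eq:z_t} and an exact maximizer of \eqref{pro:compute_rho}. Let $\hat z^\star$ denote an optimal solution of \eqref{pro:compute_rho}, so that $\hat z^\star\in\bar K$, $\|\hat z^\star - z\|_M\le r$, and $h^\top(\hat z^\star - z)=r\,\rho(r;z)$.

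First I would establish the easy upper bound $\tfrac{h^\top(z(t)-z)}{r}\le\rho(r;z)$. The hypothesis $f(t)<0$ means precisely that $\|z(t)-z\|_M<r$, and since $z(t)\in\bar K$ by the definition of \eqref{eq:z_t}, the point $z(t)$ is feasible for the maximization \eqref{pro:compute_rho}. Feasibility then gives $h^\top(z(t)-z)\le r\,\rho(r;z)$, which is the desired upper bound.

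Next I would derive the matching lower bound using the optimality of $z(t)$ for the penalized problem \eqref{eq:z_t}. Since $\hat z^\star\in\bar K$ is a feasible competitor in \eqref{eq:z_t}, optimality of $z(t)$ yields
\begin{equation*}
 t\,h^\top(z(t)-z)-\|z(t)-z\|_M^2 \ \ge\ t\,h^\top(\hat z^\star - z)-\|\hat z^\star - z\|_M^2 \ .
\end{equation*}
Adding the nonnegative term $\|z(t)-z\|_M^2$ back to the left side, and using $\|\hat z^\star - z\|_M^2\le r^2$ together with $h^\top(\hat z^\star - z)=r\,\rho(r;z)$ on the right, this rearranges to $t\,h^\top(z(t)-z)\ge t\,r\,\rho(r;z)-r^2$, that is, $\tfrac{h^\top(z(t)-z)}{r}\ge\rho(r;z)-\tfrac{r}{t}$. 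Combining the two bounds gives $0\le\rho(r;z)-\tfrac{h^\top(z(t)-z)}{r}\le\tfrac{r}{t}$, which is exactly the claim.

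The argument is short, and the only real subtlety is ensuring that an exact maximizer $\hat z^\star$ of \eqref{pro:compute_rho} exists, so that the comparison against it is legitimate; when $M$ is merely positive semidefinite the feasible set of \eqref{pro:compute_rho} need not be compact in the null directions of $M$. To handle this I would either invoke the same existence reasoning already underlying Lemma \ref{lm:compute_rho}, or replace $\hat z^\star$ by a sequence of near-optimal feasible points and pass to the limit; neither affects the final inequality, since both displayed inequalities use only feasibility of the competitor and the norm bound $\|\hat z^\star - z\|_M^2\le r^2$. I expect this existence/compactness bookkeeping to be the main (and essentially only) obstacle, as the two core inequalities follow directly from feasibility and from the defining optimality of $z(t)$.
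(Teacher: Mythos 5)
Your proof is correct, and it takes a genuinely different route from the paper's. The paper argues via Lagrangian duality: it writes down the dual of \eqref{pro:compute_rho}, uses the optimality conditions \eqref{eq:z_t_opt} to build an explicit dual-feasible pair $(\lambda,s)=(\tfrac1t,\,\tfrac2t M(z(t)-z)-h)$, computes the duality gap of the primal-dual pair $(z(t),(\lambda,s))$ to be exactly $\tfrac{r^2-\|z-z(t)\|_M^2}{t}\le\tfrac{r^2}{t}$, and concludes. Your argument is a direct two-sided feasibility exchange: $z(t)$ is feasible for the constrained problem (giving $h^\top(z(t)-z)\le r\rho(r;z)$), and a maximizer (or near-maximizer) of the constrained problem is feasible for the penalized problem (giving, via optimality of $z(t)$ and dropping the nonnegative penalty on the left, $h^\top(z(t)-z)\ge r\rho(r;z)-\tfrac{r^2}{t}$). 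What your approach buys is that it never needs the dual problem at all, and in particular it sidesteps the awkward point in the paper's proof where the explicit dual expression involves $\|s+h\|_{M^{-1}}^2$ and hence only literally makes sense for positive definite $M$ --- the paper handles the merely positive semidefinite case with a one-sentence assertion that ``all of the properties follow nevertheless,'' whereas your argument applies verbatim for $M\succeq 0$. What the paper's approach buys is a slightly sharper intermediate statement (the exact duality gap $\tfrac{r^2-\|z-z(t)\|_M^2}{t}$) and reuse of the KKT machinery already set up for Lemma \ref{lm:compute_rho}. Your handling of the possible non-attainment of the supremum in \eqref{pro:compute_rho} by passing to near-optimal feasible points is the right fix and is, if anything, more careful than the paper on this point; both proofs implicitly rely on $\rho(r;z)$ being finite, which is guaranteed whenever the maximizer $z(t)$ in \eqref{eq:z_t} exists.
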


The main computational bottleneck of the above scheme is solving \eqref{eq:z_t}. If $M$ is full-rank (i.e., \eqref{eq:general_stepsize} holds strictly), the objective function of \eqref{eq:z_t} is strongly convex and smooth. Furthermore, let us presume that the task of computing the projection onto $\bar{K}$ under the Euclidean norm is reasonable (as it is for the nonnegative orthant and the cross-product of second-order cones for example).  Then the projected gradient descent method and its accelerated versions can be applied to \eqref{eq:z_t} with linear convergence rates, see \cite{lan2020first}. Moreover, since the restart condition does not need to be checked frequently in practice, the cost of computing the normalized duality gap can be further reduced if not computed very often. 

\subsubsection{Proofs of Lemmas \ref{lm:compute_rho} and \ref{lm:compute_rho_2}}
We now proceed with the proofs of Lemmas \ref{lm:compute_rho} and \ref{lm:compute_rho_2}. We first recall the optimality conditions for problems \eqref{pro:compute_rho} and \eqref{eq:z_t}.
Since strong duality holds for \eqref{eq:z_t}, for each $t \ge 0$ the optimal solution $z(t)$ of \eqref{eq:z_t} must satisfy the following conditions:
	\begin{equation}\label{eq:z_t_opt}
	z(t) \in \bar{K}, \ s:= 2M z(t) - 2Mz - t\cdot h \in  \bar{K}^*, \text{ and  } (z(t))^\top s = 0 \ .
	\end{equation} 
Regarding problem \eqref{pro:compute_rho}, $\hat{z}^\star$ is an optimal solution of \eqref{pro:compute_rho} if there exists a scalar multiplier $\lambda^\star$ that together with $\hat{z}^\star$ satisfy the KKT optimality conditions:
	\begin{equation}\label{eq:rho_KKT}
		\text{Inclusions: } \hat{z}^\star \in \bar{K}, \ s^\star :=    2 \lambda^\star M\hat{z}^\star - 2 \lambda^\star Mz -h \in \bar{K}^*, \ \lambda^\star \ge 0,  \  \|z-\hat{z}^\star\|_M^2\le r^2,   \text{ and }
	\end{equation}
	\begin{equation}\label{eq:rho_KKT_1}
		\text{Complementarity: } \  (\hat{z}^\star)^\top s^\star = 0 , \ \text{ and }  \lambda^\star \cdot \left( r^2 -  \|z-\hat{z}^\star\|_M^2\right) = 0 \ . \ \ \ \ \ \ \ \ \ \ \ \ \ \ \ \ \ \ \ \ \ \ \ \ \ \ \ \ 
	\end{equation}

\begin{proof}[Proof of Lemma \ref{lm:compute_rho}]
	If $t^\star>0$ and $\|z - z(t^\star)\|_M = r$, then it follows from \eqref{eq:z_t_opt} and $f(t^\star) = 0 $ that $z^\star  := z(t^\star)$ and $\lambda^\star := \frac{1}{t^\star}$ satisfy the optimality conditions \eqref{eq:rho_KKT} and \eqref{eq:rho_KKT_1}, and therefore $z(t^\star)$ is an optimal solution of \eqref{pro:compute_rho}.	
\end{proof}

\begin{proof}[Proof of Lemma \ref{lm:compute_rho_2}]	
	We first suppose that $M$ is positive definite, in which case using a standard Lagrangian construction one can derive the following dual problem of \eqref{pro:compute_rho}:
\begin{equation}\label{bertsekas}
	\min_{ \lambda \ge0, \ s \in \bar K^* } \frac{1}{4\lambda}\|s+h\|_{M^{-1}}^2 + z^\top s + \lambda r^2\end{equation}
Now define $\lambda:=\frac{1}{t}$ and $s:= 2\lambda M z(t) - 2\lambda Mz -  h$, whereby from \eqref{eq:z_t_opt} it follows that $(\lambda,s)$ is feasible for \eqref{bertsekas}.  Also, since $f(t)<0$ we have $z(t)$ is feasible for \eqref{pro:compute_rho} and the duality gap of this pair of primal and dual solution works out to be exactly $\lambda(r^2 - \|z-z(t) \|_M^2) = \tfrac{r^2 - \|z-z(t) \|_M^2}{t} $ which can be verified by simple arithmetic manipulation. Let the optimal objective value of \eqref{pro:compute_rho}  be $g^\star$; then $|h^\top (z(t) - z)  - g^\star | \le \tfrac{r^2 - \|z-z(t) \|_M^2}{t}$, from which it follows that $\left|\frac{h^\top (z(t) - z)}{r} - \rho(r;z)\right| = \left|\frac{h^\top (z(t) - z)}{r} - \frac{g^\star}{r}\right| \le \tfrac{r^2 - \|z-z(t) \|_M^2}{rt} \le \tfrac{r^2}{rt} = \tfrac{r}{t} $. This proves the result for the case when $M$ is positive definite.  
	
If $M$ is not positive definite, then under the assumption that $\sigma, \tau$ satisfy \eqref{eq:general_stepsize} we have $M \succeq 0$ \eqref{robsummer}. In this case the dual problem of \eqref{pro:compute_rho} no longer has the very convenient expression \eqref{bertsekas}, but all of the properties of the proof in the previous paragraph follow nevertheless. \end{proof}

Note that the proofs of Lemmas \ref{lm:compute_rho} and \ref{lm:compute_rho_2} are also valid if we replace $M$ by another positive semidefinite matrix $\tilde{M}$ to define a $\tilde M$-norm, and let us denote the normalized duality gap using the $\tilde M$-norm as $\rho^{\tilde{M}}(r;z)$. In Section \ref{subsec:approx_rho} we will show that with a proper choice of $\tilde{M}$ that $\rho^{\tilde{M}}(r;z)$ will provide a good approximation of $\rho^M(r;z)$ but with significantly lower computational cost of solving \eqref{eq:z_t} .

\subsection{Approximating  the normalized duality gap}\label{subsec:approx_rho}

In Section \ref{subsec:compute_rho} we showed that computing $\rho(r;z)$ can be accomplished by parametrically solving the optimization problem \eqref{eq:z_t}, which is equivalent to a certain projection onto the cone $\bar{K}=K_p\times \mathbb{R}^m$ in the $M$-norm. Although PDHG is premised on the notion that Euclidean projections onto $\bar{K}$ are simple to compute (see Lines \ref{line:update_x} and \ref{line:update_y} of \textsc{OnePDHG} in Algorithm \ref{alg: one PDHG}), projections onto $\bar K$ under the $M$-norm might be significantly more difficult.  In this subsection we describe how to efficiently approximate $\rho(r;z)$ by working with a different matrix norm, namely the $N$-norm which was introduced in the proof of Lemma \ref{lm change of norm}, and for which the equivalent optimization problem \eqref{eq:z_t} works out to be a Euclidean projection onto $K_p$.

The $N$-norm is the matrix norm $\|z\|_N$ in which $N := \Big(\begin{smallmatrix}
		\frac{1}{\tau}I_n &                     \\
		                  & \frac{1}{\sigma}I_m
	\end{smallmatrix}\Big)$, which was introduced in the proof of Lemma \ref{lm change of norm}. Let $\rho^N(r;z)$ denote the corresponding normalized duality gap function in $N$-norm.  We now show that in the $N$-norm, solving $z(t)$ of \eqref{eq:z_t} is simply a Euclidean projection onto $K_p$. Because $\bar{K} = K_p \times \mathbb{R}^m$ and $\|z\|_N^2 = \frac{1}{\tau}\|x\|^2 + \frac{1}{\sigma} \|y\|^2$, \eqref{eq:z_t} can be separated into two independent problems:
\begin{equation}\label{eq:z_t_2}
\begin{aligned}
	z(t) = (x(t),y(t)) & = \Big(
	\arg\max_{\tilde{x} \in K_p} \ t\cdot h_1^\top \tilde{x} - \tfrac{1}{\tau}\|\tilde{x} - x\|^2,  
	\arg\max_{\tilde{y} \in \mathbb{R}^m} \ t\cdot h_2^\top \tilde{y} - \tfrac{1}{\sigma}\|\tilde{y} - y\|^2\Big) \\
	& = \left(P_{K_p}\left(x + \tfrac{t\tau}{2} \cdot h_1\right), y + \tfrac{t\sigma}{2}\cdot h_2 \right) \ .
\end{aligned}
\end{equation} 
Hence the primary computational cost of computing $z(t)$ in the $N$-norm is just the Euclidean projection onto $K_p$, which is no more of a computational burden than Line \ref{line:update_x} of \textsc{OnePDHG} in Algorithm \ref{alg: one PDHG}, and might be considerably easier than the $M$-norm projection onto $\bar{K}$.

Furthermore, the following proposition shows that $\rho^N(r;z)$ is equivalent to $\rho(r;z)$ up to a constant factor so long as the step-sizes are chosen a bit conservatively.
\begin{proposition}\label{pr:rhoNrho}
	If $\tau,\sigma$ satisfy \eqref{eq:general_stepsize} strictly, then for any $z\in \bar{K}$ and $r>0$ it holds that:
	\begin{equation}\label{eq:lm:rhoNrho}
		\frac{1}{\sqrt{2}}\cdot \rho^N\left( r;z\right) \le
		\rho(r;z) \le \frac{1}{\sqrt{1-\sqrt{\tau \sigma}\lambda_{\max }}}\cdot \rho^N\left( r;z\right)  \ .
	\end{equation}
\end{proposition}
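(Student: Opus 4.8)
The plan is to reduce the claim to two ingredients: the equivalence of the $M$- and $N$-norms with explicit constants, and the monotonicity of the normalized duality gap in $r$. The starting observation is that $\rho(r;z)$ and $\rho^N(r;z)$ are built from the \emph{same} linear objective. Exactly as in \eqref{pro:compute_rho}, $L(x,\hat y)-L(\hat x,y)=h^\top(\hat z-z)$ with $h=(A^\top y-c,\ b-Ax)$. Writing $B(r;z)$ for the $M$-norm feasible set from the definition of the normalized duality gap and $B^N(r;z):=\{\hat z=(\hat x,\hat y):\hat x\in K_p,\ \|\hat z-z\|_N\le r\}$ for its $N$-norm analogue, I have $r\rho(r;z)=\sup_{\hat z\in B(r;z)}h^\top(\hat z-z)$ and $r\rho^N(r;z)=\sup_{\hat z\in B^N(r;z)}h^\top(\hat z-z)$. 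Thus the two quantities differ only through the shape of the feasible region, and the whole proof becomes a comparison of constrained suprema of one fixed linear functional.

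Next I would sandwich $M$ between multiples of $N$. The bound $M\preceq 2N$ is already established in the proof of Lemma \ref{lm change of norm} (it uses $\tau\sigma\le 1/\lambda_{\max}^2$) and gives $\|u\|_M\le\sqrt2\,\|u\|_N$. For the reverse direction I set $c:=1-\sqrt{\tau\sigma}\,\lambda_{\max}$, which is strictly positive precisely because \eqref{eq:general_stepsize} is assumed to hold strictly. A completion of squares shows $M-cN\succeq 0$: using $1-c=\sqrt{\tau\sigma}\,\lambda_{\max}$, for any $u=(x,y)$,
\begin{equation*}
u^\top(M-cN)u=\lambda_{\max}\left(\sqrt[4]{\sigma/\tau}\,\|x\|-\sqrt[4]{\tau/\sigma}\,\|y\|\right)^2+2\left(\lambda_{\max}\|x\|\,\|y\|-x^\top A^\top y\right)\ge 0,
\end{equation*}
where the first term is nonnegative since $\lambda_{\max}>0$ and the second since $x^\top A^\top y\le\|Ax\|\,\|y\|\le\lambda_{\max}\|x\|\,\|y\|$. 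Hence $\|u\|_N\le\tfrac{1}{\sqrt c}\|u\|_M$. Translating these norm inequalities into the feasible sets yields the inclusions $B^N(r;z)\subseteq B(\sqrt2\,r;z)$ and $B(r;z)\subseteq B^N(r/\sqrt c;z)$.

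The remaining ingredient is that $\rho(\cdot;z)$ and $\rho^N(\cdot;z)$ are nonincreasing in $r$. Given $r_1<r_2$ and any $\hat z$ feasible at radius $r_2$, the radially contracted point $z+\tfrac{r_1}{r_2}(\hat z-z)$ is feasible at radius $r_1$: its deviation from $z$ shrinks by $r_1/r_2$ in the relevant norm, and its $\hat x$-component $(1-\tfrac{r_1}{r_2})x+\tfrac{r_1}{r_2}\hat x$ lies in $K_p$ by convexity because $x,\hat x\in K_p$; meanwhile its objective value scales by the same factor $r_1/r_2$, so $r_1\rho(r_1;z)\ge\tfrac{r_1}{r_2}\cdot r_2\rho(r_2;z)$, giving $\rho(r_1;z)\ge\rho(r_2;z)$, and identically for $\rho^N$. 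Combining the pieces: from $B^N(r;z)\subseteq B(\sqrt2\,r;z)$, $r\rho^N(r;z)\le\sqrt2\,r\,\rho(\sqrt2\,r;z)\le\sqrt2\,r\,\rho(r;z)$, which is the left inequality of \eqref{eq:lm:rhoNrho}; from $B(r;z)\subseteq B^N(r/\sqrt c;z)$, $r\rho(r;z)\le\tfrac{r}{\sqrt c}\rho^N(r/\sqrt c;z)\le\tfrac{r}{\sqrt c}\rho^N(r;z)$, which is the right inequality (both uses of monotonicity are valid since $\sqrt2\,r\ge r$ and $r/\sqrt c\ge r$).

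The only genuinely delicate step is the monotonicity argument, where preserving the membership $\hat x\in K_p$ under radial contraction toward $z$ relies on both the convexity of $K_p$ \emph{and} on $z\in\bar K$ (so that $x\in K_p$); this is exactly the place where having a contraction factor at most $1$ is essential. Everything else is routine bookkeeping with quadratic forms. I would emphasize that the strictness of \eqref{eq:general_stepsize} enters in precisely one spot, namely to guarantee $c=1-\sqrt{\tau\sigma}\,\lambda_{\max}>0$ so that the constant $1/\sqrt{1-\sqrt{\tau\sigma}\,\lambda_{\max}}$ appearing in \eqref{eq:lm:rhoNrho} is finite.
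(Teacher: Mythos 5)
Your proof is correct and follows essentially the same route as the paper's: sandwich the $M$-norm between multiples of the $N$-norm, convert this into inclusions between the $M$-ball and $N$-ball feasible sets, and finish with the monotonicity of the normalized duality gap in $r$. The only difference is that you prove the two ingredients from scratch (the norm equivalence $cN \preceq M \preceq 2N$ via your completion-of-squares identity, and the monotonicity via radial contraction toward $z$), whereas the paper imports them as Lemma \ref{lm:MNnorms} and Lemma \ref{lm:monotonicity_of_rho} from prior work; both of your verifications check out.
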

\noindent
For example, if $\sqrt{\sigma\tau} = \tfrac{1}{2\lambda_{\max}}$, then \eqref{eq:lm:rhoNrho} becomes $\frac{1}{\sqrt{2}}\cdot \rho^N\left( r;z\right) \le  \rho(r;z) \le \sqrt{2}\cdot \rho^N\left( r;z\right)$. In practice we have used $\rho^N\left( r;z\right)$ instead of $\rho\left( r;z\right)$ to evaluate the restart condition, and it can be proven that a computational guarantee of rPDHG still holds. This technique has also been used in \cite{applegate2023faster,applegate2021practical}. 

The proof of Proposition \ref{pr:rhoNrho} uses the following two lemmas.
\begin{lemma}{\bf(Proposition 2.8 of \cite{xiong2023computational})}\label{lm:MNnorms}
	If $\tau,\sigma$ satisfy \eqref{eq:general_stepsize}, then for any $z\in\mathbb{R}^{n+m}$ it holds that $\sqrt{1-\sqrt{\tau \sigma} \lambda_{\max }} \cdot\|z\|_N \leq\|z\|_M \leq \sqrt{2}\|z\|_N$.
\end{lemma}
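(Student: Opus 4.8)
The plan is to reduce everything to a direct comparison of the two quadratic forms $z^\top M z$ and $z^\top N z$, exploiting that $M$ and $N$ differ only in the off-diagonal coupling block. Writing $z=(x,y)$ with $x\in\mathbb{R}^n$, $y\in\mathbb{R}^m$, I would first record the explicit expansions
$$\|z\|_M^2 = \tfrac{1}{\tau}\|x\|^2 + \tfrac{1}{\sigma}\|y\|^2 - 2x^\top A^\top y, \qquad \|z\|_N^2 = \tfrac{1}{\tau}\|x\|^2 + \tfrac{1}{\sigma}\|y\|^2,$$
so that $\|z\|_M^2 = \|z\|_N^2 - 2\langle Ax, y\rangle$. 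The entire lemma then hinges on controlling the single cross term $2\langle Ax, y\rangle$ relative to $\|z\|_N^2$.

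The key step is a sharp bound on that cross term. First I would apply Cauchy--Schwarz together with the singular-value estimate $\|Ax\|\le \lambda_{\max}\|x\|$ (where $\lambda_{\max}=\sigma_{\max}^+(A)$ is the largest singular value of $A$) to obtain $|2\langle Ax,y\rangle|\le 2\lambda_{\max}\|x\|\|y\|$. To express this in terms of $\|z\|_N^2$, I would rescale by setting $u := \|x\|/\sqrt{\tau}$ and $v := \|y\|/\sqrt{\sigma}$, so that $\|z\|_N^2 = u^2+v^2$ and $\|x\|\|y\| = \sqrt{\tau\sigma}\,uv$; the elementary inequality $2uv\le u^2+v^2$ then yields $|2\langle Ax,y\rangle| \le \sqrt{\tau\sigma}\,\lambda_{\max}(u^2+v^2) = \sqrt{\tau\sigma}\,\lambda_{\max}\|z\|_N^2$.

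With this bound in hand, the two desired inequalities follow immediately. Inserting it into the identity $\|z\|_M^2 = \|z\|_N^2 - 2\langle Ax,y\rangle$ gives both $\|z\|_M^2 \ge (1 - \sqrt{\tau\sigma}\,\lambda_{\max})\|z\|_N^2$ and $\|z\|_M^2 \le (1 + \sqrt{\tau\sigma}\,\lambda_{\max})\|z\|_N^2$. The step-size condition \eqref{eq:general_stepsize} provides $\sqrt{\tau\sigma}\,\lambda_{\max}\le 1$, which simultaneously guarantees that the lower coefficient $1-\sqrt{\tau\sigma}\,\lambda_{\max}$ is nonnegative (so that taking square roots is legitimate) and bounds the upper coefficient by $2$. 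Taking square roots produces exactly $\sqrt{1-\sqrt{\tau\sigma}\,\lambda_{\max}}\,\|z\|_N \le \|z\|_M \le \sqrt{2}\,\|z\|_N$.

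There is no genuine obstacle here: the argument is a one-line quadratic-form manipulation followed by a weighted arithmetic--geometric mean estimate. The only place demanding a little care is the bookkeeping that converts $\|x\|\|y\|$ into a multiple of $\|z\|_N^2$, where one must use precisely the reweighting $u=\|x\|/\sqrt{\tau}$, $v=\|y\|/\sqrt{\sigma}$ in order to extract the factor $\sqrt{\tau\sigma}$ that subsequently combines with $\lambda_{\max}$ and the step-size bound to yield the stated constants. I would also flag the identification $\sigma_{\max}^+(A)=\sigma_{\max}(A)=\lambda_{\max}$ so that the Cauchy--Schwarz step is tight enough to deliver the sharp coefficient $1-\sqrt{\tau\sigma}\,\lambda_{\max}$ rather than a looser constant.
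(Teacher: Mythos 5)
Your proof is correct, and it is the standard argument for this fact: expand $\|z\|_M^2=\|z\|_N^2-2\langle Ax,y\rangle$, bound the cross term by $\sqrt{\tau\sigma}\,\lambda_{\max}\|z\|_N^2$ via Cauchy--Schwarz and the weighted AM--GM step, then invoke $\sqrt{\tau\sigma}\,\lambda_{\max}\le 1$ from \eqref{eq:general_stepsize}. The paper itself does not reprove this lemma (it is imported as Proposition 2.8 of \cite{xiong2023computational}), and your argument matches the proof given there, so nothing further is needed.
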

\begin{lemma}{\bf(Proposition 5 of \cite{applegate2023faster})}\label{lm:monotonicity_of_rho}
	It holds for any $z$ that $\rho^N(r;z)$ is a monotonically non-increasing for $r\in[0,\infty)$.
\end{lemma}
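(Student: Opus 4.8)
The plan is to recognize $\rho^N(r;z)$ as the ratio $g(r)/r$ of a concave function $g$ that vanishes at the origin, and then to invoke the elementary fact that such a ratio is non-increasing. First I would record the affine structure of the inner objective: as already observed in the reformulation \eqref{pro:compute_rho}, the map $\hat z = (\hat x,\hat y)\mapsto L(x,\hat y)-L(\hat x,y)$ is affine in $\hat z$ and vanishes at $\hat z = z$, so it equals $h^\top(\hat z - z)$ with $h := (A^\top y - c,\, b - Ax)$. Writing $C(r) := \{\hat z : \hat x \in K_p,\ \|\hat z - z\|_N \le r\}$ and $g(r) := \sup_{\hat z \in C(r)} h^\top(\hat z - z)$, the definition of the $N$-norm normalized duality gap becomes simply $\rho^N(r;z) = g(r)/r$.

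The key step will be to prove that $g$ is concave on $[0,\infty)$. For $\lambda \in [0,1]$ and $r_1, r_2 \ge 0$, I would establish the Minkowski-sum containment $\lambda\, C(r_1) + (1-\lambda)\, C(r_2) \subseteq C(\lambda r_1 + (1-\lambda) r_2)$: if $\hat z_i \in C(r_i)$ then $\lambda \hat x_1 + (1-\lambda)\hat x_2 \in K_p$ by convexity of the cone $K_p$, while the $N$-norm of $\lambda(\hat z_1 - z) + (1-\lambda)(\hat z_2 - z)$ is at most $\lambda r_1 + (1-\lambda) r_2$ by the triangle inequality. Because $h^\top(\cdot - z)$ is affine and $\lambda + (1-\lambda) = 1$, maximizing it over the Minkowski sum decouples into $\lambda g(r_1) + (1-\lambda) g(r_2)$; combined with the containment (which only enlarges the feasible set of the left-hand supremum) this yields $g(\lambda r_1 + (1-\lambda) r_2) \ge \lambda g(r_1) + (1-\lambda) g(r_2)$, i.e.\ concavity. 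I would also note $g(0) = 0$, since at $r=0$ the only feasible point is $\hat z = z$, which is feasible because $z \in \bar K$ forces $x \in K_p$, and the objective there is $h^\top(z-z)=0$.

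To finish, for $0 < r_1 < r_2$ I would write $r_1 = \tfrac{r_1}{r_2}\, r_2 + \bigl(1 - \tfrac{r_1}{r_2}\bigr)\cdot 0$ and apply concavity together with $g(0) = 0$ to obtain $g(r_1) \ge \tfrac{r_1}{r_2}\, g(r_2)$, which rearranges to $\rho^N(r_1;z) = g(r_1)/r_1 \ge g(r_2)/r_2 = \rho^N(r_2;z)$, the desired monotonicity.

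The only genuine content is the concavity step, and the only possible friction is the bookkeeping at $r = 0$, where the ratio $g(r)/r$ is read as a limit (or one simply restricts to $r > 0$, which suffices for the lemma's use). I would emphasize that the argument uses nothing about $N$ beyond the triangle inequality and the convexity of $K_p$, so the identical proof establishes the analogous monotonicity for $\rho(r;z)$ in the $M$-norm; the remaining manipulations are routine.
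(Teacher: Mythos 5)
Your proof is correct. Note that the paper itself does not prove this lemma --- it simply imports it as Proposition~5 of the cited reference --- so there is no in-paper argument to compare against; measured against the standard proof in that reference, your route is the same mechanism in slightly more general dress. The cited proof is a one-step scaling argument: let $\hat z^\star$ attain the supremum defining $g(r_2)$, observe that $z + \tfrac{r_1}{r_2}(\hat z^\star - z) \in C(r_1)$ because $C(r_2)$ is convex and contains $z$, and conclude $g(r_1) \ge \tfrac{r_1}{r_2}\, g(r_2)$ directly by linearity of $h^\top(\cdot - z)$. Your concavity argument via the Minkowski containment $\lambda C(r_1) + (1-\lambda) C(r_2) \subseteq C(\lambda r_1 + (1-\lambda) r_2)$ recovers exactly this when one endpoint is $r=0$ (where $C(0) = \{z\}$), so the two proofs coincide at their core; what your framing buys is the extra observation that $g$ is genuinely concave on $[0,\infty)$, not merely superhomogeneous, and your closing remark is also accurate: nothing in the argument uses the specific matrix $N$ beyond the triangle inequality, so the same proof covers $\rho(r;z)$ in the $M$-seminorm. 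Your handling of the edge cases is appropriately careful --- restricting to $z \in \bar K$ so that $C(0)$ is nonempty and $g(0)=0$, and reading the ratio at $r=0$ as a limit --- which matters since for $x \notin K_p$ the set $C(r)$ can be empty for small $r$ and the lemma's ``for any $z$'' should be understood in the domain $K_p \times \mathbb{R}^m$ where $\rho$ is defined.
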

\begin{proof}[Proof of Proposition \ref{pr:rhoNrho}]
	From Lemma \ref{lm:MNnorms} we have $\big\{\hat{z}:\|\hat{z} - z\|_N\le  \frac{r}{\sqrt{2}}
	\big\}\subseteq \left\{\hat{z}:\|\hat{z} - z\|_M\le r
	\right\} \subseteq \big\{\hat{z}:\|\hat{z} - z\|_N\le  \frac{r}{\sqrt{1-\sqrt{\tau \sigma}\lambda_{\max }}}
	\big\}$, which leads to
	\begin{equation}\label{eq:lm:rhoNrho 2}
		\textstyle
		\frac{r}{\sqrt{2}}\cdot \rho^N\left( \frac{r}{\sqrt{2}};z\right) \le
		r\cdot \rho(r;z) \le \frac{r}{\sqrt{1-\sqrt{\tau \sigma}}}\cdot \rho^N\Big( \frac{r}{\sqrt{1-\sqrt{\tau \sigma}\lambda_{\max }}};z\Big)  \
	\end{equation}
	for any $r$ and $z$, because of the inclusion relationship between the feasible sets of the corresponding optimization problems. Finally, applying Lemma \ref{lm:monotonicity_of_rho} to \eqref{eq:lm:rhoNrho 2} yields \eqref{eq:lm:rhoNrho}.
\end{proof}

\section{Proof of \eqref{xcski}}\label{app:proof of lm r calW small}

We prove the following lemma regarding lower and upper bounds on $\sup_{\gamma>0} \frac{r_\gamma}{\gamma}$.
\begin{lemma}\label{lm r_delta over delta}
	Under Assumption \ref{assump:striclyfeasible}, it holds that
	\begin{equation}\label{eq of lm r calW small}
		\frac{\width_K}{\max_{w\in\calW^\star}\|w\|}\le \sup_{\gamma >0}\frac{r_\gamma}{\gamma} \le \frac{1}{\max_{w\in\calW^\star}\|w\|} \ ,
	\end{equation}
	in which $\width_K$ is the width of the cone $K:=K_p\times K_d$ defined in \eqref{eq:width_K}. \end{lemma}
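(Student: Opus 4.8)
The plan is to prove the two inequalities separately: the right-hand (upper) bound is a clean complementarity argument, while the left-hand (lower) bound requires constructing feasible points that lie deep inside $K$, which I expect to be the hard part.

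For the upper bound, let $w^\star=(x^\star,s^\star)\in\calW^\star$ attain $\max_{w\in\calW^\star}\|w\|=:R^\star$, so that $x^\star\in K_p$, $s^\star\in K_d$, and $(x^\star)^\top s^\star=0$. Fix $\gamma>0$ and use the conic center $w_\gamma=(x_\gamma,s_\gamma)$ with $B(w_\gamma,r_\gamma)\subseteq K$. Since $K=K_p\times K_d$, this inclusion decouples into $B(x_\gamma,r_\gamma)\subseteq K_p$ and $B(s_\gamma,r_\gamma)\subseteq K_d$. Because $K_d=K_p^*$, the point $x_\gamma-r_\gamma\,s^\star/\|s^\star\|$ lies in $K_p$, so pairing it against $s^\star\in K_d$ gives $x_\gamma^\top s^\star\ge r_\gamma\|s^\star\|$; symmetrically $s_\gamma^\top x^\star\ge r_\gamma\|x^\star\|$. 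Since $w_\gamma$ and $w^\star$ are both feasible, $x_\gamma-x^\star\in\operatorname{Null}(A)$ and $s_\gamma-s^\star\in\operatorname{Im}(A^\top)$ are orthogonal, which together with $(x^\star)^\top s^\star=0$ yields the identity $\gap(w_\gamma)=x_\gamma^\top s_\gamma=x_\gamma^\top s^\star+s_\gamma^\top x^\star$. Combining, $\gamma\ge\gap(w_\gamma)\ge r_\gamma(\|x^\star\|+\|s^\star\|)\ge r_\gamma\|w^\star\|=r_\gamma R^\star$, so $r_\gamma/\gamma\le 1/R^\star$ for every $\gamma>0$, giving the upper bound after taking the supremum.

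For the lower bound I first observe a reduction: any $w\in\calF_{++}$ with $\gap(w)>0$ lies in $\calW_{\gap(w)}$ and satisfies $B(w,\dist(w,\partial K))\subseteq K$, so $r_{\gap(w)}\ge\dist(w,\partial K)$ and hence $\sup_{\gamma>0}\tfrac{r_\gamma}{\gamma}\ge\tfrac{\dist(w,\partial K)}{\gap(w)}$. Next, the same orthogonality/complementarity identity gives, for feasible $w=(x,s)$, that $\gap(w)=x^\top s^\star+s^\top x^\star=\langle w,(s^\star,x^\star)\rangle\le\|(s^\star,x^\star)\|\,\|w\|=R^\star\|w\|$. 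Therefore $\tfrac{\dist(w,\partial K)}{\gap(w)}\ge\tfrac{\dist(w,\partial K)}{R^\star\|w\|}$, and it remains to produce feasible points whose depth-to-norm ratio $\dist(w,\partial K)/\|w\|$ approaches $\width_K$.

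To build such points I would take a width-optimal direction $\bar w=(\bar x,\bar s)$ with $\|\bar w\|=1$ and $B(\bar w,\width_K)\subseteq K$ (equivalently $B(\bar x,\width_K)\subseteq K_p$ and $B(\bar s,\width_K)\subseteq K_d$), and consider $w(\mu)=w^\star+\mu\bar w$, feasibilized by projecting onto the affine subspace $V$. After the standard reduction $c\in\operatorname{Null}(A)$, the gradient $(c,q)$ of $\gap$ lies in $\lin V$, so the gap is preserved under this projection; meanwhile $w^\star+\mu\bar w\in K$ by the cone property and its inscribed radius degrades only by the projection error $\mu\,\dist(\bar w,\lin V)$. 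The main obstacle is precisely this projection error: the width-optimal direction $\bar w$ need not be a feasible direction, so one cannot simply conclude that $\dist(w,\partial K)/\|w\|$ reaches $\width_K$ along $\calF_{++}$. Controlling this degradation is the crux of the lower bound, and I expect it to require a more careful scaling/limiting argument as $\mu\to\infty$ that also exploits the recession directions of $\calF$ (or the freedom among width-optimal configurations of the product cone $K$) to drive the ratio to $\width_K$ in the supremum. Once this is resolved, combining it with the gap bound $\gap(w)\le R^\star\|w\|$ yields $\sup_{\gamma>0}r_\gamma/\gamma\ge\width_K/R^\star$, completing the proof.
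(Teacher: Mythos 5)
Your upper-bound argument is correct and complete, and it takes a genuinely different route from the paper. The decoupling of $B(w_\gamma,r_\gamma)\subseteq K_p\times K_d$ into the factor balls, the pairings $x_\gamma^\top s^\star\ge r_\gamma\|s^\star\|$ and $s_\gamma^\top x^\star\ge r_\gamma\|x^\star\|$, and the orthogonality identity $\gap(w_\gamma)=x_\gamma^\top s_\gamma=x_\gamma^\top s^\star+s_\gamma^\top x^\star$ all check out, and $\|x^\star\|+\|s^\star\|\ge\|w^\star\|$ finishes it. The paper instead obtains \emph{both} inequalities by citing the primal--dual sublevel-set theorem of Freund (2003) (Lemma \ref{lm:primal-dual-sublevelsets}) applied to the symmetric reformulation \eqref{pro:primal-dual LP problem}--\eqref{pro:dual of the primal-dual LP problem}, in which the dual of the primal--dual system is the same system with the two $n$-dimensional blocks swapped; your argument is in effect a clean, self-contained proof of the easy direction of that theorem in this symmetric setting.

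The lower bound has a genuine gap, and it is not merely the unfinished technical step you flag: the reduction itself is to a false statement. After the Cauchy--Schwarz step $\gap(w)\le\|w\|\cdot\max_{w'\in\calW^\star}\|w'\|$, you need $\sup_{w\in\calF_{++}}\dist(w,\partial K)/\|w\|\ge\width_K$, but this fails in general. Take the LP with $A=\left(\begin{smallmatrix}1&0&0\\0&1&0\end{smallmatrix}\right)$, $b=(1,2)^\top$, $c=e_3$: Assumption \ref{assump:striclyfeasible} holds, $K=\mathbb{R}^6_+$ so $\width_K=1/\sqrt{6}$, the unique optimum is $w^\star=((1,2,0),(0,0,1))$ with $\|w^\star\|=\sqrt{6}$, and every strictly feasible $w=((1,2,u),(t_1,t_2,1))$ has $\dist(w,\partial K)/\|w\|=\min(1,u,t_1,t_2)/\sqrt{6+u^2+t_1^2+t_2^2}\le 1/3<1/\sqrt{6}$. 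Even granting the exact supremum $1/3$, your chain yields only $\sup_\gamma r_\gamma/\gamma\ge 1/(3\sqrt{6})\approx 0.136$, strictly weaker than the claimed $\width_K/\|w^\star\|=1/6$ (the true value here is $1/4$); the Cauchy--Schwarz step discards too much because $\gap(w)$ can be far smaller than $\|w^\star\|\,\|w\|$. The lower bound is intrinsically a \emph{duality} statement: $r_\gamma$ is bounded below by $\width_K\cdot\gamma$ divided by the outer radius $\max_{w\in\calW_\gamma}\|w\|$ of the symmetric-dual sublevel set (Corollary \ref{cor: r R in symmetric form} with $\eps=\delta$), after which the paper passes to the limit $\gamma\searrow 0$ using the monotonicity of $r_\delta/\delta$ (Lemma \ref{lm:monotonicity of delta r}) so that $\max_{w\in\calW_\gamma}\|w\|\to\max_{w\in\calW^\star}\|w\|$. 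Repairing your proof requires importing or reproving that duality inequality; the direct primal construction of deep feasible points cannot succeed.
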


Before presenting the proof we first discuss a related result.
For the general primal and dual problems \eqref{pro: general primal clp} and \eqref{pro: general dual clp on s}, \cite{freund2003primal} proves a geometric relationship between the primal and dual sublevel sets of \eqref{pro: general primal clp} and \eqref{pro: general dual clp on s} which we now describe. Under Assumption \ref{assump:striclyfeasible} the problems \eqref{pro: general primal clp} and \eqref{pro: general dual clp on s} have a common optimal objective value $f^\star$.  Then for any $\eps,\delta\in\mathbb{R}_+$, define
\begin{equation}\label{eq:def_R_r}
	\bar{R}_\eps:=
	\left(\begin{array}{cc}
			\max\limits_{x}     & \|x\|                         \\
			\operatorname{s.t.} & x \in V_p \cap K_p            \\
			                    & c^\top x \le  f^\star + \eps
		\end{array}
	\right)
	\ \text{ and } \ \bar{r}_\delta:=  \left(\begin{array}{cc}
			\max\limits_{s}     & \max\limits_{r:B(s,r) \subseteq K_d}r \\
			\operatorname{s.t.} & s \in V_d \cap K_d                    \\
			                    &  - q^\top s + q_0 \ge f^\star - \delta
		\end{array}
	\right) \ .
\end{equation}
Recall the definition of the width $\width_K$ of cone $K$ in \eqref{eq:width_K}, and then the product of $\bar{R}_\eps$ and $\bar{r}_\delta$ has both lower bound and upper bounds.
\begin{lemma}{\bf (Geometric relationship between primal and dual sublevel sets, Theorem 3.2 of \cite{freund2003primal})}\label{lm:primal-dual-sublevelsets}
	Under Assumption \ref{assump:striclyfeasible}, for any $\eps,\delta\in\mathbb{R}_+$ it holds that
	\begin{equation}
		\width_{K_d} \cdot \min\{\delta, \eps\} \le  \bar{r}_\delta\bar{R}_\eps  \le \delta + \eps \ .
	\end{equation}
\end{lemma}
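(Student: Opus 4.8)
The plan is to prove the two inequalities separately, using throughout the identity that for any affine-feasible pair the bilinear pairing $x^\top s$ coincides with the duality gap, together with the elementary characterization of the conic radius, namely that the largest $r$ with $B(s,r)\subseteq K_d$ equals $\inf_{p\in K_p,\,\|p\|=1}p^\top s$ (which follows directly from $K_d=K_p^*$).

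\emph{Upper bound.} I would first record the identity $x^\top s = c^\top x - b^\top y = c^\top x - (-q^\top s + q_0)$, valid whenever $Ax=b$ and $s=c-A^\top y$, which is immediate from \eqref{eq by=-qs}. Let $x$ attain $\bar{R}_\eps$ and let $s$, with inscribed ball $B(s,\bar{r}_\delta)\subseteq K_d$, attain $\bar{r}_\delta$. Since $x\in K_p$ and (for $x\neq 0$) the perturbation $d:=-\bar{r}_\delta\,x/\|x\|$ satisfies $s+d\in K_d$, we get $0\le x^\top(s+d)$, i.e. $x^\top s\ge \bar{r}_\delta\|x\| = \bar{r}_\delta\bar{R}_\eps$ (the case $x=0$ being trivial). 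On the other hand the identity together with the two sublevel constraints $c^\top x\le f^\star+\eps$ and $-q^\top s+q_0\ge f^\star-\delta$ gives $x^\top s = c^\top x-(-q^\top s+q_0)\le \eps+\delta$. Chaining these yields $\bar{r}_\delta\bar{R}_\eps\le\eps+\delta$.

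\emph{Lower bound (harder).} Here I would exhibit explicit near-optimal certificates built from an optimal primal-dual pair $(x^\star,s^\star)$ and a cone center $\bar s$ of $K_d$ realizing the width, so that $\|\bar s\|=1$ and $p^\top\bar s\ge\width_{K_d}\|p\|$ for all $p\in K_p$; equivalently $\bar s$ has conic radius $\width_{K_d}$. The model construction is to slide $s^\star$ toward a deep-interior dual point along a segment: the convex combination $s_\theta=(1-\theta)s^\star+\theta\bar s$ has conic radius at least $\theta\,\width_{K_d}$ (concavity of the conic radius, which is an infimum of linear functionals, and $s^\star\in K_d$), while its dual objective stays $\ge f^\star-\delta$ as long as $\theta\le \delta/\big((x^\star)^\top\bar s\big)$, using the pairing identity to write the objective deficit as $(x^\star)^\top\bar s\le\|x^\star\|$. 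Calibrating $\theta$ to this budget and invoking $\bar{R}_\eps\ge\|x^\star\|$ would give $\bar{r}_\delta\bar{R}_\eps\ge\delta\,\width_{K_d}$, and a primal-side analogue, in which one instead spends the budget $\eps$, supplies the complementary regime; combining them produces $\width_{K_d}\min\{\eps,\delta\}$.

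\emph{Main obstacle.} The construction above is not literally valid, because the cone center $\bar s$ need not lie in the affine set $V_d=c+\operatorname{Im}(A^\top)$, so $s_\theta$ may be dual-infeasible and the pairing identity may fail. The crux of the proof is therefore to manufacture a genuine dual-feasible interior point that captures the width $\width_{K_d}$ while living in the slice $V_d$: I would replace $\bar s$ by a point $\hat s\in V_d\cap\operatorname{int}K_d$ (which exists by Assumption \ref{assump:striclyfeasible}) whose conic radius controls the pairing $(x^\star)^\top\hat s$ in the ratio $\width_{K_d}/\|x^\star\|$, and establish existence of such an $\hat s$ directly from the definition of $\width_{K_d}$ (for instance by projecting $\bar s$ onto $V_d$ and re-centering, or via a separating-hyperplane/duality argument). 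Handling this affine-slice reduction, together with the attendant case split that yields the $\min\{\eps,\delta\}$ (the regimes $\theta\le 1$ versus a forced $\theta=1$), is where essentially all the difficulty lies; the remaining manipulations are exactly the routine steps indicated above.
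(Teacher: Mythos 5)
First, a point of reference: the paper does not prove Lemma \ref{lm:primal-dual-sublevelsets} at all --- it is quoted verbatim from Theorem 3.2 of \cite{freund2003primal} --- so your attempt must be measured against that source's argument. Your upper bound is complete and correct, and is essentially the standard one: from $B(s,\bar r_\delta)\subseteq K_d$ and $x\in K_p$ you get $x^\top s\ge \bar r_\delta\|x\|$, while the pairing identity $x^\top s=c^\top x-b^\top y$ (valid since $x\in V_p$, $s\in V_d$, with \eqref{eq by=-qs} and strong duality under Assumption \ref{assump:striclyfeasible} supplying the common value $f^\star$) gives $x^\top s\le\delta+\eps$. Your auxiliary facts are also right: the largest $r$ with $B(s,r)\subseteq K_d$ is indeed $\inf\{p^\top s: p\in K_p,\ \|p\|=1\}$, which is concave and positively homogeneous in $s$.

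The lower bound, however, contains a genuine gap that you name but do not close, and it is not a repairable technicality of your construction --- it is the entire theorem. Two concrete symptoms. First, if your sliding argument were valid as calibrated, it would yield $\bar r_\delta\bar R_\eps\ge\width_{K_d}\cdot\delta$ with \emph{no} dependence on $\eps$ (since $\bar R_\eps\ge\|x^\star\|$ for every $\eps\ge 0$, and $\eps$ never enters your dual-side budget); but the bound $\width_{K_d}\min\{\delta,\eps\}$ is essentially tight, so no correct proof can produce $\delta$ alone --- the failure is exactly the feasibility issue you flag, that $\bar s$ (or any direction of large conic radius) need not lie in $\vec{V}_d$. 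Second, and more structurally, \emph{any} one-sided argument that perturbs $s^\star$ within $V_d$ must fail: if $V_d\cap K_d$ is a thin sliver, then $\bar r_\delta$ is bounded above by the sliver's inradius uniformly in $\delta$, and in that regime the content of the lemma is that $\bar R_\eps$ must be correspondingly \emph{large} --- a statement about the primal set that a dual-side construction never touches. (Your asserted ``primal-side analogue'' has the same defect in mirror image: growing $\|x\|$ within budget $\eps$ says nothing about $\bar r_\delta$.) The proof in \cite{freund2003primal} instead couples the two level sets through conic duality --- essentially a separating-hyperplane/minimax argument applied to the characterization $\bar r_\delta=\max\big\{\inf_{p\in K_p,\|p\|=1}p^\top s\ :\ s\in V_d\cap K_d,\ -q^\top s+q_0\ge f^\star-\delta\big\}$ --- which is where the width constant and the factor $\min\{\delta,\eps\}$ emerge jointly. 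Your suggested repairs (``project $\bar s$ onto $V_d$ and re-center'') do not supply the needed quantitative relation: the projection can leave $K_d$ entirely, and a point of $V_d\cap\textsf{int}K_d$ with conic radius comparable to $\width_{K_d}$ relative to the pairing with $x^\star$ need not exist.
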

\noindent Lemma \ref{lm:primal-dual-sublevelsets} leads directly to the following corollary regarding the conic radius $r_\delta$ of the sublevel set $\calW_\delta$:
\begin{corollary}\label{cor: r R in symmetric form}
	For any $\eps, \delta \in\mathbb{R}_+$, it holds that
	\begin{equation}\label{eq:r R in symmetric form}
		\width_K \cdot \min\{\delta,\eps\} \le r_\delta \cdot \left(\max_{w\in\calW_\eps}\|w\|\right) \le \delta + \eps \ .
	\end{equation}
\end{corollary}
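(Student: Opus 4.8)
The plan is to recognize that both $\calW_\delta$ and $\calW_\eps$ are sublevel sets of a \emph{single} conic linear optimization problem posed in the combined variable $w=(x,s)\in\mathbb{R}^{2n}$, and then to apply the general conic relationship of Lemma \ref{lm:primal-dual-sublevelsets} to that problem. Concretely, I would consider the combined problem of minimizing the affine duality gap $\gap(w)=c^\top x+q^\top s-q_0$ over $\calF=V\cap K$, where $V=V_p\times V_d$ and $K=K_p\times K_d$. Under Assumption \ref{assump:striclyfeasible} this attains its optimum with optimal value $0$, so its $\eps$-sublevel set is exactly $\calW_\eps$ and its $\delta$-sublevel set is exactly $\calW_\delta$. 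Then the maximum norm over the primal sublevel set of the combined problem is, by definition, $\max_{w\in\calW_\eps}\|w\|$, i.e. the quantity $\bar R_\eps$ of \eqref{eq:def_R_r} instantiated for the combined problem.

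The crux is to show that the conic radius $r_\delta$ of $\calW_\delta$ is the \emph{dual} sublevel-set inscribed-radius quantity $\bar r_\delta$ for the combined problem, which I would establish by verifying that the combined problem is self-dual. First, observe that $B(w,r)\subseteq K$ if and only if $B(x,r)\subseteq K_p$ and $B(s,r)\subseteq K_d$ (take $v=0$, resp.\ $u=0$, for one direction, and use that $\|(u,v)\|\le r$ implies $\|u\|,\|v\|\le r$ for the other); hence by Definition \ref{def radius}, $r_\delta=\max_{w\in\calW_\delta}\max\{r:B(w,r)\subseteq K\}$ is the maximum inscribed radius of $\calW_\delta$ in $K$. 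Next I would write $V$ as a linear system, using $x\in V_p\Leftrightarrow Ax=b$ and $s\in V_d\Leftrightarrow Z^\top s=Z^\top c$ where the columns of $Z$ form a basis of $\operatorname{Null}(A)$, put the combined problem in standard conic form, and compute its conic dual. A direct calculation, using $Aq=b$ (so that $q\in V_p$) and $q_0=q^\top c$, shows that the dual cone is $K^\ast=K_d\times K_p$, that the dual feasible region in the slack variables $(t_1,t_2)\in K_d\times K_p$ is exactly $(V_d\cap K_d)\times(V_p\cap K_p)$, and that the dual objective equals $q_0-\gap\big((t_2,t_1)\big)$. Thus maximizing the dual objective is minimizing $\gap\big((t_2,t_1)\big)$, the combined problem is self-dual up to the coordinate swap $(t_1,t_2)\mapsto(t_2,t_1)$, and its dual $\delta$-sublevel set is the image of $\calW_\delta$ under this swap.

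Finally I would match the geometric quantities across the swap: the map $(a,b)\mapsto(b,a)$ is a Euclidean isometry carrying $K_d\times K_p$ onto $K_p\times K_d=K$, so it preserves inscribed radii, sending the dual $\delta$-sublevel set's maximum inscribed radius to $r_\delta$, and it likewise gives $\width_{K^\ast}=\width_{K_d\times K_p}=\width_K$. Applying Lemma \ref{lm:primal-dual-sublevelsets} to the combined (self-dual) problem, with primal parameter $\eps$ and dual parameter $\delta$, then yields $\width_K\cdot\min\{\delta,\eps\}\le r_\delta\cdot\big(\max_{w\in\calW_\eps}\|w\|\big)\le\delta+\eps$, which is exactly \eqref{eq:r R in symmetric form}; the endpoints $\delta=0$ or $\eps=0$ are covered since Lemma \ref{lm:primal-dual-sublevelsets} allows nonnegative parameters. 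I expect the main obstacle to be the bookkeeping in the self-duality computation, namely correctly expressing $V_d$ by linear equations, identifying the dual slacks with (the swap of) $\calF$, and confirming that the dual objective reduces to $-\gap$ up to the constant $q_0$, rather than any conceptual difficulty; once self-duality is in hand, the corollary is an immediate specialization of Lemma \ref{lm:primal-dual-sublevelsets}.
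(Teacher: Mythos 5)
Your proposal is correct and follows essentially the same route as the paper: both form the combined self-dual conic problem in $w=(x,s)$ over $\calF=V\cap K$, observe that its symmetric dual is the same problem up to swapping the two $n$-dimensional blocks (so that $\bar r_\delta$ for the dual equals $r_\delta$ of $\calW_\delta$, $\bar R_\eps$ equals $\max_{w\in\calW_\eps}\|w\|$, and $\width_{K^*}=\width_K$), and then invoke Lemma~\ref{lm:primal-dual-sublevelsets}. The only cosmetic difference is that you propose to verify self-duality by an explicit conic-dual computation with a null-space basis, whereas the paper cites the symmetric-dual form $\tilde V=\linV^\perp+z_0$ directly (after normalizing $c\in\operatorname{Null}(A)$, which is WLOG since translating $c$ along $\operatorname{Im}(A^\top)$ leaves the sublevel sets unchanged).
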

\begin{proof}
	The idea of the proof is to relate $r_\delta$ and $\max_{w\in\calW_\eps}\|w\|$ to the quantities $\bar{r}_\delta$ and $\bar{R}_\eps$ defined in \eqref{eq:def_R_r} for a certain pair of primal and dual problems. Since any change of the objective vector $c$ in $\operatorname{Im}(A^\top)$ does not change the sublevel sets $\calW_\delta$ or $\calW_\eps$, without loss of generality we presume that $c \in \operatorname{Null}(A)$. We can combine \eqref{pro: general primal clp} and \eqref{pro: general dual clp on s} into a single conic linear optimization problem of the following form:
	\begin{equation}\label{pro:primal-dual LP problem}
		\begin{aligned}
			\min_{w\in\mathbb{R}^{2n}} \  & z_0^\top w \quad
			\text{s.t.} \                 & w \in V = \linV + w_0, \ w \in K = K_p \times K_d \ , 
		\end{aligned}
	\end{equation}
where $z_0 := (c,q) \in \linV$ and $w_0 := (q,c)$, yielding $V = \linV + w_0 = (\linVp + q )\times (\linVd + c) = V_p \times V_d$.  Using a similar approach for deriving the dual problem with that in Section \ref{sec:get_general_clp_dual}, it is known that its symmetric dual problem (see also Section 3.1 of \cite{renegar2001mathematical} for details) is the following problem:
	\begin{equation}\label{pro:dual of the primal-dual LP problem}
		\begin{aligned}
			\min_{z\in\mathbb{R}^{2n}} \  & w_0^\top z \quad
			\text{s.t.} \                 & z \in \tilde{V} := \linV^\bot + z_0 \ , \ z \in K^* = K_d \times K_p \ . 
		\end{aligned}
	\end{equation}
	Note that  $\lin{V} = \linV_p \times \linV_d$ in which $\linV_p$ and $\linV_d$ are orthogonal complementary subspaces of $\mathbb{R}^n$, and so $\linV_d \times \linV_p$ and $\linV_p\times \linV_d$ are orthogonal complementary subspaces of $\mathbb{R}^{2n}$ and thus $\linV^\bot$ = $\linV_d \times \linV_p$. Moreover, because $K_p$ and $K_d$ are dual cones of each other, then $K^* = K_p^* \times K_d^* = K_d \times K_p$. Therefore, now we can see the feasible sets of \eqref{pro:primal-dual LP problem} and \eqref{pro:dual of the primal-dual LP problem} are related by simply exchanging the order of the $n$-dimensional variable components, namely from $w=(x,s)$ to $z=(s,x)$.  And also the objective vectors $z_0$ and $w_0$ are similarly related by exchanging the order of $c$ and $q$.

It thus follows that the quantity $\bar{r}_\delta$ associated with \eqref{pro:primal-dual LP problem} is identical to the quantity $r_\delta$ of $\calW_\delta$, and quantity $\bar{R}_\eps$ associated with \eqref{pro:dual of the primal-dual LP problem} is identical to $\max_{w\in\calW_\eps}\|w\|$. Furthermore, because $K = K_p\times K_d$ and $K^* = K_d \times K_p$, then $\tau_K = \tau_{K^*}$.
	Therefore, directly applying Lemma \ref{lm:primal-dual-sublevelsets} on \eqref{pro:primal-dual LP problem} and  \eqref{pro:dual of the primal-dual LP problem}  yields \eqref{eq:r R in symmetric form}.
\end{proof}

Furthermore, the following monotonicity result is presented in Remark 2.1 of \cite{freund2003primal}.
\begin{lemma}{\bf (Monotonicity of $\bar{r}_\delta/\delta$)}\label{lm:monotonicity of delta r}
	For any $\delta' > \delta >0$, it holds that $\frac{\bar{r}_{\delta} }{\delta}\ge \frac{\bar{r}_{\delta'}}{\delta'}$.
\end{lemma}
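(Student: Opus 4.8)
The plan is to prove the monotonicity by a convex-combination (scaling) argument that blends a near-maximizer of $\bar{r}_{\delta'}$ toward a dual-optimal point, exploiting that $K_d$ is a convex cone. Throughout, write $\psi(s) := \max\{r \ge 0 : B(s,r)\subseteq K_d\}$ for the inscribed-ball radius of $K_d$ at $s$, so that by definition $\bar{r}_\delta = \max\{\psi(s) : s\in V_d\cap K_d,\ -q^\top s + q_0 \ge f^\star - \delta\}$. Fix $\delta' > \delta > 0$. Under Assumption \ref{assump:striclyfeasible} the dual problem attains its optimum, so there exists $s^\star \in V_d \cap K_d$ with $-q^\top s^\star + q_0 = f^\star$. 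Let $s'$ attain $\bar{r}_{\delta'}$; if the maximum is only a supremum, one takes a maximizing sequence and passes to the limit at the very end, and nothing below changes.

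First I would set $\lambda := 1 - \delta/\delta' \in (0,1)$ and $s_\lambda := \lambda s^\star + (1-\lambda)s'$, and verify that $s_\lambda$ is feasible for the $\delta$-problem. Since $V_d$ is affine and $K_d$ is convex, we have $s_\lambda \in V_d \cap K_d$. For the objective constraint, using $-q^\top s' + q_0 \ge f^\star - \delta'$ together with linearity of $s\mapsto -q^\top s + q_0$,
\begin{equation*}
-q^\top s_\lambda + q_0 = \lambda(-q^\top s^\star + q_0) + (1-\lambda)(-q^\top s' + q_0) \ge \lambda f^\star + (1-\lambda)(f^\star - \delta') = f^\star - (1-\lambda)\delta' = f^\star - \delta ,
\end{equation*}
so $s_\lambda$ is indeed feasible for the optimization problem defining $\bar{r}_\delta$.

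The crux is to lower-bound the inscribed radius at $s_\lambda$. Because $B(s', \psi(s'))\subseteq K_d$, $s^\star \in K_d$, and $K_d$ is a convex cone, the Minkowski combination gives
\begin{equation*}
B\big(s_\lambda,\ (1-\lambda)\psi(s')\big) = \lambda\{s^\star\} + (1-\lambda)B\big(s',\psi(s')\big) \subseteq \lambda K_d + (1-\lambda)K_d \subseteq K_d ,
\end{equation*}
whence $\psi(s_\lambda) \ge (1-\lambda)\psi(s') = (\delta/\delta')\,\bar{r}_{\delta'}$. (Equivalently one may first check that $\psi$ is concave on $K_d$ by the same Minkowski-sum reasoning and then use $\psi(s^\star)\ge 0$.) Combining this with the feasibility of $s_\lambda$ established above yields $\bar{r}_\delta \ge \psi(s_\lambda) \ge (\delta/\delta')\,\bar{r}_{\delta'}$, which rearranges to $\bar{r}_\delta/\delta \ge \bar{r}_{\delta'}/\delta'$, as desired.

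The only nonroutine step is the inscribed-radius bound in the last paragraph, and this is precisely where the cone (rather than merely convex) structure of $K_d$ is essential: blending toward the optimal point $s^\star$, which typically lies on $\partial K_d$ and hence contributes an inscribed radius of zero, still preserves a ball of radius scaled exactly by $(1-\lambda)$. The affine and objective feasibility checks, by contrast, are immediate from linearity and convexity, and the possible non-attainment of the supremum is handled routinely by a limiting argument.
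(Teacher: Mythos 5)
Your proof is correct. Note that the paper itself supplies no proof of Lemma \ref{lm:monotonicity of delta r} --- it is imported verbatim from Remark 2.1 of \cite{freund2003primal} --- so the relevant comparison is with the standard argument behind that remark, and yours is exactly it: blending a near-maximizer $s'$ for the $\delta'$-problem toward a dual optimal point $s^\star$ with weight $\lambda = 1-\delta/\delta'$ is precisely the specialization of the concavity of $\delta \mapsto \bar{r}_\delta$ (in the joint variables $(s,r)$) together with $\bar{r}_0 \ge 0$, evaluated at $\delta = (\delta/\delta')\,\delta' + (1-\delta/\delta')\cdot 0$. Your feasibility check, the Minkowski-sum identity $B\bigl(s_\lambda,(1-\lambda)\psi(s')\bigr) = \lambda\{s^\star\} + (1-\lambda)B\bigl(s',\psi(s')\bigr)$, and the limiting argument for non-attainment are all sound, and your appeal to Assumption \ref{assump:striclyfeasible} for the existence of $s^\star$ is exactly what the paper's context provides.

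One small correction to your closing commentary: the inclusion $\lambda\{s^\star\} + (1-\lambda)B\bigl(s',\psi(s')\bigr) \subseteq K_d$ uses only the \emph{convexity} of $K_d$ (for a convex set $C$ and $\lambda\in[0,1]$ one has $\lambda C + (1-\lambda)C \subseteq C$); the cone structure plays no role there, and indeed the entire argument goes through with $K_d$ replaced by any closed convex set containing a point of the optimal level. So the step you flag as the one place where conicity is essential in fact needs nothing beyond convexity.
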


Using Corollary \ref{cor: r R in symmetric form} and Lemma \ref{lm:monotonicity of delta r}, we now prove Lemma \ref{lm r_delta over delta}.
\begin{proof}[Proof of Lemma \ref{lm r_delta over delta}]
	We first prove the second inequality of \eqref{eq of lm r calW small}.
	According to the second inequality of \eqref{eq:r R in symmetric form} with $\eps = 0$, it holds that $r_\delta \cdot \max_{w \in \calW^\star}\|w\| \le \delta$ for any $\delta > 0$, which directly implies the second inequality of \eqref{eq of lm r calW small}.

As for the first inequality of \eqref{eq of lm r calW small}, the first inequality of \eqref{eq:r R in symmetric form} yields $r_\delta \cdot \max_{w \in \calW_\eps}\|w\| \ge \width_K \cdot \min\{\delta,\eps\}$ for any $\delta ,\eps \in \mathbb{R}_{++}$. Taking $\eps = \delta$ yields $\frac{r_\delta}{\delta}\ge \frac{\width_K}{\max_{w \in \calW_\delta}\|w\|} $ for any $\delta > 0$. And using Lemma \ref{lm:monotonicity of delta r} we have $\sup_{\delta>0}\frac{r_\delta}{\delta} = \lim_{\delta\searrow 0}\frac{r_\delta}{\delta} \ge \lim_{\delta \searrow 0} \frac{\width_K}{\max_{w \in \calW_\delta}\|w\|} = \frac{\width_K}{\max_{w\in \calW^\star}\|w\|}$, which proves the first inequality of \eqref{eq of lm r calW small}.
\end{proof}

\section{Proof of Theorem \ref{thm overall complexity lp}}\label{taxtime}

Linear programming problems enjoy a ``sharpness'' property that is not guaranteed for the more general conic optimization problem \eqref{pro: general primal clp}.  Similar in spirit to \cite{xiong2023computational} we define the (primal-and-dual) \textit{PD sharpness} $\mu$ as follows:
\begin{equation}\label{eqdef:LPsharpness}
	\mu:= \inf_{w \in \calF \setminus \calW^\star}\frac{\dist(w, V \cap \{w: \gap(w)= 0\})}{\dist(w,\calW^\star)} \ .
\end{equation}

Recalling the definition of the best suboptimal extreme point gap $\bar\delta$ from Definition \ref{def:best_suboptimal_gap}, a key property that characterizes $\mu$ using $\bar\delta$ is the following lemma which is taken from \cite{xiong2023computational}, albeit using different notation.
\begin{lemma}\label{lm:LPsharpness} {\bf (essentially Theorem 5.5 of \cite{xiong2023computational})}
	If both \eqref{pro: general primal clp} and \eqref{pro: general dual clp on s} have feasible and nonoptimal solutions, then for any $\delta \in (0, \bar{\delta}]$ it holds that
	\begin{equation}\label{eq:LPsharpness_keyslice}
		\mu = \inf_{w \in \calF \cap \{w: \gap(w) = \delta\}}\frac{\dist(w, V \cap \{w: \gap(w)= 0\})}{\dist(w,\calW^\star)} \ .
	\end{equation}
\end{lemma}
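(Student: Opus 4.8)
The plan is to exploit the positive homogeneity of the ratio
\[
R(w) := \frac{\dist\big(w,\, V_0\big)}{\dist(w,\calW^\star)}, \qquad V_0 := V \cap \{w:\gap(w)=0\},
\]
along rays emanating from the optimal set $\calW^\star$, and then to use the extreme-point structure encoded by $\bar\delta$ to transport any feasible nonoptimal point onto the slice $S_\delta := \calF \cap \{w:\gap(w)=\delta\}$ without changing $R$. First I would record that, by strong duality for LP, a feasible $w$ is nonoptimal if and only if $\gap(w)>0$, so that $\calF\setminus\calW^\star = \calF\cap\{\gap(w)>0\}$ and $S_\delta \subseteq \calF\setminus\calW^\star$ for every $\delta>0$.

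The core computation is the ray-invariance of $R$. Fix $w\in\calF$ with $g:=\gap(w)>0$, let $w^\star := P_{\calW^\star}(w)$, and set $w_t := w^\star + t(w-w^\star)$ for $t\ge 0$. I would verify three scalings: (i) $\gap(w_t)=tg$, since $\gap$ is affine and $\gap(w^\star)=0$; (ii) $\dist(w_t,\calW^\star)=t\,\dist(w,\calW^\star)$, because the variational inequality characterizing $w^\star = P_{\calW^\star}(w)$ gives $\langle w_t-w^\star,\,u-w^\star\rangle = t\langle w-w^\star,\,u-w^\star\rangle\le 0$ for all $u\in\calW^\star$, so $w^\star = P_{\calW^\star}(w_t)$ as well; and (iii) $\dist(w_t,V_0)=t\,\dist(w,V_0)$, because $V_0$ is an affine subspace with $w^\star\in\calW^\star\subseteq V_0$, whence $\dist(w_t,V_0)=\dist\big(t(w-w^\star),\lin{V_0}\big)$ is positively homogeneous. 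Consequently $R(w_t)=R(w)$ for every $t\ge 0$ with $w_t\in\calF$.

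With ray-invariance in hand the identity follows from two directions. The inequality ``$\ge$'' is immediate: since $S_\delta\subseteq\calF\setminus\calW^\star$, taking the infimum over the smaller set gives $\inf_{w\in S_\delta}R(w)\ge \mu$. For ``$\le$'' I would show that every $w\in\calF\setminus\calW^\star$ admits a point $w'\in S_\delta$ with $R(w')=R(w)$: set $t:=\delta/g$ and $w':=w_t$, so that $\gap(w')=\delta$ and $R(w')=R(w)$ provided $w'\in\calF$. When $g\ge\delta$ we have $t\le 1$ and $w'\in\calF$ by convexity of $\calF$; taking the infimum over such $w$ then yields $\inf_{S_\delta}R\le\mu$.

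The remaining---and hardest---case is $g<\delta\le\bar\delta$, where the ray must be extended \emph{past} $w$ (i.e.\ $t=\delta/g>1$) while staying feasible, and this is exactly where the hypothesis $\delta\le\bar\delta$ enters. The plan is to prove that the ray $\{w_t\}$ cannot leave $\calF$ before its gap reaches $\bar\delta$. Concretely, suppose it exits $\calF$ at some finite $t^\star\ge 1$ with $\gamma:=\gap(w_{t^\star})=t^\star g<\bar\delta$; since $w_t\in V$ for all $t$, this means $w_{t^\star}\in\partial K$ with some newly-active coordinate $i$ (forcing $w^\star_i>0$ and $w_i-w^\star_i<0$). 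Minimizing the affine map $\gap$ over the minimal face $\mathcal{G}$ of $\calF$ containing $w_{t^\star}$, the minimum is attained at a vertex $v\in\ep_\calF$ with $\gap(v)\le\gamma<\bar\delta$, which by the definition of $\bar\delta$ forces $v\in\calW^\star$ and $v_i=0$. Combining $v_i=0$ with the projection optimality of $w^\star$ should contradict the ability of coordinate $i$ to vanish at $t^\star\ge 1$, ruling out an early exit. I expect this feasibility-of-upward-scaling step to be the main obstacle: it is the crux where the combinatorial content of $\bar\delta$ is used, and it mirrors the polyhedral argument underlying Theorem 5.5 of \cite{xiong2023computational}. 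Once it is established, $w'=w_{\delta/g}\in\calF\cap\{\gap=\delta\}=S_\delta$ with $R(w')=R(w)$, completing ``$\le$'' and hence the claimed equality.
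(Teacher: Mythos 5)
Your setup is clean and correct as far as it goes: the three homogeneity facts (i)--(iii) along the ray $w_t=w^\star+t(w-w^\star)$ with $w^\star=P_{\calW^\star}(w)$ are all valid (in particular the variational-inequality argument showing $w^\star=P_{\calW^\star}(w_t)$ for all $t\ge 0$ is right), the ``$\ge$'' direction is immediate, and downward scaling ($\gap(w)\ge\delta$) is handled by convexity of $\calF$. Note, for calibration, that the paper itself does not prove this lemma --- it imports it verbatim from Theorem 5.5 of the cited reference --- so your attempt has to stand entirely on its own.

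It does not, because the crux step is false as stated. You claim that for $\gap(w)=g<\delta\le\bar\delta$ the ray $\{w_t\}$ cannot leave $\calF$ before its gap reaches $\bar\delta$. Counterexample (drawn inside the affine hull $V$): let $\calF$ be the triangle with vertices $(0,0)$, $(1,0)$, $(5,1)$, let $\gap(x,y)=y$, so $\calW^\star=[(0,0),(1,0)]$, the only suboptimal extreme point is $(5,1)$, and $\bar\delta=1$. Take $w=(0.5,0.05)$. Then $w^\star=P_{\calW^\star}(w)=(0.5,0)$ and the ray is vertical; it exits $\calF$ through the edge from $(0,0)$ to $(5,1)$ at $(0.5,0.1)$, i.e.\ at gap $0.1\ll\bar\delta$. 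Your proposed contradiction also cannot be rescued: at the exit point the newly active constraint's minimal face does contain an optimal vertex $v$ (here $v=(0,0)$) at which that constraint is tight, while $w^\star=(0.5,0)$ has it slack --- and there is simply no conflict between these two facts, since $\calW^\star$ may contain both points where a given coordinate vanishes and points where it is positive; the projection inequality $\langle w-w^\star,v-w^\star\rangle\le 0$ constrains only the full inner product, not the single coordinate $i$. The lemma survives in this example because any $w$ whose projection ray exits early has a large ratio $R(w)$ (here $R(0.5,0.05)=1$ while $\mu=1/\sqrt{17}$, attained along the edge from $(1,0)$ to $(5,1)$), so such points never compete for the infimum --- but establishing that requires a genuinely different mechanism. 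The standard route is polyhedral: on the slice $\calF\cap\{\gap=\delta\}$ the numerator $\dist(\cdot,V\cap\{\gap=0\})$ is constant and the denominator $\dist(\cdot,\calW^\star)$ is convex, so the slice infimum is governed by maximizing a convex function over the slice polytope, hence by its vertices; and the hypothesis $\delta\le\bar\delta$ is what guarantees every vertex of the slice lies on an edge of $\calF$ joining an optimal extreme point to a suboptimal one, which is what lets one compare slices at different levels $\delta'\le\bar\delta$. Since the case $g<\delta$ is the only place $\bar\delta$ enters and is exactly where the infimum defining $\mu$ lives (the per-slice infimum is non-decreasing in $\delta$, so $\mu$ is its limit as $\delta\searrow 0$), the proof as proposed has an essential gap.
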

\noindent The following lower bound of $\mu$ using $d^H_\delta$ and $r_\delta$ is a direct implication of Lemma \ref{lm:LPsharpness}.
\begin{lemma}\label{lm:lowerbound_mu}
	Under the hypothesis of Lemma \ref{lm:LPsharpness}, for any $\delta\in(0,\bar{\delta}]$ it holds that
	\begin{equation}\label{eq:lowerbound_mu}
		\mu \ge  \frac{\delta}{d^H_\delta}\cdot \frac{1}{\sqrt{\|P_{\linVp}(c)\|^2 + \|q\|^2}} \ .
	\end{equation}
\end{lemma}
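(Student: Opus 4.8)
The plan is to combine the key-slice characterization of PD sharpness in Lemma \ref{lm:LPsharpness} with an exact computation of the numerator distance and a Hausdorff bound on the denominator. Fix $\delta\in(0,\bar\delta]$. By Lemma \ref{lm:LPsharpness} we may write
\begin{equation*}
	\mu = \inf_{w \in \calF \cap \{w: \gap(w) = \delta\}}\frac{\dist(w, V \cap \{w: \gap(w)= 0\})}{\dist(w,\calW^\star)} \ ,
\end{equation*}
so it suffices to bound the two distances appearing in the ratio for every $w$ in the slice $\calF\cap\{\gap(w)=\delta\}$. First I would handle the denominator: any such $w$ is feasible with $\gap(w)=\delta\le\bar\delta$, hence $w\in\calW_\delta$, and so by the definition of the Hausdorff distance $d^H_\delta = \max_{w\in\calW_\delta}\dist(w,\calW^\star)$ we immediately get $\dist(w,\calW^\star)\le d^H_\delta$.

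The main work is the numerator, namely computing $\dist(w, V\cap\{\gap(w)=0\})$ exactly. The point is that $\gap(\cdot)$ is an affine function (recall $\gap(w)=c^\top x + q^\top s - q_0$, with gradient $(c,q)$), so $V\cap\{\gap(w)=0\}$ is an affine subspace of $V$ of codimension one, and since $w\in V=\calF\supseteq$, the nearest point to $w$ in this subspace lies along the projected gradient direction inside the associated linear subspace $\linV=\linVp\times\linVd$. Thus the distance equals $|\gap(w)|/\|P_{\linV}(c,q)\| = \delta/\|P_{\linV}(c,q)\|$. To simplify $\|P_{\linV}(c,q)\|$, I would use $P_{\linV}(c,q)=(P_{\linVp}(c),\,P_{\linVd}(q))$ together with the fact that $q = A^\top(AA^\top)^\dag b\in\operatorname{Im}(A^\top)=\linVd$, whence $P_{\linVd}(q)=q$; this yields $\|P_{\linV}(c,q)\|=\sqrt{\|P_{\linVp}(c)\|^2+\|q\|^2}$, so the numerator is the constant $\delta/\sqrt{\|P_{\linVp}(c)\|^2+\|q\|^2}$ on the whole slice.

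Combining the two bounds, every ratio in the infimum satisfies
\begin{equation*}
	\frac{\dist(w, V \cap \{\gap=0\})}{\dist(w,\calW^\star)} \ge \frac{\delta/\sqrt{\|P_{\linVp}(c)\|^2+\|q\|^2}}{d^H_\delta} \ ,
\end{equation*}
and taking the infimum over $w$ gives \eqref{eq:lowerbound_mu}. I expect the only delicate point to be justifying the exact numerator formula: one must verify that $\gap$ is non-constant on $V$ (so that $P_{\linV}(c,q)\ne 0$ and the subspace has codimension one), which follows from the hypothesis that feasible optimal \emph{and} nonoptimal solutions coexist, and that the ambient Euclidean distance to the subspace coincides with the distance computed within $V$ because both $w$ and the target set lie in the affine subspace $V$. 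The Hausdorff and sharpness inputs are then routine to apply.
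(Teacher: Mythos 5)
Your proposal is correct and follows essentially the same route as the paper's proof: both invoke the key-slice characterization from Lemma \ref{lm:LPsharpness}, use the closed-form expression $\delta/\sqrt{\|P_{\linVp}(c)\|^2+\|q\|^2}$ for the numerator distance to the affine set $V\cap\{\gap=0\}$, and bound the denominator by $d^H_\delta$ via the Hausdorff distance. The only difference is that you spell out the justification of the numerator formula (codimension one, $P_{\linVd}(q)=q$) which the paper simply asserts.
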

\begin{proof}
	For $ w \in V$ satisfying $\gap(w) = \delta$, the numerator of \eqref{eq:LPsharpness_keyslice} has the closed form 		$$
		\dist(w, V \cap \{w: \gap(w)= 0\}) =  \frac{\delta}{\|P_{\linV}([c, q])\|} = \frac{\delta}{\sqrt{\|P_{\linVp}(c)\|^2 + \|q\|^2}} \ .
	$$
	Also, from the definition of $d^H_\delta$ in Definition \ref{def distance to optima} we have:
	$$
		d^H_\delta = \max_{w \in \calF \cap\{w:\gap(w)\le \delta\}} \dist(w,\calW^\star) \ge \max_{w \in \calF \cap\{w:\gap(w) = \delta\}} \dist(w,\calW^\star) \ ,
	$$
	whereby from
	\eqref{eq:LPsharpness_keyslice}  we have for any $\delta \in (0, \bar{\delta}]$ that
	$$
		\mu =  \frac{\delta}{\sqrt{\|P_{\linV}(c)\|^2 + \|q\|^2}} \cdot \inf_{w \in \calF \cap \{w: \gap(w) = \delta\}}\frac{1}{\dist(w,\calW^\star)}  \ge
		\frac{\delta}{\sqrt{\|P_{\linV}(c)\|^2 + \|q\|^2}} \cdot \frac{1}{d^H_\delta} \ ,
	$$
	which is exactly \eqref{eq:lowerbound_mu}.
\end{proof}

\begin{lemma}\label{lm c123 sharp LP}
	Suppose that $c\in\operatorname{Null}(A)$. For any $\delta \in ( 0,\bar{\delta}]$ and any $w:=(x,s)$ it holds that
	\begin{equation}\label{eq lm c123 sharp LP}
		\dist(w,\calW^\star) \le 	\frac{ d^H_\delta   }{\delta} \cdot\gap(w) +  \frac{5D_\delta}{r_\delta} \cdot \max\{ \dist(w,K) , \dist(w, V)\} \ .
	\end{equation}
\end{lemma}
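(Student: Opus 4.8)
The plan is to prove Lemma \ref{lm c123 sharp LP} by combining the PD-sharpness lower bound of Lemma \ref{lm:lowerbound_mu} with the error-bound machinery already developed for sublevel sets, namely Lemma \ref{lm error bound R r} and Lemma \ref{lm efeas bound dist}. The key structural observation is that the LP sharpness $\mu$ controls how far a feasible point can be from the optimal set relative to its distance to the zero-gap affine slice, and that for $\delta \le \bar\delta$ this sharpness is exactly characterized on the $\delta$-gap slice. So the overall strategy is: first use sharpness to bound $\dist(w,\calW^\star)$ when $w$ is (nearly) feasible, then patch in the infeasibility terms $\dist(w,K)$ and $\dist(w,V)$ using the error-bound quotient $D_\delta/r_\delta$.

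First I would reduce to the feasible case by projecting. Let $\hat w := P_V(w)$, so that $\|w-\hat w\| = \dist(w,V)$, and since $c\in\operatorname{Null}(A)=\linVp$ and $q\in\operatorname{Im}(A^\top)=\linVd$, the gap is unchanged: $\gap(\hat w)=\gap(w)$. A triangle inequality gives $\dist(w,\calW^\star)\le \dist(\hat w,\calW^\star)+\dist(w,V)$, so it suffices to bound $\dist(\hat w,\calW^\star)$ for $\hat w \in V$. Then I would split on whether $\gap(\hat w)\le\delta$ or not, much as in the proof of Lemma \ref{lm efeas bound dist}. In the regime $\gap(\hat w)\le\delta$, I would apply Lemma \ref{lm error bound R r} to $\hat w$ to get $\dist(\hat w,\calW_\delta)\le \tfrac{D_\delta}{r_\delta}\dist(\hat w,K)$, and then bound $\dist(\hat w,\calW^\star)\le \dist(\hat w,\calW_\delta)+d^H_\delta$ from the definition of the Hausdorff distance. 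The remaining task is to show the $d^H_\delta$ term can be upgraded to the sharpness-weighted quantity $\tfrac{d^H_\delta}{\delta}\gap(w)$; this is where Lemma \ref{lm:lowerbound_mu} enters, since sharpness relates distance-to-optimum to the gap along the feasible region.

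The cleaner route, which I expect to use, is to invoke sharpness directly. For feasible $\hat w$, the PD sharpness definition \eqref{eqdef:LPsharpness} gives $\dist(\hat w,\calW^\star)\le \tfrac{1}{\mu}\dist(\hat w, V\cap\{\gap=0\})$, and the numerator has the closed form $\gap(\hat w)/\sqrt{\|P_{\linVp}(c)\|^2+\|q\|^2}$ computed in the proof of Lemma \ref{lm:lowerbound_mu}. Substituting the lower bound $\mu\ge \tfrac{\delta}{d^H_\delta}\cdot \tfrac{1}{\sqrt{\|P_{\linVp}(c)\|^2+\|q\|^2}}$ makes the normalizing constant cancel, yielding $\dist(\hat w,\calW^\star)\le \tfrac{d^H_\delta}{\delta}\gap(\hat w)=\tfrac{d^H_\delta}{\delta}\gap(w)$ for feasible $\hat w$. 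For the genuinely feasible $w$ this already gives the first term; the infeasibility corrections then come from re-introducing $\dist(w,V)$ and $\dist(w,K)$. Here I would account for the fact that projecting onto $V$ does not restore membership in $K$, so I must pass through $\calW_\delta$ via Lemma \ref{lm error bound R r} to absorb $\dist(\hat w,K)\le \dist(w,K)+\dist(w,V)$, which with $D_\delta\ge r_\delta$ (Lemma \ref{lm D ge r}) and careful collection of the constants produces the factor $5D_\delta/r_\delta$.

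The main obstacle will be bookkeeping the two regimes $\gap(w)\le\delta$ versus $\gap(w)>\delta$ while keeping the constant at $5$. For the second regime I would use the same homogenization trick as in Lemma \ref{lm efeas bound dist} and Lemma \ref{lm:upperbound_eobj_using_regular_errors}: define $w_t := w^\star + t(w-w^\star)$ with $w^\star=P_{\calW^\star}(w)$, set $f(t):=\dist(w_t,\calW^\star)$ (linear, nonnegative, $f(0)=0$) and $g(t)$ equal to the right-hand side of \eqref{eq lm c123 sharp LP} evaluated at $w_t$ (convex, nonnegative, $g(0)=0$), choose $u:=\delta/\gap(w)\le 1$ so that $\gap(w_u)=\delta$, apply the $\gap\le\delta$ case at $w_u$ to get $g(u)\ge f(u)$, and invoke Proposition \ref{lm:convexity_ineq} with $v=1\ge u$ to conclude $g(1)\ge f(1)$, which is exactly \eqref{eq lm c123 sharp LP}. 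The delicate point is verifying that the sharpness-based bound (which is stated for $\gap=\delta$ via Lemma \ref{lm:LPsharpness}) is legitimately applied on the slice $\gap(w_u)=\delta$ rather than on a sublevel set, and then tracking that the combination of the sharpness term and the $\dist(\hat w,K)$ triangle inequalities, together with $d^H_\delta\le D_\delta$, does not inflate the constant past $5$.
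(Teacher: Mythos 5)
Your proposal is correct and follows essentially the same route as the paper's proof: project onto $V$, pass to the feasible point $\calF(\hat w;w_\delta)\in\calW_\delta$ via Lemma \ref{lm error bound R r}, apply the PD-sharpness bound of Lemma \ref{lm:lowerbound_mu} there, and handle $\gap(w)>\delta$ by the homogenization trick with Proposition \ref{lm:convexity_ineq}. The one step you gloss over — that $\gap(\calF(\hat w;w_\delta))$ differs from $\gap(\hat w)$ by at most $\delta\cdot\dist(\hat w,K)/r_\delta$ (by collinearity, since both $\calF(\hat w;w_\delta)$ and $w_\delta$ lie in $\calW_\delta$) — is exactly what the paper uses to fold the sharpness term back into $\tfrac{d^H_\delta}{\delta}\gap(w)$ plus a $\tfrac{D_\delta}{r_\delta}\dist(\hat w,K)$ correction, and your constant-tracking plan would recover the factor $5$.
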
 
\begin{proof}
	We first consider the case when $\gap(w)\le \delta$. Let $\bar{w} := P_V(w)$. Then because $c \in \operatorname{Null}(A) = \linVp$ and $q \in \operatorname{Im}(A^\top) = \linVd$, we have $\gap(w) = \gap(\bar{w})$ and hence $\gap(\bar{w}) = \gap(w)\le \delta$.
	Let $(w_\delta, r_\delta)$ be the conic center and conic radius of $\calW_\delta$ as defined in Definition \ref{def radius}. Because $\bar{w} \in V$, then from Lemma \ref{lm error bound R r} we have:
	\begin{equation}\label{eq lm c123 sharp LP 1}
		\frac{\| \bar{w} - \calF(\bar{w};w_\delta) \|}{\dist(\bar{w}, K)} \le \frac{\| w_\delta - \calF(\bar{w};w_\delta) \|}{r_\delta} \ .
	\end{equation}

	Also, because $\bar{w}$, $\calF(\bar{w};w_\delta)$, and $w_\delta$ are collinear, it follows that
	\begin{equation}\label{eq lm c123 sharp LP 2}
		\begin{aligned}
			\big| \gap(\bar{w}) - \gap(\calF(\bar{w};w_\delta)) \big| & = \big|
			\gap(\calF(\bar{w};w_\delta)) - \gap(w_\delta)
			\big|
			\cdot \frac{\| \bar{w} - \calF(\bar{w};w_\delta)\|}{\|\calF(\bar{w};w_\delta) - w_\delta\|} \\
			                                                          & \le  \big|
			\gap(\calF(\bar{w};w_\delta)) - \gap(w_\delta)
			\big|
			\cdot \frac{\dist(\bar{w},K)}{r_\delta} \le \delta \cdot  \frac{\dist(\bar{w},K)}{r_\delta} \ ,
		\end{aligned}
	\end{equation}
	where the first inequality uses \eqref{eq lm c123 sharp LP 1} and the second inequality follows since $w_\delta$ and $\calF(\bar{w};w_\delta)$ are in $\calW_\delta$.

	Next, because $\calF(\bar{w};w_\delta) \in \calW_\delta$, then from the definition of the PD sharpness $\mu$ in \eqref{eqdef:LPsharpness} and the lower bound on $\mu$ from Lemma \ref{lm:lowerbound_mu}, we have:
	\begin{equation}\label{eq lm c123 sharp LP 3}
		\begin{aligned}
			\dist(\calF(\bar{w};w_\delta),\calW^\star) & \ \le   \frac{\dist(\calF(\bar{w};w_\delta), V \cap \{w:\gap(w)  = 0\}         ) }{ \mu} \\
			                                           & \ =
			\frac{\gap(\calF(\bar{w};w_\delta))}{\sqrt{\|c\|^2 + \|q\|^2}\cdot \mu} \le \frac{ d^H_\delta   }{\delta} \cdot \gap(\calF(\bar{w};w_\delta))\ .
		\end{aligned}
	\end{equation}

	\noindent We also have the following bound on $\dist(\bar{w},\calW^\star)$:
	\begin{equation}\label{eq lm c123 sharp LP 4}
		\begin{aligned}
			\dist(\bar{w},\calW^\star) & \le \dist(\calF(\bar{w};w_\delta),\calW^\star)  + \| \calF(\bar{w};w_\delta) - \bar{w}\| \\  & \le  \frac{ d^H_\delta   }{\delta} \cdot \gap(\calF(\bar{w};w_\delta))  +\frac{D_\delta}{r_\delta} \cdot \dist(\bar{w},K) \ .
		\end{aligned}
	\end{equation}
	where the second inequality uses \eqref{eq lm c123 sharp LP 3} as well as the inequality $\|   \calF(\bar{w};w_\delta) - \bar{w}\| \le \frac{D_\delta}{r_\delta} \cdot \dist(\bar{w},K)$ which itself follows from Lemma \ref{lm error bound R r}.

	From \eqref{eq lm c123 sharp LP 2} we have
	\begin{equation}\label{eq lm c123 sharp LP 6}
		\begin{aligned}
			\gap(\calF(\bar{w};w_\delta)) & \ \le  \gap(\bar{w}) + \delta\cdot \frac{\dist(\bar{w},K)}{r_\delta} \ ,
		\end{aligned}
	\end{equation}
	whereby \eqref{eq lm c123 sharp LP 4} implies
	\begin{equation}\label{eq lm c123 sharp LP 7}
		\begin{aligned}
			\dist(\bar{w},\calW^\star) & \le  \frac{ d^H_\delta   }{\delta} \cdot \left( \gap(\bar{w}) + \delta\cdot \frac{\dist(\bar{w},K)}{r_\delta}    \right)  +\frac{D_\delta}{r_\delta} \cdot \dist(\bar{w},K) \\
			                           & \le \frac{ d^H_\delta   }{\delta} \cdot\gap(\bar{w}) +  \frac{2D_\delta}{r_\delta} \cdot \dist(\bar{w},K) \ ,
		\end{aligned}
	\end{equation}
	in which the second inequality uses $d^H_\delta \le D_\delta$ from Lemma \ref{lm D ge r}.

	Finally, we use the upper bound on $\dist(\bar{w},\calW^\star)$ to obtain an upper bound on $\dist(w,\calW^\star)$:
	\begin{equation}\label{eq lm c123 sharp LP 8}\begin{aligned}
			\dist(w,\calW^\star) & \le 	\dist(\bar{w},\calW^\star) + \| \bar{w} - w\|  = 	\dist(\bar{w},\calW^\star) +\dist(w, V) \\ & \le \frac{ d^H_\delta   }{\delta} \cdot\gap(w) + \frac{2D_\delta}{r_\delta} \cdot \dist(\bar w, K) + \dist(w, V)
			\\ & \le \frac{ d^H_\delta   }{\delta} \cdot\gap(w) + \frac{2D_\delta}{r_\delta} \cdot \left(\dist(w, K) + \dist(w,V) \right) + \dist(w, V)
			\\ & \le 	\frac{ d^H_\delta   }{\delta} \cdot\gap(w) +  \frac{5D_\delta}{r_\delta} \cdot \max\{ \dist(w,K) , \dist(w, V)\} \ ,
		\end{aligned}\end{equation}
	where the second inequality uses \eqref{eq lm c123 sharp LP 7} and $\gap(\bar{w}) = \gap(w)$, the third inequality uses $\dist(\bar{w},K) \le \dist(w,K) + \|w - \bar{w}\| = \dist(w,K)  + \dist(w,V)$, and the fourth inequality uses $D_\delta \ge r_\delta$ from Lemma \ref{lm D ge r}. This proves \eqref{eq lm c123 sharp LP} in the case when $\gap(w) \le \delta$.

	Let us now consider the case where  $\delta \le \gap(w)$.  Here we will make use of Proposition \ref{lm:convexity_ineq} to complete the proof.  Let $w^\star := P_{\calW^\star}(w) = \arg\min_{\bar{w}\in\calW^\star}\|\bar{w} - w\|$ and define $w_t := w^\star + t\cdot (w - w^\star)$ for $t\in[0,\infty)$. Then define the following functions of $t$ :
	$$
		f(t):= \dist(w_t,\calW^\star) \ , \text{ and }g(t):=\frac{ d^H_\delta   }{\delta} \cdot\gap(w_t) +  \frac{5D_\delta}{r_\delta} \cdot \max\{ \dist(w_t,K) , \dist(w_t, V)\}  \ .
	$$
	Then $f(t)$ is a nonnegative  linear  function on $[0,\infty)$, and $f(0) = 0$. And $g(t)$ is convex and nonnegative on $[0,\infty)$, and $g(0) = 0$. In addition, because $\gap(\cdot)$ is a linear function and $\gap(w_t) = t \cdot \gap(w)$, then setting $u := \delta / \gap(w)$ we obtain $ \gap(w_u) = u\cdot \gap(w) = \delta$. We can then invoke \eqref{eq lm c123 sharp LP 8} using $w_u$ in the place of $w$, which yields $g(u) \ge f(u)$. Now it follows from Proposition \ref{lm:convexity_ineq} with $v := 1\ge  u$ that  $g(1) \ge f(1)$, which is precisely \eqref{eq lm c123 sharp LP 8} in the case $\delta \le \gap(w)$, and completes the proof.
\end{proof}

\begin{lemma}\label{thm L sharp LP} Suppose that $c\in\operatorname{Null}(A)$. Under Assumption \ref{assump:striclyfeasible}, suppose that Algorithm \ref{alg: PDHG with restarts} (rPDHG) is run starting from $z^{0,0} = (x^{0,0},y^{0,0} ) = (0,0)$, and the step-sizes $\sigma$ and $\tau$ satisfy the step-size inequality \eqref{eq:general_stepsize}. Then for all $n\ge 1$ and any $\delta  \in (0,\bar{\delta}]$ it holds that
	\begin{equation}\label{eq thm L sharp LP}
		\dist_M(z^{n,0},\calZ^\star) \le    (5\sqrt{2} + 4) \cdot c_0^2 \cdot \frac{ D_\delta}{r_\delta}  \cdot \rho(\| z^{n,0} - z^{n-1,0}\|_M; z^{n,0}) \ .
	\end{equation}
\end{lemma}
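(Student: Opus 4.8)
The plan is to mirror the proof of Lemma \ref{thm L C}, replacing the general-CLP error bound (Lemma \ref{lm efeas bound dist}) by the sharper LP-specific error bound of Lemma \ref{lm c123 sharp LP}, which is available here precisely because the restriction $\delta \in (0,\bar\delta]$ activates the PD-sharpness characterization. The decisive structural gain is that Lemma \ref{lm c123 sharp LP} controls the optimality distance by a term $\tfrac{d^H_\delta}{\delta}\gap(w)$ rather than the CLP expression $\tfrac{d^H_\delta}{\delta}\max\{\gap(w),\delta\}$. The absence of the $\delta$-floor means that no case split on the sign of $\gap(w^{n,0})-\delta$ is needed, and more importantly the $\delta$ in the denominator will cancel cleanly, which is exactly what eliminates the additive (linear-in-$1/\eps$) term present in the CLP bound of Lemma \ref{thm L C}.

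Concretely, I would first invoke Lemma \ref{lm change of norm} (with $\calX:=\calX^\star$ and $\calY:=\calY^\star$) to pass from the $M$-norm distance to the Euclidean distance, obtaining $\dist_M(z^{n,0},\calZ^\star)\le \sqrt{2}c_0\cdot \dist(w^{n,0},\calW^\star)$. Next I would apply Lemma \ref{lm c123 sharp LP} at $w=w^{n,0}$ (valid for any $w$, since that lemma already dispatches both the $\gap(w)\le\delta$ and $\gap(w)\ge\delta$ cases via Proposition \ref{lm:convexity_ineq}) to expand $\dist(w^{n,0},\calW^\star)$ into a gap term and a feasibility-error term.

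Then I would substitute the translation estimates from Lemma \ref{lm use gap to bound error in w}: inequality \eqref{ineq translate feasibility error} bounds $\max\{\dist(w^{n,0},V),\dist(w^{n,0},K)\}$ by $c_0\cdot\rho$, and inequality \eqref{ineq translate gap error} bounds $\gap(w^{n,0})$ by $2\sqrt{2}c_0\cdot\dist(0,\calW^\star)\cdot\rho$, where $\rho:=\rho(\|z^{n,0}-z^{n-1,0}\|_M;z^{n,0})$. This collapses everything to a single multiple of $\rho$ with coefficient $\tfrac{4c_0^2 d^H_\delta}{\delta}\cdot\dist(0,\calW^\star)+\tfrac{5\sqrt{2}\,c_0^2 D_\delta}{r_\delta}$.

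Finally I would apply the right-hand inequality of \eqref{xcski}, which yields $\dist(0,\calW^\star)\le \max_{w\in\calW^\star}\|w\|\le \delta/r_\delta$; this cancels the $\delta$ and converts the first coefficient into $\tfrac{4c_0^2 d^H_\delta}{r_\delta}$. Using $d^H_\delta\le D_\delta$ from Lemma \ref{lm D ge r} and combining the two coefficients then gives the stated constant $(4+5\sqrt{2})\,c_0^2\,D_\delta/r_\delta$. I expect no genuine obstacle in this lemma itself, as the substantive effort was already spent establishing Lemma \ref{lm c123 sharp LP} through the PD-sharpness lower bound of Lemma \ref{lm:lowerbound_mu}. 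The one point demanding care is tracking the exact cancellation of $\delta$ so that the resulting bound is purely multiplicative in $\rho$ with no additive constant, which is precisely what will later allow $\calC=0$ to be fed into Theorem \ref{thm: complexity of PDHG with adaptive restart} and thereby deliver pure linear convergence for the LP case.
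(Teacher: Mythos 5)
Your proposal is correct and follows essentially the same route as the paper's proof: pass to the Euclidean distance via Lemma \ref{lm change of norm}, apply the LP-specific error bound of Lemma \ref{lm c123 sharp LP}, substitute the translation estimates of Lemma \ref{lm use gap to bound error in w}, and then use $\dist(0,\calW^\star)\le \delta/r_\delta$ from \eqref{xcski} together with $d^H_\delta \le D_\delta$ to cancel the $\delta$ and obtain the purely multiplicative constant $(5\sqrt{2}+4)c_0^2 D_\delta/r_\delta$. The constants and the order of the steps match the paper exactly.
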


\begin{proof}[Proof of Lemma \ref{thm L sharp LP}]
	Directly using Lemmas \ref{lm change of norm} and \ref{lm c123 sharp LP} we have
	\begin{equation}\label{eq L sharp LP 1}
		\begin{aligned}
			\dist_M(z^{n,0},\calZ^\star) & \le  \sqrt{2}c_0 \cdot	\dist(w^{n,0},\calW^\star)                                                                                                                           \\
			                             & \le  \	\frac{ \sqrt{2}c_0 d^H_\delta   }{\delta} \cdot\gap(w^{n,0}) +  \frac{5\sqrt{2}c_0  D_\delta}{r_\delta} \cdot \max\{ \dist(w^{n,0},K) , \dist(w^{n,0}, V)\} \ ,
		\end{aligned}
	\end{equation}
	and applying Lemma \ref{lm use gap to bound error in w} yields
	\begin{equation}\label{eq L sharp LP 2}			\begin{aligned}
			\dist_M(z^{n,0},\calZ^\star) \le \  & \left(  \frac{5\sqrt{2}c_0^2 D_\delta}{r_\delta}  + \frac{4 c_0^2 d^H_\delta   }{\delta} \cdot \dist(0,\calW^\star)    \right) \cdot \rho(\| z^{n,0} - z^{n-1,0}\|_M; z^{n,0}) \ .
		\end{aligned}
	\end{equation}
	From Lemma \ref{lm r_delta over delta} we have
	$r_\delta \cdot \dist(0,\calW^\star) \le r_\delta \cdot \max_{w\in\calW^\star}\|w\| \le \delta$, whereby it follows from \eqref{eq L sharp LP 2} that
	\begin{equation}\label{eq L sharp LP 3}
		\dist_M(z^{n,0},\calZ^\star)
		\le \left( \frac{5\sqrt{2} c_0^2 D_\delta}{r_\delta} +  \frac{4c_0^2 d^H_\delta}{r_\delta}
		\right)
		\cdot  \rho(\| z^{n,0} - z^{n-1}\|_M ; z^{n,0} ) \ .
	\end{equation}
	Last of all, notice that $d^H_\delta \le D_\delta$ (from Lemma \ref{lm D ge r}), and so \eqref{eq thm L sharp LP} follows from \eqref{eq L sharp LP 3}.
\end{proof}

We now prove Theorem \ref{thm overall complexity lp}. 

\begin{proof}[Proof of Theorem \ref{thm overall complexity lp}]
	From Lemma \ref{lm use gap to bound error in w} it follows that $w^{n,0}$ satisfies the $\eps$-tolerance requirement (Definition \ref{eq accuracy requirement clp}) if
	\begin{equation}\label{eq overall complexity lp 1}
		\rho(\| z^{n,0} - z^{n-1}\|_M ; z^{n,0} ) \le \min\left\{
		\frac{ \eps_{\mathrm{cons}}}{c_0},\
		\frac{ \eps_{\mathrm{gap}}}{2\sqrt{2} c_0 \cdot  \dist(0,\calW^\star) },\
		\frac{\eps_{\mathrm{obj}}}{14c_0\cdot\inf_{\gamma>0}\left(\frac{\gamma}{r_\gamma}\right)}
		\right\} \ .
	\end{equation}
	Now notice that the right-hand-side term is equal to $\frac{\err}{c_0}$ where $\err$ is defined in \eqref{def eq calN lp}.  Also, it follows from the choice of step-sizes in the theorem and the definition of $c_0$ in \eqref{icemelt} that $c_0 = \sqrt{\kappa}$.

	In the proof of Lemma \ref{thm L sharp LP} we see that \eqref{eq thm L sharp LP} holds for any $\delta \in (0,\bar\delta]$, so Theorem \ref{thm: complexity of PDHG with adaptive restart} can be applied since the condition \eqref{eq restart L C condition} is satisfied using \eqref{eq thm L sharp LP} with $\calL = (5\sqrt{2} + 4) \cdot c_0^2 \cdot \frac{ D_\delta}{r_\delta}$ and $\calC = 0$. Therefore it follows from Theorem \ref{thm: complexity of PDHG with adaptive restart} that $T$ satisfies
	\begin{equation}\label{eq overall complexity lp 2}
		T \le 23 \cdot \left((5\sqrt{2}+4)\cdot c_0^2\cdot \frac{D_\delta}{r_\delta} \right) \cdot  \ln\left(\frac{23 c_0\cdot  \dist_M(z^{0,0},\calZ^\star)}{\err}\right) \ .
	\end{equation}		Here we have $c_0^2 = \kappa$, and $\dist_M(z^{0,0},\calZ^\star) \le  \sqrt{2}c_0 \cdot \dist(w^{0,0},\calW^\star)\le  \sqrt{2}c_0 \cdot \dist(0,\calW^\star) = \sqrt{2\kappa} \cdot \dist(0,\calW^\star)$ from Lemma \ref{lm change of norm} and Proposition \ref{lm:distance_to_optimal_initial}.
	Additionally, $23 \cdot (5\sqrt{2}+4)  \le 255$ and $23\sqrt{2}\le 33$.     Therefore \eqref{eq overall complexity lp 2} yields \eqref{eq overall complexity LP}.
\end{proof}

\section{Proofs of Results in Section \ref{sec:geometric_enhancements}}\label{appendix:geometric_enhancements}

\subsection{Proof of Theorem \ref{thm:conditionnumber_after_transformation}}
We first recall definitions of a (logarithmically homogeneous) self-concordant barrier function $f$ and the complexity value $\vartheta_f$.

\begin{definition}[Sections 2.3.1 and 2.3.5 of \cite{renegar2001mathematical}]\label{def:SCbarrier}
	Let $f$ be a function defined on $\emph{\textsf{int}} K_p \subseteq \mathbb{R}^n $.
	The function $f$ is a self-concordant barrier function for $K_p$ if for all $x \in \emph{\textsf{int}} K_p$, the unit local norm ball $B_{x}(x,1)$ satisfies $B_{x}(x,1) \subset K_p$ and
	\begin{equation}\label{eq:sef:SCbarrier}
		1- \|y-x\|_x \le \frac{\|v\|_y}{\|v\|_x}\le \frac{1}{1 - \|y-x\|_x} \ \text{ for all }   y \in \emph{\textsf{int}}B_{x}(x,1) \text{ and } v\neq 0 \ ,
		\text{ and }
	\end{equation}
	\begin{equation}\label{eq:def:complexity_value}
		\vartheta_f:=\sup_{x\in \emph{\textsf{int}} K_p} \|\nabla f(x)\|_x^2 < \infty \ .
	\end{equation}
	Here $\vartheta_f$ denotes the complexity value of $f$. Additionally, $f$ is a logarithmically homogeneous self-concordant barrier function if for all $x\in\emph{\textsf{int}}K_p$ and all $t>0$ it holds that
	$f(tx) = f(x) - \vartheta_f \ln(t)$.
\end{definition}
\noindent
Logarithmically homogeneous self-concordant barriers have some very special properties, see \cite{nesterov1994interior,renegar2001mathematical}, among which is the following equality taken from \cite[Theorem 2.3.9]{renegar2001mathematical}:
\begin{equation}\label{patsday} \|x\|_x = \sqrt{\vartheta_f} \ \text{for all } \ x \in \textsf{int} K_p \ .
\end{equation}
(Any self-concordant barrier function for $K_p$ can be expanded to a logarithmically homogeneous self-concordant barrier function on $\{(x,t):t>0,x\in t \cdot K_p\}$, see \cite[Proposition 5.1.4]{nesterov1994interior}.)

For simplicity of notation, in this section we use $\tilde{w}(\eta):= \phi_\eta(w(\eta))$ to denote the rescaled central path solution for the parameter $\eta$.  We now prove Theorem \ref{thm:conditionnumber_after_transformation}.

\begin{proof}[Proof of Theorem \ref{thm:conditionnumber_after_transformation}]
	We first show that the local-norm ball at $w(\eta)$ maps to a Euclidean ball under the rescaling transformation $\phi_\eta$. For any $r>0$, the rescaling of the local-norm ball $\phi_\eta\left(
		B_{w(\eta)}(w(\eta),r)
		\right)$ satisfies	\begin{equation}\label{eq:local_to_Euclidean}
		\begin{aligned}
			\phi_\eta\left(
			B_{w(\eta)}(w(\eta),r)
			\right) = & \left\{ \tfrac{1}{\sqrt{\eta}} H_{w(\eta)}^{1/2}\cdot \hat{w}: \sqrt{(\hat{w}-w(\eta))^\top H_{w(\eta)} (\hat{w}-w(\eta))} \le r\right\} \\
			=         &
			\left\{v: \left\|v -\tfrac{1}{\sqrt{\eta}} H_{w(\eta)}^{1/2}\cdot w(\eta)\right\| \le\tfrac{r}{\sqrt{\eta}} \right\} = B\left(\tilde{w}(\eta),\tfrac{r}{\sqrt{\eta}}\right) \ ,
		\end{aligned}
	\end{equation}
	where the second equality uses the substitution $\hat{w} = \sqrt{\eta} H_{w(\eta)}^{-1/2}\cdot v$.

	Let us first prove \eqref{eq:tilde_D}. From item ({\it\ref{item:fact:centralpath_main:3}.}) of Fact \ref{fact:centralpath_main} we have
	$\calW_{\alpha} \subseteq	B_{w(\eta)}(w(\eta),\vartheta_F + 2\sqrt{\vartheta_F})$.	From \eqref{eq:local_to_Euclidean} we have
	\begin{equation}\label{eq:tw_in_tB}
		\begin{aligned}
			 & \tilde{\calW}_{\alpha}  \subseteq  \	\phi_\eta\left(B_{w(\eta)}(w(\eta),\vartheta_F + 2\sqrt{\vartheta_F})\right) = B\left(\tilde{w}(\eta),\tfrac{\vartheta_F + 2\sqrt{\vartheta_F}}{\sqrt{\eta}}\right) \ ,
		\end{aligned}
	\end{equation}
	and therefore $\tilde{D}_\alpha \le\frac{2\vartheta_F +4\sqrt{\vartheta_F}}{\sqrt{\eta}}$, which yields \eqref{eq:tilde_D} since logarithmic homogeneity implies $\vartheta_f = \vartheta_{f^*}$ and consequently $\vartheta_F = \vartheta_f + \vartheta_{f^*} = 2 \vartheta_f$.

	Next observe that \eqref{eq:tilde_d} directly follows from \eqref{eq:tilde_D} by using Lemma \ref{lm D ge r}.

	Regarding the lower bound on $\tilde{r}_{\alpha}$ in \eqref{eq:tilde_r}, we note that $ K \supseteq  B_{w(\eta)}(w(\eta),1)$ because $F$ is a self-concordant barrier function on $K$. Therefore
	\begin{equation}\label{eq:thm:conditionnumber_after_transformation_2}
		\begin{aligned}
			\tilde{K} = \phi_\eta (K)  \supseteq  \phi_\eta\left(B_{w(\eta)}(w(\eta),1)\right)  =
			B\left(\tilde{w}(\eta),\tfrac{1}{\sqrt{\eta}}\right) \ ,
		\end{aligned}
	\end{equation}where the last equality uses \eqref{eq:local_to_Euclidean}.
	And since $\tilde{w}(\eta)\in \tilde{\calW}_{\alpha}$, \eqref{eq:thm:conditionnumber_after_transformation_2} implies that $\tilde{r}_{\alpha} \ge \tfrac{1}{\sqrt{\eta}}$, which proves \eqref{eq:tilde_r}.

	Concerning the proof of \eqref{eq:tilde_dist}, we have the following expression for $\|\tilde{w}(\eta)\|$:
	\begin{equation}\label{eq:norm_of_tildew}
		\begin{aligned}
			\|\tilde{w}(\eta)\| & = \left\|\tfrac{1}{\sqrt{\eta}} H_{w(\eta)}^{1/2} \cdot w(\eta)\right\| =  \tfrac{1}{\sqrt{\eta}}\cdot\|w(\eta)\|_{w(\eta)} = \sqrt{\tfrac{\vartheta_F}{\eta}} \ ,
		\end{aligned}
	\end{equation}
	where the last equality uses \eqref{patsday}. Hence
	\begin{equation}\label{eq:dist_of_tildew}
		\dist(0,\tilde{\calW}^\star) \le \|\tilde{w}(\eta)\| + \dist(\tilde{w}(\eta),\tilde{\calW}^\star	) \le \sqrt{\tfrac{\vartheta_F}{\eta}} + \tfrac{\vartheta_F +2\sqrt{\vartheta_F}}{\sqrt{\eta}}  =  \tfrac{\vartheta_F +3\sqrt{\vartheta_F}}{\sqrt{\eta}}  = \tfrac{2\vartheta_f +3\sqrt{2\vartheta_f}}{\sqrt{\eta}} \ ,
	\end{equation}
	where the second inequality uses \eqref{eq:norm_of_tildew} and \eqref{eq:tw_in_tB}, and the last equality uses $\vartheta_F = \vartheta_f + \vartheta_{f^*} = 2 \vartheta_f$.
	Last of all, \eqref{eq:ratioratio} follows immediately from \eqref{eq:tilde_D} and \eqref{eq:tilde_r}.
\end{proof}

\subsection{Proof of Proposition \ref{lm:feature_of_log_hessian}}
The proof actually is nothing more than manipulation of basic barrier calculus. We first state the following properties of self-concordant barriers as presented in \cite{renegar2001mathematical}.
\begin{fact}\label{fact:SCfunction}
	The following statements hold for the self-concordant barrier function $f$ for $K_p$:
	\begin{enumerate}
		\item For all $x \in K_p$ it holds that $-\nabla f(x) \in K_p^*$.  Furthermore $\emph{\textsf{int}}K_p^* = \{-\nabla f(x):x\in\emph{\textsf{int}}K_p\}$ \cite[Proposition 3.3.3]{renegar2001mathematical}. For any $x\in\emph{\textsf{int}} K_p$, let $s = -\nabla f(x)$, then   $\nabla^2 f(x) = (\nabla^2 f^*(s))^{-1}$ \cite[Theorem 3.3.4]{renegar2001mathematical}.\label{item:fact:SCfunction:2}
		\item If $f$ is logarithmically homogeneous, then for any $t>0$, $\nabla^2 f(tx) = \frac{1}{t^2}\cdot \nabla^2 f(x)$ \cite[Theorem 2.3.9]{renegar2001mathematical}. \label{item:fact:SCfunction:3}
		\item  Let $w(\eta)=(x(\eta),s(\eta))$ be the optimal solution of \eqref{pro: general primaldual clp centralpath}. If $f$ is logarithmically homogeneous, then  $s(\eta) = -\frac{1}{\eta}\nabla f(x(\eta))$ \cite[Section 3.4]{renegar2001mathematical}. \label{item:fact:SCfunction:4}
	\end{enumerate}
\end{fact}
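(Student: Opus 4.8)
The plan is to prove the three items in the order (2), (1), (3), since (2) is pure calculus, (1) is the substantive step, and (3) invokes (1). Throughout I would lean on the defining identities of a logarithmically homogeneous self-concordant barrier rather than re-deriving self-concordance estimates.

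First I would dispatch item (2). Taking the gradient in $x$ of the defining relation $f(tx) = f(x) - \vartheta_f\ln t$ gives $t\,\nabla f(tx) = \nabla f(x)$, so $\nabla f$ is homogeneous of degree $-1$; differentiating this once more in $x$ yields $t^2\,\nabla^2 f(tx) = \nabla^2 f(x)$, which is exactly $\nabla^2 f(tx) = \tfrac{1}{t^2}\nabla^2 f(x)$. As a byproduct, differentiating $t\,\nabla f(tx)=\nabla f(x)$ in $t$ and setting $t=1$ records the identity $\nabla^2 f(x)\,x = -\nabla f(x)$, which is convenient later.

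Item (1) splits into dual feasibility, the bijection, and the Hessian-inverse relation. For $-\nabla f(x)\in K_p^*$, I would fix $x,y\in\textsf{int}K_p$ and consider the scalar convex function $\phi(s):=f(x+sy)$ on $s\ge 0$. Since $x+sy = s(y+x/s)$ with $y+x/s\to y$, logarithmic homogeneity gives $\phi(s)=f(y+x/s)-\vartheta_f\ln s\to-\infty$; a convex function tending to $-\infty$ is nonincreasing, so $\phi'(0)=\langle\nabla f(x),y\rangle\le 0$, and letting $y$ range over $\textsf{int}K_p$ and taking closures yields $\langle -\nabla f(x),y\rangle\ge 0$ for all $y\in K_p$. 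For the bijection $\textsf{int}K_p^*=\{-\nabla f(x):x\in\textsf{int}K_p\}$, the map $x\mapsto-\nabla f(x)$ is injective by strict convexity of $f$ and has open image; for surjectivity I would fix $s\in\textsf{int}K_p^*$ and minimize $x\mapsto s^\top x + f(x)$ over $\textsf{int}K_p$, which is strictly convex and (because $s$ is interior to the dual cone) coercive, hence attains a unique minimizer $x$ with $\nabla f(x)=-s$. For the Hessian identity, with the paper's convention $f^*(s)=-\inf_x\{s^\top x + f(x)\}$ the minimizer $x(s)$ satisfies $s=-\nabla f(x(s))$ and, by the envelope theorem, $\nabla f^*(s)=-x(s)$; differentiating $s=-\nabla f(x(s))$ gives $I=-\nabla^2 f(x(s))\,\tfrac{dx}{ds}$ while $\nabla f^*(s)=-x(s)$ gives $\nabla^2 f^*(s)=-\tfrac{dx}{ds}$, and eliminating $\tfrac{dx}{ds}$ produces $\nabla^2 f(x)\,\nabla^2 f^*(s)=I$.

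Finally, for item (3) I would write the KKT conditions for the primal half of the separable problem \eqref{pro: general primaldual clp centralpath}, namely $\min\{\eta\, c^\top x + f(x):Ax=b\}$. Stationarity gives $\eta c + \nabla f(x(\eta)) = A^\top\lambda$ for some $\lambda$, so setting $y(\eta):=\lambda/\eta$ and $s(\eta):=c - A^\top y(\eta)\in V_d$ yields
\[
s(\eta) \;=\; c - A^\top\tfrac{\lambda}{\eta} \;=\; -\tfrac{1}{\eta}\,\nabla f(x(\eta)),
\]
and item (1) guarantees $-\nabla f(x(\eta))\in\textsf{int}K_p^*=\textsf{int}K_d$, so this $s(\eta)$ is interior-feasible and is the dual central-path point. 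The main obstacle is item (1): the surjectivity in the bijection claim rests on verifying coercivity of $s^\top x + f(x)$ for $s\in\textsf{int}K_p^*$, and the Hessian-inverse identity requires carefully tracking the sign conventions in the paper's (slightly nonstandard) definition of $f^*$. Items (2) and (3) are short once item (1) is in hand.
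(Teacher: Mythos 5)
You should first note what you are being compared against: the paper does not prove this statement at all — it is presented as a Fact with citations to Renegar (Proposition 3.3.3, Theorem 3.3.4, Theorem 2.3.9, and Section 3.4 of \cite{renegar2001mathematical}) — so your proposal is a from-scratch reconstruction of classical results rather than a parallel to an argument in the paper. With that understood, your items (2) and (1) are sound. Differentiating $f(tx)=f(x)-\vartheta_f\ln t$ twice gives item (2), and your bijection argument is correctly assembled: injectivity from strict monotonicity of $\nabla f$, openness of the image via the inverse function theorem (which is exactly what rescues you from only having the non-strict inequality $\langle -\nabla f(x),y\rangle\ge 0$ for boundary $y$ — an open subset of $K_p^*$ automatically lies in $\textsf{int}K_p^*$), and surjectivity by coercive minimization of $s^\top x+f(x)$; the envelope computation giving $\nabla^2 f(x)=(\nabla^2 f^*(s))^{-1}$ under the paper's sign convention for $f^*$ is also correct. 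One scope caveat: item (1) of the Fact is stated for a general self-concordant barrier, while your ray argument $\phi(s)=f(y+x/s)-\vartheta_f\ln s\to-\infty$ uses logarithmic homogeneity; in the general case one would instead use the semiboundedness inequality $\langle\nabla f(x),y-x\rangle\le\vartheta_f$ with $y=x+th$, $t\to\infty$. This is immaterial for the paper's usage (the Fact is only invoked for logarithmically homogeneous $f$ in Proposition \ref{lm:feature_of_log_hessian}), but worth flagging.

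The genuine gap is in item (3). The paper defines $s(\eta)$ as the optimal solution of the dual half of the separable problem \eqref{pro: general primaldual clp centralpath}, i.e., the minimizer of $\eta\, q^\top s+f^*(s)$ over $V_d\cap\textsf{int}K_d$. Your argument manufactures a candidate $s(\eta)=c-A^\top\lambda/\eta=-\tfrac{1}{\eta}\nabla f(x(\eta))$ from the primal KKT conditions and verifies only that it is interior-feasible; the assertion that it ``is the dual central-path point'' is precisely the claim to be proved, and feasibility does not imply optimality. The missing verification is short with tools you already recorded: degree-$(-1)$ homogeneity of $\nabla f$ gives $s(\eta)=-\nabla f\big(\eta\, x(\eta)\big)$, so your envelope identity $\nabla f^*(-\nabla f(x))=-x$ yields $\nabla f^*(s(\eta))=-\eta\, x(\eta)$; dual stationarity requires $\eta q+\nabla f^*(s(\eta))=\eta\big(q-x(\eta)\big)$ to be orthogonal to $\lin{V}_d=\operatorname{Im}(A^\top)$, i.e., to lie in $\operatorname{Null}(A)$, which holds because $Aq=AA^\top(AA^\top)^{\dag}b=b=Ax(\eta)$ (using $b\in\operatorname{Im}(A)$, guaranteed by primal feasibility under Assumption \ref{assump:striclyfeasible}). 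Stationarity, interior feasibility, and strict convexity of $f^*$ then identify your candidate with the unique minimizer, closing item (3).
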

With these facts in mind, we can now easily prove the proposition.
\begin{proof}[Proof of Proposition \ref{lm:feature_of_log_hessian}]
	It follows from the definition of $\phi_\eta$ in \eqref{eq:def phi} and definition of $D_1$ in the proposition that
	$\phi_\eta(w)= ( D_1^{-1} x , \tfrac{1}{\sqrt{\eta}} \cdot H_{s(\eta)}^{1/2} s ) $, so it suffices to prove that $D_1 = \tfrac{1}{\sqrt{\eta}} \cdot H_{s(\eta)}^{1/2}$.  We have
	\begin{equation}\label{peaky} H_{s(\eta)} := \nabla^2 f^*(s(\eta)) = \nabla^2 f^*(-\tfrac{1}{\eta}\nabla f(x(\eta))) = \eta^2 \nabla^2 f^*(-\nabla f(x(\eta))) = \eta^2 [\nabla^2 f(x(\eta))]^{-1} \ , \end{equation}
	where the second equality uses item (\ref{item:fact:SCfunction:4}.) of Fact \ref{fact:SCfunction}, the third equality uses item (\ref{item:fact:SCfunction:3}.) of Fact \ref{fact:SCfunction}, and the fourth equality uses item (\ref{item:fact:SCfunction:2}.) of Fact \ref{fact:SCfunction}.  Last of all, it follows from \eqref{peaky} that
	$$H_{s(\eta)} = \eta^2 [H_{x(\eta)}]^{-1} = \eta D_1^2  \ , $$ which provides the proof after rearranging and taking square roots. \end{proof}

\subsection{Proof of Theorem \ref{thm overall complexity rescaled}}
The proof of Theorem \ref{thm overall complexity rescaled} is more involved than simply applying the complexity guarantees of Theorems \ref{thm overall complexity clp} and \ref{thm overall complexity lp} due to the fact that the $\eps$-tolerance condition for the rescaled problem \eqref{pro:rescaled problem} does not correspond to the $\eps$-tolerance condition of the original problem \eqref{pro: general primal clp}.

We begin with the following affine invariance properties for $w\in V$.

\begin{proposition}\label{lm:gap_invariant}
	Let $\widetilde{\gap}(\cdot)$ and $\widetilde{\eobj}(\cdot)$ denote the duality gap and the objective function error of the rescaled problem \eqref{pro:rescaled problem}. Then for any $w \in V$ and $\eta > 0$, it holds that  $
		\gap(w) = \widetilde{\gap}(\phi_\eta(w))$ and $\eobj(w) = \widetilde{\eobj}(\phi_{\eta}(w))$.
\end{proposition}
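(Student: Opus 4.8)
The plan is to reduce both identities to the single observation that, on the affine feasibility subspace, the duality gap is simply the inner product of the primal and dual cone variables. First I would show that for any $w=(x,s)\in V$ one has $\gap(w) = x^\top s$. Indeed, $w\in V$ means $Ax=b$ and $A^\top y + s = c$ for some $y\in\mathbb{R}^m$, so $c^\top x = (A^\top y + s)^\top x = y^\top(Ax) + s^\top x = b^\top y + s^\top x$; combining this with the identity $b^\top y = q_0 - q^\top s$ already established in \eqref{eq by=-qs} gives $\gap(w) = c^\top x + q^\top s - q_0 = s^\top x$. Crucially, this derivation uses only membership in $V$ (not membership in $K$), so the same computation applied verbatim to the rescaled data $(\tilde A,\tilde b,\tilde c):=(D_2 A D_1, D_2 b, D_1^\top c)$ yields $\widetilde{\gap}(\tilde w) = \tilde x^\top \tilde s$ for every $\tilde w=(\tilde x,\tilde s)\in\tilde V$.

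Next I would verify that $\phi_\eta$ carries $V$ onto $\tilde V$. Since $D_2$ is full-rank, $\tilde V_p = \{\tilde x : D_2 A D_1 \tilde x = D_2 b\} = \{\tilde x : A D_1 \tilde x = b\} = D_1^{-1} V_p$, and an analogous argument (using $D_1^\top = D_1$ and $D_2$ full-rank) gives $\tilde V_d = D_1 V_d$, so $\phi_\eta(w) = (D_1^{-1}x, D_1 s)\in\tilde V$ whenever $w\in V$. The gap invariance then follows from the two inner-product formulas together with the symmetry of $D_1 = \sqrt{\eta}\,H_{x(\eta)}^{-1/2}$: writing $\tilde w = \phi_\eta(w)$ and using Proposition \ref{lm:feature_of_log_hessian}, $\widetilde{\gap}(\tilde w) = \tilde x^\top \tilde s = (D_1^{-1}x)^\top(D_1 s) = x^\top D_1^{-1} D_1 s = x^\top s = \gap(w)$, where $D_1^{-\top} = D_1^{-1}$ because $H_{x(\eta)}^{-1/2}$ is symmetric. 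This establishes the first claim.

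For the objective-error identity I would first record two facts: (i) the primal objective is preserved, $\tilde c^\top \tilde x = (D_1^\top c)^\top (D_1^{-1} x) = c^\top x$; and (ii) the common optimal value is preserved, $f^\star = \tilde f^\star$, which holds because $\phi_\eta$ is a linear bijection from $\calF$ onto $\tilde\calF$ that preserves the primal objective by (i) and hence preserves the optimal value. Combining gap invariance with (i) shows the dual objectives agree as well, i.e. $q_0 - q^\top s = \tilde q_0 - \tilde q^\top \tilde s$. Substituting (i), (ii), and this dual-objective identity into the two absolute-value terms of \eqref{eq objective error} gives $|c^\top x - f^\star| = |\tilde c^\top \tilde x - \tilde f^\star|$ and $|f^\star - q_0 + q^\top s| = |\tilde f^\star - \tilde q_0 + \tilde q^\top \tilde s|$, and summing them yields $\eobj(w) = \widetilde{\eobj}(\phi_\eta(w))$.

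The main obstacle is bookkeeping rather than conceptual: one must be careful that the formula $\gap(w)=x^\top s$ requires only $w\in V$ (so that it transfers cleanly under $\phi_\eta$, which is only guaranteed to respect $V$, not $K$), and that the objective-error terms reference $f^\star$, so the equality $f^\star = \tilde f^\star$ must be justified before matching the two errors term by term. Using the dual-objective form $q_0 - q^\top s = b^\top y$ throughout lets me avoid any direct manipulation of the pseudoinverses hidden inside $q$, $q_0$, $\tilde q$, and $\tilde q_0$.
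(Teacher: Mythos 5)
Your proof is correct, and it takes a mildly different route from the paper's on both halves. For the gap identity, the paper works with the multiplier: it writes $\widetilde{\gap}(\tilde w)=(D_1^\top c)^\top\tilde x-(D_2 b)^\top\tilde y$ for the rescaled multiplier $\tilde y$, observes that $y:=D_2^\top\tilde y$ is a valid multiplier for the original dual constraint, and matches $c^\top x-b^\top y$ term by term. You instead collapse everything to the identity $\gap(w)=x^\top s$ valid on all of $V$ (correctly noting this needs only the affine constraints, not the cone), after which invariance is the one-line computation $(D_1^{-1}x)^\top(D_1 s)=x^\top s$ using symmetry of $D_1$. Your route is arguably cleaner since it avoids carrying the multiplier through the rescaling, at the cost of first verifying $\phi_\eta(V)=\tilde V$, which you do. For the objective error, the paper uses the compact identity $\widetilde{\eobj}(\tilde w)=|\widetilde{\gap}(\tilde x,\tilde s^\star)|+|\widetilde{\gap}(\tilde x^\star,\tilde s)|$ for an optimal pair and reduces everything to gap invariance; you instead separately establish invariance of the primal objective value, of the common optimal value $f^\star$ (via $\phi_\eta$ being an objective-preserving bijection $\calF\to\tilde\calF$), and hence of the dual objective value, and then match the two absolute-value terms of \eqref{eq objective error} directly. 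The paper's trick is slicker, but it implicitly relies on optimal solutions mapping to optimal solutions under $\phi_\eta$ — exactly the fact your explicit verification of $f^\star=\tilde f^\star$ surfaces — so your version makes a hidden dependency visible while reaching the same conclusion.
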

\begin{proof}
	In the proof we will use $\tilde{w} = (\tilde{x},\tilde{s})$ to denote $\phi_\eta(w)$. Since $\phi_\eta(w)\in\phi_\eta(V) = \tilde{V}$ and the rescaled affine subspace $\tilde{V}$ is $\{x: AD_1 x = b\}\times \{s: D_1^\top A^\top y + s = D_1^\top c\}$, there exists $\tilde{y}\in \mathbb{R}^m$ such that $(D_2 A D_1)^\top \tilde{y} + \tilde{s} = D_1^\top c$, which means  $\widetilde{\gap}(\tilde{w})=(D_1^\top c)^\top \tilde{x} - (D_2 b)^\top \tilde{y}$.

	Notice that from Lemma \ref{lm:feature_of_log_hessian}, $\tilde{s} = D_1 s$ so $(D_2 A D_1)^\top \tilde{y} + D_1 s = D_1^\top c$ and thus $A^\top (D_2^\top \tilde{y}) + s = c$. Therefore, $\gap(w)$ can be written as $c^\top x - b^\top (D_2^\top \tilde{y})$.

	According to Lemma \ref{lm:feature_of_log_hessian}, $\tilde{x} = D_1^{-1} x$, so substituting this into the expressions of $\widetilde{\gap}(\tilde{w})$ and $\gap(w)$ derived above yields $\widetilde{\gap}(\tilde{w}) = \gap(w)$.

	Let $\widetilde{\eobj}(\cdot)$ denote the objective error of the rescaled problem. Because the duality gap is invariant under the rescaling, and $\widetilde{\eobj}(\tilde{w} ) = |\widetilde{\gap}(\tilde{x},\tilde{s}^\star)| + |\widetilde{\gap}(\tilde{x}^\star,\tilde{s})|$ for $(\tilde{x}^\star,\tilde{s}^\star)\in\calW^\star$, the objective error also remains invariant for $w\in V$, namely $\eobj(w) = \widetilde{\eobj}(\phi_{\eta}(w))$.
\end{proof}

Notice that standard measures of feasibility error such as the distances to $K$ and $V$, are not generically invariant under the rescaling transformation.  We therefore are led to consider the following new measure of feasibility error, as a means to the end of proving Theorem \ref{thm overall complexity rescaled}. For any $\delta > 0$ and $w \in V$, the relative feasibility ratio on $\eqref{pro: general primal clp}$ is defined as
\begin{equation}\label{eq def relative feasibility error}
	\efeas(w;\delta) := \min_{\bar{w} \in \calW_\delta} \frac{\| \calF(w;\bar{w})  - w \|  }{\| \calF(w;\bar{w})  - \bar{w} \|} \
\end{equation}
where recall the definition of $\calF(w,\bar{w})$ in \eqref{eq  def v}. The relative feasibility error of $w$ with respect to $\calW_\delta$ can be interpreted as a measure of the extent to which $\calW_\delta$ can be expanded, centered at a specific inner point $\bar{w}$, such that the expanded $\calW_\delta$ encompasses $w$. Similarly, the relative feasibility error of the rescaled problem for $\tilde{w}\in\tilde{V}$ is denoted by $\widetilde{\efeas}(\tilde{w};\delta) := \min_{\hat{w} \in \tilde{\calW}_\delta} \frac{\| \tilde{\calF}(\tilde{w};\hat{w})  - \tilde{w} \|  }{\| \tilde{\calF}(\tilde{w};\hat{w})  - \hat{w} \|}$.
Note that
$$
	\frac{\| \calF(w;\bar{w})  - w \|  }{\| \calF(w;\bar{w})  - \bar{w} \|}  = \frac{\|\phi_\eta(\calF(w;\bar{w}))  - \phi_\eta(w) \|  }{\| \phi_\eta(\calF(w;\bar{w}))  - \phi_\eta(\bar{w}) \|}
	= \frac{\|\tilde{\calF}(\phi_\eta(w);\phi_\eta(\bar{w}))  - \phi_\eta(w) \|  }{\| \tilde{\calF}(\phi_\eta(w);\phi_\eta(\bar{w}))  - \phi_\eta(\bar{w}) \|}
$$
holds for any $w\in V$ and $\bar{w}\in\calF$, because $w$, $\calF(w;\bar{w})$, and $\bar{w}$ are collinear. Therefore the relative feasibility ratio is invariant under $\phi_\eta$, i.e., $\efeas(w;\delta) = \widetilde{\efeas}(\phi_\eta(w);\delta)$ for any $w\in V$ and $\eta > 0 $. 

The following lemma shows that the relative feasibility error can be bounded from above and below by the distance to $K$ up to certain factors involving $D_\delta$ and $r_\delta$.
\begin{lemma}\label{lm:distance_to_cone_bounded}
	Let $w$ be any point in $V$. For any $\delta \ge \gap(w)$, the following inequalities hold:
	\begin{equation}\label{eq:lm:distance_to_cone_bounded}
		\frac{\dist(w,\calF)}{D_\delta}\le \efeas(w;\delta)\le \frac{\dist(w,K)}{r_\delta} \ .
	\end{equation}
\end{lemma}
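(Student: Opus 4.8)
The plan is to prove the two inequalities separately, in both cases reducing the minimization that defines $\efeas(w;\delta)$ to a single well-chosen competitor and leaning on the geometry already extracted in Lemma~\ref{lm error bound R r}. First I would dispose of the degenerate case $w \in \calF$: then $\dist(w,\calF) = \dist(w,K) = 0$ and, since $\gap(w)\le\delta$, also $w \in \calW_\delta$, so both outer bounds in \eqref{eq:lm:distance_to_cone_bounded} are $0$ and the claim is vacuous once one observes that the ratio in $\efeas$ can be driven to $0$. Hence I would assume $w \notin \calF$, equivalently $\dist(w,K) > 0$ (as $w\in V$), so that $\calF(w;\bar w)$ is a genuine boundary-crossing point for any $\bar w \in \calF_{++}$.

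For the upper bound $\efeas(w;\delta) \le \dist(w,K)/r_\delta$, since $\efeas$ is a minimum over $\bar w \in \calW_\delta$ it suffices to exhibit one admissible $\bar w$ achieving the bound, and the natural choice is the conic center $w_\delta$ of Definition~\ref{def radius}, which lies in $\calF_{++}$. Writing $v := \calF(w;w_\delta)$, the ratio $\tfrac{\|\calF(w;w_\delta)-w\|}{\|\calF(w;w_\delta)-w_\delta\|}$ appearing in the definition of $\efeas$ equals $\tfrac{\|w-v\|}{\|w_\delta-v\|}$, which is exactly the quantity bounded by $\dist(w,K)/r_\delta$ in inequality~\eqref{eq error bound R r 4} of the proof of Lemma~\ref{lm error bound R r}. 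So this direction is essentially a reread of that earlier argument.

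For the lower bound $\dist(w,\calF)/D_\delta \le \efeas(w;\delta)$, I would bound the ratio from below for \emph{every} admissible $\bar w \in \calW_\delta$ and then take the minimum. Fix such a $\bar w$ and set $v := \calF(w;\bar w)$. The numerator satisfies $\|w - v\| \ge \dist(w,\calF)$ because $v \in \calF$ by construction. For the denominator, I claim $v \in \calW_\delta$: it lies in $\calF$, and since $v$ is on the segment joining $w$ and $\bar w$ and $\gap(\cdot)$ is affine, $\gap(v) \le \max\{\gap(w),\gap(\bar w)\} \le \delta$. Hence $v$ and $\bar w$ both lie in $\calW_\delta$, so $\|v - \bar w\| \le D_\delta$ by Definition~\ref{def diameter}. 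Combining, $\tfrac{\|w-v\|}{\|v-\bar w\|} \ge \dist(w,\calF)/D_\delta$ for every $\bar w$, and minimizing over $\bar w \in \calW_\delta$ yields the lower bound.

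The calculations are routine; the main obstacle is purely in the bookkeeping, namely (i) confirming the boundary-crossing point $\calF(w;\bar w)$ is well-defined and indeed lands in $\calW_\delta$ (via the affineness of $\gap$ together with $\delta \ge \gap(w)$), and (ii) treating the degenerate $w \in \calF$ case so that the minimization and the ratios are not vacuous. The one conceptual step is recognizing that the upper bound is nothing more than a repackaging of inequality~\eqref{eq error bound R r 4}, so that no estimate beyond Lemma~\ref{lm error bound R r} is required.
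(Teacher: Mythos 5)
Your proposal is correct and follows essentially the same route as the paper: the upper bound is obtained by testing the conic center $w_\delta$ as a competitor and invoking the second inequality of Lemma~\ref{lm error bound R r}, while the lower bound uses $\dist(w,\calF)\le\|\calF(w;\bar w)-w\|$ together with $\|\calF(w;\bar w)-\bar w\|\le D_\delta$ since both points lie in $\calW_\delta$. The only cosmetic difference is that you bound the ratio for every $\bar w\in\calW_\delta$ before minimizing, whereas the paper applies the same two estimates directly to the minimizing $\bar w$; this is logically equivalent.
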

\noindent
Note from \eqref{eq:lm:distance_to_cone_bounded} that a small value of $\efeas(w;\delta)$ implies a small value of $\dist(w,\calF)$, which then implies a small value of $\dist(w, K)$ since $K \subseteq \calF$.
\begin{proof}
	To prove the first inequality, let $\bar{w}\in \calW_\delta$ be the point that achieves the minimum in \eqref{eq def relative feasibility error}, so that $\efeas(w;\delta) = \frac{\| \calF(w;\bar{w})  - w \|  }{\| \calF(w;\bar{w})  - \bar{w} \|}$. Then we have
	$$\dist(w,\calF) \le \| \calF(w;\bar{w})  - w \| = \| \calF(w;\bar{w})  - \bar{w} \|\cdot \efeas(w;\delta)  \le D_\delta \cdot \efeas(w;\delta) \ , $$
	where the last inequality follows from the fact that both $\calF(w;\bar{w})$ and $\bar{w}$ belong to $\calW_\delta$.

	To prove the second inequality, let $w_\delta$ be the conic center of  $\calW_\delta$ (defined in Definition \ref{def radius}). By the definition of the relative feasibility error, $\frac{	\| \calF(w;w_\delta) - w \|}{\| \calF(w;w_\delta)- w_\delta\| } \ge \efeas(w;\delta)$. Additionally, from Lemma \ref{lm error bound R r} we have $\frac{	\| \calF(w;w_\delta) - w \|}{\| \calF(w;w_\delta)- w_\delta\| } \le \frac{\dist(w,K)}{r_\delta}$, and combining these two inequalities furnishes the proof of the second inequality in \eqref{eq:lm:distance_to_cone_bounded}.
\end{proof}

The following lemma provides criteria for a candidate solution to the rescaled problem to be transformed back to a suitably nearly-optimal solution to the original problem.

\begin{lemma}\label{lm:acc_to_get}
	Suppose that $\alpha:=\gap(w(\eta)) \ge \min\{\eps_{\mathrm{obj}}, \eps_{\mathrm{gap}} \}$. Let $\tilde{w}=(\tilde{x},\tilde{s})$ satisfy $\tilde{x}\in \tilde{K}_p$ and $\tilde{s}\in \tilde{V}_d$, and also
	\begin{equation}\label{eq:lm:acc_to_get:1}
		\max\{\dist(\tilde{w},\tilde{V}),\dist(\tilde{w},\tilde{K})\}\le \frac{\eps_{\mathrm{cons}}\cdot \tilde{r}_\alpha}{\sqrt{2}\cdot D_{\alpha}} , \ \text{ and } \  \widetilde{\eobj}(\tilde{w})\le \min\{\eps_{\mathrm{obj}}, \eps_{\mathrm{gap}} \} \
	\end{equation}
	for the rescaled problems \eqref{pro:rescaled problem} and \eqref{pro:dual rescaled problem}.  Then $w:=\phi_\eta^{-1}(P_{\tilde{V}}(\tilde{w}))$ satisfies
	\begin{equation}\label{eq:lm:acc_to_get:2}
		\dist(w,\calF)\le \eps_{\mathrm{cons}}  \ \ \text{ and } \ \eobj(w)\le \min\{\eps_{\mathrm{obj}}, \eps_{\mathrm{gap}}\}  \ .
	\end{equation}
\end{lemma}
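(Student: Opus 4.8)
The plan is to push everything through the rescaling map $\phi_\eta$, exploiting the invariance of the gap, the objective error, and the relative feasibility ratio, and thereby reduce the two target inequalities to two estimates carried out entirely in the rescaled space. First I would set $\hat{w}:=P_{\tilde{V}}(\tilde{w})$ and $w:=\phi_\eta^{-1}(\hat{w})$; since $\hat{w}\in\tilde{V}=\phi_\eta(V)$ we get $w\in V$ for free, which is exactly what licenses the use of Proposition \ref{lm:gap_invariant} and Lemma \ref{lm:distance_to_cone_bounded}. Because $\tilde{s}\in\tilde{V}_d$ by hypothesis, the projection onto $\tilde{V}=\tilde{V}_p\times\tilde{V}_d$ only moves the primal block, so $\hat{w}=(\hat{x},\tilde{s})$ with $\hat{x}=P_{\tilde{V}_p}(\tilde{x})$.

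For the objective-error conclusion I would first argue that projecting onto $\tilde{V}$ leaves both objective values unchanged. The primal objective is $\bar{c}^\top\tilde{x}$ with $\bar{c}=D_1^\top c\in\operatorname{Null}(D_2AD_1)$, and the projection perturbs $\tilde{x}$ by a vector in $\operatorname{Im}((D_2AD_1)^\top)=\operatorname{Null}(D_2AD_1)^\perp$, so $\bar{c}^\top\hat{x}=\bar{c}^\top\tilde{x}$; the dual block is untouched. Hence $\widetilde{\eobj}(\hat{w})=\widetilde{\eobj}(\tilde{w})$, and by the invariance in Proposition \ref{lm:gap_invariant} (valid since $w\in V$) we get $\eobj(w)=\widetilde{\eobj}(\hat{w})=\widetilde{\eobj}(\tilde{w})\le\min\{\eps_{\mathrm{obj}},\eps_{\mathrm{gap}}\}$, which is the second conclusion. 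The same chain delivers the auxiliary estimate $\gap(w)=\widetilde{\gap}(\hat{w})\le\widetilde{\eobj}(\hat{w})\le\min\{\eps_{\mathrm{obj}},\eps_{\mathrm{gap}}\}\le\alpha$, where I use $|\widetilde{\gap}|\le\widetilde{\eobj}$ (triangle inequality on the objective-error definition) and the standing hypothesis $\alpha\ge\min\{\eps_{\mathrm{obj}},\eps_{\mathrm{gap}}\}$. This certifies $\alpha\ge\gap(w)=\widetilde{\gap}(\hat{w})$, so that Lemma \ref{lm:distance_to_cone_bounded} may be invoked at level $\delta=\alpha$ in both the original and the rescaled space.

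For the feasibility conclusion I would chain the two halves of Lemma \ref{lm:distance_to_cone_bounded} across the rescaling: $\dist(w,\calF)\le D_\alpha\cdot\efeas(w;\alpha)=D_\alpha\cdot\widetilde{\efeas}(\hat{w};\alpha)\le \tfrac{D_\alpha}{\tilde{r}_\alpha}\dist(\hat{w},\tilde{K})$, where the middle equality is the $\phi_\eta$-invariance of the relative feasibility ratio. It then remains to show $\dist(\hat{w},\tilde{K})\le \eps_{\mathrm{cons}}\tilde{r}_\alpha/D_\alpha$, after which $D_\alpha$ and $\tilde{r}_\alpha$ cancel and $\dist(w,\calF)\le\eps_{\mathrm{cons}}$ follows. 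The crucial point — and where I expect the real content to lie — is to exploit the product structure $\tilde{K}=\tilde{K}_p\times\tilde{K}_d$ together with the two hypotheses $\tilde{x}\in\tilde{K}_p$ and $\tilde{s}\in\tilde{V}_d$. These force $\dist(\tilde{w},\tilde{V})=\dist(\tilde{x},\tilde{V}_p)$ and $\dist(\tilde{w},\tilde{K})=\dist(\tilde{s},\tilde{K}_d)$ (one block each); and since $\hat{w}=(\hat{x},\tilde{s})$ with $\dist(\hat{x},\tilde{K}_p)\le\|\hat{x}-\tilde{x}\|=\dist(\tilde{x},\tilde{V}_p)$ (using $\tilde{x}\in\tilde{K}_p$), the product structure yields the Pythagorean identity $\dist(\hat{w},\tilde{K})=\sqrt{\dist(\hat{x},\tilde{K}_p)^2+\dist(\tilde{s},\tilde{K}_d)^2}$. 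Bounding each term by $\eps_{\mathrm{cons}}\tilde{r}_\alpha/(\sqrt{2}\,D_\alpha)$ via \eqref{eq:lm:acc_to_get:1} gives exactly $\dist(\hat{w},\tilde{K})\le\eps_{\mathrm{cons}}\tilde{r}_\alpha/D_\alpha$, which is precisely why the hypothesis carries the factor $\sqrt{2}$.

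The only genuine obstacle is the constant: a crude triangle-inequality bound $\dist(\hat{w},\tilde{K})\le\dist(\tilde{w},\tilde{V})+\dist(\tilde{w},\tilde{K})$ loses a factor and would only deliver $\sqrt{2}\,\eps_{\mathrm{cons}}$. Obtaining the sharp $\eps_{\mathrm{cons}}$ hinges on the decoupling of the primal and dual feasibility errors into orthogonal coordinate blocks, so I would take care to verify that the cross terms truly vanish — that the projection onto $\tilde{V}$ perturbs only the primal coordinate while the dual coordinate stays fixed — before invoking the Pythagorean identity.
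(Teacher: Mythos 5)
Your proposal is correct and follows essentially the same route as the paper's proof: project onto $\tilde V$ (which moves only the primal block), use the $\phi_\eta$- and projection-invariance of the gap, objective error, and relative feasibility ratio, verify $\gap(w)\le\alpha$ so that Lemma \ref{lm:distance_to_cone_bounded} applies at level $\alpha$ on both sides, and then bound $\dist(\hat w,\tilde K)$ blockwise so the $\sqrt{2}$ in the hypothesis cancels. Your Pythagorean-identity phrasing of the last step is just a repackaging of the paper's explicit candidate point $(\tilde x, P_{\tilde K_d}(\tilde s))\in\tilde K$, and yields the same constant.
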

\begin{proof}
	To ease the notational burden let $\hat{w} :=P_{\tilde{V}}(\tilde{w}) = (P_{\tilde{V}_p}(\tilde{x}),\tilde{s})$. Then $w = \phi_\eta^{-1}(\hat{w})\in V$, and since $D_2AD_1 \bar{c} = 0$, we have $\widetilde{\gap}(\tilde{w}) = \widetilde{\gap}(\hat{w})$ and $\widetilde{\eobj}(\tilde{w}) = \widetilde{\eobj}(\hat{w})$. Furthermore, since the duality gap and objective error are both invariant under the rescaling $\phi_\eta$, it follows that
	$$
		\begin{array}{c}
			\widetilde{\gap}(\tilde{w})  = \widetilde{\gap}(\hat{w}) = \gap(w) \text{ and } 	\widetilde{\eobj}(\tilde{w}) = 	\widetilde{\eobj}(\hat{w})=\eobj(w)\ ,
		\end{array}
	$$
	and therefore $\eobj(w) = \widetilde{\eobj}(\tilde{w})  \le \min\{\eps_{\mathrm{obj}}, \eps_{\mathrm{gap}}\}$. This shows the second inequality in \eqref{eq:lm:acc_to_get:2}.

	Again because the relative feasibility error is invariant under the rescaling, we have $\widetilde{\efeas}(\hat{w};\alpha) = \efeas(w;\alpha)$. Furthermore, since $\gap(w) = \widetilde{\gap}(\tilde{w})\le \widetilde{\eobj}(\tilde{w})  \le \min\{\eps_{\mathrm{obj}}, \eps_{\mathrm{gap}} \} \le \alpha$, it follows from Lemma \ref{lm:distance_to_cone_bounded} that
	\begin{equation}\label{eq:acc_to_get:1}
		\dist(w,\calF)\le D_{\alpha}\cdot \efeas(w;\alpha) = D_{\alpha}\cdot \widetilde{\efeas}(\hat{w};\alpha) \ .
	\end{equation}
	Note that $\dist(\hat{w},\tilde{K}) \le \|(\hat{x} - \tilde{x}, \hat{s} - P_{\tilde{K}_d}(\hat{s})) \| = \|(\hat{x} - \tilde{x}, \tilde{s} - P_{\tilde{K}_d}(\tilde{s})) \|$ and $\|\hat{x} - \tilde{x}\| = \dist(\tilde{x},\tilde{V}_p) = \dist(\tilde{w},\tilde{V})$, and furthermore $\|\tilde{s} - P_{\tilde{K}_d}(\tilde{s})\| = \dist(\tilde{s},\tilde{K}_d) = \dist(\tilde{w},\tilde{K})$. It then follows that $\dist(\hat{w},\tilde{K}) \le  \sqrt{2}\cdot \max\{\dist(\tilde{w},\tilde{V}),\dist(\tilde{w},\tilde{K})\} \le \frac{\eps_{\mathrm{cons}}\cdot \tilde{r}_\alpha}{D_{\alpha}}$. And since $\widetilde{\gap}(\hat{w}) \le \alpha$, it follows using Lemma \ref{lm:distance_to_cone_bounded} that $\widetilde{\efeas}(\hat{w};\alpha) \le \frac{\eps_{\mathrm{cons}}}{D_{\alpha}}$. Finally, substituting this inequality into \eqref{eq:acc_to_get:1} yields $\dist(w,\calF) \le \eps_{\mathrm{cons}}$, which completes the proof.
\end{proof}
 
We now present the proof of Theorem \ref{thm overall complexity rescaled}.  We remark that the iteration bound in the theorem yields a solution $w^{n,0}$ satisfying \eqref{eq:lm:acc_to_get:2}, which is a stricter requirement (and might be significantly stricter) than the $\eps$-tolerance requirement.

\begin{proof}[Proof of Theorem \ref{thm overall complexity rescaled}]
	We first consider the general case of CLP. From Lemma \ref{lm:acc_to_get}, once $\tilde{w}^{n,0}$ satisfies \eqref{eq:lm:acc_to_get:1}, then $w^{n,0}$ satisfies \eqref{eq:lm:acc_to_get:2} and hence satisfies the $\eps$-tolerance requirement, since it always holds that $\gap(w) \le \eobj(w)$ for any $w$ and hence $\gap(w^{n,0}) \le \eobj(w^{n,0}) \le \min\{\eps_{\mathrm{gap}},\eps_{\mathrm{obj}}\} \le \eps_{\mathrm{gap}}$. Let us directly apply Theorem \ref{thm overall complexity clp} to the rescaled problem \eqref{pro:rescaled problem}. Then the number of \textsc{OnePDHG} iterations required in order to obtain a solution $\tilde{w}^{n,0}$  satisfying \eqref{eq:lm:acc_to_get:1} is at most
	\begin{equation}\label{eq thm overall complexity rescaled 1}
		\begin{array}{c}
			T \le  T_\alpha := 190\tilde{\kappa} \cdot \frac{\tilde{D}_\alpha}{\tilde{r}_\alpha}  \cdot \left[\ln\left(33\tilde{\kappa}\cdot \dist(0,\tilde{\calW}^\star) \right) + \ln\left(\frac{1}{\widetilde{\err}}\right) \, \right]
			+ \frac{50\tilde{\kappa} \cdot \tilde{d}^H_\delta}{\widetilde{\err}}	 \ ,
		\end{array}
	\end{equation}
	where $$\widetilde{\err} :=
		\min\left\{\frac{\eps_{\mathrm{cons}}\cdot \tilde{r}_\alpha}{\sqrt{2}\cdot D_{\alpha}} \ , \ +\infty \ , \
		\tfrac{1}{14}  \left( \displaystyle\sup_{\gamma  > 0} \tfrac{ \tilde{r}_\gamma}{\gamma} \right) \cdot \min\{\eps_{\mathrm{gap}},\eps_{\mathrm{obj}}\}
		\right\} \ , $$ where the middle term above is $+\infty$ because \eqref{eq:lm:acc_to_get:1} does not directly require a bound  on $\widetilde{\gap}(\tilde{w})$.
	Note that Theorem \ref{thm:conditionnumber_after_transformation} implies the following inequalities:
	\begin{equation}\label{eq thm overall complexity rescaled 3}
		\begin{array}{c}
			\frac{\tilde{D}_\alpha}{\tilde{r}_\alpha}\le 4\vartheta_f + 4 \sqrt{2\vartheta_f} \ , \  \dist(0,\tilde{\calW}^\star)  \le \frac{2\vartheta_f + 3 \sqrt{2\vartheta_f}}{\sqrt{\eta}} \ , \ \tilde{r}_{\alpha} \ge \frac{1}{\sqrt{\eta}} \ , \ \text{and }  \ \tilde{d}^H_{\alpha}  \le \frac{4\vartheta_f + 4 \sqrt{2\vartheta_f}}{\sqrt{\eta}} \ ,
		\end{array}
	\end{equation}
	which also implies
	\begin{equation}\label{eq thm overall complexity rescaled 3-1}
		\begin{array}{c}
			\displaystyle\sup_{\gamma  > 0} \frac{ \tilde{r}_\gamma}{\gamma} \ge \frac{ \tilde{r}_\alpha}{\alpha} \ge \frac{1}{\sqrt{\eta}\alpha} \ ,
		\end{array}
	\end{equation}
	and hence the following bound on $T$:
	\begin{equation}\label{eq thm overall complexity rescaled 4}
		\begin{array}{c}
			T \le  \tilde{\kappa} \cdot (\vartheta_f + \sqrt{2\vartheta_f})  \cdot \left(760\left[\ln\left(33\tilde{\kappa}\cdot \left(2\vartheta_f + 3 \sqrt{2\vartheta_f}\right) \right) + \ln\left(\frac{1}{\sqrt{\eta}\cdot \widetilde{\err}}\right) \, \right]
			+ \frac{200 }{\sqrt{\eta} \cdot \widetilde{\err}} \right)\ ,
		\end{array}
	\end{equation}
	and the following lower bound on $\sqrt{\eta}\cdot \widetilde{\err}$:
	\begin{equation}\label{eq thm overall complexity rescaled 5}
		\sqrt{\eta}\cdot \widetilde{\err}\ge \min\left\{\frac{\eps_{\mathrm{cons}}}{\sqrt{2} \cdot D_{\alpha}} , \
		\frac{\eps_{\mathrm{gap}}}{14\alpha}  ,\  \frac{\eps_{\mathrm{obj}}}{14\alpha}
		\right\} \ .
	\end{equation}
	Denoting the right-hand side of \eqref{eq thm overall complexity rescaled 5} by $\err^{\alpha}$, then the bound  \eqref{eq overall complexity rescaled clp} follows directly from \eqref{eq thm overall complexity rescaled 4} and
	\eqref{eq thm overall complexity rescaled 5}.

	Let us now consider the case where $K_p=\mathbb{R}^n_+$. Since the duality gap is invariant under the rescaling transformation, $\bar{\delta}$ is the best suboptimal extreme point gap for both \eqref{pro: general primal clp} and \eqref{pro:rescaled problem}, and so from Theorem \ref{thm overall complexity lp} we have
	\begin{equation}\label{eq thm overall complexity rescaled 6}
		\begin{array}{c}
			T \le 255 \kappa \cdot \left( \min_{0 < \delta \le \bar{\delta}}\frac{\tilde{D}_{\delta}}{\tilde{r}_{\delta}}\right) \cdot   \left[\ln\big(33 \tilde{\kappa}\cdot \dist(0,\tilde{\calW}^\star) \big) + \ln\left(\frac{1}{\widetilde{\err}}\right) \, \right] \ .
		\end{array}
	\end{equation}
	For $\alpha \ge \bar{\delta}$ we have $\tilde{D}_{\bar{\delta}}\le \tilde{D}_\alpha$ and $\tilde{r}_{\bar{\delta}}\ge \frac{\bar{\delta}}{\alpha}\cdot \tilde{r}_{\alpha}$ (using Lemma \ref{lm:monotonicity of delta r}), and hence $\min_{0 < \delta \le \bar{\delta}}\frac{\tilde{D}_{\delta}}{\tilde{r}_{\delta}} \le \frac{\tilde{D}_{\bar{\delta}}}{\tilde{r}_{\bar{\delta}}} \le \frac{\alpha}{\bar{\delta}}\cdot \frac{\tilde{D}_\alpha}{\tilde{r}_\alpha}$.
	And for $\alpha < \bar{\delta}$ we have $\min_{0 < \delta \le \bar{\delta}}\frac{\tilde{D}_{\delta}}{\tilde{r}_{\delta}} \le \frac{\tilde{D}_\alpha}{\tilde{r}_\alpha}$. Therefore,
	$$
		\begin{array}{c}
			\min_{0 < \delta \le \bar{\delta}}\frac{\tilde{D}_{\delta}}{\tilde{r}_{\delta}} \le \max\left\{1,\frac{\alpha}{\bar{\delta}}\right\}\cdot \frac{\tilde{D}_\alpha}{\tilde{r}_\alpha} \le 4\cdot \max\left\{1,\frac{\alpha}{\bar{\delta}}\right\}\cdot \left(\vartheta_f + \sqrt{2\vartheta_f}\right) \
		\end{array}
	$$
	where the last inequality uses \eqref{eq thm overall complexity rescaled 3}. Finally, notice that $\vartheta_f = n$ for the logarithmic barrier function $f(x) = -\sum_{j=1}^n x_j$ for $K_p=\mathbb{R}^n_+$. Applying \eqref{eq thm overall complexity rescaled 3} and \eqref{eq thm overall complexity rescaled 5} to \eqref{eq thm overall complexity rescaled 6} yields \eqref{eq overall complexity rescaled lp}.
\end{proof}

\section{Further details of \cpipm} \label{clifficlerun}

In this section we discuss some of the ways in which \cpipm \ either is identical to or is different from the practical IPM in \cite[Section 14.2]{nocedal2006numerical}. 

First, \cpipm \ runs on the rescaled problem \eqref{pro:rescaled problem}, where $D_1$ and $D_2$ are diagonal rescaling matrices derived from 10 iterations of Ruiz rescaling, as opposed to the original problem. Upon termination of \cpipm, solutions are converted back to the original problem.

Second, \cpipm \  incorporates all of the heuristics in \cite[Section 14.2]{nocedal2006numerical} that do not require solving linear systems. These heuristics include the selection of the centering parameter, the choice of primal and dual step lengths, and the starting point selection. The practical IPM in \cite[Section 14.2]{nocedal2006numerical} also uses a step-length parameter $\eta_k$ (using the notation of \cite{nocedal2006numerical} which is very different from our notation in Definition \ref{italy} which refers to the barrier parameter).  In \cpipm \ we set $\eta_k :=0.9$ . The starting point for \cpipm \ involves computing projections of the primal and dual zero vectors onto $\tilde{V}_p$ and $\tilde{V}_d$, and to do so  \cpipm \  uses $1,000$ iterations of CGM to approximately compute these projections.

Lastly, each iteration of \cpipm \ solves two linear systems, one for the predictor step and the other for the corrector step. These two linear systems, which are equations (14.30) and (14.35) in \cite{nocedal2006numerical}, share the same coefficient matrix and can be described as follows:
\begin{equation}\label{eq:coefficient matrix}
	\begin{pmatrix}
		0 & A^T & I \\
		A & 0   & 0 \\
		S & 0   & X
	\end{pmatrix}\begin{pmatrix}
		\Delta x \\
		\Delta y \\
		\Delta s
	\end{pmatrix}=\begin{pmatrix}
		-r_c \\
		-r_b \\
		-r_{x s}
	\end{pmatrix} \ , 
\end{equation}
where, $r_c$, $r_b$, and $r_{xs}$ are different for the two systems. We denote the coefficient matrix and the right-hand side vector of equation \eqref{eq:coefficient matrix} as $Q$ and $q$, respectively. Solving equation \eqref{eq:coefficient matrix} can be simplified to solving the following normal equation for $\Delta y$:
\begin{equation}\label{eq:normal matrix}
	A D^2 A^\top \Delta y = -r_b - AXS^{-1}r_c + AS^{-1}r_{xs}
\end{equation}
where $D$ is the diagonal matrix $S^{-1/2}X^{1/2}$. Once $\Delta y$ is computed, then $\Delta s$ and $\Delta x$ can be computed using $\Delta s  =-r_c-A^T \Delta y$ and $\Delta x =-S^{-1} r_{x s}-X S^{-1} \Delta s$. The matrix $AD^2A^\top$ in \eqref{eq:normal matrix} is always positive semidefinite so the normal equation \eqref{eq:normal matrix} is solved by the Jacobi preconditioned CGM \cite{barrett1994templates}, which is equivalent to the regular CGM for the linear system with $M^{-1}$ multiplied on both sides of \eqref{eq:normal matrix}, where $M$ is the diagonal matrix composed of the diagonal entries of $AD^2A^\top$. The CGM stops when it reaches either (i) $m$ iterations (the number of rows of $A$), or (ii) when the recovered solution $\left(\begin{smallmatrix}
			\Delta x \\
			\Delta y \\
			\Delta s
		\end{smallmatrix}\right)$ satisfies $\left\|Q \left(\begin{smallmatrix}
			\Delta x \\
			\Delta y \\
			\Delta s
		\end{smallmatrix}\right) - q\right\|\le \frac{0.1}{\sqrt{k}}\cdot \|q\|$ at iteration $k$ of the interior-point method.


\section*{Declarations} This work was supported by AFOSR Grant No. FA9550-22-1-0356. The authors declare that they have no relevant financial or non-financial interests to disclose.

\bibliographystyle{abbrv}
\bibliography{reference}

\end{document}